\documentclass[11pt,reqno]{amsart}
\usepackage{txfonts}

\usepackage{amsmath}
\usepackage{amsfonts}
\usepackage{amssymb}
\usepackage[all,cmtip]{xy}
\usepackage{mathtools}
\usepackage{bbm}
\usepackage{csquotes}
\usepackage{bbding}

\usepackage{xcolor}
\usepackage[colorlinks=true, pdfborder={0 0 0}]{hyperref}

\usepackage{tikz-cd}
\usepackage[active]{srcltx}
\usepackage{tikz}
\usepackage{amscd}
\usepackage{bm}
\usepackage{mathrsfs}

\makeatletter
\let\uppercasenonmath\@gobble
\makeatother

\makeatletter
\renewcommand\section{\@startsection{section}{1}%
  \z@{.7\linespacing\@plus\linespacing}{.9\linespacing}%
  {\normalfont\Large\bfseries\raggedright}}
\makeatother

\makeatletter

\newtheorem{thm}{Theorem}[section]
\newtheorem{prop}[thm]{Proposition}
\newtheorem{lem}[thm]{Lemma}
\newtheorem{cor}[thm]{Corollary}
\newtheorem{prop-def}[thm]{Proposition-Definition}

\theoremstyle{definition}
\newtheorem{defn}[thm]{Definition}

\newtheorem{remark}[thm]{Remark}
\newtheorem{exam}[thm]{Example}

\newcommand{\Add}{{\rm Add}}

\newcommand{\gd}{{\rm gpd}}

\newcommand{\End}{{\rm End}}

\newcommand{\pd}{{\rm pd}}

\newcommand{\fd} {{\rm findim}}
\newcommand{\Fd} {{\rm Findim}}

\newcommand{\fpd} {{\rm fpd}}
\newcommand{\Fpd} {{\rm Fpd}}

\newcommand{\cpx}[1]{#1^{\bullet}}
\newcommand{\D}[1]{{\mathscr D}(#1)}

\newcommand{\Db}[1]{{\mathscr D}^b(#1)}

\newcommand{\Cb}[1]{{\mathscr C}^b(#1)}
\newcommand{\Dc}[1]{{\mathscr D}^c(#1)}
\newcommand{\K}[1]{{\mathscr K}(#1)}

\newcommand{\Kb}[1]{{\mathscr K}^b(#1)}

\newcommand{\modcat}{\ensuremath{\mbox{{\rm -mod}}}}
\newcommand{\Modcat}{\ensuremath{\mbox{{\rm -Mod}}}}

\newcommand{\pmodcat}[1]{#1\mbox{{\rm -proj}}}

\newcommand{\Pmodcat}[1]{#1\mbox{{\rm -Proj}}}
\newcommand{\Imodcat}[1]{#1\mbox{{\rm -Inj}}}

\newcommand{\opp}{^{\rm op}}
\newcommand{\otimesL}{\otimes^{\rm\mathbb L}}

\newcommand{\Hom}{{\rm Hom}}

\newcommand{ \Cone }{{\rm Con}}

\newcommand{\Ext}{{\rm Ext}}

\newcommand{\lra}{\longrightarrow}

\newcommand{\lraf}[1]{\stackrel{#1}{\lra}}

\newcommand{\ra}{\rightarrow}

\newcommand{\Tor}{{\rm Tor}}

\newcommand{\Coprod}{{\rm Coprod}}
\newcommand{\coprods}{{\rm coprod}}
\newcommand{\gpd}{{\rm gpd}}

\begin{document}

\title[]{\Large Finiteness of homological dimensions in triangulated categories}

\author[]{Hongxing Chen, Xiaohu Chen and Jinbi Zhang$^*$}

\address{Hongxing Chen, School of Mathematical Sciences, Capital Normal University, 100048 Beijing, P. R. China;
and Academy for Multidisciplinary Studies, Capital Normal University, 100048 Beijing, P. R. China}
\email{chenhx@cnu.edu.cn}

\address{Xiaohu Chen, School of Mathematical Sciences, Capital Normal University, 100048 Beijing, P. R. China}
\email{xiaohu.chen@cnu.edu.cn}

\address{Jinbi Zhang, School of Mathematical Sciences, Anhui University, 230601 Hefei, P. R. China}
\email{zhangjb@ahu.edu.cn}

\date{}

\begin{abstract}
In a general triangulated category, the finiteness of the finitistic dimension serves as a prerequisite for a categorical obstruction, via the singularity category, to the existence of bounded $t$-structures. In this paper, we investigate the finitistic, big finitistic, and global dimensions, and establish explicit inequalities that relate these dimensions of the middle category in a recollement of triangulated categories to those of the outer categories. This provides a unified framework for extending some known results on the homological dimensions of ordinary rings to weakly approximable triangulated categories.

\end{abstract}
\renewcommand{\thefootnote}{\alph{footnote}}
\setcounter{footnote}{-1} \footnote{ $^*$ Corresponding author.
Email: zhangjb@ahu.edu.cn.}
\renewcommand{\thefootnote}{\alph{footnote}}

\setcounter{footnote}{-1} \footnote{2020 Mathematics Subject
Classification: Primary 18G80, 18G20, 16E10; Secondary 16E35, 18G70.}

\keywords{Finitistic dimension, Global dimension, Recollement, Strong compact generation, Weakly approximable triangulated category}

\maketitle

\vspace{-.7cm}

 \tableofcontents

\vspace{-.7cm}

\allowdisplaybreaks

\section{Introduction}
In homological algebra and representation theory, different types of homological dimensions of algebras, such as global dimension, finitistic dimension and dominant dimension, play important roles in understanding the structure and complexity of modules over algebras. One of the main interests on these dimensions is concentrated on their relationships with classical homological conjectures. For example, the famous \emph{finitistic dimension conjecture} says that every Artin algebra has finite finitistic dimension (see \cite{bass60}), where the finitistic dimension of an algebra is defined as the supremum of the projective dimensions of all finitely generated modules having finite projective dimension. This conjecture implies several other homological conjectures including the Auslander-Reiten conjecture, the (generalized) Nakayama conjecture and the Wakamatsu tilting conjecture. Despite some new advances on (big) finitistic dimension in recent years (\cite{g22,g25,r19}), the finitistic dimension conjecture is still open.

The persistent difficulty of solving the finitistic dimension conjecture has inspired a shift in perspective: instead of studying the finitistic dimension of an algebra purely through a single module category, one seeks to understand it as an intrinsic property of a reasonable triangulated category. The  pioneering work was done by Krause, who introduced an important notion of finitistic dimension for Hom-finite triangulated categories in terms of the generativity of objects and provided the first way to characterize the finiteness of the (small) finitistic dimension of a ring as a property of the derived category of perfect complexes (\cite{k24}). Another completely different notion of finitistic dimension for a general triangulated category was introduced in \cite{bcrpz24}, where its finiteness serves as a prerequisite for a categorical obstruction (the singularity category) to the existence of bounded $t$-structures. Precisely, it was shown in \cite[Theorem 1.5]{bcrpz24} that, for an essentially small triangulated category, if its opposite category has finite finitistic dimension, then the existence of a bounded $t$-structure on it implies its regularity, that is, its singularity category vanishes. This result can be regarded as a categorical version of \cite[Theorem 0.1]{n24a} which is a vast generalization of the conjecture of Antieau, Gepner and Heller concerning the regularity of schemes (see \cite[Conjecture 1.5]{agh19}). To bound the finiteness of the finitistic dimension in the second type for specific classes of triangulated categories, the third type of big or small finitistic dimensions for compactly generated triangulated categories was introduced in \cite[Appendix B]{bcrpz24} in terms of the $t$-structures generated by compact generators. They are a natural generalization of the big or small finitistic dimensions of ordinary rings defined through projective dimensions of modules.

All the above categorical viewpoints on finitistic dimension not only encompass the classical module-theoretical case but also extend naturally to triangulated and geometric settings, providing a unified framework to study finiteness phenomena in homological algebra. In the present paper, we investigate the behavior of homological dimensions of triangulated categories linked by recollements of triangulated categories. In particular, we establish explicit inequalities of finitistic, big finitistic, and global dimensions of triangulated categories in recollements. This yields a unified homological/categorical  framework that extends greatly the main results of Chen and Xi in \cite{cx17} on homological dimensions of ordinary rings.

Before stating our results, we first introduce some definitions and notation.

\begin{defn}{\rm \cite[Definition 1.3]{bcrpz24}}\label{Intro-defn-fd}
Let $\mathcal{S}$ be a triangulated category with the shift functor $[1]$. The \emph{finitistic dimension} of $\mathcal{S}$ at an object $G$ is defined to be
$$\fd(\mathcal{S}, G)\coloneqq \inf\big\{n\in\mathbb{N}\mid G(-\infty, -1]^{\perp}\subseteq\langle G\rangle^{[0,+\infty)}[n]\big\},$$
where $G(-\infty, -1]^{\perp}\coloneqq\{X\in\mathcal{S}\mid \Hom_\mathcal{S}(G[n], X)=0,\, n\geq 1\}$ and $\langle G\rangle ^{[0,+\infty)}$ denotes the smallest full subcategory of $\mathcal{S}$ containing $G$ and closed under extensions, direct summands and negative shifts. Clearly,
$\fd(\mathcal{S}, G)\in\mathbb{N}\cup\{+\infty\}$.

We say that $\mathcal{S}$ has \emph{finite finitistic dimension}, denoted by $\fd(\mathcal{S})<+\infty$,  if there is some object $G\in \mathcal{S}$ such that $\fd(\mathcal{S}, G) < +\infty$.
\end{defn}

Recall from \cite{bcrpz24} that there are many classes of triangulated categories that have finite finitistic dimension: a triangulated category with an algebraic $t$-structure (that is, the $t$-structure is bounded and its heart is a length category with finitely many isomorphism classes of simple objects),
a triangulated category with a strong generator (or equivalently, with finite Rouquier dimension), the singularity category of a Gorenstein Artin algebra or of a self-injective differential graded (DG) algebra over a field, the derived category of perfect complexes on a finite-dimensional, Noetherian scheme with cohomology supported on a closed subset of the scheme and the perfect derived category of a DG algebra with some conditions on its cohomology.

Our first main result conveys that checking the finiteness of finitistic dimension can be reduced along half recollements of triangulated categories.

\begin{thm}\label{mainthm-fd-1}
Let $\mathcal{R}$, $\mathcal{S}$ and $\mathcal{T}$ be triangulated categories with a left recollement

\begin{align*}
\xymatrixcolsep{4pc}\xymatrix{\mathcal{R} \ar[r]|{i_*} &\mathcal{S} \ar@<-2ex>[l]|{i^*}  \ar[r]|{j^*}  &\mathcal{T}. \ar@<-2ex>[l]|{j_!}
}
\end{align*}
Suppose that $G_{\mathcal{R}}\in \mathcal{R}$, $G_{\mathcal{S}}\in \mathcal{S}$ and $G_{\mathcal{T}}\in \mathcal{T}$. Let $\langle G_{\mathcal{R}} \rangle$ and $\langle G_{\mathcal{S}} \rangle$ be the smallest full triangulated subcategories of $\mathcal{R}$ and $\mathcal{S}$ containing $G_\mathcal{R}$ and $G_{\mathcal{S}}$ closed under direct summands, respectively.

$(1)$ If $\Hom_{\mathcal{S}}(G_{\mathcal{S}}[n], G_{\mathcal{S}})=0$ for $n\gg 0$ and $i_*(G_{\mathcal{R}}), j_!(G_{\mathcal{T}})\in \langle G_{\mathcal{S}} \rangle$, then
$$\fd(\mathcal{S},G_{\mathcal{S}})\le \fd(\mathcal{R},G_{\mathcal{R}})+\fd(\mathcal{T},G_{\mathcal{T}})
+w_{G_{\mathcal{S}}}(i_{*}(G_{\mathcal{R}}))+w_{G_{\mathcal{S}}}(j_{!}(G_{\mathcal{T}}))
+o^{-}(G_{\mathcal{S}})+1,$$
where $w_{G_{\mathcal{S}}}(X)$ denotes the width of an object $X\in\mathcal{S}$ with respect to $G_{\mathcal{S}}$, and $o^{-}(G_{\mathcal{S}})$ denotes the lower self-orthogonal index of $G_{\mathcal{S}}$ in $\mathcal{S}$ (see Definition \ref{index}). In this case, if both $\fd(\mathcal{R},G_{\mathcal{R}})$ and $\fd(\mathcal{T},G_{\mathcal{T}})$ are finite, then so is $\fd(\mathcal{S},G_{\mathcal{S}})$.

$(2)$ If $i^*(G_{\mathcal{S}})\in \langle G_{\mathcal{R}} \rangle$, then
$\fd(\mathcal{R},G_{\mathcal{R}})\le \fd(\mathcal{S},G_{\mathcal{S}})+w_{G_{\mathcal{R}}}(i^*(G_{\mathcal{S}})).$
In this case, if $\fd(\mathcal{S},G_{\mathcal{S}})$ is finite, then so is $\fd(\mathcal{R},G_{\mathcal{R}})$.

\end{thm}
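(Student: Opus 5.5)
We would first pin down the notions that the statement refers to but which are defined only later (Definition \ref{index}). For $X\in\langle G\rangle$ we take the width $w_G(X)$ to be the least $w$ such that $X\in\langle G\rangle^{[a,a+w]}$ for some $a$; here $\langle G\rangle^{[a,b]}$ is built from $G[-b],\dots,G[-a]$ by extensions, finite sums and summands. We take $o^-(G)$ to be the least $m\ge 0$ such that $\Hom(G[n],G)=0$ for all $n>m$; this is finite by hypothesis. The tools we will use are the adjunctions $(i^*,i_*)$ and $(j_!,j^*)$, the triangle $j_!j^*X\to X\to i_*i^*X\to$ for $X\in\mathcal{S}$, and the full faithfulness of $i_*$ and $j_!$.

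\textbf{Part (2).} Let $Y\in G_{\mathcal{R}}(-\infty,-1]^{\perp}$ and set $w=w_{G_{\mathcal{R}}}(i^*G_{\mathcal{S}})$. First we show that $i_*Y$ lies in a shifted copy of $G_{\mathcal{S}}(-\infty,-1]^\perp$. By adjunction, $\Hom(G_{\mathcal{S}}[n],i_*Y)=\Hom(i^*G_{\mathcal{S}}[n],Y)$. Since $i^*G_{\mathcal{S}}\in\langle G_{\mathcal{R}}\rangle^{[a,a+w]}$, it is built from $G_{\mathcal{R}}[k]$ with $-a-w\le k\le -a$, so this Hom vanishes once $n-a-w\ge 1$. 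Hence $i_*Y[-a-w]\in G_{\mathcal{S}}(-\infty,-1]^\perp$, and therefore $i_*Y\in\langle G_{\mathcal{S}}\rangle^{[0,\infty)}[d+a+w]$ with $d=\fd(\mathcal{S},G_{\mathcal{S}})$.

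Next we apply $i^*$, using $i^*i_*\cong \mathrm{id}$. This gives $Y\in\langle i^*G_{\mathcal{S}}\rangle^{[0,\infty)}[d+a+w]$. The latter is contained in $\langle G_{\mathcal{R}}\rangle^{[0,\infty)}[d+a+w-(a+w)]$, up to the sign conventions, which we still need to verify. The bookkeeping of $a$ must cancel exactly; if it does not, we normalise $a$ by replacing $G_{\mathcal{S}}$ with a shift, which does not change $\fd$.

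\textbf{Part (1).} Let $X\in G_{\mathcal{S}}(-\infty,-1]^\perp$ and use the triangle $j_!j^*X\to X\to i_*i^*X$. The problem is that $j^*X$ and $i^*X$ need not be in the corresponding perps, because the truncation defect is controlled only in one direction.

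For $j^*X$ the adjunction gives $\Hom(G_{\mathcal{T}}[n],j^*X)\cong\Hom(j_!G_{\mathcal{T}}[n],X)$. Since $j_!G_{\mathcal{T}}$ has bounded width relative to $G_{\mathcal{S}}$, this vanishes for $n$ beyond $w_{G_{\mathcal{S}}}(j_!G_{\mathcal{T}})$ plus a shift. Hence a shift of $j^*X$ lies in the $\mathcal{T}$-perp. Applying the finitistic bound in $\mathcal{T}$ and then $j_!$, which sends $\langle G_{\mathcal{T}}\rangle^{[0,\infty)}$ into $\langle j_!G_{\mathcal{T}}\rangle^{[0,\infty)}\subseteq\langle G_{\mathcal{S}}\rangle^{[0,\infty)}[\text{width}]$, we get that $j_!j^*X$ lies in $\langle G_{\mathcal{S}}\rangle^{[0,\infty)}$ shifted by $\fd(\mathcal{T})+2w_{G_{\mathcal{S}}}(j_!G_{\mathcal{T}})$ or so.

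For $i^*X$ there is no left adjoint to test against, so we test with the right adjoint instead: $\Hom(G_{\mathcal{R}}[n],i^*X)\cong\Hom(i_*G_{\mathcal{R}}[n],i_*i^*X)$. From the triangle this Hom sits between $\Hom(i_*G_{\mathcal{R}}[n],X)$ and $\Hom(i_*G_{\mathcal{R}}[n],j_!j^*X[1])$. The first term vanishes for large $n$ because $i_*G_{\mathcal{R}}$ has bounded width. For the second we use the placement of $j_!j^*X$ obtained above, together with $\Hom(G_{\mathcal{S}}[n],G_{\mathcal{S}})=0$ for $n>o^-(G_{\mathcal{S}})$. This shows that $\Hom(\langle G_{\mathcal{S}}\rangle^{[0,\infty)}[s][n],G_{\mathcal{S}}\text{-built }i_*G_{\mathcal{R}})$ vanishes for $n$ large relative to $s$, $o^-$ and the width. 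This is where the term $o^-(G_{\mathcal{S}})+1$ enters, the $+1$ coming from the shift $[1]$ in the triangle. Then a shift of $i^*X$ lies in the $\mathcal{R}$-perp. Applying $\fd(\mathcal{R})$ and $i_*$ puts $i_*i^*X$ in a shifted aisle, and extension-closure of the aisle gives $X$ in $\langle G_{\mathcal{S}}\rangle^{[0,\infty)}[N]$, where $N$ is the maximum of the two shifts.

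\textbf{Main obstacle.} The hard part is the middle step of Part (1): controlling $\Hom(i_*G_{\mathcal{R}}[n],j_!j^*X[1])$. Here $j_!j^*X$ is known only to lie in a shifted aisle, an infinite "co-bounded" object, and the self-orthogonality has to be propagated through extensions and negative shifts. We would prove a small lemma for this: $\Hom(A,B[k])=0$ for $A\in\langle G\rangle^{[p,q]}$, $B\in\langle G\rangle^{[0,\infty)}$ and $k$ beyond $o^-(G)$ plus the range. The proof would use closure of the left orthogonal under extensions and summands. Keeping the shift bookkeeping tight enough to obtain exactly the stated sum, rather than, say, twice the widths, is the delicate point, and we may need to track the actual intervals instead of crude widths.
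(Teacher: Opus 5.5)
Your route is the paper's. Part (2) is exactly its Lemma~\ref{lem-fullyfaithful} (any adjoint pair $\mathbf F\dashv\mathbf H$ with $\mathbf H$ fully faithful) applied to $(i^*,i_*)$. In Part (1) the paper also uses the canonical triangle and the same adjunction estimate for $j_!j^*X$. It then controls $i^*X$ through $\Hom_{\mathcal S}(i_*G_{\mathcal R}[n],i_*i^*X)$, with $o^-(G_{\mathcal S})$ killing the $j_!j^*X[1]$ term. The paper first places $i_*i^*X$ in a shifted copy of $G_{\mathcal S}(-\infty,-1]^{\perp}$, which is equivalent to your two-term exact sequence. Your ``infinite co-bounded'' worry is moot: objects of $\langle G\rangle^{[0,+\infty)}$ are finite iterated extensions. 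Your small lemma is just the closure of $G_{\mathcal S}(-\infty,-N]^{\perp}$ under extensions, summands and negative shifts.

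What needs repair is the arithmetic you left open, and it is not delicate. Suppose $\mathbf F(G)\in\langle G'\rangle^{[c,d]}$. The adjunction step shifts by the upper index $d$. Pushing $\langle G\rangle^{[0,+\infty)}$ forward subtracts only the lower index $c$, since $\langle\mathbf F(G)\rangle^{[0,+\infty)}\subseteq\langle G'\rangle^{[c,+\infty)}=\langle G'\rangle^{[0,+\infty)}[-c]$. So each pass costs exactly one width.

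\textbf{Part (2).} Put $f=\fd(\mathcal S,G_{\mathcal S})$. You get $Y\in\langle G_{\mathcal R}\rangle^{[0,+\infty)}[f+(a+w)-a]$, i.e.\ the claimed shift $f+w$. Subtracting $a+w$, as you wrote, is a slip: it would need the inclusion $\langle i^*G_{\mathcal S}\rangle^{[0,+\infty)}[f+a+w]\subseteq\langle G_{\mathcal R}\rangle^{[0,+\infty)}[f]$. That inclusion fails whenever the top piece $G_{\mathcal R}[-a]$ really occurs and $w>0$.

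\textbf{Part (1).} Write $i_*G_{\mathcal R}\in\langle G_{\mathcal S}\rangle^{[a,b]}$, $j_!G_{\mathcal T}\in\langle G_{\mathcal S}\rangle^{[c,d]}$, $r=\fd(\mathcal R,G_{\mathcal R})$, $t=\fd(\mathcal T,G_{\mathcal T})$ and $o=o^-(G_{\mathcal S})$.
\begin{itemize}
\item First, $j_!j^*X\in\langle G_{\mathcal S}\rangle^{[0,+\infty)}[t+d-c]$: one width, not two.
\item The outer terms of your exact sequence vanish for $n\ge b+1$ and for $n\ge b+t+(d-c)+o+2$, respectively.
\item Hence $i^*X[-(b+t+d-c+o+1)]\in G_{\mathcal R}(-\infty,-1]^{\perp}$.
\item Therefore $i_*i^*X\in\langle G_{\mathcal S}\rangle^{[0,+\infty)}[r+(b-a)+t+(d-c)+o+1]$.
\end{itemize}
This shift dominates $t+d-c$, so extension closure gives exactly the stated inequality.
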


Theorem \ref{mainthm-fd-1} can be applied to determine the regularity of triangulated categories. Following \cite[Section 1.1]{bcrpz24}, we say that an essentially small triangulated category $\mathcal{S}$ is \emph{regular} if there exists an object $G \in \mathcal{S}$ such that $\mathcal{S}$ is equivalent to $\mathfrak{S}_{G}(\mathcal{S})$ as triangulated categories, where $\mathfrak{S}_{G}(\mathcal{S})$ denotes the completion of the triangulated category $\mathcal{S}$ (in the sense of Neeman) with respect to the good metric defined by the object $G$ (see \cite[Definition 1.4]{bcrpz24} for details). In many practical applications, we always choose $G$ to be a \emph{classical generator} of $\mathcal{S}$, that is, $\mathcal{S}$ is generated by $G$ via taking extensions of triangles, shifts and direct summands in $\mathcal{S}$. For example, the derived category of perfect complexes on a finite-dimensional, Noetherian scheme is regular at a classical generator if and only if the scheme is regular in the geometric sense.

A combination of \cite[Theorem 1.5(a)]{bcrpz24} and Theorem \ref{mainthm-fd-1} implies the following result.

\begin{cor}\label{cor to regular}
Let $\mathcal{R}$, $\mathcal{S}$, and $\mathcal{T}$ be essentially small triangulated categories with $\fd(\mathcal{R}^{{\rm op}})<+\infty$ and $\fd(\mathcal{T}^{{\rm op}})<+\infty$. Suppose that $\mathcal{R}$ and $\mathcal{T}$ have bounded $t$-structures and that there is a recollement of triangulated categories:
\[
\xymatrixcolsep{4pc}\xymatrix{
\mathcal{R} \ar[r]|{i_*=i_!} &
\mathcal{S} \ar@<-2ex>[l]|{i^*} \ar@<2ex>[l]|{i^!} \ar[r]|{j^!=j^*} &
\mathcal{T} \ar@<-2ex>[l]|{j_!} \ar@<2ex>[l]|{j_{*}}.
}
\]
Then $\mathcal{S}$ is regular.
\end{cor}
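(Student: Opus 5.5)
The plan is to verify the hypotheses of \cite[Theorem 1.5(a)]{bcrpz24} for $\mathcal{S}$. That theorem says an essentially small triangulated category whose opposite has finite finitistic dimension and which admits a bounded $t$-structure is regular. So two things are needed: (a) $\fd(\mathcal{S}^{\rm op})<+\infty$, and (b) $\mathcal{S}$ has a bounded $t$-structure.

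For (b), glue the bounded $t$-structures on $\mathcal{R}$ and $\mathcal{T}$ along the full recollement, using the Beilinson--Bernstein--Deligne gluing. Set
\[
\mathcal{S}^{\le 0}=\{X\mid i^*X\in\mathcal{R}^{\le 0},\ j^*X\in \mathcal{T}^{\le 0}\},\qquad \mathcal{S}^{\ge 0}=\{X\mid i^!X\in\mathcal{R}^{\ge 0},\ j^*X\in\mathcal{T}^{\ge 0}\}.
\]
Boundedness follows from the triangles $j_!j^*X\to X\to i_*i^*X\to$ and the boundedness on the outer pieces. Here one must check that $j_!$ and $i_*$ shift the glued aisles by a bounded amount: $j_!$ is right $t$-exact and $i_*$ is $t$-exact for the glued structure, so each object lies in $\mathcal{S}^{[a,b]}$ for suitable $a,b$. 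This step is standard.

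For (a), pass to opposite categories. A recollement $\mathcal{R}\to\mathcal{S}\to\mathcal{T}$ becomes, after taking opposites, a recollement $\mathcal{R}^{\rm op}\to\mathcal{S}^{\rm op}\to\mathcal{T}^{\rm op}$. The adjoints swap roles: $i^!$ and $j_*$ become the left adjoints $(i^!)^{\rm op}$ and $(j_*)^{\rm op}$. In particular it gives a left recollement in the sense of Theorem \ref{mainthm-fd-1}, with $i_*^{\rm op}$, $(i^!)^{\rm op}$, $j^{*\rm op}$, $(j_*)^{\rm op}$ playing the roles of $i_*,i^*,j^*,j_!$. Choose $G_{\mathcal{R}}\in\mathcal{R}^{\rm op}$ and $G_{\mathcal{T}}\in\mathcal{T}^{\rm op}$ with finite finitistic dimension.

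The key step, and the main obstacle, is choosing $G_{\mathcal{S}}$ so that the hypotheses of Theorem \ref{mainthm-fd-1}(1) hold. These are: $\Hom(G_{\mathcal{S}}[n],G_{\mathcal{S}})=0$ for $n\gg0$, and $i_*G_{\mathcal{R}},\ j_*G_{\mathcal{T}}\in\langle G_{\mathcal{S}}\rangle$. I would take
\[
G_{\mathcal{S}}=i_*G_{\mathcal{R}}\oplus j_*G_{\mathcal{T}},
\]
viewed in $\mathcal{S}^{\rm op}$. The containment condition is then trivial, and the widths are $0$.

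It remains to get the vanishing of $\Hom$ in high degrees. Use the bounded $t$-structure on $\mathcal{S}$ from (b): any two objects $X,Y$ of $\mathcal{S}$ lie in a bounded range, so $\Hom(X,Y[-n])=0$ for $n\gg0$. In $\mathcal{S}^{\rm op}$, the relevant groups $\Hom_{\mathcal{S}^{\rm op}}(G[n],G)$ are $\Hom_{\mathcal{S}}(G,G[-n])$ up to sign convention. These vanish for large $n$ by the $t$-structure axiom $\Hom(\mathcal{S}^{\le a},\mathcal{S}^{\ge b})=0$ for $a<b$.

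This also makes $o^-(G_{\mathcal{S}})$ finite, assuming Definition \ref{index} gives the lower self-orthogonal index as the bound beyond which these Homs vanish. Theorem \ref{mainthm-fd-1}(1) then yields $\fd(\mathcal{S}^{\rm op},G_{\mathcal{S}})<+\infty$. Applying \cite[Theorem 1.5(a)]{bcrpz24} finishes the proof.

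The points to watch are the direction of shifts under taking opposites, and that the lemma on left recollements is correctly matched to the opposite recollement.
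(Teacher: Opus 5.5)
Your proposal is correct and follows the paper's proof essentially step for step. You glue to a bounded $t$-structure on $\mathcal{S}$ (the paper's Lemma~\ref{lem-bounde induced bounde}), pass to the opposite recollement, and take $G_{\mathcal{S}}=i_*(G_{\mathcal{R}})\oplus j_*(G_{\mathcal{T}})$. You then get $\Hom_{\mathcal{S}}(G_{\mathcal{S}},G_{\mathcal{S}}[-n])=0$ for $n\gg 0$ from that $t$-structure, apply Theorem~\ref{mainthm-fd-1}(1) to the upper (left) half of the opposite recollement, and conclude with \cite[Theorem 1.5]{bcrpz24}.

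One small simplification: boundedness of the glued $t$-structure follows directly from its definition, since $i^*X$, $i^!X$ and $j^*X$ automatically lie in bounded ranges for the bounded $t$-structures on $\mathcal{R}$ and $\mathcal{T}$. So you do not need any exactness properties of $j_!$ or $i_*$.
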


Theorem \ref{mainthm-fd-1} also generalizes some results in \cite{cx17} on finitistic dimensions of ordinary rings. For an associative ring $R$ with identity, we denote by $R\Modcat$ the category of left $R$-modules, by $\D{R\Modcat}$ the \emph{unbounded derived category} of $R$ and by $\Dc{R\Modcat}$ the full subcategory of $\D{R\Modcat}$ consisting of all compact objects. The last category is called the \emph{perfect derived category} of $R$. It is known that a complex of $R$-modules lies in $\Dc{R\Modcat}$ if and only if it is quasi-isomorphic to a bounded complex of finitely generated projective $R$-modules. Moreover, the \emph{finitistic dimension} of $R$, denoted by $\fd(R)$, is defined as the supremum of the projective dimensions of all $R$-modules having a projective resolution of finite length by finitely generated projective $R$-modules. By \cite[Lemma 4.1]{bcrpz24}, $\fd(R)=\fd(\Dc{R\Modcat}, R)$, which ensures the application of Theorem \ref{mainthm-fd-1} to the case of rings.

\begin{cor}{\rm \cite[Proposition 3.10]{cx17}}\label{CX}
Suppose that there is a half recollement of perfect derived
categories of the associative rings $R$, $S$ and $T$:

\begin{align*}
\xymatrixcolsep{4pc}\xymatrix{\Dc{R\Modcat}  \ar[r]|{i_*} &\Dc{S\Modcat} \ar@<-2ex>[l]|{i^*}  \ar[r]|{j^*}  &\Dc{T\Modcat}\ar@<-2ex>[l]|{j_!}
}
\end{align*}
Then  $\fd(S)\le \fd(R)+\fd(T)+w_S(i_*(R))+w_S(j_!(T))+1,$
where $w_S(i_*(R))$ and $w_S(j_!(T))$ denote the widths of the complexes $i_*(R)$ and $j_!(T)$ with respect to ${_S}S$, respectively.
\end{cor}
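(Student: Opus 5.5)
This is a direct specialization of Theorem \ref{mainthm-fd-1}(1), so I would apply that theorem with $\mathcal{R}=\Dc{R\Modcat}$, $\mathcal{S}=\Dc{S\Modcat}$, $\mathcal{T}=\Dc{T\Modcat}$, $G_\mathcal{R}=R$, $G_\mathcal{S}=S$ and $G_\mathcal{T}=T$, and then translate each term of the resulting bound. By \cite[Lemma 4.1]{bcrpz24} we have $\fd(R)=\fd(\Dc{R\Modcat},R)$, and likewise for $S$ and $T$. Thus the left-hand side and the first two summands of the bound in Theorem \ref{mainthm-fd-1}(1) are exactly $\fd(S)$, $\fd(R)$ and $\fd(T)$. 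The width terms $w_{S}(i_*(R))$ and $w_S(j_!(T))$ are, by definition, the widths of these complexes with respect to ${_S}S$.

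Next I would verify the hypotheses of Theorem \ref{mainthm-fd-1}(1). First, $\Hom_{\Dc{S\Modcat}}(S[n],S)=H^{-n}(S)=0$ for all $n\neq 0$, so in particular for $n\gg 0$. Second, $S$ is a classical generator of $\Dc{S\Modcat}$, since every perfect complex is built from $S$ by finitely many shifts, cones and direct summands. Hence $\langle S\rangle=\Dc{S\Modcat}$. Because $i_*$ and $j_!$ take values in $\Dc{S\Modcat}$, both $i_*(R)$ and $j_!(T)$ lie in $\langle S\rangle$ automatically.

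Finally, I would compute the lower self-orthogonal index $o^{-}(S)$ from Definition \ref{index}. Since $S$ has no self-extensions in nonzero degrees, this index should be $0$, which leaves the extra summand $+1$ and recovers the stated inequality. This is the only step needing care, because the precise normalization in Definition \ref{index} matters. If that definition measures the largest $n$ with $\Hom(G[n],G)\neq 0$, then the vanishing for $n\ge 1$, together with $\Hom(S,S)\neq0$, gives the value $0$. If the convention produces a smaller value, the inequality only gets stronger. I expect no other obstacle.
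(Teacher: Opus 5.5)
Your proposal is correct and follows exactly the route the paper intends. The corollary is the specialization of Theorem \ref{mainthm-fd-1}(1) to $G_{\mathcal{R}}=R$, $G_{\mathcal{S}}=S$, $G_{\mathcal{T}}=T$, using \cite[Lemma 4.1]{bcrpz24}, $\langle S\rangle=\Dc{S\Modcat}$, and $o^{-}(S)=0$. Your hedging on $o^{-}$ is unnecessary: Definition \ref{index} takes the infimum over $b\geq 0$ such that $\Hom(S[i],S)=0$ for all $i>b$, which is exactly $0$ here. If the widths are read in the sense of \cite{cx17}, Remark \ref{kuandu}(2) identifies them with $w_S$.
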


In practice, we are more interested in compactly generated triangulated categories.
So, we consider another kind of (big) finitistic dimensions for compactly generated triangulated categories introduced in \cite{bcrpz24} and establish explicit inequalities of those dimensions.

Let $\mathcal{S}$ be a compactly generated triangulated category with a compact generator $G$, and let $(\mathcal{S}^{\le 0},\mathcal{S}^{\ge 1})$ be the $t$-structure on $\mathcal{S}$ generated by $G$ (see Lemma \ref{lem-p-w-gen}). We denote by $\mathcal{S}^b$ and $\mathcal{S}^c$ the full subcategories of $\mathcal{S}$ consisting of all bounded objects and all compact objects, respectively. Here, an object $X\in\mathcal{S}$ is said to be bounded if there exists a nonnegative integer $n$ such that $X[n]\in \mathcal{S}^{\le 0}$ and $X[-n]\in\mathcal{S}^{\ge 1}$, and compact if the functor
$\Hom_\mathcal{S}(X,-):\mathcal{S}\to \mathbb{Z}\Modcat$ commutes with small coproducts. Note that both  $\mathcal{S}^b$ and $\mathcal{S}^c$ are thick subcategories of $\mathcal{S}$. Moreover, the object $G$ is a classical generator of $\mathcal{S}^c$, and $\mathcal{S}^c\subseteq\mathcal{S}^b$ if and only if $\Hom_\mathcal{S}(G,G[i])=0$ for $i\ll0$.

With the above preparations, we now recall the following definition.

\begin{defn}\label{Ffpd}\cite[Definition B.1 and B.2]{bcrpz24}
$(1)$ The \emph{projective dimension} of an object $X\in\mathcal{S}$ with respect to $G$ is defined as
$$\pd_G(X):=\inf\{n\in\mathbb{Z}\mid \Hom_{\mathcal{S}}(X, Y[i])=0, \;\forall i>n,\; Y\in \mathcal{S}^{b}\cap\mathcal{S}^{\le 0}\}$$
$$\quad\quad\quad\quad\, =\inf\{n\in\mathbb{Z}\mid \Hom_{\mathcal{S}}(X, Y[i])=0, \;\forall i>n,\; Y\in \mathcal{S}^{\geq 0}\cap\mathcal{S}^{\leq 0}\}.$$

$(2)$ The \emph{big finitistic dimension} and \emph{finitistic dimension} of $\mathcal{S}$ with respect to $G$ are defined as
\begin{align*}
{\rm Fpd}(\mathcal{S}, G)&\coloneqq \sup\{\pd_G(X)\mid X\in \mathcal{S}^{b}\cap\mathcal{S}^{\ge 0},\; \pd_G(X)<+\infty\},\\
{\rm fpd}(\mathcal{S}, G)&\coloneqq \sup\{\pd_G(X)\mid X\in \mathcal{S}^c\cap \mathcal{S}^{b}\cap\mathcal{S}^{\ge 0},\; \pd_G(X)<+\infty\}.
\end{align*}
\end{defn}

Note that the finiteness of ${\rm Fpd}(\mathcal{S}, -)$ and ${\rm fpd}(\mathcal{S}, -)$ is independent of the choice of different compact generators of $\mathcal{S}$. Moreover, Definition \ref{Ffpd}(2) generalizes the classical notions of the finitistic dimension and big finitistic dimension of an associative ring $R$ since there are equalities
$${\rm Fpd}(\D{R\Modcat}, R)=\Fd(R)\;\;\mbox{and}\;\; {\rm fpd}(\D{R\Modcat}, R)=\fd(R).$$

Weakly approximable triangulated category (see \cite{neeman18, n18} or Section \ref{WAPP}), introduced and developed by Neeman in recent years, is a very important class of compactly generated triangulated categories. In fact, many common triangulated categories have been shown to be weakly approximable. They include the unbounded derived category of an ordinary ring, the homotopy category of spectra (or more generally, the homotopy category of the stable $\infty$-category of left module spectra over a connective $\mathbb{E}_1$-ring), and the derived category of unbounded complexes of $\mathscr{O}_X$-modules with quasicoherent cohomology for a quasicompact, quasiseparated scheme $X$ (see Example \ref{EXWAPP}).

For weakly approximable triangulated categories, the finiteness of the two finitistic dimensions is equivalent. This is illustrated by the following result, which generalizes \cite[Proposition B.3(3)]{bcrpz24}.

\begin{prop}\label{Connection}{\rm (Proposition \ref{Relation})}
Let $\mathcal{S}$ be a weakly approximable triangulated category with a compact generator $G$ such that
$\Hom_{\mathcal{S}}(G, G[i])=0$ for $i\ll0$. Then
$$
\fd(\mathcal{S}^c, G)\leq \fpd(\mathcal{S},G)+a(G)\;\;\mbox{and}\;\; \fpd(\mathcal{S},G)\leq \fd(\mathcal{S}^{c}, G)+o^+(G),
$$ where $a(G)$ and $o^+(G)$ denote the $G$-approximable index and the upper self-orthogonal index of $G$, respectively.
In particular, $\fd(\mathcal{S}^c, G)$ is finite if and only if so is $\fpd(\mathcal{S},G)$.
\end{prop}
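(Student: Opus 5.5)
We prove the two inequalities separately; the ``in particular'' clause follows at once because both $a(G)$ and $o^+(G)$ are finite. Indeed, $o^+(G)<+\infty$ since $\Hom_{\mathcal{S}}(G,G[i])=0$ for $i\ll 0$, and $a(G)<+\infty$ because $\mathcal{S}$ is weakly approximable.

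Throughout we use the $t$-structure $(\mathcal{S}^{\le 0},\mathcal{S}^{\ge 1})$ generated by $G$ (Lemma \ref{lem-p-w-gen}). Two standard facts about it will be used:
\begin{itemize}
\item $\mathcal{S}^{\le 0}=\overline{\langle G\rangle}^{(-\infty,0]}$, so $\mathcal{S}^{\ge 1}=G(-\infty,0]^{\perp}$.
\item $\mathcal{S}^c\cap\mathcal{S}^{\le 0}$ is controlled by $\langle G\rangle^{(-\infty,0]}$ up to a shift bounded by $a(G)$. This is exactly how we read the weak approximability index: every $F$ in $\mathcal{S}^{\le 0}$ admits a triangle $E\to F\to D$ with $E\in\langle G\rangle^{[-a,a]}$ and $D\in\mathcal{S}^{\le -1}$.
\end{itemize}
Since $\mathcal{S}^c\subseteq\mathcal{S}^b$, both dimensions compare compact objects.

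\textbf{The bound $\fpd(\mathcal{S},G)\le\fd(\mathcal{S}^c,G)+o^+(G)$.} Let $n=\fd(\mathcal{S}^c,G)$, and let $X\in\mathcal{S}^c\cap\mathcal{S}^{\ge 0}$ with $\pd_G(X)=p<+\infty$.

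First we place $X$ in the relevant orthogonal. We have $X\in\mathcal{S}^{\ge0}$, i.e.\ $\Hom(G[m],X)=0$ for $m\ge1$. Hence $X\in G(-\infty,-1]^\perp$ computed inside $\mathcal{S}^c$. By the definition of $n$, it follows that $X\in\langle G\rangle^{[0,+\infty)}[n]$, i.e.\ $X$ is built from $G[k]$ with $k\le n$ by extensions and summands.

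Next we estimate $\pd_G$. For $Y$ in the heart $\mathcal{S}^{\ge0}\cap\mathcal{S}^{\le0}$, we have $\Hom(G[k],Y[i])=\Hom(G,Y[i-k])$, which vanishes for $i-k\ne0$ because $Y$ lies in the heart and $G\in\mathcal{S}^{\le0}$. So each building block $G[k]$ has $\pd_G\le k\le n$. Projective dimension is subadditive along extensions and summands, so $\pd_G(X)\le n$.

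The index $o^+(G)$ will enter if the definition of width or of $\fd$ uses shifts of $G$ that only lie in $\mathcal{S}^{\le0}$ up to the self-orthogonality range. I expect this correction to appear when identifying $G(-\infty,-1]^\perp\cap\mathcal{S}^c$ with $\mathcal{S}^c\cap\mathcal{S}^{\ge0}$ up to shift $o^+(G)$. Concretely, $G\in\mathcal{S}^{\ge -o^+(G)}$, so objects of $\langle G\rangle^{[0,\infty)}[n]$ are tested against the heart with an $o^+$ defect. I will track this carefully in the final write-up.

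\textbf{The bound $\fd(\mathcal{S}^c,G)\le\fpd(\mathcal{S},G)+a(G)$.} Let $X\in\mathcal{S}^c$ with $\Hom(G[m],X)=0$ for all $m\ge1$, so $X\in\mathcal{S}^{\ge0}\cap\mathcal{S}^c$. We must show $X\in\langle G\rangle^{[0,\infty)}[N]$ with $N=\fpd+a(G)$.

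Since $X$ is compact it is also bounded, so $X\in\mathcal{S}^{\le -r}$ for some $r$. Compactness together with $\mathcal{S}^c\subseteq\mathcal{S}^b$ gives $\pd_G(X)<+\infty$; this is a Neeman-type fact: compact objects have finite projective dimension when $G$ is bounded above in self-Hom. Therefore $\pd_G(X)\le d:=\fpd(\mathcal{S},G)$.

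Now we approximate. Apply weak approximability to get a triangle $E\to X\to D\to E[1]$ with $E\in\langle G\rangle^{[-d-a,\,a]}$ and $D\in\mathcal{S}^{\le -d-1}$. The projective dimension bound gives $\Hom(X,D)=0$ once $D$ is moved into the appropriate bounded range. One can truncate $D$ since $X$ is compact and $\Hom(X,\mathcal{S}^{\le -d-1}\cap\mathcal{S}^b)=0$. Passing to the limit uses compactness and the standard vanishing of $\Hom(X,-)$ on $\mathcal{S}^{\le -M}$ for $M\gg0$. Hence $X$ is a summand of $E$.

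Finally, $X\in\mathcal{S}^{\ge0}$ lets us discard the high shifts. Using $\Hom(G[m],X)=0$ for $m\ge1$ in a Nakaoka/Neeman-type truncation argument, the negative end of $E$ can be trimmed, and $X\in\langle G\rangle^{[0,\infty)}[d+a]$.

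\textbf{Main obstacle.} The main obstacle is this last step: showing that $\Hom(X,D)=0$ makes $X$ a summand of an object of $\langle G\rangle^{[-A,B]}$. I would first try to iterate the approximation triangle $d+1$ times and use the octahedral axiom, as in the proof of \cite[Proposition B.3(3)]{bcrpz24}, which treats the case $\mathcal{S}=\D{R\Modcat}$.
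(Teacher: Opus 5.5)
The genuine gap is in your proof of $\fpd(\mathcal{S},G)\le\fd(\mathcal{S}^c,G)+o^+(G)$.

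You assert that for $Y$ in the heart, $\Hom_{\mathcal{S}}(G,Y[j])=0$ for all $j\neq0$ ``because $G\in\mathcal{S}^{\le0}$''. But $G\in\mathcal{S}^{\le 0}$ only kills $j<0$. The claim fails whenever $o^+(G)>0$: then $\Hom_{\mathcal{S}}(G,G[m])\neq0$ for some $m\ge1$ with $G\in\mathcal{S}^b\cap\mathcal{S}^{\le0}$, so $\pd_G(G)\ge1$. Concretely, in $\D{R\Modcat}$ with $G=R\oplus R[-1]$, the heart consists of modules placed in degree $1$, and $\pd_G(G)=1$. Hence ``$\pd_G(G[k])\le k$'' is false, and so is your conclusion ``$\pd_G(X)\le n$''.

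Your proposed repair looks in the wrong place. The identification of $G(-\infty,-1]^{\perp}$ inside $\mathcal{S}^c$ with $\mathcal{S}^c\cap\mathcal{S}^{\ge0}$ is exact, with no shift. Also, one has $G\in\mathcal{S}^{\ge -o^-(G)}$, not $G\in\mathcal{S}^{\ge -o^+(G)}$.

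The missing idea is the paper's Lemma~\ref{range}. Since $\Hom_{\mathcal{S}}(G,G[i])=0$ for $i>o^+(G)$ and $G$ is compact, $\Hom_{\mathcal{S}}(G,-)$ vanishes on $\Coprod(G(-\infty,-o^+(G)-1])=\mathcal{S}^{\le -o^+(G)-1}$. Hence $\pd_G(G[k])\le k+o^+(G)$. The objects with $\pd_G\le n+o^+(G)$ are closed under extensions, direct summands and negative shifts, so they contain $\langle G\rangle^{[0,+\infty)}[n]$.

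Relatedly, $o^+(G)<+\infty$ comes from weak approximability (Lemma~\ref{PR}(1)). It does not come from the hypothesis $\Hom_{\mathcal{S}}(G,G[i])=0$ for $i\ll0$; that hypothesis controls $o^-(G)$ and gives $\mathcal{S}^c\subseteq\mathcal{S}^b$.

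Your argument for $\fd(\mathcal{S}^c,G)\le\fpd(\mathcal{S},G)+a(G)$ has the right idea; it is essentially the paper's Lemma~\ref{range-1}. It needs tightening in four places.

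First, Lemma~\ref{lem-app-induction} applies only to objects of $\mathcal{S}^{\le0}$, so you must first shift to $X[r]\in\mathcal{S}^{\le0}$. With your $d=\fpd(\mathcal{S},G)$, this yields $X\in\overline{\langle G\rangle}^{[-d-a(G),\,a(G)+r]}$.

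Second, the approximating object $E$ lies in $\overline{\langle G\rangle}$, not $\langle G\rangle$. Once $X$ is a summand of $E$, you need Lemma~\ref{Closure} to return to $\langle G\rangle$.

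Third, for $\Hom_{\mathcal{S}}(X[r],D)=0$, the paper simply notes that $E\in\mathcal{S}^{\rm sb}\subseteq\mathcal{S}^b$ because $G$ is bounded. So $D$ is bounded, and the bound $\pd_G(X)\le d$ applies directly. Your truncation of $D$ also works, with no limit involved. However, its input $\Hom_{\mathcal{S}}(X,\mathcal{S}^{\le -M})=0$ for $M\gg0$ is exactly the $o^+(G)$-vanishing you got wrong above.

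Fourth, what you call the main obstacle is immediate: a zero map $X[r]\to D$ makes $E\to X[r]$ split. No trimming is needed afterwards, since $\langle G\rangle^{[-d-a(G),\,a(G)+r]}\subseteq\langle G\rangle^{[-d-a(G),+\infty)}=\langle G\rangle^{[0,+\infty)}[d+a(G)]$.
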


Our second main result on finitistic dimension is stated in the following theorem. For unexplained notation in the theorem, we refer to Sections \ref{NTN} and \ref{WAPP}.

\begin{thm}\label{mainthm-fd-2}
{\rm (Theorem \ref{thm-fd-small})}
Let $\mathcal{R}$, $\mathcal{S}$ and $\mathcal{T}$ be  weakly approximable triangulated categories with compact generators $G_{\mathcal{R}}$, $G_{\mathcal{S}}$ and $G_{\mathcal{T}}$, respectively. Suppose that there is a recollement of triangulated categories:
\begin{align*}
\xymatrixcolsep{4pc}\xymatrix{\mathcal{R} \ar[r]|{i_*=i_!} &\mathcal{S} \ar@<-2ex>[l]|{i^*} \ar@<2ex>[l]|{i^!} \ar[r]|{j^!=j^*}  &\mathcal{T}. \ar@<-2ex>[l]|{j_!} \ar@<2ex>[l]|{j_{*}}
}
\end{align*}

{\rm (1)}
Suppose that $\Hom_{\mathcal{S}}(G_{\mathcal{S}},G_{{\mathcal{S}}}[i])=0$ for $i\ll0$ and $i_*(G_{\mathcal{R}})\in\mathcal{S}^{c}$.
Then:

{\rm (i)}
$\fpd(\mathcal{S},G_{\mathcal{S}})\le \fpd(\mathcal{R},G_{\mathcal{R}})+w_{G_{\mathcal{S}}}(i_{*}(G_{\mathcal{R}}))+a(G_{\mathcal{R}})+\fpd(\mathcal{T},G_{\mathcal{T}})+w_{G_{\mathcal{S}}}(j_{!}(G_{\mathcal{T}}))+a(G_{\mathcal{T}})+o^{-}(G_{\mathcal{S}})+o^{+}(G_{\mathcal{S}})+1.$
In particular, if both $\fpd(\mathcal{R},G_{\mathcal{R}})$ and $\fpd(\mathcal{T},G_{\mathcal{T}})$ are finite, then so is $\fpd(\mathcal{S},G_{\mathcal{S}})$.

{\rm (ii)} $\fpd(\mathcal{R},G_{\mathcal{R}})\le \fpd(\mathcal{S},G_{\mathcal{S}})+w_{G_{\mathcal{R}}}(i^{*}(G_{\mathcal{S}}))+a(G_{\mathcal{S}})+o^{+}(G_{\mathcal{R}})$.
In particular, if $\fpd(\mathcal{S},G_{\mathcal{S}})$ is finite, then so is $\fpd(\mathcal{R},G_{\mathcal{R}})$.

{\rm (2)} Suppose that $j_!(\mathcal{T}^{\ge 0})\subseteq \mathcal{S}^{\ge -e}$ for some integer $e$.
Then $\fpd(\mathcal{T},G_{\mathcal{T}})\le\fpd(\mathcal{S},G_{\mathcal{S}})+e+d$, where $d$ is an integer
with $j_!(G_{\mathcal{T}}) \in \langle G_{\mathcal{S}} \rangle^{(-\infty,\, d]}$.
In particular, if $\fpd(\mathcal{S},G_{\mathcal{S}})$ is finite, then so is $\fpd(\mathcal{T},G_{\mathcal{T}})$.

\end{thm}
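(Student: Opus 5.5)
Parts (1)(i) and (1)(ii) should follow by passing to compact objects, applying Theorem \ref{mainthm-fd-1} there, and translating back with Proposition \ref{Connection}. Part (2) needs a direct argument.

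\textbf{Restricting the recollement.} Since $i^*$ and $j_!$ have right adjoints $i_*$ and $j^*$ that preserve coproducts (they also have right adjoints $i^!$ and $j_*$), $i^*$ and $j_!$ preserve compact objects. The hypothesis $i_*(G_{\mathcal{R}})\in\mathcal{S}^c$ gives $i_*(\mathcal{R}^c)\subseteq\mathcal{S}^c$, because $G_{\mathcal{R}}$ classically generates $\mathcal{R}^c$. Also $j^*(G_{\mathcal{S}})$ is compact: it is $j^*$ applied to a cone of $j_!j^*G_{\mathcal{S}}\to G_{\mathcal{S}}$, which lies in $i_*\mathcal{R}$, and $j^*$ preserves compacts in this situation. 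Thus we should obtain a left recollement
\[
\mathcal{R}^c \;\rightleftarrows\; \mathcal{S}^c \;\rightleftarrows\; \mathcal{T}^c
\]
with functors $i_*$, $i^*$, $j^*$, $j_!$. The condition $\Hom(G_{\mathcal{S}}[n],G_{\mathcal{S}})=0$ for $n\gg0$ is exactly the hypothesis $\Hom(G_{\mathcal{S}},G_{\mathcal{S}}[i])=0$ for $i\ll0$. Since $G_{\mathcal{S}}$ classically generates $\mathcal{S}^c$, we have $i_*(G_{\mathcal{R}}), j_!(G_{\mathcal{T}})\in\langle G_{\mathcal{S}}\rangle$ and $i^*(G_{\mathcal{S}})\in\langle G_{\mathcal{R}}\rangle$. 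The widths are computed in the compact parts, so they agree with those in the statement.

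\textbf{(1)(i).} Combine three bounds:
\begin{itemize}
\item $\fpd(\mathcal{S},G_{\mathcal{S}})\le\fd(\mathcal{S}^c,G_{\mathcal{S}})+o^+(G_{\mathcal{S}})$, by Proposition \ref{Connection};
\item Theorem \ref{mainthm-fd-1}(1) bounds $\fd(\mathcal{S}^c,G_{\mathcal{S}})$ by $\fd(\mathcal{R}^c,G_{\mathcal{R}})+\fd(\mathcal{T}^c,G_{\mathcal{T}})+w+w'+o^-(G_{\mathcal{S}})+1$, where $w,w'$ are the two widths;
\item $\fd(\mathcal{R}^c,G_{\mathcal{R}})\le\fpd(\mathcal{R},G_{\mathcal{R}})+a(G_{\mathcal{R}})$, and similarly for $\mathcal{T}$, by Proposition \ref{Connection}.
\end{itemize}
Adding these gives exactly the stated bound.

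The last step needs $\Hom(G_{\mathcal{R}},G_{\mathcal{R}}[i])=0$ and $\Hom(G_{\mathcal{T}},G_{\mathcal{T}}[i])=0$ for $i\ll0$. For $\mathcal{R}$: $\Hom(G_{\mathcal{R}},G_{\mathcal{R}}[i])=\Hom(i_*G_{\mathcal{R}},i_*G_{\mathcal{R}}[i])$ and $i_*G_{\mathcal{R}}\in\langle G_{\mathcal{S}}\rangle$, so this vanishing is inherited from $G_{\mathcal{S}}$ with a shift by the width. For $\mathcal{T}$: $\Hom(G_{\mathcal{T}},G_{\mathcal{T}}[i])=\Hom(j_!G_{\mathcal{T}},j_!G_{\mathcal{T}}[i])$, since $j_!$ is fully faithful, and the same argument applies.

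\textbf{(1)(ii).} Chain
\[
\fpd(\mathcal{R})\le\fd(\mathcal{R}^c)+o^+(G_{\mathcal{R}})\le\fd(\mathcal{S}^c)+w_{G_{\mathcal{R}}}(i^*G_{\mathcal{S}})+o^+(G_{\mathcal{R}})\le\fpd(\mathcal{S})+a(G_{\mathcal{S}})+w_{G_{\mathcal{R}}}(i^*G_{\mathcal{S}})+o^+(G_{\mathcal{R}}),
\]
using Proposition \ref{Connection} at both ends and Theorem \ref{mainthm-fd-1}(2) in the middle.

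\textbf{(2).} Take $X\in\mathcal{T}^c\cap\mathcal{T}^b\cap\mathcal{T}^{\ge0}$ with $\pd_{G_{\mathcal{T}}}(X)<\infty$, and set $Z=j_!X$, which is compact.
\begin{itemize}
\item The hypothesis gives $Z\in\mathcal{S}^{\ge -e}$.
\item $Z$ is bounded: it is compact and $\Hom(G_{\mathcal{S}},G_{\mathcal{S}}[i])=0$ for $i\ll0$. If that vanishing is not assumed in (2), I would argue boundedness directly from $X\in\langle G_{\mathcal{T}}\rangle$ and $j_!G_{\mathcal{T}}\in\langle G_{\mathcal{S}}\rangle^{(-\infty,d]}$.
\item For $Y$ in the heart of $\mathcal{S}$, adjunction gives $\Hom(j_!X,Y[i])=\Hom(X,j^*Y[i])$. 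Since $j^*$ is right adjoint to $j_!$ and $j_!G_{\mathcal{T}}\in\langle G_{\mathcal{S}}\rangle^{(-\infty,d]}$, we get $j^*(\mathcal{S}^{\le0})\subseteq\mathcal{T}^{\le d}$. Also $j^*Y$ is bounded, so $\pd_{G_{\mathcal{S}}}(Z)\le\pd_{G_{\mathcal{T}}}(X)+d<\infty$.
\end{itemize}
Consequently $Z[-e]\in\mathcal{S}^{\ge0}\cap\mathcal{S}^b\cap\mathcal{S}^c$ has finite projective dimension, and $\pd(Z[-e])=\pd(Z)-e$, so $\pd(Z)\le\fpd(\mathcal{S})+e$.

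Conversely, we need $\pd_{G_{\mathcal{T}}}(X)\le\pd_{G_{\mathcal{S}}}(j_!X)+d$. Take $W$ in the heart of $\mathcal{T}$ and write $\Hom(X,W[i])=\Hom(j_!X,j_!W[i])$. Here $j_!W$ lies in $\mathcal{S}^{\ge -e}$, and I expect $j_!W\in\mathcal{S}^{\le d}$ from $j_!G_{\mathcal{T}}\in\langle G_{\mathcal{S}}\rangle^{(-\infty,d]}$, since $j_!$ preserves coproducts and so sends the aisle generated by $G_{\mathcal{T}}$ into $\mathcal{S}^{\le d}$. So $j_!W$ is bounded with amplitude in $[-e,d]$. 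Dévissage over the truncations of $j_!W$ then gives vanishing for $i>\pd(j_!X)+d$.

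The \textbf{main obstacle} is this bookkeeping in (2): arranging the bounds so the constants combine to exactly $e+d$ rather than $2e+d$, and confirming the boundedness of $j_!X$.
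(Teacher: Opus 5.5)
Your plan for (1) is the paper's: restrict to the left recollement of compact parts, apply Theorem~\ref{mainthm-fd-1}, and translate with Proposition~\ref{Relation}(2) once $G_{\mathcal{R}}$ and $G_{\mathcal{T}}$ are known to be bounded. Your width-shift argument for boundedness is the argument of Lemma~\ref{lem-vanish}(1), and it applies to $i_*(G_{\mathcal{R}})$ because that object is compact.

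Your sentence on why $j^*(G_{\mathcal{S}})$ is compact is garbled, since $j^*$ kills the cone $i_*i^*(G_{\mathcal{S}})$. The correct version: that cone is compact, so $j_!j^*(G_{\mathcal{S}})$ is compact, so $j^*(G_{\mathcal{S}})$ is compact because $j_!$ is fully faithful and preserves coproducts. Alternatively, just cite Lemma~\ref{lem-key}(2).

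For (2) your route is also the paper's, and slightly more direct. The proof of Lemma~\ref{lem-fd-contain} obtains $j_!(\mathcal{T}^{\le 0}\cap\mathcal{T}^b)\subseteq\mathcal{S}^{\le d}\cap\mathcal{S}^b$ by a truncation argument. Your observation that $j_!$ preserves coproducts, and so maps $\mathcal{T}^{\le 0}=\Coprod(G_{\mathcal{T}}(-\infty,0])$ into $\mathcal{S}^{\le d}$, gives this at once. The paper's computation $\Hom(Z,j^*(Y_1)[p])\cong\Hom(j_!Z,Y_1[p])$ is exactly your $\Hom(X,W[i])\cong\Hom(j_!X,j_!W[i])$ with $Y_1=j_!(Z_1)$.

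One claim, however, is false. The adjunction $j_!\dashv j^*$ turns $j_!(\mathcal{T}^{\le 0})\subseteq\mathcal{S}^{\le d}$ into $j^*(\mathcal{S}^{\ge d+1})\subseteq\mathcal{T}^{\ge 1}$, not into $j^*(\mathcal{S}^{\le 0})\subseteq\mathcal{T}^{\le d}$. The latter fails in general. For example, let $A$ be the path algebra of $1\to 2$, $S$ the non-projective simple module, and $j_!=S\otimes_k-\colon\mathscr{D}(k)\to\mathscr{D}(A)$. Then $d=0$, but $H^1(j^*(A))=\Ext^1_A(S,A)\neq 0$.

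This error is harmless, because you only use it to get $\pd_{G_{\mathcal{S}}}(j_!X)<+\infty$. That holds by Lemma~\ref{range}, since $j_!X$ is compact and $o^+(G_{\mathcal{S}})<+\infty$; this is what the paper does. Likewise, $j_!X\in\mathcal{S}^b$ needs no boundedness of $G_{\mathcal{S}}$: compact objects lie in $\mathcal{S}^-$, and $j_!X\in\mathcal{S}^{\ge -e}$ by hypothesis.

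The bookkeeping you flagged as the main obstacle is not one, and no d\'evissage is needed. The object $j_!W[d]$ lies in $\mathcal{S}^{\le 0}\cap\mathcal{S}^{\ge -e-d}\subseteq\mathcal{S}^b\cap\mathcal{S}^{\le 0}$, so it is itself a test object in the definition of $\pd_{G_{\mathcal{S}}}$. Hence $\Hom(X,W[i])\cong\Hom(j_!X,(j_!W[d])[i-d])=0$ for $i>\pd_{G_{\mathcal{S}}}(j_!X)+d$. Together with $\pd_{G_{\mathcal{S}}}(j_!X)\le\fpd(\mathcal{S},G_{\mathcal{S}})+e$, this gives exactly $e+d$. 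The lower bound $-e$ on $j_!W$ only serves to make $j_!W$ bounded, so no $2e$ appears.
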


Our methods also lead to similar results on upper bounds for big finitistic dimension and global dimension of weakly approximable triangulated categories linked by recollements, see Theorems \ref{thm-fd-big} and \ref{global dim}. Moreover, some connections between finite global dimension and strong generation for compactly generated triangulated categories as well as the reduction of strong generation via recollements are given in Theorems \ref{main-thm-scg-gd} and \ref{thm-rec-scg}.

Applying Theorems \ref{mainthm-fd-2} and \ref{thm-fd-big} to unbounded derived categories, we re-obtain the following result.

\begin{cor}{\rm \cite[Theorem 1.1]{cx17}}\label{mainthm-cx17}
Let $R$, $S$ and $T$ be three rings.
Suppose that there is a recollement among the derived categories
$\D{R\Modcat}$, $\D{S\Modcat}$ and $\D{T\Modcat}$$:$
\begin{align*}
\xymatrixcolsep{4pc}\xymatrix{
\D{R\Modcat} \ar[r]|{i_*=i_!} &\D{S\Modcat} \ar@<-2ex>[l]|{i^*} \ar@<2ex>[l]|{i^!} \ar[r]|{j^!=j^*}  &\D{T\Modcat}. \ar@<-2ex>[l]|{j_!} \ar@<2ex>[l]|{j_{*}}
}
\end{align*}

{\rm (1)} Suppose that $i_*(R)$ is isomorphic in $\D{S\Modcat}$ to a bounded complex of finitely generated (respectively, arbitrary) projective $S$-modules. Then

{\rm (i)} $\fd(R)\le \fd(S)+w_R(i^*(S))$ (respectively, $\Fd(R)\le \Fd(S)+w_R(i^*(S))$).

{\rm (ii)} $\fd(S)\le \fd(R)+\fd(T)+w_S(i_*(R))+w_S(j_!(T))+1$ (respectively, $\Fd(S)\le \Fd(R)+\Fd(T)+w_S(i_*(R))+w_S(j_!(T))+1$).

{\rm (2)} Suppose that the functor $j_!$ restricts to $\Db{T\Modcat}\ra \Db{S\Modcat}$. Then
$$\fd(T)\le \fd(S)+cw(j^!(\Hom_{\mathbb{Z}}(S,\mathbb{Q}/\mathbb{Z})))$$ (respectively, $\Fd(T)\le \Fd(S)+cw(j^!(\Hom_{\mathbb{Z}}(S,\mathbb{Q}/\mathbb{Z})))$).
\end{cor}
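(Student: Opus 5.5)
This corollary is obtained by specializing Theorems \ref{mainthm-fd-2} and \ref{thm-fd-big} to $\mathcal{S}=\D{S\Modcat}$, $\mathcal{R}=\D{R\Modcat}$, $\mathcal{T}=\D{T\Modcat}$. The compact generators are $G_{\mathcal{S}}=S$, $G_{\mathcal{R}}=R$ and $G_{\mathcal{T}}=T$. Then every index appearing there is computed for a ring.

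First I record the standard facts. Each $\D{A\Modcat}$ is weakly approximable (Example \ref{EXWAPP}). The $t$-structure generated by $A$ is the standard one, so $\fpd(\D{A\Modcat},A)=\fd(A)$ and $\Fpd(\D{A\Modcat},A)=\Fd(A)$. Since $\Hom(A,A[i])=0$ for all $i\neq 0$, both self-orthogonal indices vanish: $o^{-}(A)=o^{+}(A)=0$. The approximable index satisfies $a(A)=0$, because every object of $\D{A\Modcat}^{\le 0}$ is approximated by truncations of projective resolutions with no shift loss. The hypothesis $\Hom(S,S[i])=0$ for $i\ll 0$ holds trivially.

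For (1), the hypothesis that $i_*(R)$ is a bounded complex of finitely generated projectives says exactly that $i_*(R)\in\mathcal{S}^c$.
\begin{itemize}
\item Part (ii) then follows from Theorem \ref{mainthm-fd-2}(1)(i) once all the zero indices are inserted.
\item Part (i) follows from Theorem \ref{mainthm-fd-2}(1)(ii). Here I use that $i^*$ preserves compacts, since its right adjoint $i_*$ preserves coproducts. Hence $i^*(S)\in\Dc{R\Modcat}$ and $w_R(i^*(S))$ is finite.
\item In the \lq\lq arbitrary projective'' variant the same bounds come from Theorem \ref{thm-fd-big}. That theorem presumably only requires $i_*(R)$ to lie in $\langle S\rangle$ up to coproducts, which is what a bounded complex of projectives provides.
\end{itemize}
I must also check that the paper's widths $w_{G}$ agree with the ring-theoretic widths of \cite{cx17}. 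For a perfect complex, width relative to $A$ is the length of a minimal bounded projective representative, so they should agree.

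For (2), I apply Theorem \ref{mainthm-fd-2}(2) (respectively its big analogue). I need an integer $e$ with $j_!(\mathcal{T}^{\ge0})\subseteq\mathcal{S}^{\ge -e}$, and an integer $d$ with $j_!(T)\in\langle S\rangle^{(-\infty,d]}$. The main obstacle is to identify $e+d$ with $cw(j^!(\Hom_{\mathbb{Z}}(S,\mathbb{Q}/\mathbb{Z})))$. Write $E=\Hom_{\mathbb{Z}}(S,\mathbb{Q}/\mathbb{Z})$, which is an injective cogenerator of $S\Modcat$. By adjunction,
\[
\Hom(j_!X,E[n])\cong\Hom(X,j^!E[n]).
\]
An object $Y$ lies in $\mathcal{S}^{\ge -e}$ if and only if $\Hom(Y,E[n])=0$ for all $n<-e$. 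So the required $e$ is governed by the lower cohomological bound of $j^!E$. That bound is finite because $j_!$ restricts to bounded derived categories, which forces $j^!E$ to be bounded.

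The integer $d$ comes from the top degree in which the perfect complex $j_!(T)$ has nonzero terms. That degree is likewise detected by maximal $n$ with $\Hom(j_!T,E[-n])=H^{n}(j^!E)\neq 0$. The sum $e+d$ is therefore the span of the cohomology of $j^!E$, which is its cohomological width $cw$.

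I expect the delicate part to be the normalization: $cw$ is an amplitude, whereas $e$ and $d$ are individually defined only up to a common shift. To handle this I would first shift $j_!$, so that the bottom of $j^!E$ sits in degree $0$ and $e=0$, and then read off $d$. This is legitimate because both sides of the inequality are invariant under replacing the recollement by a shifted one.
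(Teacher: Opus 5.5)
Your part (1) is the paper's argument. For rings all of $a(-)$ and $o^{\pm}(-)$ vanish, and $\mathcal{S}^c$ and $\mathcal{S}^{\rm sb}$ are the bounded complexes of finitely generated, respectively arbitrary, projectives. Theorems \ref{thm-fd-small}(1) and \ref{thm-fd-big}(1) then give (i) and (ii), with Remark \ref{kuandu} matching the widths; in the big case the width is $\overline{w_{S}}(i_*(R))$.

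\textbf{The gap is in (2), in how you determine $e$.}

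First, your orthogonality criterion has the wrong sign. With $E=\Hom_{\mathbb{Z}}(S,\mathbb{Q}/\mathbb{Z})$ one has $\Hom_{\mathcal{S}}(Y,E[m])\cong H^{-m}(Y)^{\vee}$. So $Y\in\mathcal{S}^{\ge -e}$ if and only if $\Hom_{\mathcal{S}}(Y,E[m])=0$ for all $m>e$. Likewise $\Hom_{\mathcal{S}}(j_!T,E[-n])\cong H^{-n}(j^!E)$, not $H^{n}(j^!E)$. By adjunction, $j_!(\mathcal{T}^{\ge 0})\subseteq\mathcal{S}^{\ge -e}$ is equivalent to $\Hom_{\mathcal{T}}(Z,j^!E[m])=0$ for all $m>e$ and all $Z\in\mathcal{T}^{\ge 0}$.

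This is an injective-dimension condition on $j^!E$, and the cohomology of $j^!E$ does not control it. If $j^!E$ were a module $N$ in degree $0$ of injective dimension $k$, then $\Hom_{\mathcal{T}}(Z,N[k])=\Ext^k_T(Z,N)\neq 0$ for some module $Z$. That forces $e\ge k$, although $N$ has cohomological span $0$.

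Correspondingly, $cw$ in Definition \ref{cowidth} is the minimal length of a bounded complex of \emph{injective} modules isomorphic to $j^!E$. It is not the span of the cohomology. Your claimed identity ``$e+d=$ cohomological span of $j^!E$'' is false. Take a derived equivalence $j_!=M\otimes^{\mathbb{L}}_T-$ given by a classical tilting module $M$ of projective dimension one, so $\mathcal{R}=0$ and $j^!E\cong\Hom_{\mathbb{Z}}(M,\mathbb{Q}/\mathbb{Z})$ is a module. Your identity would give $\fd(T)\le\fd(S)$, which fails when $S$ is hereditary and $T$ is tilted of global dimension $2$.

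Boundedness of the cohomology of $j^!E$ is automatic, since $H^{-n}(j^!E)\cong H^{n}(j_!T)^{\vee}$ and $j_!T$ is perfect. The hypothesis $j_!(\Db{T\Modcat})\subseteq\Db{S\Modcat}$ is needed for a much stronger fact: $j^!E$ is isomorphic to a bounded complex of injective $T$-modules. This is \cite[Lemma 3.3(2)]{cx17}, and it is the ingredient your proposal is missing.

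With that fact the argument closes as in the paper, and no shift normalization is needed.
\begin{itemize}
\item Write $j^!E\simeq\cpx{I}$ with $I^i$ injective and nonzero only for $f\le i\le e$, where $e-f=cw(j^!E)$.
\item Represent $Z\in\mathcal{T}^{\ge 0}$ by a complex in degrees $\ge 0$. Then $H^n(j_!Z)^{\vee}\cong\Hom_{\mathcal{T}}(Z[n],\cpx{I})$ is computed in the homotopy category. It vanishes for $n<-e$, where $e$ is the top degree of $\cpx{I}$, not of its cohomology. Hence $j_!(\mathcal{T}^{\ge 0})\subseteq\mathcal{S}^{\ge -e}$.
\item Also $H^n(j_!T)^{\vee}\cong H^{-n}(\cpx{I})=0$ for $n>-f$. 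Since $j_!T$ is perfect, you may take $d\le -f$.
\end{itemize}
Theorem \ref{thm-fd-small}(2), respectively \ref{thm-fd-big}(2), then gives $\fd(T)\le\fd(S)+e-f=\fd(S)+cw(j^!E)$.
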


In Corollary \ref{mainthm-cx17}, $w_R(-)$ and $w_S(-)$ denote the widths of objects (see Definition \ref{index}), while $cw(-)$ denotes the cowidth of objects (see Definition \ref{cowidth}).

\emph{ Structure of the paper:} In Section \ref{2}, we fix some notation and recall some basic definitions including $t$-structures, recollements and weakly approximable triangulated categories. In Section \ref{3}, we introduce the notion of strongly bounded objects in weakly approximable triangulated categories and give some basic properties of these objects. Further, we use the information of strongly bounded objects to characterize a recollement of weakly approximable triangulated categories which restricts to one of their full subcategories consisting of all bounded above objects (Theorem \ref{prop-rec-res1}). In Section \ref{4}, we establish a series of inequalities of (big) finitistic dimensions for weakly approximable triangulated categories in recollements (Theorems \ref{thm-fd-small} and \ref{thm-fd-big}). In Section \ref{5}, we give some reductions of global dimension (Proposition \ref{App-case}) and strong compact generation (Theorem \ref{thm-rec-scg}) for weakly approximable triangulated categories in recollements. Moreover, we establish a close relationship between global dimension and strong compact generation for compactly generated triangulated categories (Theorem \ref{main-thm-scg-gd}). This can be regarded as a categorical version of Kelly's theorem on unbounded derived categories of ordinary rings. In Section \ref{6}, we apply our main results to recollements of derived categories induced by idempotents of rings and provide a method for constructing non-positive dg algebras with finite (big) finitistic dimension (Corollary \ref{Idempotent}).

\section{Preliminaries}\label{2}
In this section, we briefly fix the notation and recall some definitions and basic facts used in the paper.

\subsection{Notation}\label{NTN}
Let $\mathcal{S}$ be a triangulated category with the shift functor $[1]$. Given two full subcategories $\mathcal{X}$ and $\mathcal{Y}$ of $\mathcal{S}$, their \emph{extension} in $\mathcal{S}$, denoted by $\mathcal{X} \ast \mathcal{Y}$, is defined to be the full subcategory of $\mathcal{S}$ consisting of all objects $Z$ such that there exists a triangle
$X \to Z \to Y \to X[1]$
in $\mathcal{S}$ with $X \in \mathcal{X}$ and $Y \in \mathcal{Y}$. We also define
$$\mathcal{X}^{\perp}\coloneqq\{M\in \mathcal{S}\mid \Hom_{\mathcal{S}}(X,M)=0, \;X\in \mathcal{X}\} \;\; \mbox{and}\;\;{}^{\perp}\mathcal{X}\coloneqq\{M\in \mathcal{S}\mid \Hom_{\mathcal{S}}(M, X)=0,\; X\in \mathcal{X}\}$$ are the full subcategories of $\mathcal{S}$ that are right and left orthogonal to $\mathcal{X}$, respectively.

Suppose that $\mathcal{S}$ admits (small) coproducts. We say that an object $X \in \mathcal{S}$ is \emph{compact} if the Hom-functor $\Hom_{\mathcal{S}}(X,-) \colon \mathcal{S} \to\mathbb{Z}\Modcat$ preserves coproducts. We denote by $\mathcal{S}^{c}$ the full subcategory of $\mathcal{S}$ consisting of all compact objects. The category $\mathcal{S}$ is said to be \emph{compactly generated} if there exists a set $\mathcal{G}\subseteq \mathcal{S}^{c}$ such that $\bigcap_{i\in\mathbb{Z}}(\mathcal{G}[i])^{\perp}=\{0\}$.
It is known that $\mathcal{S}$ is compactly generated if and only if there exists a set $\mathcal{G}\subseteq \mathcal{S}^{c}$ such that $\mathcal{S}$ is exactly the smallest full triangulated subcategory of $\mathcal{S}$ containing $\mathcal{G}$ and closed under coproducts.

We now recall the following notation and definition mainly from \cite[Reminder 1.12]{n18}. Whenever coproducts are needed, we tacitly assume that $\mathcal{S}$ admits them.

\begin{defn}\cite[Reminder 1.12]{n18}\label{notation} Let $\mathcal{S}$ be a triangulated category, $\mathcal{A} \subseteq \mathcal{S}$ a subcategory and $G \in \mathcal{S}$ an object.

{\rm (1)} $\text{smd}(\mathcal{A})$ (resp. $\text{add}(\mathcal{A})$, $\text{Add}(\mathcal{A})$) denotes the full subcategory of $\mathcal{S}$ consisting of all direct summands (resp. finite direct sums, coproducts) of objects in $\mathcal{A}$.

{\rm (2)} For $n>0$, define inductively:
\[\text{coprod}_{1}(\mathcal{A})\coloneqq\text{add}(\mathcal{A}),\;\;
\text{coprod}_{n+1}(\mathcal{A})\coloneqq\text{coprod}_{1}(\mathcal{A}) \ast \text{coprod}_{n}(\mathcal{A}),\]
and similarly,
\[\text{Coprod}_{1}(\mathcal{A})\coloneqq\text{Add}(\mathcal{A}),\;\;
\text{Coprod}_{n+1}(\mathcal{A})\coloneqq\text{Coprod}_{1}(\mathcal{A}) \ast \text{Coprod}_{n}(\mathcal{A}).\]
We set $\text{coprod}(\mathcal{A}) \coloneqq \bigcup_{n>0} \text{coprod}_n(\mathcal{A})$.

{\rm (3)} $\text{Coprod}(\mathcal{A})$ denotes the smallest full subcategory of $\mathcal{S}$ containing $\mathcal{A}$ and closed under taking coproducts and extensions. If $\mathcal{A}[1]\subseteq \mathcal{A}$ or $\mathcal{A} \subseteq\mathcal{A}[1]$, then by the Eilenberg swindle argument, $\text{Coprod}(\mathcal{A})$ is closed under direct summands in $\mathcal{S}$.

{\rm (4)} For integers $A \leq B$, let
$G[A,B] \coloneqq \{ G[-i] \mid i \in \mathbb{Z},\ A \leq i \leq B \}$.
For later use, we also extend this notation to allow $A$ or $B$ to be infinite; for example,
$G(-\infty, B] \coloneqq \{ G[-i] \mid i \in \mathbb{Z},\ i \leq B \}$.
For $n>0$, let
$$
{\langle G \rangle}^{[A,B]}_n\coloneqq\text{smd}(\text{coprod}_{n}(G[A,B])),\quad
{\langle G \rangle}^{[A,B]}\coloneqq\bigcup_{n>0} {\langle G \rangle}^{[A,B]}_{n},$$
$${\langle G \rangle}_{n}\coloneqq{\langle G \rangle}^{(-\infty, +\infty)}_n,\quad
\langle G \rangle\coloneqq \bigcup_{n>0}{\langle G \rangle}_n.
$$
In particular, $\langle G \rangle$ is the smallest full triangulated subcategory of $\mathcal{S}$ containing $G$ and closed under direct summands.

{\rm (5)} Let $A\leq B$ be integers (possibly infinite), and let $n$ be a positive integer. We define
$$\overline{\langle G \rangle}^{[A,B]}_n\coloneqq\text{smd}(\text{Coprod}_n(G[A,B]))\quad \mbox{and}\quad \overline{\langle G \rangle}^{[A,B]}\coloneqq\text{smd}(\text{Coprod}(G[A,B])).$$

{\rm (6)} The object $G$ in a triangulated category $\mathcal{S}$ with coproducts is called a \emph{compact generator} if $G\in \mathcal{S}^{c}$ and $\mathcal{S}=\overline{\langle G \rangle}^{(-\infty, +\infty)}$. In this case, if $\Hom_{\mathcal{S}}(G,G[i])=0$ for $i\ll 0$, then $G$ is further called a \emph{bounded compact generator} of $\mathcal{S}$.
\end{defn}

\begin{exam}
A typical example among triangulated categories generated by a single compact generator is the unbounded derived category of an ordinary ring, or more generally, the homotopy category of the stable $\infty$-category of left $R$-module spectra over an $\mathbb{E}_1$-ring $R$. Clearly, $R$ is a bounded compact generator if and only if the $n$-th homotopy group $\pi_n(R)$ of $R$ vanishes for $n\gg 0$.
\end{exam}

The following lemma is a direct consequence of  \cite[Proposition 1.9]{n21}, which will be used later.

\begin{lem}\label{Closure}
Suppose that $\mathcal{S}$ is a triangulated category with coproducts and with $G\in\mathcal{S}^c$. Let $A\leq B$ be integers (possibly infinite), and let $n$ be a positive integer. Then
$$
\mathcal{S}^c\cap\overline{\langle G\rangle}^{[A,B]}_n={\langle G \rangle}^{[A,B]}_n\quad \mbox{and}\quad \mathcal{S}^c\cap\overline{\langle G \rangle}^{[A,B]}={\langle G \rangle}^{[A,B]}.$$
\end{lem}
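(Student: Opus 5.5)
The inclusions ``$\supseteq$'' are immediate, and the reverse inclusions I would read off from \cite[Proposition 1.9]{n21}, applied to the set of compact objects $\mathcal{B}\coloneqq G[A,B]$.

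For ``$\supseteq$'': every object of $G[A,B]$ is a shift of the compact object $G$, hence compact. A finite direct sum is a particular coproduct, so $\text{coprod}_n(G[A,B])\subseteq\text{Coprod}_n(G[A,B])$ for every $n$. The subcategory $\mathcal{S}^c$ is thick, so it is closed under finite sums, extensions and direct summands, and by induction on $n$ we get ${\langle G\rangle}^{[A,B]}_n\subseteq\mathcal{S}^c$. Combining these gives ${\langle G\rangle}^{[A,B]}_n\subseteq \mathcal{S}^c\cap\overline{\langle G\rangle}^{[A,B]}_n$. Taking unions over $n$, and using $\bigcup_n\text{Coprod}_n\subseteq \text{Coprod}$, gives ${\langle G\rangle}^{[A,B]}\subseteq\mathcal{S}^c\cap\overline{\langle G\rangle}^{[A,B]}$.

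For ``$\subseteq$'' in the first equality: Neeman's proposition asserts, for a set $\mathcal{B}$ of compact objects, that
\[
\mathcal{S}^c\cap \text{smd}(\text{Coprod}_n(\mathcal{B}))=\text{smd}(\text{coprod}_n(\mathcal{B})).
\]
Specialising to $\mathcal{B}=G[A,B]$, including the cases where $A$ or $B$ is infinite, gives exactly the first equality. The mechanism behind it is as follows. A compact $X$ that is a summand of an object built in $n$ extension steps from coproducts of objects of $\mathcal{B}$ has a split monomorphism $X\to Y$ into such an object. By compactness, and induction on $n$ along the defining triangles, this map factors through an object built in $n$ steps from finite subcoproducts. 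Hence $X$ is a summand of an object of $\text{coprod}_n(\mathcal{B})$.

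For ``$\subseteq$'' in the second equality, I would again invoke the unbounded form of \cite[Proposition 1.9]{n21}, namely $\mathcal{S}^c\cap\text{smd}(\text{Coprod}(\mathcal{B}))=\text{smd}(\text{coprod}(\mathcal{B}))$. The closure $\text{smd}(\text{coprod}(G[A,B]))$ is precisely $\bigcup_n{\langle G\rangle}^{[A,B]}_n={\langle G\rangle}^{[A,B]}$, since $\text{coprod}(\mathcal{B})=\bigcup_n\text{coprod}_n(\mathcal{B})$ and $\text{smd}$ commutes with this increasing union.

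This last step is the main obstacle. $\text{Coprod}(G[A,B])$ is defined as a closure under coproducts and extensions, which need not equal $\bigcup_n\text{Coprod}_n(G[A,B])$. In particular, homotopy colimits can arise when $A=-\infty$. So one cannot simply reduce to the finite-$n$ case, and one must use the version of Neeman's result stated for $\text{Coprod}(\mathcal{B})$ itself. If only the finite-$n$ version were available, the fallback is to show that a compact object of $\text{smd}(\text{Coprod}(\mathcal{B}))$ already lies in $\text{smd}(\text{Coprod}_n(\mathcal{B}))$ for some $n$. I would argue this by transfinite induction on the stages of the closure: maps out of a compact object into a coproduct, or into an extension, factor through a finite part at each stage. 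That is the compactness argument underlying Neeman's proof.
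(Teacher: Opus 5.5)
Your proposal is correct and takes the same approach as the paper, which gives no argument beyond citing \cite[Proposition 1.9]{n21} for both equalities; your checks of the ``$\supseteq$'' inclusions and of $\text{smd}(\text{coprod}(G[A,B]))={\langle G\rangle}^{[A,B]}$ are routine and fine. Your fallback for the $\text{Coprod}$ case also works, and most cleanly without transfinite bookkeeping: the class of objects $F$ such that every map from a compact object into $F$ factors through an object of ${\langle G\rangle}^{[A,B]}$ contains $G[A,B]$ and is closed under coproducts, summands and extensions, hence contains $\overline{\langle G\rangle}^{[A,B]}$ (for extensions, take a triangle $F_1\to F\to F_2\to F_1[1]$ and $f\colon H\to F$; factor $H\to F_2$ as $H\xrightarrow{a}E_2\to F_2$, factor $E_2\to F_1[1]$ through the cone of $a$ and then through an object of ${\langle G\rangle}^{[A,B]}[1]$ so that $a$ lifts to the resulting extension, and absorb the remaining difference by a map through $F_1$).
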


To bound the self-orthogonal indexes of objects in a triangulated category, we introduce the following definition.

\begin{defn}\label{index}
Let $\mathcal{S}$ be a triangulated category with an object $G$.

$(1)$ The \emph{lower and upper self-orthogonal indexes} of $G$ in $\mathcal{S}$ are defined as follows:
\[o^{-}(G) \coloneqq \inf \{ b \geq 0 \mid \Hom_{\mathcal{S}}(G[i], G) = 0 \text{ for all } i >b \},\]
\[
o^{+}(G) \coloneqq \inf \{ c \geq 0 \mid \Hom_{\mathcal{S}}(G, G[i]) = 0 \text{ for all } i > c \}.
\]

(2) For each $P \in \langle G\rangle$, we define the \emph{width} of $P$ with respect to $G$ as \[w_{G}(P)\coloneqq\min\{\beta-\alpha\mid P\in \langle G\rangle^{[\alpha,\beta]}\}.\]
When $\mathcal{S}$ has coproducts, for each $Q\in\overline{\langle G\rangle}^{[a,b]}$ for some $a\le b$, we also define
\[\overline{w_{G}}(Q)\coloneqq\min\{\beta-\alpha\mid Q\in\overline{\langle G\rangle}^{[\alpha,\beta]}\}.\]
\end{defn}

\begin{remark}\label{kuandu}
(1) Suppose that $\mathcal{S}$ has coproducts. In Definition \ref{index}(2), for any $P\in\langle G\rangle$, we have $\overline{w_{G}}(P)\leq w_{G}(P)$, and the equality holds if  $P\in\mathcal{S}^{c}$, due to Lemma \ref{Closure}.

(2) Let $R$ be an ordinary ring with identity. For any $P\in\mathscr{K}^{b}(R\text{-}\mathrm{proj})$ (resp. $\mathscr{K}^{b}(R\text{-}\mathrm{Proj})$), the width $w_{R}(P)$ (resp. $\overline{w_{R}}(P)$) in Definition \ref{index} coincides with the homological width defined in \cite[Section 3.1]{cx17}.
\end{remark}

\subsection{$t$-structures and recollements}
In this subsection, we recall the definitions of $t$-structures on triangulated categories and recollements of triangulated categories as well as their properties.

\begin{defn}\cite[Definition 1.3.1]{BBD}\label{T-structure}
Let $\mathcal{S}$ be a triangulated category. A pair of full subcategories $(\mathcal{S}^{\leq 0},\mathcal{S}^{\geq 1})$ in $\mathcal{S}$ is called a \emph{$t$-structure} on $\mathcal{S}$ if the following conditions are satisfied:

(1) $\mathcal{S}^{\leq 0}[1] \subseteq \mathcal{S}^{\leq 0}$ and $\mathcal{S}^{\geq 1} \subseteq\mathcal{S}^{\geq 1}[1]$;

(2) $\Hom_{\mathcal{S}}(\mathcal{S}^{\leq 0},\mathcal{S}^{\geq 1})=0$;

(3) For any $X \in \mathcal{S}$, there is a triangle
$X^{\leq 0} {\rightarrow} X {\rightarrow} X^{\geq 1} \rightarrow X^{\leq 0}[1]$
with $X^{\leq 0} \in \mathcal{S}^{\leq 0}$ and $X^{\geq 1} \in \mathcal{S}^{\geq 1}$.
\end{defn}

Let $(\mathcal{S}^{\leq 0},\mathcal{S}^{\geq 1})$ be a $t$-structure on $\mathcal{S}$.
For each $n \in \mathbb{Z}$, we denote
$$\mathcal{S}^{\leq n}\coloneqq \mathcal{S}^{\leq 0}[-n],\;\;\mathcal{S}^{\geq n+1}\coloneqq \mathcal{S}^{\geq 1}[-n]\;\;\mbox{and}\;\;\mathcal{H}\coloneqq \mathcal{S}^{\leq 0} \cap \mathcal{S}^{\geq 0}.$$
Then $(\mathcal{S}^{\leq n})^{\perp}=\mathcal{S}^{\geq n+1}$ and
$^{\perp}(\mathcal{S}^{\geq n})=\mathcal{S}^{\leq n-1}$. This means that $\mathcal{S}^{\leq n}$ and $\mathcal{S}^{\geq n+1}$ are determined by each other. Moreover, up to non-canonical isomorphism, there is a unique triangle  $$ X^{\leq n-1} \lra X \lra X^{\geq n} \lra  X^{\leq n-1}[1]$$ in $\mathcal{S}$ with $X^{\leq n-1}\in \mathcal{S}^{\leq n-1}$ and $X^{\geq n}\in \mathcal{S}^{\geq n}$. The $t$-structure is said to be \emph{bounded above} (resp.\ \emph{bounded below}) if $\mathcal{S} = \bigcup_{m=1}^{\infty} \mathcal{S}^{\leq m}$ (resp.\ $\mathcal{S} = \bigcup_{m=1}^{\infty} \mathcal{S}^{\geq -m}$). It is \emph{bounded} if it is both bounded above and bounded below. Note that $\mathcal{H}$ is an abelian category and is usually called the \emph{heart} of the $t$-structure $(\mathcal{S}^{\leq 0},\mathcal{S}^{\geq 1})$ (for example, see  \cite[Theorem 1.3.6]{BBD}).

Two $t$-structures $(\mathcal{S}^{\leq 0}_i,\mathcal{S}^{\geq 1}_i)$ for $i=1,2$ on $\mathcal{S}$ are said to be \emph{equivalent} if there exists some natural number $n$ such that $\mathcal{S}^{\leq -n}_1\subseteq \mathcal{S}^{\leq 0}_2\subseteq \mathcal{S}^{\leq n}_1$, or equivalently,  $\mathcal{S}^{\geq n}_1\subseteq \mathcal{S}^{\geq 0}_2\subseteq \mathcal{S}^{\geq -n}_1$. This defines an equivalence relation on the class of all $t$-structures on $\mathcal{S}$.

The following lemma provides a canonical way to construct a $t$-structure. It first appeared in \cite[Theorem A.1]{tls03}, with alternative proofs given in \cite[Theorem 12.1]{kn13} and \cite[Theorem 3.0.1]{chns24}.

\begin{lem}{\rm \cite[Theorem A.1]{tls03}}\label{lem-p-w-gen}
Let $\mathcal{S}$ be a triangulated category with coproducts and let $G$ be a compact object in $\mathcal{S}$. Then there is a $t$-structure on $\mathcal{S}$ defined by
$$
(\mathcal{S}_{G}^{\leq 0},\mathcal{S}_{G}^{\geq 1})\coloneqq\big(\Coprod(G(-\infty,0]),G(-\infty,0]^{\perp}\big).
$$
In particular, the heart of $(\mathcal{S}_{G}^{\leq 0},\mathcal{S}_{G}^{\geq 1})$ is closed under coproducts in $\mathcal{S}$.
\end{lem}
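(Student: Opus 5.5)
The plan is to verify the three axioms of Definition \ref{T-structure} directly for $\mathcal{U}\coloneqq\Coprod(G(-\infty,0])$ and $\mathcal{V}\coloneqq G(-\infty,0]^{\perp}$. The triangle in axiom (3) will be built by a countable ``killing maps'' construction, and compactness of $G$ will be used to pass through the homotopy colimit. Recall that $G(-\infty,0]=\{G[i]\mid i\ge 0\}$.

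\emph{Shift and orthogonality axioms.} Since $G(-\infty,0][1]\subseteq G(-\infty,0]$ and $\Coprod$ is compatible with shifts, we get $\mathcal{U}[1]\subseteq\mathcal{U}$. If $Y\in\mathcal{V}$, then $\Hom(G[i],Y[-1])=\Hom(G[i+1],Y)=0$ for all $i\ge 0$, so $\mathcal{V}[-1]\subseteq\mathcal{V}$, i.e. $\mathcal{V}\subseteq\mathcal{V}[1]$. For orthogonality, the class ${}^{\perp}\mathcal{V}$ is always closed under coproducts and extensions, and it contains $G(-\infty,0]$ by definition. Hence it contains $\mathcal{U}$, so $\Hom(\mathcal{U},\mathcal{V})=0$.

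\emph{Constructing the truncation triangle.} Fix $X$ and set $X_0=X$. Inductively, let
\[
P_n=\coprod_{i\ge 0}\ \coprod_{f\in\Hom(G[i],X_n)} G[i],
\]
with the tautological map $P_n\to X_n$, and define $X_{n+1}$ as its cone. Put $V=\operatorname{hocolim}X_n$ and take the induced map $X\to V$.

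\begin{itemize}
\item \emph{$V\in\mathcal{V}$.} Since $G[i]$ is compact, $\Hom(G[i],V)=\operatorname{colim}_n\Hom(G[i],X_n)$. By construction every map $G[i]\to X_n$ with $i\ge 0$ becomes zero in $X_{n+1}$, so this colimit vanishes.
\item \emph{The fibre $U_n$ of $X\to X_n$ lies in $\mathcal{U}$.} Applying the octahedral axiom to $X\to X_n\to X_{n+1}$ gives a triangle $U_n\to U_{n+1}\to P_n\to$. Since $U_0=0$, each $U_n$ is an iterated extension of coproducts of objects $G[i]$ with $i\ge 0$, hence lies in $\mathcal{U}$.
\item \emph{The fibre $U$ of $X\to V$ lies in $\mathcal{U}$.} Using the octahedral axiom on the defining triangles of the homotopy colimits, I would identify $U$ with $\operatorname{hocolim}U_n$. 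The defining triangle $\coprod U_n\to\coprod U_n\to U\to$ then places $U$ in $\big(\coprod U_n\big)*\big(\coprod U_n\big)[1]\subseteq\mathcal{U}$.
\end{itemize}

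This yields a triangle $U\to X\to V\to U[1]$ with $U\in\mathcal{U}$ and $V\in\mathcal{V}$, which is axiom (3). I expect the main technical obstacle to be the identification of the fibre of $X\to\operatorname{hocolim}X_n$ with $\operatorname{hocolim}U_n$, since homotopy colimits are not functorial. To handle it, I would choose compatible morphisms of triangles and compare the cones of $1-\text{shift}$ on the coproducts $\coprod U_n\to\coprod X\to\coprod X_n$ via a $3\times 3$ diagram. Alternatively, it is enough to cite \cite[Theorem A.1]{tls03} for this step.

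\emph{The heart.} The aisle $\mathcal{U}$ is closed under coproducts by definition. The coaisle $\mathcal{V}$ is closed under coproducts because each $G[i]$ is compact. Since the $t$-structure axioms give $\mathcal{S}^{\ge 0}=\mathcal{V}[1]$, the heart is $\mathcal{U}\cap\mathcal{V}[1]$. Both factors are closed under coproducts, so the heart is closed under coproducts in $\mathcal{S}$.
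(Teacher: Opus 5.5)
Your argument is correct in outline, and it is the standard killing-maps construction behind the cited result. The paper gives no proof of its own; it only cites \cite[Theorem A.1]{tls03}, with alternative proofs in \cite{kn13} and \cite{chns24}.

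The one step to adjust is the one you flagged. In the $3\times 3$ lemma only two of the three vertical maps can be prescribed. The third is produced by the lemma and need not be $1-\mathrm{shift}$. So your sketch will not literally identify the fibre $U$ of $X\to V$ with $\operatorname{hocolim}U_n$ --- but you never need that identification.

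Write $v_n\colon X\to X_n$ for the composites. Apply the lemma to the commutative square built as follows:
\begin{itemize}
\item both rows are $\coprod_n v_n\colon\coprod_n X\to\coprod_n X_n$, completed by the coproduct triangles $\coprod_n X\to\coprod_n X_n\to\coprod_n U_n[1]\to$;
\item both columns are the maps $1-\mathrm{shift}$, completed by the triangles defining $X=\operatorname{hocolim}(X\xrightarrow{1}X\xrightarrow{1}\cdots)$ and $V=\operatorname{hocolim}X_n$.
\end{itemize}
Commutativity on the $0$-th summand, where $v_0=1_X$, forces the induced third-row map $X\to V$ to be the canonical one. The third column then exhibits its cone $U[1]$ as the cone of some endomorphism of $\coprod_n U_n[1]$. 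Hence $U\in(\coprod_n U_n)*(\coprod_n U_n[1])\subseteq\mathcal{U}$, which is all you use.

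Alternatively, you can sidestep the issue entirely. Set $U:=\operatorname{hocolim}U_n$ with any map to $X$ extending the maps $U_n\to X$. Then check that its cone lies in $\mathcal{V}$ using compactness of the $G[i]$ and exactness of filtered colimits. Concretely, pass to the colimit of the long exact sequences obtained by applying $\Hom(G[i],-)$ to $U_n\to X\to X_n\to U_n[1]$, and use $\operatorname{colim}_n\Hom(G[i],X_n)=0$ for $i\ge 0$.

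Finally, falling back on \cite[Theorem A.1]{tls03} for this step would be circular, since that is exactly the statement being proved.
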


Following \cite[Definition 1.18]{n18}, for a triangulated category $\mathcal{S}$ generated by a single compact generator $G$, the equivalence class of the $t$-structure $(\mathcal{S}_{G}^{\leq 0}, \mathcal{S}_{G}^{\geq 1})$ in Lemma \ref{lem-p-w-gen} is called the \emph{preferred equivalence class}.
Note that if $(\mathcal{S}^{\leq 0},\mathcal{S}^{\geq 1})$ is a $t$-structure on $\mathcal{S}$ in the preferred equivalence class, then $G$ is bounded if and only if $G \in \mathcal{S}^{b}$ if and only if $\mathcal{S}^c\subseteq \mathcal{S}^{b}$. This explains the reason why we introduce the notion of bounded compact generator in Definition \ref{notation}(6).

\begin{defn}{\rm \cite[1.4.3]{BBD}}
Let $\mathcal{R}$, $\mathcal{S}$ and $\mathcal{T}$ be triangulated categories. A \emph{recollement} of $\mathcal{S}$ by $\mathcal{R}$ and $\mathcal{T}$ is a diagram of six triangle functors
\begin{align}\label{diag:rec}
\xymatrixcolsep{4pc}\xymatrix{\mathcal{R} \ar[r]|{i_*=i_!} &\mathcal{S} \ar@<-2ex>[l]|{i^*} \ar@<2ex>[l]|{i^!} \ar[r]|{j^!=j^*}  &\mathcal{T}, \ar@<-2ex>[l]|{j_!} \ar@<2ex>[l]|{j_{*}}
}
\end{align}
satisfying the following conditions:

{\rm (R1)} $(i^\ast,i_\ast)$,\,$(i_!,i^!)$,\,$(j_!,j^!)$ ,\,$(j^\ast,j_\ast)$
are adjoint pairs;

{\rm (R2)}
$i_*,~j_*,~j_!$ are fully faithful$;$

{\rm (R3)}
$j^*i_*=0$ $($thus $i^*j_!=0$ and $i^! j_*=0$$);$

{\rm (R4)}
for any object $Y$ in $\mathcal{S}$, there are two triangles in $\mathcal{S}$ induced by counit and unit adjunctions$:$
$$\xymatrix@R=0.5pc{
i_*i^!(Y)\ar[r] &Y \ar[r] & j_*j^*(Y) \ar[r]& i_*i^!(Y)[1],\\
j_!j^*(Y) \ar[r] &Y \ar[r] & i_*i^*(Y) \ar[r]& j_!j^*(Y)[1].}$$
The diagram consisting of the upper two rows \begin{align*}
\xymatrixcolsep{4pc}\xymatrix{\mathcal{R} \ar[r]|{i_*} &\mathcal{S} \ar@<-2ex>[l]|{i^*}  \ar[r]|{j^*}  &\mathcal{T} \ar@<-2ex>[l]|{j_!}
}
\end{align*}
is said to be a \emph{left recollement} of $\mathcal{S}$ by $\mathcal{R}$ and $\mathcal{T}$ if the four functors $i^*$, $i_*$, $j_!$ and $j^!$ satisfy the conditions in {\rm (R1)-(R4)} involving them.  A \emph{right recollement} is defined similarly via the lower two rows.

We say that a recollement \eqref{diag:rec} \emph{extends one step downwards} if there are triangle functors $i_{\#}\colon \mathcal{R}\to\mathcal{S}$ and $j^{\#}\colon \mathcal{S}\to\mathcal{T}$ such that the diagram:
\[
\xymatrixcolsep{4pc}\xymatrix{\mathcal{T} \ar[r]|{j_{*}} &\mathcal{S} \ar@<-2ex>[l]|{ j^!=j^*} \ar@<2ex>[l]|{j^{\#}} \ar[r]|{i^{!}}  &\mathcal{R}, \ar@<-2ex>[l]|{i_*=i_!} \ar@<2ex>[l]|{i_{\#}}
}
\]
is also a recollement.
\end{defn}

The following result is well known.
\begin{lem}\label{lem-key}
For the recollement of the form \eqref{diag:rec}, assume further that $\mathcal{R}$, $\mathcal{S}$ and $\mathcal{T}$ are compactly generated triangulated categories. Then we have the following:

$(1)$ {\rm \cite[Theorem 5.1]{n96}} $j_!$ and $i^*$ preserve compact objects.

$(2)$ {\rm \cite[Chapter IV, Proposition 1.11]{br07}} The following statements are equivalent:

\;\quad {\rm (2a)} $i_*$ preserves compact objects. \;\;\; {\rm (2b)} $j^*$ preserves compact objects.

\;\quad {\rm (2c)} $i^!$ has a right adjoint. \quad\quad\quad\;\;\;   {\rm(2d)} $j_*$ has a right adjoint.

\;\quad {\rm (2e)} The recollement extends one step downwards.

If any one of the above conditions holds, then the recollement {\rm (\ref{diag:rec})} induces a left recollement:
\begin{align*}
\xymatrixcolsep{4pc}\xymatrix{\mathcal{R}^c  \ar[r]|{i_*=i_!}
&\mathcal{S}^c\ar@<-2ex>[l]|{i^*}  \ar[r]|{j^!=j^*}
&\mathcal{T}^c. \ar@<-2ex>[l]|{j_!}
}
\end{align*}
\end{lem}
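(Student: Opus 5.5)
The statement is Lemma \ref{lem-key}, and I would prove it with one tool throughout: a left adjoint preserves compact objects whenever its right adjoint preserves coproducts. The argument runs
$$\Hom(LX,\textstyle\coprod Y_k)\cong \Hom(X, R\coprod Y_k)\cong\Hom(X,\coprod RY_k)\cong\coprod\Hom(X,RY_k)\cong\coprod \Hom(LX,Y_k).$$
For the converse direction I would use Neeman's Brown representability. Between compactly generated categories, a triangle functor has a right adjoint if and only if it preserves coproducts. Moreover, a right adjoint $R$ preserves coproducts if its left adjoint $L$ sends a set of compact generators to compact objects; this is checked by testing $\coprod RY_k\to R\coprod Y_k$ against $\Hom(G[n],-)$ for a compact generator $G$.

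\emph{Part (1).} The functor $j^*=j^!$ has the right adjoint $j_*$, so it preserves coproducts, and the tool shows that its left adjoint $j_!$ preserves compact objects. Likewise $i_*=i_!$ has the right adjoint $i^!$, so $i_*$ preserves coproducts and $i^*$ preserves compact objects.

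\emph{Part (2).} I would prove the equivalences in the following order.
\begin{itemize}
\item (2a)$\Leftrightarrow$(2c). By Brown representability, $i^!$ has a right adjoint if and only if it preserves coproducts. By the tool and its converse applied to $i_*\dashv i^!$, this holds if and only if $i_*$ preserves compact objects. The same argument with $j^*\dashv j_*$ gives (2b)$\Leftrightarrow$(2d).
\item (2a)$\Rightarrow$(2b). Take $Y\in\mathcal{S}^c$ and the triangle $j_!j^*Y\to Y\to i_*i^*Y\to$. By part (1) and (2a), $i_*i^*Y$ is compact, so $j_!j^*Y$ is compact. Since $j_!$ is fully faithful and preserves coproducts (it is a left adjoint), $\Hom(j^*Y,\coprod T_k)\cong \Hom(j_!j^*Y,\coprod j_!T_k)$. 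Hence $j^*Y$ is compact.
\item (2b)$\Rightarrow$(2a). By (2b)$\Leftrightarrow$(2d), $j_*$ preserves coproducts. The triangle $i_*i^!Y\to Y\to j_*j^*Y\to$ then shows that $i_*i^!$ preserves coproducts. Because $i_*$ is fully faithful and preserves coproducts, $i^!$ preserves coproducts, and so $i_*$ preserves compact objects.
\item (2c)+(2d)$\Rightarrow$(2e). Let $i_{\#}$ and $j^{\#}$ be the right adjoints of $i^!$ and $j_*$. A right adjoint of a fully faithful functor is fully faithful when it exists, so $i_{\#}$ is fully faithful. The vanishing $j^{\#}i_{\#}=0$ comes by adjunction from $i^!j_*=0$. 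The two required triangles come from the existing triangle $i_*i^!Y\to Y\to j_*j^*Y$ and from its adjoint counterpart $j_*j^{\#}Y\to Y\to i_{\#}i^!Y$. The latter is produced by the standard argument: take the cone of the unit and check that it lies in the right places using the orthogonality relations.
\item (2e)$\Rightarrow$(2c). This is immediate from the definition of extending one step downwards.
\end{itemize}

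\emph{Induced left recollement.} Once these conditions hold, all four functors $i^*,i_*,j_!,j^*$ preserve compact objects, by part (1) and by (2a) and (2b). The adjunctions, full faithfulness, the vanishing $j^*i_*=0$, and the triangle $j_!j^*Y\to Y\to i_*i^*Y$ all restrict to $\mathcal{R}^c,\mathcal{S}^c,\mathcal{T}^c$, which gives the required left recollement.

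\emph{Main obstacle.} The only delicate step is the construction of the triangles in (2c)+(2d)$\Rightarrow$(2e). There I would follow the standard recollement argument of Beligiannis--Reiten, \cite{br07}, rather than redo it.
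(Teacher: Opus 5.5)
Your proposal is correct and follows essentially the same route as the paper. The paper gives no argument of its own here: it cites Neeman's Theorem 5.1 for (1) and Beligiannis--Reiten for (2). Your ``tool and its converse'' is exactly Neeman's Theorem 5.1, and Brown representability handles (2c) and (2d), so you have reconstructed the standard proofs behind those citations.

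One justification in (2c)+(2d)$\Rightarrow$(2e) must be corrected. You write that a right adjoint of a fully faithful functor is fully faithful. This is false as stated: $i^!$ is right adjoint to the fully faithful $i_*$, yet it is not fully faithful, since it kills $j_*\mathcal{T}$. Moreover, $i_\#$ is the right adjoint of $i^!$, which is itself not fully faithful, so the claim would not apply anyway.

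The fact you need is the adjoint-triple lemma. For $L\dashv M\dashv R$ one has $ML\dashv MR$, and the unit $\mathrm{id}\to ML$ is an isomorphism iff its mate, the counit $MR\to\mathrm{id}$, is. Hence $L$ is fully faithful iff $R$ is. Apply this to $i_*\dashv i^!\dashv i_\#$. The same fact gives you that $i^!$ sends the unit $Y\to i_\#i^!Y$ to an isomorphism. That is what places its cocone in $\ker i^!=j_*\mathcal{T}$ when you build the triangle $j_*j^\#Y\to Y\to i_\#i^!Y\to$.

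In (2b)$\Rightarrow$(2a), ``the triangle shows'' also deserves one more line. The cone of $\coprod_k$ of the counits is $\coprod_k j_*j^*Y_k\cong j_*(\coprod_k j^*Y_k)$, using (2d). This object is right orthogonal to $i_*\mathcal{R}$. So the comparison map $\coprod_k i_*i^!Y_k\to i_*i^!(\coprod_k Y_k)$ between objects of $i_*\mathcal{R}$ is an isomorphism by Yoneda.
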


A method for constructing $t$-structures by recollements is given in the following result.

\begin{prop}\label{glued $t$-str}{\rm \cite[Theorem~1.4.10]{BBD}}
In the recollement \eqref{diag:rec}, assume that $(\mathcal{R}^{\leq 0},\mathcal{R}^{\geq 1})$ on $\mathcal{R}$ and $(\mathcal{T}^{\leq 0},\mathcal{T}^{\geq 1})$ on $\mathcal{T}$ are $t$-structures. Then there is a $t$-structure $(\mathcal{S}^{\leq 0},\mathcal{S}^{\geq 1})$ on $\mathcal{S}$ defined by
\[
\mathcal{S}^{\leq 0}=\{X\in\mathcal{S}\,\mid\,i^{*}(X)\in\mathcal{R}^{\leq 0},\,j^{*}(X)\in\mathcal{T}^{\leq 0}\}\quad\text{and}\quad\mathcal{S}^{\geq 1}=\{Y\in\mathcal{S}\,\mid\,i^{!}(Y)\in\mathcal{R}^{\geq 1},\,j^{*}(Y)\in\mathcal{T}^{\geq 1}\}.
\]
This $t$-structure is called the \emph{glued $t$-structure} associated with the given $t$-structures on $\mathcal{R}$ and $\mathcal{T}$.
\end{prop}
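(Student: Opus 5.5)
We verify the three axioms of Definition \ref{T-structure} directly, using only the adjunctions (R1), the vanishing relations (R3), the triangles (R4), and the standard consequences of full faithfulness of $i_*$ and $j_*$, namely $i^*i_*\cong {\rm id}\cong i^!i_*$ and $j^*j_*\cong{\rm id}$. Throughout we keep in mind that $i^*,j^*,i^!$ are triangle functors, so they commute with $[1]$ and send triangles to triangles.

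\emph{Axiom (1).} This is immediate. If $X\in\mathcal{S}^{\le 0}$, then $i^*(X[1])=i^*(X)[1]\in\mathcal{R}^{\le 0}[1]\subseteq\mathcal{R}^{\le 0}$, and similarly for $j^*$. The same argument with $i^!$ and $j^*$ gives the inclusion for $\mathcal{S}^{\ge 1}$.

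\emph{Axiom (2).} Let $X\in\mathcal{S}^{\le 0}$ and $Y\in\mathcal{S}^{\ge 1}$. Apply $\Hom_{\mathcal{S}}(-,Y)$ to the second triangle in (R4),
$$j_!j^*X\to X\to i_*i^*X\to j_!j^*X[1].$$
By adjunction we get
$$\Hom(j_!j^*X,Y)\cong\Hom_{\mathcal{T}}(j^*X,j^*Y)=0 \quad\text{and}\quad \Hom(i_*i^*X,Y)\cong\Hom_{\mathcal{R}}(i^*X,i^!Y)=0,$$
both vanishing because the given $t$-structures on $\mathcal{T}$ and $\mathcal{R}$ satisfy axiom (2). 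Exactness of the long Hom sequence then forces $\Hom(X,Y)=0$.

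\emph{Axiom (3): construction of $Z$.} Fix $X\in\mathcal{S}$. The construction truncates first in $\mathcal{T}$ and then in $\mathcal{R}$.
\begin{itemize}
\item[(a)] Take the truncation triangle $A\to j^*X\to B\to A[1]$ in $\mathcal{T}$, with $A\in\mathcal{T}^{\le 0}$ and $B\in\mathcal{T}^{\ge 1}$.
\item[(b)] Let $f\colon X\to j_*B$ be the composite of the unit $X\to j_*j^*X$ with $j_*$ of the truncation map. Complete $f$ to a triangle $Y\to X\to j_*B\to Y[1]$. Since $j^*j_*\cong{\rm id}$, the map $j^*f$ is the truncation map $j^*X\to B$, so $j^*Y\cong A$.
\item[(c)] Take the truncation triangle $C\to i^*Y\to D\to C[1]$ in $\mathcal{R}$, with $C\in\mathcal{R}^{\le 0}$ and $D\in\mathcal{R}^{\ge 1}$.
\item[(d)] Let $g\colon Y\to i_*D$ be the composite of the unit $Y\to i_*i^*Y$ with $i_*$ of the truncation map. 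Complete $g$ to a triangle $Z\to Y\to i_*D\to Z[1]$.
\end{itemize}

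\emph{Axiom (3): $Z\in\mathcal{S}^{\le 0}$.} Applying $i^*$ to the triangle in (d), and using $i^*i_*\cong{\rm id}$, the map $i^*g$ becomes the truncation map $i^*Y\to D$. Hence $i^*Z\cong C\in\mathcal{R}^{\le 0}$. Applying $j^*$ and using $j^*i_*=0$ gives $j^*Z\cong j^*Y\cong A\in\mathcal{T}^{\le 0}$. So $Z\in\mathcal{S}^{\le 0}$.

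\emph{Axiom (3): the cone $W$ lies in $\mathcal{S}^{\ge 1}$.} Let $W$ be the cone of the composite $Z\to Y\to X$. Apply the octahedral axiom to the composable pair $Z\to Y\to X$, whose individual cones are $i_*D$ and $j_*B$. It yields a triangle
$$i_*D\to W\to j_*B\to i_*D[1].$$
From this triangle we read off the two conditions:
\begin{itemize}
\item[(i)] Applying $j^*$ and using $j^*i_*=0$ and $j^*j_*\cong{\rm id}$ gives $j^*W\cong B\in\mathcal{T}^{\ge 1}$.
\item[(ii)] Applying $i^!$ and using $i^!j_*=0$ and $i^!i_*\cong{\rm id}$ gives $i^!W\cong D\in\mathcal{R}^{\ge 1}$.
\end{itemize}
Thus $W\in\mathcal{S}^{\ge 1}$, and $Z\to X\to W\to Z[1]$ is the required truncation triangle.

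\emph{Main obstacle.} The only delicate point is the octahedral step producing $W$, together with the identifications $j^*f$ and $i^*g$ with the truncation maps. These identifications follow from the triangle identities for the adjunctions $(j^*,j_*)$ and $(i^*,i_*)$. Finally, both $\mathcal{S}^{\le 0}$ and $\mathcal{S}^{\ge 1}$ are closed under isomorphisms, so the descriptions "up to isomorphism" above suffice.
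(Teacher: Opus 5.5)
Your proposal is correct and is essentially the standard argument of \cite[Theorem~1.4.10]{BBD}, which the paper cites without reproducing a proof. You obtain Hom-vanishing from the triangle $j_!j^*X\to X\to i_*i^*X\to j_!j^*X[1]$ together with the adjunctions $(j_!,j^!)$ and $(i_!,i^!)$, and you build the truncation triangle by first splitting off $j_*B$, then $i_*D$, and gluing the two cones by the octahedral axiom; the identifications of $j^*Z$, $i^*Z$, $j^*W$ and $i^!W$ via the triangle identities and $j^*i_*=0=i^!j_*$ are handled correctly.
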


Gluing $t$-structures along recollements has the following property.
\begin{lem}\label{lem-bounde induced bounde}
In Proposition \ref{glued $t$-str}, if both $(\mathcal{R}^{\leq 0},\mathcal{R}^{\geq 1})$ and $(\mathcal{T}^{\leq 0},\mathcal{T}^{\geq 1})$ are bounded above (resp. bounded below, bounded), then so is the glued $t$-structure $(\mathcal{S}^{\leq 0},\mathcal{S}^{\geq 1})$.
\end{lem}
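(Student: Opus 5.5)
The plan is to verify the two halves separately, using the description of the glued $t$-structure in Proposition \ref{glued $t$-str} and the recollement triangles (R4). Throughout, fix a bound $m$ such that $\mathcal{R}=\bigcup_m \mathcal{R}^{\le m}$ and $\mathcal{T}=\bigcup_m\mathcal{T}^{\le m}$ in the bounded-above case, and dually for bounded below. The bounded case then follows by combining the two.

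\emph{Bounded above.} Let $X\in\mathcal{S}$. By hypothesis there are integers $a,b$ with $i^*(X)\in\mathcal{R}^{\le a}$ and $j^*(X)\in\mathcal{T}^{\le b}$. Set $n=\max(a,b)$. Since $i^*$ and $j^*$ are triangle functors, they commute with shifts. Hence $i^*(X[n])=i^*(X)[n]\in\mathcal{R}^{\le 0}$ and $j^*(X[n])\in\mathcal{T}^{\le 0}$. By the definition of the glued aisle this gives $X[n]\in\mathcal{S}^{\le 0}$, that is, $X\in\mathcal{S}^{\le n}$. Therefore $\mathcal{S}=\bigcup_n\mathcal{S}^{\le n}$. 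No use of (R4) is needed here, because the aisle is defined purely by membership conditions on $i^*X$ and $j^*X$.

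\emph{Bounded below.} The argument is symmetric, using the coaisle description $\mathcal{S}^{\ge 1}=\{Y\mid i^!(Y)\in\mathcal{R}^{\ge1},\ j^*(Y)\in\mathcal{T}^{\ge 1}\}$, which shifts to $\mathcal{S}^{\ge k}=\{Y\mid i^!Y\in\mathcal{R}^{\ge k},\ j^*Y\in\mathcal{T}^{\ge k}\}$. Given $Y$, choose $k$ with $i^!(Y)\in\mathcal{R}^{\ge -k}$ and $j^*(Y)\in\mathcal{T}^{\ge -k}$; such $k$ exists by bounded-belowness of the outer $t$-structures. Then $Y\in\mathcal{S}^{\ge -k}$.

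The only point needing care, and it is minor, is to check that the glued subcategories shift as expected. Concretely, one needs $\mathcal{S}^{\le n}=\mathcal{S}^{\le 0}[-n]$ to coincide with $\{X\mid i^*X\in\mathcal{R}^{\le n},\ j^*X\in\mathcal{T}^{\le n}\}$, and similarly for $\mathcal{S}^{\ge n}$. This holds because $i^*$, $i^!$ and $j^*$ commute with $[1]$ up to natural isomorphism, and the subcategories $\mathcal{R}^{\le 0}$, $\mathcal{T}^{\le 0}$, etc.\ are closed under isomorphism.
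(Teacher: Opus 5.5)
Your proof is correct, but it takes a more direct route than the paper.

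\textbf{The paper's route.} It fixes $Y\in\mathcal{S}$ and uses the canonical triangle $j_!j^*(Y)\to Y\to i_*i^*(Y)\to j_!j^*(Y)[1]$ from (R4). It then shows $j_!j^*(Y)\in\mathcal{S}^{\le n}$ and $i_*i^*(Y)\in\mathcal{S}^{\le m}$ by computing $i^*$ and $j^*$ of these two objects, using $i^*j_!=0$, $j^*i_*=0$ and full faithfulness. It concludes by closure of $\mathcal{S}^{\le r}$ under extensions. The bounded-below case is left as ``similar'', presumably via the other triangle $i_*i^!(Y)\to Y\to j_*j^*(Y)\to i_*i^!(Y)[1]$ and extension-closure of the coaisle.

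\textbf{Your route.} You apply the membership test that defines the glued aisle (resp.\ coaisle) directly to $Y$. Since $i^*(Y)\in\mathcal{R}^{\le n}$ and $j^*(Y)\in\mathcal{T}^{\le n}$ for $n$ large, you get $Y\in\mathcal{S}^{\le n}$ at once. Dually, $i^!(Y)\in\mathcal{R}^{\ge -k}$ and $j^*(Y)\in\mathcal{T}^{\ge -k}$ give $Y\in\mathcal{S}^{\ge -k}$.

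\textbf{Comparison.} The paper's check on the two outer terms of the triangle is exactly this same test. Given the pointwise description of the glued $t$-structure, the triangle and the extension-closure step are therefore redundant. Your argument needs only two facts:
\begin{itemize}
\item $i^*$, $i^!$ and $j^*$ commute with $[1]$;
\item the outer aisles and coaisles are nested, e.g.\ $\mathcal{R}^{\le a}\subseteq\mathcal{R}^{\le n}$ for $a\le n$.
\end{itemize}
You use the nesting implicitly when passing to $n=\max(a,b)$; it follows from $\mathcal{R}^{\le 0}[1]\subseteq\mathcal{R}^{\le 0}$. The triangle-based argument would be the natural one if the glued aisle were presented as an extension closure such as $j_!(\mathcal{T}^{\le 0})\ast i_*(\mathcal{R}^{\le 0})$, rather than by conditions on $i^*$ and $j^*$. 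With the definition as stated in the paper, it buys nothing extra.
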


\begin{proof}
We prove the assertion of Lemma \ref{lem-bounde induced bounde} for the case of bounded above $t$-structures. Other cases can be shown similarly.

Let $Y\in\mathcal{S}$. Then there is a canonical triangle $j_{!}j^{*}(Y)\to Y\to i_{*}i^{*}(Y)\to j_{!}j^{*}(Y)[1]$ in $\mathcal{S}$. Since both $(\mathcal{R}^{\leq 0},\mathcal{R}^{\geq 1})$ and $(\mathcal{T}^{\leq 0},\mathcal{T}^{\geq 1})$ are bounded above, there exist positive integers $n$ and $m$ such that $j^{*}(Y) \in \mathcal{T}^{\leq n}$ and $i^{*}(Y) \in \mathcal{R}^{\leq m}$. Note that
$i^*j_{!}j^{*}(Y)=0$, $j^*j_{!}j^{*}(Y)\simeq j^{*}(Y)$, $i^*i_{*}i^{*}(Y)\simeq i^{*}(Y)$ and $j^*i_{*}i^{*}(Y)=0$. This implies $j_{!}j^{*}(Y)\in\mathcal{S}^{\leq n}$ and $i_{*}i^{*}(Y)\in\mathcal{S}^{\leq m}$. Let $r:=\max\{n,m\}$. Since $\mathcal{S}^{\leq r}$ is closed under extensions in $\mathcal{S}$, we have $Y\in \mathcal{S}^{\leq r}$. Thus $(\mathcal{S}^{\leq 0},\mathcal{S}^{\geq 1})$ is bounded above.
\end{proof}

\subsection{Weakly approximable triangulated categories}\label{WAPP}
In this subsection, we recall the definitions of (weakly) approximable triangulated categories and their triangulated subcategories introduced by Neeman.

\begin{defn}\label{defn-app}{\rm \cite[Definition 1.25]{n18}}
Let $\mathcal{S}$ be a triangulated category with coproducts.
The category $\mathcal{S}$ is called \emph{weakly approximable} (resp. \emph{approximable}) if
it has a compact generator $G$ and a $t$-structure $(\mathcal{S}^{\le 0},\mathcal{S}^{\ge 1})$ together with an integer $A>0$ satisfying the following conditions:

{\rm (1)}
$G[A]\in \mathcal{S}^{\leq0}$ and $\Hom_{\mathcal{S}}(G[-A],\mathcal{S}^{\leq 0})=0$.

{\rm (2)}
Each object $F\in\mathcal{S}^{\leq 0}$
admits a triangle $E\ra F\ra D \ra E[1]$ in $\mathcal{S}$ with $E\in  \overline{\langle G\rangle}^{[-A,A]}$ (resp. $E\in \overline{\langle G\rangle}_A^{[-A,A]}$) and $D\in\mathcal{S}^{\le -1}$.
\end{defn}

By  \cite[Propositions 3.4 and 3.6; Lemma 3.5]{n18}, the notion of (weakly) approximable triangulated categories does not depend on the choice of compact generators and $t$-structures from the preferred equivalence class, and in particular, the $t$-structure $(\mathcal{S}^{\le 0},\mathcal{S}^{\ge 1})$ on $\mathcal{S}$ in Definition \ref{defn-app} does lie in the preferred equivalence class. Consequently, when addressing a (weakly) approximable triangulated category $\mathcal{S}$ with a compact generator $G$, one can always take the $t$-structure $(\mathcal{S}^{\leq 0},\mathcal{S}^{\geq 1})$ in Definition \ref{defn-app} to be the canonical $t$-structure $(\mathcal{S}_G^{\leq 0},\mathcal{S}_G^{\geq 1})$ associated with $G$ (see Lemma~\ref{lem-p-w-gen}).

We first present some examples of (weakly) approximable triangulated categories.

\begin{exam}\label{EXWAPP}
$(1)$ Let $X$ be a quasicompact, quasiseparated scheme and let $Z$ be a closed subset of $X$ such that $X-Z$ is quasicompact. Denote by $\mathscr{D}_{\rm qc}(X)$ the full subcategory of the unbounded derived category of $\mathscr{O}_X$-modules consisting of cochain complexes of $\mathscr{O}_X$-modules with \emph{quasicoherent} cohomology, and by $\mathscr{D}_{{\rm qc}, Z}(X)$ the full subcategory of $\mathscr{D}_{\rm qc}(X)$ consisting of complexes whose cohomology is supported on $Z$ (that is, the restriction of those complexes to $X-Z$ is acyclic). Then $\mathscr{D}_{{\rm qc},Z}(X)$ is weakly approximable (see \cite[Theorem 3.2(iv)]{n24a}).

If $X$ is a quasi-compact and separated scheme, then $\mathscr{D}_{\mathrm{qc}}(X)$ is approximable (see \cite[Example 5.6]{n18}). More generally, if $(X,\mathcal{A})$ is a separated and quasi-compact noncommutative scheme, then $\mathscr{D}_{\mathrm{qc}}(\mathcal{A})$ is approximable (see \cite[Proposition 4.1]{dlr25}).

$(2)$ If $\mathcal{S}$ is compactly generated with a compact generator $G$ such that $\Hom_{\mathcal{S}}(G, G[i]) = 0$ for $i \geq 1$, then $\mathcal{S}$ is approximable (see \cite[Remark~5.3]{n18}).
This implies that the homotopy category of the stable $\infty$-category of left $R$-module spectra over a connective $\mathbb{E}_1$-ring $R$ is approximable, where $R$ is said to be \emph{connective} if the $n$-th homotopy group $\pi_n(R)$ of $R$ vanishes for all $n<0$. Thus the derived category of a non-positive differential graded (DG) algebra is approximable. This includes the case of the (unbounded) derived category of an ordinary ring.
\end{exam}

\medskip
In the rest of this subsection, we fix a weakly approximable triangulated category $\mathcal{S}$ with a compact generator $G$ and with a $t$-structure $(\mathcal{S}^{\le 0},\mathcal{S}^{\ge 1})$ in the preferred equivalence class.

\begin{defn}\label{App-index}
For each $M\in\mathcal{S}^{\leq 0}$, we define the \emph{approximable index} of $M$ in $\mathcal{S}$ as
\[b(M)\coloneqq\inf\{n\geq0\mid \exists\;\text{a triangle }E\ra M\ra D \ra E[1] \; \mbox{in} \; \mathcal{S}\text{ with }E\in\overline{\langle G\rangle}^{[-n,n]}\text{ and }D\in\mathcal{S}^{\leq -1}\}.\]
The $G$-\emph{approximable index} of $\mathcal{S}$ is defined to be
\[a(G) \coloneqq \sup \{ b(M) \mid M \in \mathcal{S}^{\leq 0} \}.\]
Then $0\leq a(G)\leq A<+\infty$, where $A$ is given in Definition \ref{defn-app}
\end{defn}

By \cite[Remark 5.3]{n18}, if $\Hom_\mathcal{S}(G,G[n])=0$ for $n>0$, then $a(G)=0<A$. Moreover, by the proof of \cite[Corollary 3.2]{n18}, we have the following result.

\begin{lem}\label{lem-app-induction}
For each integer $m>0$ and each object $F\in \mathcal{S}^{\leq 0}$, there is a triangle $E_m\ra F\ra D_m\ra E_m[1]$ in $\mathcal{S}$ with $D_m\in \mathcal{S}^{\le -m}$ and $E_m\in \overline{\langle G\rangle}^{[1-m-a(G),a(G)]}$. If $\mathcal{S}$ is approximable, then $E_m\in\overline{\langle G\rangle}_{mA}^{[1-m-a(G), a(G)]}$.
\end{lem}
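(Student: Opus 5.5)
We argue by induction on $m$, following the proof of \cite[Corollary 3.2]{n18} but tracking the index $a(G)$ instead of $A$. For $m=1$ the claim is the definition of $a(G)$ (Definition \ref{App-index}). Since $a(G)\le A<+\infty$, every $F\in\mathcal{S}^{\le 0}$ has $b(F)\le a(G)$, and $b(F)$ is an infimum over nonnegative integers. Hence it is attained, and there is a triangle $E_1\to F\to D_1\to E_1[1]$ with $E_1\in\overline{\langle G\rangle}^{[-b,b]}\subseteq\overline{\langle G\rangle}^{[-a(G),a(G)]}$ and $D_1\in\mathcal{S}^{\le -1}$. This is the case $m=1$, because $[1-1-a(G),a(G)]=[-a(G),a(G)]$.

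For the inductive step, suppose we have a triangle $E_m\to F\to D_m\to E_m[1]$ with $D_m\in\mathcal{S}^{\le -m}$ and $E_m\in\overline{\langle G\rangle}^{[1-m-a(G),a(G)]}$. Since $D_m[-m]\in\mathcal{S}^{\le 0}$, the case $m=1$ applied to $D_m[-m]$ and shifted by $[m]$ gives a triangle $E'\to D_m\to D_{m+1}\to E'[1]$ with $D_{m+1}\in\mathcal{S}^{\le -m-1}$ and
$$E'\in\overline{\langle G\rangle}^{[-a(G),a(G)]}[m]=\overline{\langle G\rangle}^{[-a(G)-m,\,a(G)-m]}.$$
The octahedral axiom applied to the composite $F\to D_m\to D_{m+1}$ produces an object $E_{m+1}$ that completes $F\to D_{m+1}$ to a triangle $E_{m+1}\to F\to D_{m+1}\to E_{m+1}[1]$. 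The same octahedron gives a triangle $E_m\to E_{m+1}\to E'\to E_m[1]$. Because $m\ge 1$, both $E_m$ and $E'$ lie in $\overline{\langle G\rangle}^{[-m-a(G),a(G)]}$. This category is closed under extensions: $\mathrm{Coprod}$ is extension closed, and the smd-closure is harmless because the Eilenberg swindle of Definition \ref{notation}(3) applies only when the range is infinite. The proof of \cite[Corollary 3.2]{n18} handles this same point, so we follow it. Therefore $E_{m+1}\in\overline{\langle G\rangle}^{[1-(m+1)-a(G),a(G)]}$, which completes the induction.

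In the approximable case we argue the same way, but use $\mathrm{Coprod}_n$ in place of $\mathrm{Coprod}$. By Definition \ref{defn-app}, each object of $\mathcal{S}^{\le 0}$ admits an approximation with $E\in\overline{\langle G\rangle}^{[-A,A]}_A$. We need $E\in\overline{\langle G\rangle}^{[-a(G),a(G)]}_A$ at the same time, and this simultaneous control is the main obstacle. We would handle it as Neeman does, choosing the approximating triangle so that both the range and the length are bounded, for instance by comparing two approximating triangles. Granting the base case, the octahedron gives $E_{m+1}\in \mathrm{smd}(\mathrm{Coprod}_{mA}(\cdot)\ast\mathrm{Coprod}_A(\cdot))$. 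Since $\mathrm{Coprod}_p\ast\mathrm{Coprod}_q=\mathrm{Coprod}_{p+q}$ and summands are absorbed as in \cite[Reminder 1.12]{n18}, this gives $E_{m+1}\in\overline{\langle G\rangle}^{[-m-a(G),a(G)]}_{(m+1)A}$.
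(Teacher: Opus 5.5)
Your weakly approximable argument is correct and follows the paper's route. The paper only refers to the inductive octahedral argument in the proof of \cite[Corollary 3.2]{n18}, and your version of it, with the base case taken from the definition of $a(G)$, is right, including the range bookkeeping $E'\in\overline{\langle G\rangle}^{[-a(G)-m,\,a(G)-m]}$ and $E_{m+1}\in E_m\ast E'$.

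Your justification of extension-closure is off, though. For a finite range the Eilenberg swindle is irrelevant, so the $\mathrm{smd}$ is genuinely needed. The correct argument is that $\mathrm{smd}(\mathcal{A})\ast\mathrm{smd}(\mathcal{B})\subseteq\mathrm{smd}(\mathcal{A}\ast\mathcal{B})$ whenever $\mathcal{A},\mathcal{B}$ contain $0$ and are closed under finite sums. Indeed, suppose $X\oplus X'\in\mathcal{A}$, $Y\oplus Y'\in\mathcal{B}$, and $X\to Z\to Y\to X[1]$ is a triangle. Adding the triangles $X'\xrightarrow{1}X'\to 0\to X'[1]$ and $0\to Y'\xrightarrow{1}Y'\to 0$ shows $Z\oplus X'\oplus Y'\in\mathcal{A}\ast\mathcal{B}$. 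Applied to $\mathrm{Coprod}$ and to $\mathrm{Coprod}_n$, this gives both closure facts you use.

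The approximable clause has a genuine gap, and it is the one you flag yourself: the base case $E_1\in\overline{\langle G\rangle}^{[-a(G),a(G)]}_A$ is assumed, not proved.
\begin{itemize}
\item Definition \ref{defn-app} bounds the length by $A$ but only gives range $[-A,A]$.
\item Definition \ref{App-index} gives range $[-a(G),a(G)]$ but no bound on the length.
\end{itemize}
These come from two unrelated triangles $E\to F\to D$ and $E'\to F\to D'$, and comparing them, as you suggest, does not produce one approximation with both properties. For instance, the homotopy pullback of $E\to F\leftarrow E'$ still approximates $F$ modulo $\mathcal{S}^{\le -1}$, but it is only known to lie in $D[-1]\ast E'$.

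Deferring to Neeman does not help. His argument starts from the hypothesis of Definition \ref{defn-app}, where a single constant $A$ controls both range and length, so the issue never arises there. The paper's one-line citation of that proof does not resolve it either.

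What your induction actually proves, starting from the base case of Definition \ref{defn-app}, is $E_m\in\overline{\langle G\rangle}^{[1-m-A,A]}_{mA}$. This is Neeman's statement, and it is all that is used later: in Remark \ref{lem-app-sb} the union over $i$ absorbs the constants. To get the stated range $[1-m-a(G),a(G)]$ together with length $mA$, you need an extra ingredient. One option is a length-aware index, defined by requiring $E\in\overline{\langle G\rangle}^{[-n,n]}_A$ in Definition \ref{App-index}; with that index your induction goes through verbatim.
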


Now, we recall some full subcategories of a weakly approximable triangulated category $\mathcal{S}$ (see \cite{cns24,n18}).

\begin{itemize}
\item \emph{Bounded above objects}: $\mathcal{S}^-\coloneqq \bigcup_{m=1}^{\infty}\mathcal{S}^{\leq m}$;
\item \emph{Bounded below objects}: $\mathcal{S}^+\coloneqq \bigcup_{m=1}^{\infty}\mathcal{S}^{\geq-m}$;
\item \emph{Bounded objects}: $\mathcal{S}^b\coloneqq \mathcal{S}^-\cap\mathcal{S}^+$;
\item \emph{Compact objects}: $\mathcal{S}^c$;
\item \emph{Pseudo-compact objects}: $
\mathcal{S}^-_c\coloneqq \bigcap_{m=1}^{\infty}\big(\mathcal{S}^c*\mathcal{S}^{\leq-m}\big)$, that is, an object $F\in\mathcal{S}$ belongs to $\mathcal{S}^-_c$ if, for any integer $m>0$, there is a triangle $E\ra F\ra D\ra E[1]$ in $\mathcal{S}$ with $E\in\mathcal{S}^c$ and $D\in\mathcal{S}^{\leq-m}$;
\item \emph{Bounded pseudo-compact objects}: $\mathcal{S}^b_c\coloneqq\mathcal{S}^-_c\cap\mathcal{S}^b$.
\end{itemize}

The above subcategories are intrinsic in the sense that they are independent of the choice of the $t$-structures in the preferred equivalence class (see \cite{chns24, cns24,n18}). Moreover, by Definition \ref{defn-app} and \cite[Corollary 3.10]{bnp23}, we have the following result.

\begin{lem}\label{PR}
$(1)$ $\Hom_\mathcal{S}(G,G[i])=0$ for $i\gg 0$.

$(2)$ $
\mathcal{S}^-\big(\text{resp. } \mathcal{S}^+, \mathcal{S}^b \big)\,=\{X\in \mathcal{S}\mid \Hom_{\mathcal{S}}(G[i],X)=0 \;\text{for}\; i\ll 0\; (\text{resp. } i\gg 0, |i|\gg 0)\}.$
\end{lem}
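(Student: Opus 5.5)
The statement is Lemma \ref{PR}. I would first fix the compact generator $G$, the $t$-structure $(\mathcal{S}^{\le 0},\mathcal{S}^{\ge 1})$ and the integer $A$ from Definition \ref{defn-app}. I may take this $t$-structure to be in the preferred equivalence class, so after enlarging $A$ I can assume
\[
\mathcal{S}_G^{\le -A}\subseteq \mathcal{S}^{\le 0}\subseteq \mathcal{S}_G^{\le A},
\]
where $\mathcal{S}_G^{\le 0}=\Coprod(G(-\infty,0])$ as in Lemma \ref{lem-p-w-gen}.

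For part (1), condition (1) of Definition \ref{defn-app} gives $G[A]\in\mathcal{S}^{\le 0}$, hence $G[A+i]\in\mathcal{S}^{\le 0}$ for all $i\ge 0$. The second half of that condition says $\Hom_{\mathcal{S}}(G[-A],\mathcal{S}^{\le 0})=0$. Therefore
\[
\Hom_{\mathcal{S}}(G,G[2A+i])\cong\Hom_{\mathcal{S}}(G[-A],G[A+i])=0 \quad\text{for all } i\ge 0.
\]
So $\Hom_{\mathcal{S}}(G,G[n])=0$ for $n\ge 2A$, which proves (1).

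For part (2), I would treat $\mathcal{S}^-$ and $\mathcal{S}^+$ separately; the statement for $\mathcal{S}^b$ then follows by intersecting the two.

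\emph{The case $\mathcal{S}^+$.} Since $\mathcal{S}_G^{\ge 1}=G(-\infty,0]^\perp$, an object $X$ lies in $\mathcal{S}_G^{\ge -m}$ exactly when $\Hom(G[i],X)=0$ for all $i>m$. Since $\mathcal{S}^{\ge 0}$ and $\mathcal{S}_G^{\ge 0}$ differ by a bounded shift, this gives $\mathcal{S}^+=\{X\mid \Hom(G[i],X)=0 \text{ for } i\gg0\}$ directly.

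\emph{The case $\mathcal{S}^-$, easy direction.} Suppose $X\in\mathcal{S}^{\le m}$. Then $X[-m]\in\mathcal{S}^{\le 0}$, so $\Hom(G[-A],X[-m+j])=0$ for all $j\ge0$. Rewriting, $\Hom(G[i],X)=0$ for all $i\le -A-m$.

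\emph{The case $\mathcal{S}^-$, converse.} This is the main obstacle. Assume $\Hom(G[i],X)=0$ for $i\le -N$; I want to show $X\in\mathcal{S}^{\le m}$ for some $m$. The standard route is \cite[Corollary 3.10]{bnp23}, and I would invoke it directly. If I wanted to argue instead, I would try the following.

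First, truncate: take the triangle $X^{\le 0}\to X\to X^{\ge 1}\to X^{\le 0}[1]$. The piece $X^{\ge1}$ lies in $\mathcal{S}^+$ and still satisfies $\Hom(G[i],X^{\ge1})=0$ for $i\ll0$, because $\Hom(G[i],X^{\le0}[1])$ vanishes for $i\ll 0$ by the easy direction. Hence $X^{\ge 1}$ is bounded in the $G$-sense. So it suffices to prove: if $Y\in\mathcal{S}^+$ and $\Hom(G[i],Y)=0$ for all $|i|\ge N$, then $Y\in\mathcal{S}^-$.

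For this reduced claim I would use weak approximability. By Lemma \ref{lem-app-induction}, any $F\in\mathcal{S}^{\le 0}$ is approximated by objects of $\overline{\langle G\rangle}^{[1-m-a,a]}$ up to an error in $\mathcal{S}^{\le -m}$. Combining this with the vanishing of $\Hom(G[i],Y)$ for $|i|\ge N$, and with the fact that $Y\in\mathcal{S}^{\ge -n}$ for some $n$ kills maps from $\mathcal{S}^{\le -n-1}$, one shows that the truncation $Y^{\ge M}$ receives only zero maps from $\Coprod(G[\ast])$ in the relevant range. Since $G$ is a generator, this forces $Y^{\ge M}=0$ for $M\gg0$, and hence $Y\in\mathcal{S}^{\le M-1}$.
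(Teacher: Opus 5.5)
Your proposal follows the paper's route. Part (1) is read off from condition (1) of Definition~\ref{defn-app} exactly as you do, and the only nontrivial inclusion in (2) — that $\Hom_{\mathcal{S}}(G[i],X)=0$ for $i\ll 0$ forces $X\in\mathcal{S}^-$ — is precisely \cite[Corollary 3.10]{bnp23}, which the paper cites and you propose to invoke; your direct treatment of $\mathcal{S}^+$ and of the easy half of $\mathcal{S}^-$ is fine, apart from writing $X[-m]$ where you mean $X[m]$.

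Do not treat your closing sketch as a substitute for that citation: for $M\gg 0$ the triangle $Y^{\le M-1}\to Y\to Y^{\ge M}\to Y^{\le M-1}[1]$ only yields $\Hom_{\mathcal{S}}(G[i],Y^{\ge M})=0$ for $i>-M$ and for $i\le -M-c$ with a constant $c$ governed by $A$, and closing the remaining window is exactly the content of the cited result, which your sketch asserts (``one shows that'') rather than proves.
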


We give an example to explain the above six subcategories of $\mathcal{S}$.

Let $\mathcal{S}:=\mathscr{D}(S\Modcat)$ for a ring $S$ and let $G:={_SS}$. Then
$$\mathcal{S}^-=\mathscr{D}^{-}(S\Modcat),\;\; \mathcal{S}^+=\mathscr{D}^{+}(S\Modcat), \;\; \mathcal{S}^b=\mathscr{D}^b(S\Modcat),$$
$$\mathcal{S}^c=\mathscr{K}^b(\pmodcat{S}), \;\; \mathcal{S}^-_c=\mathscr{K}^-(\pmodcat{S}), \;\; \mathcal{S}^b_c=\mathscr{K}^{-,b}(\pmodcat{S}).$$

The following result will be used in our later proofs.

\begin{lem}\label{lem-gluing-$t$-struc}{\rm\cite[Corollary 3.12]{bnp23}}
Let the following diagram be a recollement of weakly approximable triangulated categories
\begin{align*}
\xymatrixcolsep{4pc}\xymatrix{\mathcal{R} \ar[r]|{i_*=i_!} &\mathcal{S} \ar@<-2ex>[l]|{i^*} \ar@<2ex>[l]|{i^!} \ar[r]|{j^!=j^*}  &\mathcal{T}. \ar@<-2ex>[l]|{j_!} \ar@<2ex>[l]|{j_{*}}
}
\end{align*}
Then the glued $t$-structures on $\mathcal{S}$ associated with the $t$-structures on $\mathcal{R}$ and $\mathcal{T}$ in the preferred equivalence classes are in the preferred equivalence class.
Moreover, the recollement can be restricted to a left and right recollements, respectively:

$$
\xymatrixcolsep{4pc}\xymatrix{\mathcal{R}^- \ar[r]|{i_*=i_!} &\mathcal{S}^- \ar@<-2ex>[l]|{i^*}  \ar[r]|{j^!=j^*}  &\mathcal{T}^- \ar@<-2ex>[l]|{j_!}}
\quad\mbox{and}\quad
\xymatrixcolsep{4pc}\xymatrix{\mathcal{R}^+ \ar[r]|{i_*=i_!} &\mathcal{S}^+  \ar@<2ex>[l]|{i^!} \ar[r]|{j^!=j^*}  &\mathcal{T}^+. \ar@<2ex>[l]|{j_{*}}}
$$
In particular, $i_*$ can be restricted to $\mathcal{R}^b\ra \mathcal{S}^b$ and $j^*$ can be restricted to $\mathcal{S}^b \ra \mathcal{T}^b$.
\end{lem}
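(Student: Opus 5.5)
We would take compact generators $G_{\mathcal{R}}, G_{\mathcal{S}}, G_{\mathcal{T}}$ and the $t$-structures $(\mathcal{R}^{\le 0},\mathcal{R}^{\ge 1})$, $(\mathcal{T}^{\le 0},\mathcal{T}^{\ge 1})$ generated by $G_{\mathcal{R}}$ and $G_{\mathcal{T}}$ as in Lemma \ref{lem-p-w-gen}. Let $(\mathcal{S}^{\le 0}_{gl},\mathcal{S}^{\ge 1}_{gl})$ be the glued $t$-structure of Proposition \ref{glued $t$-str}. Since any two $t$-structures in a preferred class are equivalent, and gluing respects shifts, it suffices to treat this choice. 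The goal is then to find an $n\ge 0$ with
$$\mathcal{S}_{G_{\mathcal{S}}}^{\le -n}\subseteq \mathcal{S}^{\le 0}_{gl}\subseteq \mathcal{S}_{G_{\mathcal{S}}}^{\le n}.$$

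The basic tool is the following observation. By Lemma \ref{PR}(1), $\Hom(H,H'[i])=0$ for $i\gg0$ whenever $H,H'$ are compact. Hence, for compact $H$, we get $\Hom(H[i],G)=0$ for $i\ll 0$ with $G$ the generator. By Lemma \ref{PR}(2), every compact object is bounded above.

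For the first inclusion:
\begin{itemize}
\item $i^*G_{\mathcal{S}}$ is compact by Lemma \ref{lem-key}(1), so it lies in $\mathcal{R}^{-}$.
\item For $j^*G_{\mathcal{S}}$, adjunction gives $\Hom(G_{\mathcal{T}}[i],j^*G_{\mathcal{S}})=\Hom(j_!G_{\mathcal{T}}[i],G_{\mathcal{S}})$. This vanishes for $i\ll0$ because $j_!G_{\mathcal{T}}$ is compact, so $j^*G_{\mathcal{S}}\in\mathcal{T}^-$.
\item Hence $G_{\mathcal{S}}\in\mathcal{S}^{\le n}_{gl}$ for some $n$.
\item Both $i^*$ and $j^*$ are left adjoints, so they preserve coproducts. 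Therefore $\mathcal{S}^{\le 0}_{gl}$ is closed under coproducts and extensions, and so $\Coprod(G_{\mathcal{S}}(-\infty,-n])\subseteq \mathcal{S}^{\le 0}_{gl}$.
\end{itemize}

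For the second inclusion, take $X\in\mathcal{S}^{\le0}_{gl}$ and use the triangle $j_!j^*X\to X\to i_*i^*X\to$. It suffices to bound $j_!(\mathcal{T}^{\le0})$ and $i_*(\mathcal{R}^{\le 0})$ above uniformly.
\begin{itemize}
\item Both $j_!$ and $i_*$ preserve coproducts, since they have right adjoints $j^!$ and $i^!$.
\item Therefore it is enough to show that $j_!G_{\mathcal{T}}$ and $i_*G_{\mathcal{R}}$ lie in $\mathcal{S}^-$.
\item The object $j_!G_{\mathcal{T}}$ is compact, so it lies in $\mathcal{S}^-$ by the observation above.
\item For $i_*G_{\mathcal{R}}$, which need not be compact, adjunction gives $\Hom(G_{\mathcal{S}}[i],i_*G_{\mathcal{R}})=\Hom(i^*G_{\mathcal{S}}[i],G_{\mathcal{R}})$. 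This vanishes for $i\ll0$ because $i^*G_{\mathcal{S}}$ is compact in $\mathcal{R}$ and Lemma \ref{PR}(1) holds there.
\end{itemize}
This step, handling the possibly non-compact $i_*G_{\mathcal{R}}$ through adjunction, is the main point to get right.

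Once the glued $t$-structure is in the preferred class, we have
$$X\in\mathcal{S}^-\iff i^*X\in\mathcal{R}^- \text{ and } j^*X\in\mathcal{T}^-,$$
and dually
$$X\in\mathcal{S}^+\iff i^!X\in\mathcal{R}^+ \text{ and } j^*X\in\mathcal{T}^+.$$
The forward direction holds by the definition of the glued $t$-structure. The converse uses the gluing triangles, as in Lemma \ref{lem-bounde induced bounde}. From these criteria:
\begin{itemize}
\item $i^*$ and $j^*$ restrict to $\mathcal{S}^-$.
\item $j_!$ and $i_*$ send bounded above objects into $\mathcal{S}^-$, using $i^*j_!=0$, $j^*j_!\cong{\rm id}$, $i^*i_*\cong{\rm id}$ and $j^*i_*=0$.
\item The same identities give the $+$ case for $i^!$, $j_*$, $i_*$ and $j^*$.
\end{itemize}
The triangles in (R4) then stay inside $\mathcal{S}^{\mp}$, which yields the left and right recollements. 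Finally, $i_*$ and $j^*$ occur in both restrictions, so they restrict to the bounded parts $\mathcal{R}^b\to\mathcal{S}^b$ and $\mathcal{S}^b\to\mathcal{T}^b$.
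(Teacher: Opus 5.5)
Your proof is correct. The paper gives no argument of its own for this lemma; it quotes it from \cite[Corollary 3.12]{bnp23}. So your proposal is a different, self-contained route.

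You derive the result from statements the paper does record: Lemma~\ref{PR} (itself \cite[Corollary 3.10]{bnp23}), Lemma~\ref{lem-key}(1), and the description $\mathcal{S}_G^{\le 0}=\Coprod(G(-\infty,0])$ from Lemma~\ref{lem-p-w-gen}. The reduction to the generated $t$-structures on $\mathcal R$ and $\mathcal T$ is valid. Both inclusions $\mathcal{S}_{G_{\mathcal S}}^{\le -n}\subseteq\mathcal{S}^{\le 0}_{gl}\subseteq\mathcal{S}_{G_{\mathcal S}}^{\le n}$ are sound. This includes the adjunction step $\Hom(G_{\mathcal S}[i],i_*G_{\mathcal R})\cong\Hom(i^*G_{\mathcal S}[i],G_{\mathcal R})$ for the possibly non-compact $i_*G_{\mathcal R}$. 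The restrictions to $\mathcal S^{\pm}$ and $\mathcal S^b$ then follow as you say.

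What your route gains over the citation is transparency: weak approximability enters only through Lemma~\ref{PR}. Specifically, it is used for two things:
\begin{itemize}
\item the vanishing of $\Hom(H,H'[i])$ for $i\gg0$ between compact objects;
\item the nontrivial (``if'') direction of the $\mathcal{S}^-$ case of Lemma~\ref{PR}(2), which gives $j^*G_{\mathcal S}\in\mathcal T^-$ and $i_*G_{\mathcal R}\in\mathcal S^-$.
\end{itemize}

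Three small points:
\begin{itemize}
\item Passing from Lemma~\ref{PR}(1), which is stated only for $G$, to arbitrary compact $H,H'$ uses $\mathcal S^c=\langle G\rangle$ together with a d\'evissage. You should say this explicitly.
\item ``Compact implies bounded above'' needs no weak approximability: $G\in\mathcal S_G^{\le 0}$ and $\mathcal S^-$ is thick.
\item In your criteria for $\mathcal S^{\pm}$, the converse direction needs no gluing triangles. If $i^*X\in\mathcal R^{\le m_1}$ and $j^*X\in\mathcal T^{\le m_2}$, then $X\in\mathcal S^{\le \max(m_1,m_2)}_{gl}$ directly from the definition of the glued aisle. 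The co-aisle case is the same.
\end{itemize}
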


\section{Strongly bounded objects in weakly approximable triangulated categories}\label{3}

In this section, we concentrate on strongly bounded objects in a compactly generated triangulated category
and establish some basic properties of these objects. The main result of this section is Theorem \ref{prop-rec-res1}, which generalizes \cite[Proposition 4]{k91} from unbounded derived categories of rings to weakly approximable triangulated categories.

\begin{defn}\label{defn-sp}
{\rm \cite[Definition 8.5]{n25}}
Let $\mathcal{S}$ be a compactly generated triangulated category with a compact generator $G$. We define
$$
\mathcal{S}^{{\rm sb}}\coloneqq\bigcup_{i\ge 0}\overline{\langle G\rangle}^{[-i,i]}=\bigcup_{i\ge 0}\text{smd}(\text{Coprod}(G[-i,i])).
$$
The objects in $\mathcal{S}^{{\rm sb}}$ are called \emph{strongly bounded} objects in $\mathcal{S}$.
\end{defn}

Note that $\mathcal{S}^{{\rm sb}}$ does not depend on the choice of compact generators of $\mathcal{S}$. When $\mathcal{S}=\mathscr{D}(S\Modcat)$ for a ring $S$, we see that $\mathcal{S}^{{\rm sb}}$ coincides with the homotopy category $\mathscr{K}^b(\Pmodcat{S})$ of bounded complexes of projective $S$-modules. Thus $\mathcal{S}^{{\rm sb}}$ can be regarded as a natural analogue of $\mathscr{K}^b(\Pmodcat{S})$ in the framework of compactly generated triangulated categories.

\begin{lem}\label{some lems}
Let $\mathcal{S}$ be a compactly generated triangulated category with a compact generator $G$. Then:

$(1)$ The category $\mathcal{S}^{{\rm sb}}$ is a thick subcategory of $\mathcal{S}$.

$(2)$ If $\Hom_{\mathcal{S}}(G,G[i])=0$ for $i\gg 0$, then $\mathcal{S}^{{\rm sb}}\subseteq\bigcup_{m\in\mathbb{Z}}{}^{\perp}\mathcal{S}^{\leq m}$, where $(\mathcal{S}^{\leq 0},\mathcal{S}^{\geq 1})$ denotes a $t$-structure on $\mathcal{S}$ in the preferred equivalence class.

$(3)$ Let $\mathcal{R}$ be a compactly generated triangulated category with a compact generator $X$. If a triangle functor $\mathbf{F}: \mathcal{R} \to \mathcal{S}$ preserves coproducts with  $\mathbf{F}(X)\in\mathcal{S}^{{\rm sb}}$, then $\mathbf{F}$  can be restricted to a triangle functor $\mathcal{R}^{{\rm sb}}\ra \mathcal{S}^{{\rm sb}}$.
\end{lem}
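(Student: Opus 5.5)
For part (1) I will show that $\mathcal{S}^{\rm sb}$ is a triangulated subcategory closed under direct summands. Shift-closure is immediate from the definition: $\overline{\langle G\rangle}^{[-i,i]}[1]$ is contained in $\overline{\langle G\rangle}^{[-i-1,i+1]}$. Closure under summands is built into the $\text{smd}$ in the definition, since each $\overline{\langle G\rangle}^{[-i,i]}$ is already summand-closed. For extensions, suppose $X\in\overline{\langle G\rangle}^{[-i,i]}$ and $Z\in\overline{\langle G\rangle}^{[-k,k]}$, with $n=\max(i,k)$. Then both lie in $\overline{\langle G\rangle}^{[-n,n]}$, so I only need that $\overline{\langle G\rangle}^{[-n,n]}$ is closed under extensions.

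The point to handle is that $\text{smd}(\mathcal{C})$ of an extension-closed $\mathcal{C}$ need not be extension-closed in general. I will use the standard trick. Write $X\oplus X'\in\mathcal{C}$ and $Z\oplus Z'\in\mathcal{C}$, where $\mathcal{C}=\text{Coprod}(G[-n,n])$. Given a triangle $X\to Y\to Z\to X[1]$, take its direct sum with the split triangles $X'\to X'\to 0\to X'[1]$ and $0\to Z'\to Z'\to 0$. This gives a triangle $X\oplus X'\to Y\oplus X'\oplus Z'\to Z\oplus Z'\to X[1]\oplus X'[1]$. Its outer terms lie in $\mathcal{C}$, so $Y\oplus X'\oplus Z'\in\mathcal{C}$, and hence $Y\in\overline{\langle G\rangle}^{[-n,n]}$. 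Cones then follow by rotation together with shift-closure.

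For part (2), the hypothesis gives $N$ with $\Hom_{\mathcal{S}}(G,G[j])=0$ for $j>N$. I work with $t$-structure $(\mathcal{S}_G^{\le0},\mathcal{S}_G^{\ge1})$. For any other preferred $t$-structure, $\mathcal{S}^{\le m}\subseteq\mathcal{S}_G^{\le m+c}$ for some $c$, so nothing changes. I claim:
\begin{itemize}
\item[(a)] $\Hom(G,\mathcal{S}_G^{\le -N-1})=0$;
\item[(b)] for an object $E\in\overline{\langle G\rangle}^{[-i,i]}$, we have $\Hom(E,\mathcal{S}_G^{\le -N-i-1})=0$.
\end{itemize}

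For (a): $\mathcal{S}_G^{\le -N-1}=\Coprod(G(-\infty,-N-1])$. The class of objects $Y$ with $\Hom(G,Y)=0$ is closed under coproducts, because $G$ is compact, and under extensions. It contains $G[j]$ for $j\ge N+1$ by the choice of $N$. So it contains $\mathcal{S}_G^{\le -N-1}$.

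For (b): by (a) and shift-stability of $\mathcal{S}_G^{\le\ast}$, each $G[-l]$ with $|l|\le i$ is left-orthogonal to $\mathcal{S}_G^{\le -N-i-1}$. The left orthogonal ${}^{\perp}\mathcal{S}_G^{\le -N-i-1}$ is closed under coproducts, extensions and summands. It therefore contains $\overline{\langle G\rangle}^{[-i,i]}$, which is the claim.

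For part (3), let $\mathcal{U}$ be the full subcategory of objects $Y\in\mathcal{R}$ with $\mathbf{F}(Y)\in\mathcal{S}^{\rm sb}$. By (1), and since $\mathbf{F}$ is a triangle functor, $\mathcal{U}$ is thick and contains $X$. Fix $i$ and set $Q=\mathbf{F}(X)\in\overline{\langle G\rangle}^{[-k,k]}$. The set $\{X[-l]:|l|\le i\}$ is sent into $\overline{\langle G\rangle}^{[-k-i,k+i]}$. That class is closed under coproducts and extensions, and it is summand-closed by the Eilenberg swindle remark applied to smd. Since $\mathbf{F}$ preserves coproducts and triangles, $\mathbf{F}$ maps $\Coprod(X[-i,i])$ into it. Summands go to summands, so $\mathbf{F}$ maps $\overline{\langle X\rangle}^{[-i,i]}$ into $\mathcal{S}^{\rm sb}$.

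The expected obstacle is only the smd/extension closure in (1), which is handled by the sum trick above.
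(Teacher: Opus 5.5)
Your proposal is correct and follows essentially the same route as the paper: shift-closure via $\overline{\langle G\rangle}^{[-i,i]}[1]\subseteq\overline{\langle G\rangle}^{[-i-1,i+1]}$; for (2), killing $\mathcal{S}_G^{\le -B}$ against $G[-A,A]$ and then using that left orthogonals are closed under coproducts and extensions; for (3), the class of objects that $\mathbf{F}$ sends into $\overline{\langle G\rangle}^{[-k-i,k+i]}$. Your direct-sum-of-triangles trick just spells out the extension-closure that the paper calls clear.

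One small correction. In (3), summand-closure of $\overline{\langle G\rangle}^{[-k-i,k+i]}$ is automatic from the $\text{smd}$. It does not come from the Eilenberg swindle, which does not apply because $G[-k-i,k+i]$ is not shift-stable. Also, your trick shows that $\text{smd}$ of any extension-closed class is always extension-closed, contrary to your remark that it need not be.
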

\begin{proof}
(1) By definition, $\Coprod(G[-k,k])[1] = \Coprod(G[-k-1,k-1])$ for $k \geq 0$. This implies $\overline{\langle G\rangle}^{[-k,k]}[1]=\overline{\langle G\rangle}^{[-k-1,k-1]}$. Thus $\mathcal{S}^{{\rm sb}}\subseteq \mathcal{S}$ is closed under $[1]$. Similarly, $\mathcal{S}^{{\rm sb}}\subseteq \mathcal{S}$ is closed under $[-1]$. Clearly, $\mathcal{S}^{{\rm sb}}\subseteq \mathcal{S}$  is closed under direct summands and extensions. Thus $\mathcal{S}^{{\rm sb}}$ is a thick subcategory of $\mathcal{S}$.

(2) It suffices to show $(2)$ for the $t$-structure on $\mathcal{S}$ generated by $G$ (see Lemma \ref{lem-p-w-gen}). In the following, we prove that, for any integer $A>0$, there exists an integer $B>0$ such that $\overline{\langle G\rangle}^{[-A, A]} \subseteq {}^{\perp}(\mathcal{S}_{G}^{\leq -B})$.

In fact, since $\Hom_{\mathcal{S}}(G,G[i])=0$ for $i \gg 0$, there exists an integer $B>0$ such that $G(-\infty, -B] \subseteq G[-A, A]^{\perp}$. Moreover, since $G[-A,A]^{\perp}$ is closed under extensions and coproducts in $\mathcal{S}$, we have $\mathcal{S}_{G}^{\leq -B} \subseteq G[-A, A]^{\perp}$ by Lemma \ref{lem-p-w-gen}. Note that ${}^{\perp}\mathcal{S}_{G}^{\leq -B}$ is also closed under extensions and coproducts in $\mathcal{S}$. Thus $\overline{\langle G\rangle}^{[-A, A]} \subseteq {}^{\perp}\mathcal{S}_{G}^{\leq -B}$.

(3) Suppose that $\mathbf{F}$ preserves coproducts and $\mathbf{F}(X)\in \overline{\langle G\rangle}^{[\alpha,\beta]}$ for some integers $\alpha\leq \beta$. Let $M\in \mathcal{R}^{{\rm sb}}:=\bigcup_{i\ge 0}\overline{\langle X\rangle}^{[-i,i]}$.
Then $M\in \overline{\langle X\rangle}^{[-B,B]}$ for some $B\ge 0$.
We claim  $\mathbf{F}(M)\in \overline{\langle G\rangle}^{[\alpha-B,\beta+B]}.$

To show this, we define $$\mathscr{X}\coloneqq \{Y\in \mathcal{R}\mid \mathbf{F}(Y)\in \overline{\langle G\rangle}^{[\alpha-B,\beta+B]}\}.$$
By assumption, $\mathbf{F}(X)\in \overline{\langle G\rangle}^{[\alpha,\beta]}$. This implies
$\mathbf{F}(X[-i])\in \overline{\langle G\rangle}^{[\alpha+i,\beta+i]} \subseteq \overline{\langle G\rangle}^{[\alpha-B,\beta+B]}$ for  $-B\le i\le B$, and therefore $X[-i]\in \mathscr{X}$. Since $\overline{\langle G\rangle}^{[\alpha-B,\beta+B]}\subseteq\mathcal{S}$ is closed under coproducts and $\mathbf{F}$ preserves coproducts, $\mathscr{X}\subseteq\mathcal{R}$ is closed under coproducts. Moreover, since $\overline{\langle G\rangle}^{[\alpha-B,\beta+B]}\subseteq\mathcal{S}$ is closed under extensions, $\mathscr{X}\subseteq \mathcal{R}$ is closed under extensions. Thus $\overline{\langle X\rangle}^{[-B,B]}\subseteq\mathscr{X}$. In particular, $M\in \mathscr{X}$, and therefore $\mathbf{F}(M)\in \overline{\langle G\rangle}^{[\alpha-B,\beta+B]}\subseteq \mathcal{S}^{{\rm sb}}$.
This shows that $\mathbf{F}$ maps $\mathcal{R}^{{\rm sb}}$ into $\mathcal{S}^{{\rm sb}}$.
\end{proof}

\begin{lem}\label{lem-vanish}
Consider a recollement of weakly approximable triangulated categories
\begin{align*}
\xymatrixcolsep{4pc}\xymatrix{\mathcal{R} \ar[r]|{i_*=i_!} &\mathcal{S} \ar@<-2ex>[l]|{i^*} \ar@<2ex>[l]|{i^!} \ar[r]|{j^!=j^*}  &\mathcal{T} \ar@<-2ex>[l]|{j_!} \ar@<2ex>[l]|{j_{*}}
}
\end{align*}with compact generators $G_{\mathcal{R}}$, $G_{\mathcal{S}}$, and $G_{\mathcal{T}}$ for $\mathcal{R}$, $\mathcal{S}$, and $\mathcal{T}$, respectively. Suppose that $G_{\mathcal{S}}$ is bounded. Then the following statements hold.

{\rm (1)} $G_{\mathcal{T}}$ is bounded.

{\rm (2)} If $i_{*}(G_{\mathcal{R}})\in\mathcal{S}^{{\rm sb}}$, then $G_{\mathcal{R}}$ is bounded.
\end{lem}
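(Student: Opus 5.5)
The plan is to translate boundedness of a compact generator into membership in the bounded part of the category, and then move objects across the recollement. For this I use the remark after Lemma \ref{lem-p-w-gen}: in a weakly approximable category, a compact generator $G$ is bounded if and only if $G\in\mathcal{S}^b$, if and only if $\mathcal{S}^c\subseteq \mathcal{S}^b$. By Lemma \ref{PR}(2) this is equivalent to $\Hom(G[i],G)=0$ for $|i|\gg0$. The half "$i\ll 0$" of this vanishing is automatic by Lemma \ref{PR}(1). So in each case the only thing to show is the vanishing $\Hom(G[i],G)=0$ for $i\gg 0$.

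For (1), Lemma \ref{lem-key}(1) says that $j_!$ preserves compact objects, so $j_!(G_{\mathcal{T}})\in\mathcal{S}^c$. Since $G_{\mathcal{S}}$ is bounded, $\mathcal{S}^c\subseteq\mathcal{S}^b$, hence $j_!(G_{\mathcal{T}})\in\mathcal{S}^b$. By Lemma \ref{lem-gluing-$t$-struc}, $j^*$ restricts to a functor $\mathcal{S}^b\to\mathcal{T}^b$. Since $j_!$ is fully faithful, $G_{\mathcal{T}}\simeq j^*j_!(G_{\mathcal{T}})\in\mathcal{T}^b$, so $G_{\mathcal{T}}$ is bounded.

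For (2), the first step is to show that $\mathcal{S}^{\rm sb}\subseteq\mathcal{S}^b$ when $G_{\mathcal{S}}$ is bounded.
\begin{itemize}
\item Each $G_{\mathcal{S}}[k]$ lies in some $\mathcal{S}^{\ge -N_k}$, and $\mathcal{S}^{\ge n}=G_{\mathcal{S}}(-\infty,n-1]^{\perp}$ is closed under coproducts, extensions and summands. Hence $\overline{\langle G_{\mathcal{S}}\rangle}^{[-i,i]}\subseteq\mathcal{S}^{\ge -N}$ for a suitable $N$.
\item The inclusion into $\mathcal{S}^-$ follows the same way, using that $\mathcal{S}^{\le m}$ is closed under coproducts and extensions.
\end{itemize}
Consequently $i_*(G_{\mathcal{R}})\in\mathcal{S}^+$, which by Lemma \ref{PR}(2) means $\Hom_{\mathcal{S}}(G_{\mathcal{S}}[i],i_*G_{\mathcal{R}})=0$ for $i\gg0$. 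By adjunction this gives
$$\Hom_{\mathcal{R}}(i^*(G_{\mathcal{S}})[i],G_{\mathcal{R}})=0 \quad\text{for } i\gg0.$$

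Next, I show that $i^*(G_{\mathcal{S}})$ is a compact generator of $\mathcal{R}$.
\begin{itemize}
\item It is compact by Lemma \ref{lem-key}(1).
\item If $\Hom(i^*G_{\mathcal{S}}[n],Y)=0$ for all $n$, then by adjunction $\Hom(G_{\mathcal{S}}[n],i_*Y)=0$ for all $n$. Hence $i_*Y=0$, and $Y=0$ since $i_*$ is fully faithful.
\end{itemize}
It follows that $\mathcal{R}^c=\langle i^*G_{\mathcal{S}}\rangle$, using Lemma \ref{Closure}, so $G_{\mathcal{R}}\in\langle i^*G_{\mathcal{S}}\rangle^{[\alpha,\beta]}$ for some finite $\alpha\le\beta$. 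The displayed vanishing is preserved under extensions, summands and finitely many shifts. Therefore $\Hom_{\mathcal{R}}(G_{\mathcal{R}}[i],G_{\mathcal{R}})=0$ for $i\gg0$. Together with Lemma \ref{PR}(1) and (2), this gives $G_{\mathcal{R}}\in\mathcal{R}^b$, that is, $G_{\mathcal{R}}$ is bounded.

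The main obstacle I expect is the inclusion $\mathcal{S}^{\rm sb}\subseteq\mathcal{S}^+$, because it needs boundedness below to survive arbitrary coproducts. This is handled by writing $\mathcal{S}^{\ge n}$ as a right orthogonal of compact objects, and it is the point where the hypothesis that $G_{\mathcal{S}}$ is bounded is used.
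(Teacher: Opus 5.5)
Your proof is correct, but in both parts it takes a different route from the paper.

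\textbf{Part (1).} The paper stays inside $\mathcal{S}$. It places $j_!(G_{\mathcal{T}})$ in some $\langle G_{\mathcal{S}}\rangle^{[u,v]}$ and notes that any two objects of this class satisfy $\Hom_{\mathcal{S}}(Y_1,Y_2[i])=0$ for $i\le n+u-v$. It then pulls this back to $G_{\mathcal{T}}$ using the full faithfulness of $j_!$. You instead route through the restriction $j^*(\mathcal{S}^b)\subseteq\mathcal{T}^b$, which the paper imports from the gluing results recalled in Section~\ref{2}. This is legitimate in the weakly approximable setting, but it relies on a substantial external theorem. The paper's argument is elementary, works for any recollement of compactly generated categories, and gives an explicit range of vanishing.

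\textbf{Part (2).} The paper proves that $\overline{\langle G_{\mathcal{S}}\rangle}^{[-u,u]}$ has self-Hom vanishing in sufficiently negative degrees. To handle coproducts it uses Neeman's factorization lemma for maps from compact objects into coproduct closures, and it again concludes via full faithfulness of $i_*$. You need only the one-sided fact $i_*(G_{\mathcal{R}})\in\mathcal{S}^+$. You obtain it cleanly because $\mathcal{S}^{\ge -N}=G_{\mathcal{S}}(-\infty,-N-1]^{\perp}$ is closed under coproducts, extensions and summands. You then transport the vanishing to $\mathcal{R}$ via the adjunction $(i^*,i_*)$ and a thick-closure argument with the compact generator $i^*(G_{\mathcal{S}})$.

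\textbf{What each route buys.} Your version of (2) proves a slightly stronger statement: the conclusion holds as soon as $i_*(G_{\mathcal{R}})\in\mathcal{S}^+$. Boundedness of $G_{\mathcal{S}}$ is needed only to deduce this from $i_*(G_{\mathcal{R}})\in\mathcal{S}^{\rm sb}$. It also avoids the factorization lemma. The price is that you lose the explicit bound $j\le n-2u$ that the paper's computation gives.
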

\begin{proof}
Assume that there exists $n\in\mathbb{Z}$ such that $\Hom_{\mathcal{S}}(G_{\mathcal{S}},G_{\mathcal{S}}[i])=0$ for all $i\leq n$.

(1) By Lemma \ref{lem-key}(1), $j_!$ can restrict to $\mathcal{T}^c\ra \mathcal{S}^c$. Then there are $u\le v\in \mathbb{Z}$ such that $j_!(G_{\mathcal{T}})\in \langle G_{\mathcal{S}}\rangle^{[u,v]}$.
Then, for any $Y_1,Y_2\in \langle G_{\mathcal{S}}\rangle^{[u,v]}$, we have $\Hom_{\mathcal{S}}(Y_1,Y_2[i])=0$ for $i\le n+u-v$.
Since $j_!(G_{\mathcal{T}})\in \langle G_{\mathcal{S}}\rangle^{[u,v]}$ and $j_!$ is fully faithful, it holds that $\Hom_{\mathcal{T}}(G_{\mathcal{T}},G_{\mathcal{T}}[i])
\simeq\Hom_{\mathcal{S}}(j_!(G_{\mathcal{T}}),j_!(G_{\mathcal{T}})[i])=0$ for $i\le n+u-v$. Thus $G_{\mathcal{T}}$ is bounded.

(2) Assume $i_{*}(G_{\mathcal{R}})\in\overline{\langle G_{\mathcal{S}}\rangle}^{[-u,u]}$ for some $u\geq 0$. Let $X\in\overline{\langle G_{\mathcal{S}}\rangle}^{[-u,u]}$. Since $G_{\mathcal{S}}$ is compact, it follows from \cite[Lemma 1.8]{n21} that, for each $i\in\mathbb{Z}$, every morphism in $\Hom_{\mathcal{S}}(G_{\mathcal{S}},X[i])$ factors through an object in $\langle G_{\mathcal{S}}\rangle^{[-u,u]}[i]$. Moreover, since $\Hom_{\mathcal{S}}(G_{\mathcal{S}},G_{\mathcal{S}}[i])=0$ for all $i\le n$, we have $\langle G_{\mathcal{S}}\rangle^{[-u,u]}[i]\subseteq G_{\mathcal{S}}^{\perp}$ for $i\leq n-u$. It follows that $\Hom_{\mathcal{S}}(G_{\mathcal{S}},X[i])=0$ for $i\leq n-u$, and therefore $\overline{\langle G_{\mathcal{S}}\rangle}^{[-u,u]}[i]\subseteq G_{\mathcal{S}}^{\perp}$ for $i\leq n-u$. Thus $\overline{\langle G_{\mathcal{S}}\rangle}^{[-u,u]}[j]\subseteq G_{\mathcal{S}}[-u,u]^{\perp}$ for $j\leq n-2u$. Now, for any fixed $j\le n-2u$, let $Y\in\overline{\langle G_{\mathcal{S}}\rangle}^{[-u,u]}[j]$. Then $G_{\mathcal{S}}[-u,u]\subseteq {}^{\perp}Y$. Since ${}^{\perp}Y\subseteq \mathcal{S}$ is closed under coproducts and extensions, it is clear that $\overline{\langle G_{\mathcal{S}}\rangle}^{[-u,u]}\subseteq{}^{\perp}Y$. This forces $\Hom_{\mathcal{S}}(X,Y)=0$.
Since $i_*$ is fully faithful and $i_{*}(G_{\mathcal{R}})\in\overline{\langle G_{\mathcal{S}}\rangle}^{[-u,u]}$, we see that $\Hom_{\mathcal{R}}(G_{\mathcal{R}},G_{\mathcal{R}}[j])\simeq \Hom_{\mathcal{S}}(i_{*}(G_{\mathcal{R}}),i_{*}(G_{\mathcal{R}})[j])=0$ for all $j\le n-2u$. Thus $G_{\mathcal{R}}$ is bounded.
\end{proof}

In the following, we give a new characterization of $\mathcal{S}^{{\rm sb}}$ in terms of the vanishing of Hom-spaces.

\begin{prop}\label{lem-sp=big}
Let $\mathcal{S}$ be a weakly approximable triangulated category with a compact generator $G$ and $(\mathcal{S}^{\leq 0},\mathcal{S}^{\geq 1})$ a $t$-structure in the preferred equivalence class.
Then
$$
\mathcal{S}^{{\rm sb}}=\{X\in \mathcal{S}^-\mid \forall\; Z\in \mathcal{S}^-,\; \Hom_{\mathcal{S}}(X, Z[i])=0 \text{ for }i\gg 0 \}=\bigcup_{m\in\mathbb{Z}}\big(\mathcal{S}^-\cap {^\perp}\mathcal{S}^{\le m}\big).
$$

If $G$ is bounded, then
$$\mathcal{S}^{{\rm sb}}=\{X\in \mathcal{S}^b\mid \forall\;  Z\in \mathcal{S}^b,\;\Hom_{\mathcal{S}}(X, Z[i])=0 \text{ for }i\gg0\}=\bigcup_{m\in\mathbb{Z}}\big(\mathcal{S}^b\cap {^\perp}\mathcal{S}^{\le m}\big).\\$$
\end{prop}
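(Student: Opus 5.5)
The plan is to prove the cycle of inclusions
\[
\mathcal{S}^{{\rm sb}}\subseteq \bigcup_{m}\big(\mathcal{S}^-\cap{}^{\perp}\mathcal{S}^{\le m}\big)\subseteq \{X\in\mathcal{S}^-\mid \forall Z\in\mathcal{S}^-,\ \Hom_{\mathcal{S}}(X,Z[i])=0,\ i\gg0\}\subseteq \mathcal{S}^{{\rm sb}}.
\]
Since all three sets are independent of the choice of $t$-structure in the preferred equivalence class, we may take $(\mathcal{S}_G^{\le0},\mathcal{S}_G^{\ge1})$ throughout.

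\emph{First inclusion.} By Lemma \ref{PR}(1), $\Hom_{\mathcal{S}}(G,G[i])=0$ for $i\gg0$, so Lemma \ref{some lems}(2) gives $\mathcal{S}^{{\rm sb}}\subseteq\bigcup_m{}^{\perp}\mathcal{S}^{\le m}$. Moreover $G[-i,i]\subseteq\mathcal{S}_G^{\le i}$, and $\mathcal{S}_G^{\le i}$ is closed under coproducts, extensions and summands, so $\overline{\langle G\rangle}^{[-i,i]}\subseteq\mathcal{S}^{\le i}\subseteq\mathcal{S}^-$.

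\emph{Second inclusion.} Let $X\in{}^{\perp}\mathcal{S}^{\le m}$ and $Z\in\mathcal{S}^{\le k}$. Since $Z[i]\in\mathcal{S}^{\le k-i}\subseteq\mathcal{S}^{\le m}$ for $i\ge k-m$, we get $\Hom_{\mathcal{S}}(X,Z[i])=0$ for $i\gg0$.

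\emph{Third inclusion (key step).} After a shift, assume $X\in\mathcal{S}^{\le0}$. For each $m>0$, Lemma \ref{lem-app-induction} gives triangles $E_m\to X\xrightarrow{f_m} D_m\to E_m[1]$ with $E_m\in\overline{\langle G\rangle}^{[1-m-a(G),a(G)]}$ and $D_m\in\mathcal{S}^{\le -m}=\mathcal{S}^{\le0}[m]$. Write $D_m=W_m[m]$ with $W_m\in\mathcal{S}^{\le0}$ and set $Z:=\coprod_m W_m$, which lies in $\mathcal{S}^{\le0}\subseteq\mathcal{S}^-$ because $\mathcal{S}^{\le 0}$ is closed under coproducts. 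Since $D_m$ is a summand of $Z[m]$, the hypothesis gives $f_m=0$ for $m\gg0$. For such $m$, $X$ is a direct summand of $E_m$, so $X\in\mathcal{S}^{{\rm sb}}$. This closes the cycle.

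\emph{Bounded case.} Assume $G$ is bounded. Then $\mathcal{S}^{{\rm sb}}\subseteq\mathcal{S}^b$: indeed $G\in\mathcal{S}^b$, and a compactness argument as in Lemma \ref{lem-vanish}(2) shows that $\overline{\langle G\rangle}^{[-i,i]}\subseteq\mathcal{S}^{\ge -N}$ for some $N$. Hence the first two inclusions go through verbatim with $\mathcal{S}^b$ in place of $\mathcal{S}^-$. The obstacle is the third inclusion, because the test objects must now be bounded, while the objects $Z$ and $D_m$ above need not be. My plan is to show directly that $X\in{}^{\perp}\mathcal{S}^{\le -N}$ for some $N$, and then apply the argument above.

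\emph{Step 1 (uniform vanishing on the heart).} I claim there is $n$ with $\Hom_{\mathcal{S}}(X,Y[i])=0$ for all $Y$ in the heart $\mathcal{H}$ and all $i>n$. Suppose not. Then there are $Y_k\in\mathcal{H}$ and $i_k\to\infty$ with $\Hom_{\mathcal{S}}(X,Y_k[i_k])\ne0$. The heart is closed under coproducts (Lemma \ref{lem-p-w-gen}), so $Z:=\coprod_k Y_k\in\mathcal{H}\subseteq\mathcal{S}^b$. Since $Y_k[i_k]$ is a summand of $Z[i_k]$, we get $\Hom_{\mathcal{S}}(X,Z[i_k])\neq0$ for all $k$, contradicting the hypothesis.

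\emph{Step 2 (bounded objects).} Every object of $\mathcal{S}^b\cap\mathcal{S}^{\le -n-1}$ is a finite extension of objects $Y[i]$ with $Y\in\mathcal{H}$ and $i>n$. By Step 1, $\Hom_{\mathcal{S}}(X,\mathcal{S}^b\cap\mathcal{S}^{\le -n-1})=0$.

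\emph{Step 3 (passing to unbounded objects).} For $W\in\mathcal{S}^{\le -n-1}$, I would write $W$ as the homotopy limit of its truncations $W^{\ge -k}$ (each of which lies in $\mathcal{S}^b\cap\mathcal{S}^{\le -n-1}$), using products, which exist in a compactly generated category. The Milnor $\lim$–$\lim^1$ sequence, together with the vanishing of $\Hom_{\mathcal{S}}(X,W^{\ge-k})$ and $\Hom_{\mathcal{S}}(X,W^{\ge -k}[-1])$, then gives $\Hom_{\mathcal{S}}(X,W)=0$. The second vanishing needs $W^{\ge -k}[-1]$ to be in $\mathcal{S}^{\le -n-1}$, which holds after enlarging $n$ by one.

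This last step requires left completeness of the $t$-structure, i.e.\ $W\simeq\operatorname{holim}W^{\ge -k}$. For weakly approximable categories I expect to obtain it from Neeman's results on the preferred equivalence class. This is the point I expect to be hardest to justify, and I would check it first.
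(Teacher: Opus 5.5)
Your unbounded case matches the paper's argument: the same cycle of inclusions and the same test object $\coprod_m D_m[-m]\in\mathcal{S}^{\le 0}$. Your Steps 1 and 2 in the bounded case are also correct.

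The gap is Step 3. It needs left completeness of the preferred $t$-structure, i.e.\ that every $W\in\mathcal{S}^{\le -n-1}$ is the homotopy limit of its truncations $W^{\ge -k}$. You give no argument for this. The paper neither states nor uses it, and none of the tools available here (Lemmas \ref{PR} and \ref{lem-app-induction}) says anything about products or homotopy limits. As written, the bounded case is therefore not proved.

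Step 3 is also unnecessary, because the premise that led you to it is mistaken: you say the $D_m$ need not be bounded, but each $D_m$ is. Indeed $X\in\mathcal{S}^b$, and $E_m\in\overline{\langle G\rangle}^{[1-m-a(G),\,a(G)]}\subseteq\mathcal{S}^{\rm sb}\subseteq\mathcal{S}^b$ by the inclusion you proved at the start of the bounded case. Since $\mathcal{S}^b$ is a triangulated subcategory, $D_m\in\mathcal{S}^b\cap\mathcal{S}^{\le -m}$. Step 2 then gives $\Hom_{\mathcal{S}}(X,D_m)=0$ for $m>n$, so $f_m=0$ and $X$ is a direct summand of $E_m$. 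Only the coproduct of the $D_m$ could fail to be bounded, and Step 1 has already handled that uniformity issue through the heart.

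The paper closes the same point without using the heart. Take $X\in\mathcal{S}_G^{\ge -s}\cap\mathcal{S}_G^{\le 0}$ and $t$ with $\Hom_{\mathcal{S}}(G,G[i])=0$ for $i\le -t$. Then $E_m\in\mathcal{S}_G^{\ge 2-m-a(G)-t}$, so $D_m[-m]\in\mathcal{S}_G^{\le 0}\cap\mathcal{S}_G^{\ge \min\{-s,\,1-a(G)-t\}}$ uniformly in $m$. Since $\mathcal{S}_G^{\ge N}$ is closed under coproducts ($G$ being compact), $\coprod_m D_m[-m]$ lies in $\mathcal{S}^b$, and the unbounded argument applies verbatim. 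Either repair removes any need for left completeness.
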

\begin{proof}
Let $\mathscr{X}\coloneqq \{X\in \mathcal{S}^-\mid \forall\; Z\in \mathcal{S}^-,\; \Hom_{\mathcal{S}}(X, Z[i])=0 \text{ for }i\gg 0 \}$ and
$\mathscr{Y}\coloneqq \bigcup_{m\in\mathbb{Z}}\big(\mathcal{S}^-\cap {^\perp}\mathcal{S}^{\le m}\big)$.
Clearly, $\mathscr{Y} \subseteq \mathscr{X}$. We now show  $\mathcal{S}^{{\rm sb}}\subseteq \mathscr{Y}$.
By Lemmas \ref{PR}(1) and \ref{some lems}(2), it suffices to show $\mathcal{S}^{{\rm sb}}\subseteq\mathcal{S}^{-}$.

Let $M \in \mathcal{S}^{{\rm sb}}$. Then there exists an integer $r > 0$ such that $M \in \overline{\langle G \rangle}^{[-r, r]}$. Since $G[i]$ is compact for all $i \in \mathbb{Z}$, it follows from \cite[Lemma 1.8]{n21} that every morphism in $\Hom_{\mathcal{S}}(G[i], M)$ factors through an object in $\coprods(G[-r, r])$. As $\Hom_{\mathcal{S}}(G[i], G) = 0$ for $i \ll 0$ by Lemma \ref{PR}(1), we have $\Hom_{\mathcal{S}}(G[i], M) = 0$ for $i \ll 0$. Moreover, by Lemma~\ref{PR}(2),
$\mathcal{S}^-=\{X\in \mathcal{S}\mid \Hom_{S}(G[i],X)=0 \;\text{for}\; i\ll 0\}$. Thus $M \in \mathcal{S}^{-}$. This shows $\mathcal{S}^{{\rm sb}} \subseteq \mathcal{S}^{-}$, and therefore $\mathcal{S}^{{\rm sb}}\subseteq \mathscr{Y}$.

In the following, we prove $\mathscr{X} \subseteq \mathcal{S}^{{\rm sb}}$.

Let $X \in \mathscr{X}$. Then $X \in \mathcal{S}^{-}$. Our aim is to show $X\in \mathcal{S}^{{\rm sb}}$.
Note that $\mathcal{S}^{{\rm sb}}$ is a triangulated subcategory of $\mathcal{S}$ by Lemma \ref{some lems}(1). So, without loss of generality, assume $X \in \mathcal{S}^{\le 0}$. By Lemma~\ref{lem-app-induction}, we can construct a series of triangles in $\mathcal{S}$:
$$E_1\ra X\xrightarrow{f_1}D_1\ra E_1[1],\\ $$
$$E_2\ra X\xrightarrow{f_2} D_2\ra E_2[1],\\  $$
$$\cdots\\$$
$$E_m\ra X\xrightarrow{f_m}D_m\ra E_m[1],\\$$
$$\cdots$$
where $E_m \in \overline{\langle G\rangle}^{[1-m-a(G),a(G)]}$ and $D_m \in \mathcal{S}^{\le -m}$ for $m \ge 1$. We claim that there exists an integer $k \geq 1$ with $f_k = 0$. If this is the case, then $X$ is a direct summand of $E_k \in \mathcal{S}^{{\rm sb}}$, and thus $X \in \mathcal{S}^{{\rm sb}}$ by Lemma~\ref{some lems}(1). This shows $\mathscr{X} \subseteq \mathcal{S}^{{\rm sb}}$.

Conversely, suppose $f_k \neq 0$ for all $k \geq 1$. Let $D \coloneqq \bigoplus_{i \geq 1} D_i[-i]$. Then
$D \in \mathcal{S}^{\leq 0} \subseteq \mathcal{S}^{-}$, and for each $k \geq 1$, $D_k$ is a direct summand of $D[k]$. Since $f_k \neq 0$ for all $k\geq 1$, it follows that $\Hom_{\mathcal{S}}(X, D[k]) \neq 0$ for all $k \geq 1$, contradicting $X \in \mathscr{X}$.

Now, assume that $G$ is bounded (that is, $\Hom_{\mathcal{S}}(G[i], G) = 0$ for $i \gg 0$). Then for any $M\in\overline{\langle G\rangle}^{[-r.r]}$, we have $\Hom_{\mathcal{S}}(G[i],M)=0$ for $i\gg0$. Further, by Lemma~\ref{PR}(2), $M\in\mathcal{S}^{+}$. This implies $\mathcal{S}^{{\rm sb}}\subseteq\mathcal{S}^{+}$, and thus $\mathcal{S}^{{\rm sb}}\subseteq\mathcal{S}^{b}$. It follows that $$\mathcal{S}^{{\rm sb}}=\mathcal{S}^{{\rm sb}}\cap\mathcal{S}^{b}=\bigcup_{m\in\mathbb{Z}}\big(\mathcal{S}^-\cap {^\perp}\mathcal{S}^{\le m}\big)\cap\mathcal{S}^{b}=\bigcup_{m\in\mathbb{Z}}\big(\mathcal{S}^b\cap {^\perp}\mathcal{S}^{\le m}\big).$$
It remains to show $\mathcal{S}^{{\rm sb}}$ is the same as the following category:
$$\mathscr{Z}\coloneqq\{X\in \mathcal{S}^b\mid \forall\; Z\in\mathcal{S}^{b},\;\Hom_{\mathcal{S}}(X,Z[i])=0\text{ for }i\gg 0 \}.$$

By $\mathscr{X}=\mathcal{S}^{{\rm sb}}\subseteq\mathcal{S}^{b}$, we obtain $\mathcal{S}^{{\rm sb}}\subseteq\mathscr{Z}$. To show the converse inclusion, let $X'\in\mathscr{Z}$. Clearly, the definition of $\mathscr{Z}$ depends on $\mathcal{S}^b$, which is independent of the choice of $t$-structures up to equivalence. So, we can assume  $(\mathcal{S}^{\leq 0},\mathcal{S}^{\geq 1})=(\mathcal{S}_{G}^{\leq 0},\mathcal{S}_{G}^{\geq 1})$.  Note that both $\mathcal{S}^{{\rm sb}}$ and $\mathscr{Z}$ are triangulated subcategories of $\mathcal{S}$. So, without loss of generality, assume $X'\in\mathcal{S}_{G}^{\geq-s}\cap\mathcal{S}_{G}^{\leq 0}$ for some $s>0$. By Lemma~\ref{lem-app-induction}, for each $m\geq 1$, there is a triangle $E'_{m}\to X'\xrightarrow{f_{m}'} D'_{m}\to E'_{m}[1]$ in $\mathcal{S}$ with $E'_m \in \overline{\langle G\rangle}^{[1-m-a(G),a(G)]}$ and $D'_m \in \mathcal{S}_{G}^{\le -m}$. Since $\Hom_{\mathcal{S}}(G, G[i]) = 0$ for $i\ll 0$, there exists $t>0$ such that $\Hom_{\mathcal{S}}(G, G[i]) = 0$ for all $i\leq-t$. Combining this with Lemma \ref{lem-p-w-gen}, we have
\[G[1-m-A,A]\subseteq G(-\infty,1-m-A-t]^{\perp}=\mathcal{S}^{\geq 2-m-A-t}_{G}.\]
Since $\mathcal{S}^{\geq 2-m-A-t}_{G}\subseteq \mathcal{S}$ is closed under extensions and coproducts,
it follows that
$\overline{\langle G\rangle}^{[1-m-A,A]}\subseteq \mathcal{S}^{\geq 2-m-A-t}_{G}$. This leads to  $E'_{m}\in\mathcal{S}^{\geq 2-m-A-t}_{G}$ for all $m\geq 1$. Further, by the triangle \[E'_{m}[-m]\to X'[-m]\to D'_{m}[-m]\to E'_{m}[-m+1],\]we have $X'[-m]\in\mathcal{S}_{G}^{\geq m-s}\subseteq\mathcal{S}_{G}^{\geq -s}$ and $E'_{m}[-m+1]\in\mathcal{S}_{G}^{\geq 1-A-t}$. Hence, $D'_{m}[-m]\in\mathcal{S}_{G}^{\geq\min\{-s,1-A-t\}}\cap\mathcal{S}_{G}^{\leq 0}$. Now, let $D' \coloneqq \bigoplus_{i \geq 1} D'_i[-i]$. Then $D'\in\mathcal{S}^{b}$. Similarly, we can show that $f_{k}' = 0$ for some integer $k > 0$. Then $X'$ is a direct summand of $E'_{k}$, and further, lies in $\mathcal{S}^{{\rm sb}}$. This verifies $\mathscr{Z} \subseteq \mathcal{S}^{{\rm sb}}$, and thus $\mathscr{Z}= \mathcal{S}^{{\rm sb}}$.
\end{proof}

\begin{remark}\label{lem-app-sb}
Let $\mathcal{S}$ be an approximable triangulated category with a compact generator $G$ and let $\mathscr{W}=\bigcup_{i\geq 0}\overline{\langle G\rangle}_{i+1}^{[-i,i]}$. Using the same method as in Proposition \ref{lem-sp=big}, we can show that $\mathscr{W}=\bigcup_{m\in\mathbb{Z}}(\mathcal{S}^{-}\cap\mathcal{}^{\perp}\mathcal{S}^{\leq m})$. Thus $\mathcal{S}^{{\rm sb}}=\mathscr{W}=\bigcup_{i\geq 0}\overline{\langle G\rangle}_{i+1}^{[-i,i]}$.
\end{remark}

\begin{lem}\label{lem-r-func1}
Let $\mathcal{R}$ and $\mathcal{S}$ be weakly approximable triangulated categories
with compact generators $G_{\mathcal{R}}$ and $G_{\mathcal{S}}$, respectively.
Let $\mathbf{F}\colon \mathcal{R}\ra \mathcal{S}$ be a triangle functor with a right adjoint functor $\mathbf{H}\colon \mathcal{S}\ra \mathcal{R}$.
Suppose that $\mathbf{F}$ can be restricted to $\mathcal{R}^-\ra \mathcal{S}^-$.
The following statements are equivalent.

{\rm (1)} $\mathbf{F}(G_{\mathcal{R}})\in \mathcal{S}^{{\rm sb}}$.

{\rm (2)} $\mathbf{H}$ can be restricted to $\mathcal{S}^-\ra \mathcal{R}^-$.

{\rm (3)} $\mathbf{F}$ can be restricted to $\mathcal{R}^{{\rm sb}}\ra \mathcal{S}^{{\rm sb}}$.

Moreover, each of the above conditions implies:

{\rm (4)} $\mathbf{H}$ can be restricted to $\mathcal{S}^b\ra \mathcal{R}^b$.

\noindent If $G_{\mathcal{R}}$ and $G_{\mathcal{S}}$ are bounded and $\mathbf{F}$ can be restricted to $\mathcal{R}^{b}\ra \mathcal{S}^{b}$, then {\rm (4)} is also equivalent to {\rm (1)–(3)}.
\end{lem}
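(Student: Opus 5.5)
The approach is to reduce everything to the orthogonality characterization of $\mathcal{S}^{\rm sb}$ in Proposition \ref{lem-sp=big}, together with the description of $\mathcal{R}^-$, $\mathcal{R}^b$ via $G_{\mathcal{R}}$ in Lemma \ref{PR}(2). The key observation is the adjunction isomorphism
\[
\Hom_{\mathcal{S}}(\mathbf{F}(G_{\mathcal{R}})[i],Z)\simeq \Hom_{\mathcal{R}}(G_{\mathcal{R}}[i],\mathbf{H}(Z)).
\]
Since $\mathbf{F}$ is a triangle functor and the adjunction commutes with shifts, this lets us read off the boundedness of $\mathbf{H}(Z)$ from Hom-vanishing out of $\mathbf{F}(G_{\mathcal{R}})$.

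\emph{$(1)\Rightarrow(2)$.} Assume $\mathbf{F}(G_{\mathcal{R}})\in\mathcal{S}^{\rm sb}$. By Proposition \ref{lem-sp=big}, for every $Z\in\mathcal{S}^-$ we have $\Hom_{\mathcal{S}}(\mathbf{F}(G_{\mathcal{R}}),Z[-i])=0$ for $i\ll0$. Equivalently, $\Hom_{\mathcal{R}}(G_{\mathcal{R}}[i],\mathbf{H}(Z))=0$ for $i\ll 0$, so $\mathbf{H}(Z)\in\mathcal{R}^-$ by Lemma \ref{PR}(2).

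\emph{$(2)\Rightarrow(1)$.} Note first that $G_{\mathcal{R}}\in\mathcal{R}^-$, since it is compact and $\Hom(G,G[i])=0$ for $i\gg0$ by Lemma \ref{PR}(1). Hence $\mathbf{F}(G_{\mathcal{R}})\in\mathcal{S}^-$ by the hypothesis on $\mathbf{F}$. For $Z\in\mathcal{S}^-$ we have $\mathbf{H}(Z)\in\mathcal{R}^-$, so $\Hom(\mathbf{F}G_{\mathcal{R}},Z[i])=\Hom(G_{\mathcal{R}},\mathbf{H}(Z)[i])=0$ for $i\gg0$. Proposition \ref{lem-sp=big} then gives $\mathbf{F}(G_{\mathcal{R}})\in\mathcal{S}^{\rm sb}$.

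\emph{$(1)\Leftrightarrow(3)$.} The direction $(3)\Rightarrow(1)$ is trivial because $G_{\mathcal{R}}\in\mathcal{R}^{\rm sb}$. For $(1)\Rightarrow(3)$ I would use Proposition \ref{lem-sp=big} again rather than Lemma \ref{some lems}(3), which would require $\mathbf{F}$ to preserve coproducts; that property holds here anyway, since $\mathbf{F}$ is a left adjoint. Alternatively, let $X\in\mathcal{R}^{\rm sb}\subseteq\mathcal{R}^-$. Then $\mathbf{F}(X)\in\mathcal{S}^-$. For $Z\in\mathcal{S}^-$ we get $\Hom(\mathbf{F}X,Z[i])=\Hom(X,\mathbf{H}(Z)[i])=0$ for $i\gg0$, using (2) to place $\mathbf{H}(Z)$ in $\mathcal{R}^-$ and Proposition \ref{lem-sp=big} in $\mathcal{R}$. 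So $\mathbf{F}(X)\in\mathcal{S}^{\rm sb}$.

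\emph{$(1)$--$(3)\Rightarrow(4)$.} Let $Z\in\mathcal{S}^b$. Then $\mathbf{H}(Z)\in\mathcal{R}^-$ by (2), so we only need bounded below. By Lemma \ref{PR}(2) this means $\Hom(G_{\mathcal{R}}[i],\mathbf{H}Z)=\Hom(\mathbf{F}(G_{\mathcal{R}})[i],Z)=0$ for $i\gg0$. This is the step I expect to need the most care, because $G_{\mathcal{S}}$ is not assumed bounded. The plan is as follows.
\begin{itemize}
\item Write $\mathbf{F}(G_{\mathcal{R}})\in\overline{\langle G_{\mathcal{S}}\rangle}^{[-u,u]}$.
\item Since $Z\in\mathcal{S}^+$, Lemma \ref{PR}(2) gives $\Hom(G_{\mathcal{S}}[i],Z)=0$ for $i\gg0$, so $G_{\mathcal{S}}[-u,u][i]\subseteq{}^\perp Z$ for $i\gg0$.
\item The class ${}^\perp Z$ is closed under coproducts, extensions and summands, so $\overline{\langle G_{\mathcal{S}}\rangle}^{[-u,u]}[i]\subseteq{}^\perp Z$ for $i\gg0$.
\end{itemize}
This gives the required vanishing.

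\emph{$(4)\Rightarrow(1)$ under the extra hypotheses.} Now $G_{\mathcal{R}},G_{\mathcal{S}}$ are bounded and $\mathbf{F}$ restricts to $\mathcal{R}^b\to\mathcal{S}^b$. Then $G_{\mathcal{R}}\in\mathcal{R}^b$, so $\mathbf{F}(G_{\mathcal{R}})\in\mathcal{S}^b$. For $Z\in\mathcal{S}^b$ we have $\mathbf{H}Z\in\mathcal{R}^b$, hence $\Hom(\mathbf{F}G_{\mathcal{R}},Z[i])=\Hom(G_{\mathcal{R}},\mathbf{H}Z[i])=0$ for $i\gg0$. The bounded version of Proposition \ref{lem-sp=big} then yields $\mathbf{F}(G_{\mathcal{R}})\in\mathcal{S}^{\rm sb}$.
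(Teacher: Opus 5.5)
Your proposal is correct and follows essentially the paper's route: the adjunction isomorphism, combined with the orthogonality characterization of $\mathcal{S}^{\rm sb}$ in Proposition \ref{lem-sp=big} and the Hom-vanishing description of $\mathcal{R}^-$, $\mathcal{R}^+$, $\mathcal{R}^b$ in Lemma \ref{PR}(2). The only local differences are these: you obtain $(1)\Rightarrow(2)$ directly from Proposition \ref{lem-sp=big}, where the paper runs the ${}^{\perp}Y$-closure argument; and for bounded-belowness in (4) you run that closure argument, where the paper simply uses $\mathbf{F}(G_{\mathcal{R}})\in\mathcal{S}^-$ together with $\Hom_{\mathcal{S}}(\mathcal{S}^{\le 0},\mathcal{S}^{\ge 1})=0$, which shows $\mathbf{H}(\mathcal{S}^+)\subseteq\mathcal{R}^+$ holds without (1)--(3), so your concern about $G_{\mathcal{S}}$ not being bounded does not arise.
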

\begin{proof}
(1) $\Rightarrow$ (2) Suppose that $\mathbf{F}(G_{\mathcal{R}})\in \overline{\langle G_{\mathcal{S}}\rangle}^{[-u,u]}$ for some $u>0$. By Lemma~\ref{PR}(2),
for any $Y \in \mathcal{S}^-$, there exists an integer $n$ such that $\Hom_{\mathcal{S}}(G_{\mathcal{S}}[i],Y)=0$ for $i\le n$.
Note that $^{\perp}Y\subseteq\mathcal{S}$ is closed under coproducts and extensions.
Hence, for any $Y' \in \overline{\langle G_{\mathcal{S}}\rangle}^{[-u,u]}$ and  $i \le n - u$, we have $\Hom_{\mathcal{S}}(Y'[i], Y) = 0$. By the adjunction isomorphism,
$\Hom_{\mathcal{R}}(G_{\mathcal{R}}[i],\mathbf{H}(Y))\simeq \Hom_{\mathcal{S}}(\mathbf{F}(G_{\mathcal{R}})[i],Y)=0$ for all $i\le n-u$.
It follows from Lemma \ref{PR}(2) that $\mathbf{H}(Y) \in \mathcal{R}^-$. Thus $\mathbf{H}$ restricts to a functor $\mathcal{S}^- \to \mathcal{R}^-$.

(2) $\Rightarrow$ (3) Let $X \in \mathcal{R}^{{\rm sb}}$ and $Y' \in \mathcal{S}^{-}$. By Proposition~\ref{lem-sp=big}, $\mathcal{R}^{{\rm sb}} \subseteq \mathcal{R}^{-}$. Moreover, by the adjunction isomorphism, $\Hom_{\mathcal{S}}(\mathbf{F}(X),Y'[i])\simeq \Hom_{\mathcal{R}}(X,\mathbf{H}(Y')[i])$ for $i\in\mathbb{Z}$. By $(2)$, $\mathbf{H}(Y')\in \mathcal{R}^-$.
It follows from $X\in \mathcal{R}^{{\rm sb}}$ that $\Hom_{\mathcal{R}}(X,\mathbf{H}(Y')[i])=0$ for $i\gg0$ by Proposition~\ref{lem-sp=big}.
Then $\Hom_{\mathcal{S}}(\mathbf{F}(X),Y'[i])=0$ for $i\gg 0$. Note that $\mathbf{F}(X)\in\mathcal{S}^{-}$ by assumption on $\mathbf{F}$.
Then $\mathbf{F}(X)\in \mathcal{S}^{{\rm sb}}$ by Proposition~\ref{lem-sp=big}. Thus $\mathbf{F}$ restricts to a functor $\mathcal{R}^{{\rm sb}}\ra \mathcal{S}^{{\rm sb}}$.

Clearly, (3) $\Rightarrow$ (1).

(2) $\Rightarrow$ (4) It suffice to show that $\mathbf{H}$ can be restricted to $\mathcal{S}^{+}\to\mathcal{R}^{+}$. Let $X \in \mathcal{S}^{+}$. For each $i \in \mathbb{Z}$, there is an adjunction isomorphism $\Hom_{\mathcal{S}}(\mathbf{F}(G_{\mathcal{R}})[i],X)\simeq\Hom_{\mathcal{R}}(G_{\mathcal{R}}[i],\mathbf{H}(X))$. By assumption on $\mathbf{F}$, we have $\mathbf{F}(G_{\mathcal{R}})\in\mathcal{S}^{-}$.
Since $\Hom_{\mathcal{S}}(\mathcal{S}^{\leq 0},\mathcal{S}^{\geq 1})=0$, it follows that $\Hom_{\mathcal{S}}(\mathbf{F}(G_{\mathcal{R}})[i], X)=0$ for $i\gg0$.
This leads to $\Hom_{\mathcal{R}}(G_{\mathcal{R}}[i],\mathbf{H}(X))=0$ for $i\gg 0$. Thus $\mathbf{H}(X)\in \mathcal{R}^{+}$ by Lemma \ref{PR}(2).

Now, suppose that $G_{\mathcal{R}}$ and $G_{\mathcal{S}}$ are bounded, and that $\mathbf{F}(\mathcal{R}^{b})\subseteq \mathcal{S}^{b}$. Then $\mathcal{R}^{{\rm sb}}\subseteq \mathcal{R}^b$ and $\mathcal{S}^{{\rm sb}}\subseteq \mathcal{S}^b$ by Proposition~\ref{lem-sp=big}.

(4) $\Rightarrow$ (3) Let $X\in \mathcal{R}^{{\rm sb}}$ and $Y'\in \mathcal{S}^b$.
For each $i\in\mathbb{Z}$, there is an isomorphism
$\Hom_{\mathcal{S}}(\mathbf{F}(X),Y'[i])\simeq \Hom_{\mathcal{R}}(X,\mathbf{H}(Y')[i])$.
Moreover, $\mathbf{H}(Y') \in \mathcal{R}^b$ by $(4)$. Since $X \in \mathcal{R}^{\mathrm{sb}}$, we see from Proposition~\ref{lem-sp=big} that $\Hom_{\mathcal{R}}(X, \mathbf{H}(Y')[i]) = 0$ for $i\gg0$. Hence, $\Hom_{\mathcal{S}}(\mathbf{F}(X), Y'[i]) = 0$ for $i \gg 0$. Note that $\mathbf{F}(X)\in\mathcal{S}^{b}$ by $\mathbf{F}(\mathcal{R}^{b})\subseteq \mathcal{S}^{b}$. Thus Proposition~\ref{lem-sp=big} implies $\mathbf{F}(X)\in \mathcal{S}^{{\rm sb}}$. This shows that $\mathbf{F}$ restricts to a functor  $\mathcal{R}^{{\rm sb}}\ra \mathcal{S}^{{\rm sb}}$.
\end{proof}

The following theorem is a generalization of \cite[Proposition 4]{k91} which deals with the case of unbounded derived categories of rings.

\begin{thm}\label{prop-rec-res1}
Let the following diagram be a recollement of weakly approximable triangulated categories
\begin{align*}
\xymatrixcolsep{4pc}\xymatrix{\mathcal{R} \ar[r]|{i_*=i_!} &\mathcal{S} \ar@<-2ex>[l]|{i^*} \ar@<2ex>[l]|{i^!} \ar[r]|{j^!=j^*}  &\mathcal{T}. \ar@<-2ex>[l]|{j_!} \ar@<2ex>[l]|{j_{*}}
}
\end{align*}
Let $G_{\mathcal{R}}$ and $G_{\mathcal{S}}$ are compact generators of $\mathcal{R}$ and $\mathcal{S}$, respectively. The following statements are equivalent.

{\rm (1)} $i_{*}(G_{\mathcal{R}})\in \mathcal{S}^{{\rm sb}}$.

{\rm ($1'$)} $j^{*}(G_{\mathcal{S}})\in \mathcal{T}^{{\rm sb}}$.

{\rm (2)} The recollement induces a left recollement
\begin{align*}
\xymatrixcolsep{4pc}\xymatrix{
\mathcal{R}^{{\rm sb}} \ar[r]|{i_*=i_!}
&\mathcal{S}^{{\rm sb}} \ar@<-2ex>[l]|{i^*}   \ar[r]|{j^!=j^*}
&\mathcal{T}^{{\rm sb}}. \ar@<-2ex>[l]|{j_!}
}
\end{align*}

{\rm (3)} The recollement induces a recollement
\begin{align*}
\xymatrixcolsep{4pc}\xymatrix{\mathcal{R}^- \ar[r]|{i_*=i_!} &\mathcal{S}^- \ar@<-2ex>[l]|{i^*} \ar@<2ex>[l]|{i^!} \ar[r]|{j^!=j^*}  &\mathcal{T}^-. \ar@<-2ex>[l]|{j_!} \ar@<2ex>[l]|{j_{*}}
}
\end{align*}

Moreover, each of the above conditions implies:

{\rm (4)} The recollement induces a right recollement
\begin{align*}
\xymatrixcolsep{4pc}\xymatrix{
\mathcal{R}^b \ar[r]|{i_*=i_!}
&\mathcal{S}^b  \ar@<2ex>[l]|{i^!} \ar[r]|{j^!=j^*}
&\mathcal{T}^b.   \ar@<2ex>[l]|{j_{*}}
}
\end{align*}

\noindent Assume that $G_{\mathcal{R}}$ and $G_{\mathcal{S}}$ are bounded. Then {\rm (4)} is also equivalent to {\rm (1)–(3)}.
\end{thm}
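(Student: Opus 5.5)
The plan is to reduce everything to Lemma~\ref{lem-r-func1}, applied to the adjoint pairs of the recollement, combined with the restrictions already provided by Lemma~\ref{lem-gluing-$t$-struc}.

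\textbf{(1)$\Leftrightarrow$(3).} By Lemma~\ref{lem-gluing-$t$-struc}, $i^*,i_*,j_!,j^*$ restrict to the bounded above parts, giving a left recollement on $\mathcal{R}^-,\mathcal{S}^-,\mathcal{T}^-$. So (3) amounts to asking that $i^!$ and $j_*$ also restrict to bounded above objects; the triangles and adjunctions are then inherited automatically. Apply Lemma~\ref{lem-r-func1} with $\mathbf{F}=i_*$, $\mathbf{H}=i^!$: since $i_*$ restricts to $\mathcal{R}^-\to\mathcal{S}^-$, condition (1) is equivalent to $i^!(\mathcal{S}^-)\subseteq\mathcal{R}^-$. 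It remains to see that this forces $j_*(\mathcal{T}^-)\subseteq\mathcal{S}^-$. For $Y\in\mathcal{T}^-$, use the triangle $i_*i^!j_!(Y)\to j_!(Y)\to j_*j^*j_!(Y)\simeq j_*(Y)\to$ (from (R4) applied to $j_!(Y)$). Here $j_!(Y)\in\mathcal{S}^-$, so $i^!j_!(Y)\in\mathcal{R}^-$, hence $i_*i^!j_!(Y)\in\mathcal{S}^-$, and therefore $j_*(Y)\in\mathcal{S}^-$. Conversely, (3) gives $i^!(\mathcal{S}^-)\subseteq\mathcal{R}^-$, which yields (1).

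\textbf{(1)$\Leftrightarrow$(1$'$).} Apply Lemma~\ref{lem-r-func1} to $\mathbf{F}=j_!$, $\mathbf{H}=j^*$, with $j_!$ restricting to bounded above objects and $j^*$ restricting trivially. This gives $j_!(G_{\mathcal{T}})\in\mathcal{S}^{\rm sb}$ automatically, since $j_!(G_{\mathcal{T}})$ is compact and compacts lie in $\overline{\langle G_{\mathcal{S}}\rangle}^{[-n,n]}$. That alone does not reach (1$'$), so instead consider the pair $\mathbf{F}=j^*$, $\mathbf{H}=j_*$. Since $j^*$ restricts to $\mathcal{S}^-\to\mathcal{T}^-$, Lemma~\ref{lem-r-func1} says (1$'$) holds iff $j_*(\mathcal{T}^-)\subseteq\mathcal{S}^-$. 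The previous paragraph shows $j_*(\mathcal{T}^-)\subseteq\mathcal{S}^-$ iff (3) iff (1). For the direction ``$j_*$ restricts implies $i^!$ restricts'', use the triangle $i_*i^!(Y)\to Y\to j_*j^*(Y)\to$ for $Y\in\mathcal{S}^-$: this gives $i_*i^!(Y)\in\mathcal{S}^-$. To descend to $i^!(Y)\in\mathcal{R}^-$, use that $i^*i_*\simeq \mathrm{id}$ and that $i^*$ preserves $(-)^-$.

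\textbf{(2).} For (1)$\Rightarrow$(2): $i^*$ and $j_!$ send compact generators to compacts, so $i^*(G_{\mathcal{S}})$ and $j_!(G_{\mathcal{T}})$ are strongly bounded. Also $i_*(G_{\mathcal{R}})\in\mathcal{S}^{\rm sb}$ by (1), and $j^*(G_{\mathcal{S}})\in\mathcal{T}^{\rm sb}$ by (1$'$). All four functors preserve coproducts ($i_*$ and $j^*$ have right adjoints $i^!$, $j_*$), so Lemma~\ref{some lems}(3) restricts each of them to the sb-parts. The left-recollement triangles for $Y\in\mathcal{S}^{\rm sb}$ then have all terms in the sb-parts, and the adjunctions and full faithfulness are inherited. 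Conversely, (2) contains (1) directly.

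\textbf{(4).} Lemma~\ref{lem-r-func1}(4) for the pair $(i_*,i^!)$ gives $i^!(\mathcal{S}^b)\subseteq\mathcal{R}^b$. For $j_*$, argue as above with the triangle $i_*i^!j_!(Y)\to j_!(Y)\to j_*(Y)\to$, now for $Y\in\mathcal{T}^b$; this needs $j_!(\mathcal{T}^b)\subseteq\mathcal{S}^b$, which is not given. Instead, use Lemma~\ref{lem-r-func1}(4) for the pair $(j^*,j_*)$, which is valid once (1$'$) holds. Together with $i_*$ and $j^*$ preserving bounded objects (Lemma~\ref{lem-gluing-$t$-struc}), this yields the right recollement.

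For the converse under boundedness, the final clause of Lemma~\ref{lem-r-func1} applies to $(i_*,i^!)$ because $i_*(\mathcal{R}^b)\subseteq\mathcal{S}^b$, so (4) gives (1).

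\textbf{Main obstacle.} The step most likely to need care is the $\mathcal{S}^-$ versus $\mathcal{R}^-$ descent of $i^!$ in the (1$'$) direction. The fallback there is Lemma~\ref{PR}(2) via $\Hom(G_{\mathcal{R}}[i],i^!Y)\simeq\Hom(i_*G_{\mathcal{R}}[i],Y)$.
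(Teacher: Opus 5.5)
Your argument is correct. For (2), (3), (4) and the converse under boundedness it matches what the paper intends; the paper only says these follow by combining Lemma~\ref{lem-gluing-$t$-struc} with Lemma~\ref{lem-r-func1}. The real difference is in (1)$\Leftrightarrow$(1$'$), which is the only part the paper writes out.

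\textbf{How the paper does (1)$\Leftrightarrow$(1$'$).} It stays inside the strongly bounded subcategories and uses the canonical triangle $j_!j^*(G_{\mathcal{S}})\to G_{\mathcal{S}}\to i_*i^*(G_{\mathcal{S}})\to j_!j^*(G_{\mathcal{S}})[1]$.
\begin{itemize}
\item Under (1), Lemma~\ref{some lems}(3) makes $i_*i^*(G_{\mathcal{S}})$ strongly bounded, hence also $j_!j^*(G_{\mathcal{S}})$. Proposition~\ref{lem-sp=big}, together with the density of $j^*\colon\mathcal{S}^-\to\mathcal{T}^-$ and the $(j_!,j^*)$ adjunction, then transfers this to $j^*(G_{\mathcal{S}})$.
\item Under (1$'$), the same triangle gives $i_*i^*(G_{\mathcal{S}})\in\mathcal{S}^{\rm sb}$. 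Thickness of $\mathcal{S}^{\rm sb}$ and $G_{\mathcal{R}}\in\langle i^*(G_{\mathcal{S}})\rangle$ then give (1).
\end{itemize}

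\textbf{How you do it.} You apply Lemma~\ref{lem-r-func1} twice, to $(i_*,i^!)$ and to $(j^*,j_*)$. This recasts (1) as $i^!(\mathcal{S}^-)\subseteq\mathcal{R}^-$ and (1$'$) as $j_*(\mathcal{T}^-)\subseteq\mathcal{S}^-$. You then show these two restriction properties are equivalent, using the (R4) triangles for $j_!(Y)$ and for $Y$.

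\textbf{What each route buys.} Yours is more uniform: the whole theorem becomes Lemma~\ref{lem-r-func1} plus two-out-of-three arguments in (R4) triangles. It also records that $i^!$ and $j_*$ restrict to bounded above objects simultaneously. The paper's route works only with the left half $i^*,i_*,j_!,j^*$ and the strongly bounded subcategories. It exhibits the relevant strongly bounded objects directly rather than passing through the right adjoints.

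\textbf{On your ``main obstacle''.} It is not an obstacle. Once $i_*i^!(Y)\in\mathcal{S}^-$, the isomorphism $i^!(Y)\simeq i^*i_*i^!(Y)$ together with $i^*(\mathcal{S}^-)\subseteq\mathcal{R}^-$ settles it. Your proposed fallback via $\Hom_{\mathcal{R}}(G_{\mathcal{R}}[i],i^!(Y))\simeq\Hom_{\mathcal{S}}(i_*(G_{\mathcal{R}})[i],Y)$ would be circular in that direction. The vanishing of these groups for $i\ll0$ and all $Y\in\mathcal{S}^-$ is essentially the content of (1), which is what you are trying to prove there.
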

\begin{proof}
In Theorem \ref{prop-rec-res1}, we only show the equivalence of $(1)$ and $(1')$ since other equivalences hold true by combining Lemma \ref{lem-gluing-$t$-struc} with Lemma \ref{lem-r-func1}.

(1) $\Rightarrow$ ($1'$) By Lemma \ref{lem-gluing-$t$-struc}, $j^{*}(G_{\mathcal{S}})\in\mathcal{T}^{-}$ and $j^*\colon \mathcal{S}^-\to \mathcal{T}^-$ is dense. It follows that, for any $Z' \in \mathcal{T}^{-}$, there exists an object $Y' \in \mathcal{S}^{-}$ such that $j^*(Y') \simeq Z'$.
By the adjoint pair $(j_!, j^{*})$, for each $i \in \mathbb{Z}$, there are isomorphisms:
$$\Hom_{\mathcal{T}}(j^{*}(G_{\mathcal{S}}),Z'[i])\simeq \Hom_{\mathcal{T}}(j^{*}(G_{\mathcal{S}}),j^*(Y')[i])
\simeq \Hom_{\mathcal{S}}(j_!j^{*}(G_{\mathcal{S}}),Y'[i]).$$ By Proposition~\ref{lem-sp=big},  to show $j^{*}(G_{\mathcal{S}})\in \mathcal{T}^{{\rm sb}}$, it suffices to show $j_{!}j^{*}(G_{\mathcal{S}})\in\mathcal{S}^{{\rm sb}}$. Note that there is a canonical triangle in $\mathcal{S}$:
$$j_!j^*(G_{\mathcal{S}})\ra G_{\mathcal{S}}\ra i_*i^*(G_{\mathcal{S}})\ra j_!j^*(G_{\mathcal{S}})[1].$$
By Lemma \ref{some lems}(3), the functor $i^*$ can be restricted to $\mathcal{S}^{{\rm sb}}\ra {\mathcal{R}}^{{\rm sb}}$. Moreover, since $i_{*}(G_{\mathcal{R}})\in \mathcal{S}^{{\rm sb}}$ by $(1)$, the functor $i_{*}$ can be restricted to ${\mathcal{R}}^{{\rm sb}}\ra \mathcal{S}^{{\rm sb}}$ still by Lemma \ref{some lems}(3). It follows from  $G_{\mathcal{S}}\in \mathcal{S}^{{\rm sb}}$ that $i_*i^*(G_{\mathcal{S}})\in \mathcal{S}^{{\rm sb}}$. By the above triangle, we have $j_!j^*(G_{\mathcal{S}})\in \mathcal{S}^{{\rm sb}}$. Thus $j^{*}(G_{\mathcal{S}})\in \mathcal{T}^{{\rm sb}}$.

($1'$) $\Rightarrow$ (1) By Lemma \ref{some lems}(3), the functor $j_!$ restricts to ${\mathcal{T}}^{{\rm sb}}\ra \mathcal{S}^{{\rm sb}}$. Since $ j^{*}(G_{\mathcal{S}}) \in \mathcal{T}^{{\rm sb}}$ by $(1')$, the functor $ j^{*} $ restricts to $ {\mathcal{S}}^{{\rm sb}} \to \mathcal{T}^{{\rm sb}}$ still by Lemma \ref{some lems}(3). Hence $ j_!j^*(G_{\mathcal{S}}) \in \mathcal{S}^{{\rm sb}} $. Using the triangle above and $ G_{\mathcal{S}} \in \mathcal{S}^{{\rm sb}} $, we have  $ i_*i^*(G_{\mathcal{S}}) \in \mathcal{S}^{{\rm sb}} $.
Since $i^*(G_{\mathcal{S}})$ is also a compact generator of $\mathcal{R}$, there exists an integer $u > 0$ such that $G_{\mathcal{R}} \in \langle i^*(G_{\mathcal{S}}) \rangle_{u}^{[-u, u]}$.
As $i_* i^*(G_{\mathcal{S}}) \in \mathcal{S}^{{\rm sb}}$ and $\mathcal{S}^{{\rm sb}}$ is a thick subcategory of $\mathcal{S}$ by Lemma \ref{some lems}(1), we obtain $i_*(\langle i^*(G_{\mathcal{S}})\rangle_{u}^{[-u,u]})\subseteq \mathcal{S}^{{\rm sb}}$. This implies $i_*(G_{\mathcal{R}})\in \mathcal{S}^{{\rm sb}}$.
\end{proof}

\section{Finitistic dimensions of triangulated categories}\label{4}

In this section, we consider two kinds of finitistic dimension for triangulated categories. Their finiteness is the same when restricted to weakly approximable triangulated categories (see Proposition \ref{Relation}).
Our aim is to compare (big) finitistic dimensions of triangulated categories in a recollement.

\subsection{Finitistic dimension in essentially small triangulated categories}\label{Homological dimension}

In this subsection, we give proofs of Theorem \ref{mainthm-fd-1} and Corollary \ref{cor to regular}. As a preparation, we establish the following lemma.

\begin{lem}\label{lem-fullyfaithful}
Let $\mathcal{U}$ and $\mathcal{V}$ be triangulated categories with objects $G_{\mathcal{U}} \in \mathcal{U}$ and $G_{\mathcal{V}} \in \mathcal{V}$.
Let $\mathbf{F} \colon \mathcal{U} \to \mathcal{V}$ be a triangle functor with a right adjoint $\mathbf{H} \colon \mathcal{V} \to \mathcal{U}$ such that $\mathbf{F}(G_{\mathcal{U}}) \in \langle G_{\mathcal{V}}\rangle$. If $\fd(\mathcal{U},G_{\mathcal{U}})=r<+\infty$, then $\mathbf{F}\mathbf{H}(X) \in \langle G_{\mathcal{V}}\rangle^{[0,+\infty)}[r+w_{G_{\mathcal{V}}}(\mathbf{F}(G_{\mathcal{U}}))]$ for any $X\in G_{\mathcal{V}}(-\infty,-1]^{\perp}$. In particular, if $\mathbf{H}$ is fully faithful, then $\fd(\mathcal{V},G_{\mathcal{V}})\leq\fd(\mathcal{U},G_{\mathcal{U}})+w_{G_{\mathcal{V}}}(\mathbf{F}(G_{\mathcal{U}})).$
\end{lem}
\begin{proof}
By assumption, $\mathbf{F}(G_{\mathcal{U}}) \in \langle G_{\mathcal{V}}\rangle^{[c,d]}$ for some $c \leq d$.
Let $X\in G_{\mathcal{V}}(-\infty,-1]^{\perp}$. Then $\Hom_{\mathcal{V}}(X',X)=0$ for any $X'\in \langle G_{\mathcal{V}}\rangle^{(-\infty,-1]}$.
Now, we claim that $\mathbf{H}(X)[-d]\in G_{\mathcal{U}}(-\infty,-1]^{\perp}$ in $\mathcal{U}$. Indeed, for any $i\geq 1$, \[\Hom_{\mathcal{U}}(G_{\mathcal{U}}[i],\mathbf{H}(X)[-d])\simeq \Hom_{\mathcal{U}}(G_{\mathcal{U}}[i+d],\mathbf{H}(X))\simeq \Hom_{\mathcal{V}}(\mathbf{F}(G_{\mathcal{U}})[i+d],X).\]
It follows from $\mathbf{F}(G_{\mathcal{U}})\in \langle G_{\mathcal{V}}\rangle^{[c,d]}$ that $\mathbf{F}(G_{\mathcal{U}})[i+d]\in \langle G_{\mathcal{V}}\rangle^{[c-i-d,-i]}\subseteq \langle G_{\mathcal{V}}\rangle^{(-\infty,-1]}$.
Then $$\Hom_{\mathcal{V}}(\mathbf{F}(G_{\mathcal{U}})[i+d],X)=0 \quad \text{and}\quad \Hom_{\mathcal{U}}(G_{\mathcal{U}}[i],\mathbf{H}(X)[-d])=0.$$
Thus $\mathbf{H}(X)[-d]\in G_{\mathcal{U}}(-\infty,-1]^{\perp}$ in $\mathcal{U}$.
By $\fd(\mathcal{U}, G_{\mathcal{U}})=r$, we have $\mathbf{H}(X)[-d]\in \langle G_{\mathcal{U}}\rangle^{[0,+\infty)}[r]$ and $\mathbf{H}(X)\in \langle G_{\mathcal{U}}\rangle^{[0,+\infty)}[r+d]$. It follows that $$\mathbf{F}\mathbf{H}(X)\in \langle \mathbf{F}(G_{\mathcal{U}})\rangle^{[0,+\infty)}[r+d]
\subseteq \langle G_{\mathcal{V}}\rangle^{[c,+\infty)}[r+d]=\langle G_{\mathcal{V}}\rangle^{[0,+\infty)}[r+d-c].$$
Clearly, if $\mathbf{H}$ is fully faithful, then $X \simeq \mathbf{F} \mathbf{H}(X)$ for any $X \in \mathcal{V}$. In this case, $X\in\langle G_{\mathcal{V}}\rangle^{[0,+\infty)}[r+d-c]$, and thus  $\fd(\mathcal{V},G_{\mathcal{V}})\leq\fd(\mathcal{U},G_{\mathcal{U}})+w_{G_{\mathcal{V}}}(\mathbf{F}(G_{\mathcal{U}})).$
\end{proof}

{\bf Proof of  Theorem \ref{mainthm-fd-1}.}
(1) If $\fd(\mathcal{R}, G_{\mathcal{R}}) = +\infty$ or $\fd(\mathcal{T}, G_{\mathcal{T}}) = +\infty$, then Theorem \ref{mainthm-fd-1} holds trivially. So, we assume $\fd(\mathcal{R}, G_{\mathcal{R}}) = r < +\infty$ and $\fd(\mathcal{T}, G_{\mathcal{T}}) = t < +\infty$. Recall from Definition \ref{notation}(4) that $\langle G_{\mathcal{S}} \rangle$ is the smallest full triangulated subcategory of $\mathcal{S}$ containing $G_{\mathcal{S}}$ and closed under direct summands. Now, let $i_{*}(G_{\mathcal{R}}) \in \langle G_{\mathcal{S}} \rangle^{[a, b]}$ with $w_{G_{\mathcal{S}}}(i_{*}(G_{\mathcal{R}})) = b - a$, and let $j_{!}(G_{\mathcal{T}}) \in \langle G_{\mathcal{S}} \rangle^{[c, d]}$ with $w_{G_{\mathcal{S}}}(j_{!}(G_{\mathcal{T}})) = d - c$. For each $Y\in\mathcal{S}$, there is a canonical triangle in $\mathcal{S}$:
\begin{align}\label{can-tri}
j_!j^*(Y)\ra Y\ra i_*i^*(Y)\ra j_!j^*(Y)[1].
\end{align}
Further, let $Y\in G_{\mathcal{S}}(-\infty,-1]^{\perp}$.
Then $j_!j^*(Y)\in \langle G_{\mathcal{S}}\rangle^{[0,+\infty)}[t+d-c]$ by Lemma \ref{lem-fullyfaithful}.  Define $u\coloneqq -o^{-}(G_{\mathcal{S}})-t-d+c$. Clearly, $u\leq 0$. In the following, we show $j_!j^*(Y)\in G_{\mathcal{S}}(-\infty,-1+u]^{\perp}$.

For $p\leq -1+u$ and $q=t+d-c$, we have $p+q\leq -1-o^{-}(G_{\mathcal{S}})<-o^{-}(G_{\mathcal{S}}).$
It follows from $\Hom_{\mathcal{S}}(G_{\mathcal{S}},G_{\mathcal{S}}[k])=0$ for $k<-o^{-}(G_{\mathcal{S}})$ that
$\Hom_{\mathcal{S}}(G_{\mathcal{S}}[-p],G_{\mathcal{S}}[q])
\simeq \Hom_{\mathcal{S}}(G_{\mathcal{S}},G_{\mathcal{S}}[p+q])=0.$
Thus $\langle G_{\mathcal{S}}\rangle^{[0,+\infty)}[q]\subseteq G_{\mathcal{S}}(-\infty,-1+u]^{\perp}$. Hence, \[j_!j^*(Y)\in G_{\mathcal{S}}(-\infty,-1+u]^{\perp}\text{ and }j_!j^*(Y)[1]\in G_{\mathcal{S}}(-\infty,-2+u]^{\perp}.\]
Since $Y\in G_{\mathcal{S}}(-\infty,-1]^{\perp}$, we have $Y\in G_{\mathcal{S}}(-\infty,-2+u]^{\perp}$.
Thus $i_*i^*(Y)\in G_{\mathcal{S}}(-\infty,-2+u]^{\perp}$ by (\ref{can-tri}).

Set $v\coloneqq u-b-1$. We now show $i^*(Y)[v]\in G_{\mathcal{R}}(-\infty,-1]^{\perp}$.

In fact, since $i_*$ is fully faithful, there are isomorphisms for $j\leq -1$:
$$\Hom_{\mathcal{R}}(G_{\mathcal{R}}[-j],i^*(Y)[v])
\simeq \Hom_{\mathcal{S}}(i_*(G_{\mathcal{R}})[-j],i_*i^*(Y)[v])
\simeq \Hom_{\mathcal{S}}(i_*(G_{\mathcal{R}})[-j-v],i_*i^*(Y)).$$
By $i_*(G_{\mathcal{R}})\in \langle G_{\mathcal{S}}\rangle^{[a,b]}$, we have
$$i_*(G_{\mathcal{R}})[-j-v]
\in \langle G_{\mathcal{S}}\rangle^{[a,b]}[-j-v]
= \langle G_{\mathcal{S}}\rangle^{[a+j+v,b+j+v]}
\subseteq \langle G_{\mathcal{S}}\rangle^{(-\infty,b+j+v]}
\subseteq \langle G_{\mathcal{S}}\rangle^{(-\infty,-2+u]}.$$
Note that $(\langle G_{\mathcal{S}}\rangle^{(-\infty,-2+u]})^{\perp}=G_{\mathcal{S}}(-\infty,-2+u]^{\perp}$.
Since $i_*i^*(Y)\in G_{\mathcal{S}}(-\infty,-2+u]^{\perp}$, it follows that $\Hom_{\mathcal{R}}(G_{\mathcal{R}}[-j],i^*(Y)[v])=0$ for $j\le -1$.
Thus $i^*(Y)[v]\in G_{\mathcal{R}}(-\infty,-1]^{\perp}$.

By $\fd(\mathcal{R}, G_{\mathcal{R}})=r<+\infty$, we have $i^*(Y)[v]\in \langle G_{\mathcal{R}}\rangle^{[0,+\infty)}[r]$.
It follows from $i_*(G_{\mathcal{R}})\in \langle G_{\mathcal{S}}\rangle^{[a,b]}$ that
$$i_*i^*(Y)\in \langle i_*(G_{\mathcal{R}})\rangle^{[0,+\infty)}[r-v]
\subseteq \langle G_{\mathcal{S}}\rangle^{[a,+\infty)}[r-v]=\langle G_{\mathcal{S}}\rangle^{[0,+\infty)}[r-v-a].$$
Clearly, $r-v-a=r+b+1-u-a=r+b-a+t+d-c+1+o^{-}(G_{\mathcal{S}})\geq t+d-c.$
Then \[j_!j^*(Y)\in \langle G_{\mathcal{S}}\rangle^{[0,+\infty)}[t+d-c]\subseteq \langle G_{\mathcal{S}}\rangle^{[0,+\infty)}[r+b-a+t+d-c+1+o^{-}(G_{\mathcal{S}})].\]
By (\ref{can-tri}), we have $Y\in \langle G_{\mathcal{S}}\rangle^{[0,+\infty)}[r+b-a+t+d-c+1+o^{-}(G_{\mathcal{S}})]$.
Then
\begin{align*}
\fd(\mathcal{S},G_{\mathcal{S}}) & \le r+b-a+t+d-c+1+o^{-}(G_{\mathcal{S}}) \\
& = \fd(\mathcal{R},G_{\mathcal{R}})+\fd(\mathcal{T},G_{\mathcal{T}})+b-a+d-c+1+o^{-}(G_{\mathcal{S}}).
\end{align*}
This shows Theorem \ref{mainthm-fd-1}(1).

(2) follows from Lemma \ref{lem-fullyfaithful}.
\hfill$\square$

\medskip
Applying Theorem \ref{mainthm-fd-1} to a left recollement of  triangulated categories in which the middle category has a classical generator, we obtain the following relationship among the finiteness of finitistic dimensions of those triangulated categories.

\begin{cor}
Let $\mathcal{R}$, $\mathcal{S}$, and $\mathcal{T}$ be triangulated categories with a left recollement
\begin{align*}
\xymatrixcolsep{4pc}\xymatrix{\mathcal{R} \ar[r]|{i_*} &\mathcal{S} \ar@<-2ex>[l]|{i^*}  \ar[r]|{j^*}  &\mathcal{T}. \ar@<-2ex>[l]|{j_!}
}
\end{align*}
Suppose that $\mathcal{S} = \langle G \rangle$ for some object $G\in\mathcal{S}$ with $\Hom_{\mathcal{S}}(G, G[k]) = 0$ for all $k \ll 0$. Then

{\rm (1)} If $\fd(\mathcal{R}) < +\infty$ and $\fd(\mathcal{T}) < +\infty$, then $\fd(\mathcal{S}) < +\infty$.

{\rm (2)} If $\fd(\mathcal{S}) < +\infty$, then $\fd(\mathcal{R}) < +\infty$.

\end{cor}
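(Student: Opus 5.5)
The plan is to apply Theorem \ref{mainthm-fd-1} with $G_{\mathcal{S}}\coloneqq G$ and with well-chosen generators of $\mathcal{R}$ and $\mathcal{T}$. First I would record the consequences of $\mathcal{S}=\langle G\rangle$. The hypothesis $\Hom_{\mathcal{S}}(G,G[k])=0$ for $k\ll 0$ is exactly $\Hom_{\mathcal{S}}(G[n],G)=0$ for $n\gg0$, so $o^{-}(G)<+\infty$. Moreover, every object of $\mathcal{S}$ lies in $\langle G\rangle$, so the width conditions in Theorem \ref{mainthm-fd-1}(1) are automatic. The main preliminary step is to show that $\mathcal{R}=\langle i^*(G)\rangle$ and $\mathcal{T}=\langle j^*(G)\rangle$.

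For $\mathcal{T}$, given $Z\in\mathcal{T}$ we have $j_!(Z)\in\langle G\rangle$. Applying the triangle functor $j^*$ gives $Z\simeq j^*j_!(Z)\in\langle j^*(G)\rangle$. For $\mathcal{R}$ the argument is the same: $X\simeq i^*i_*(X)$, since $i_*$ is fully faithful and therefore the counit $i^*i_*\to \mathrm{id}$ is an isomorphism. Because triangle functors send $\langle G\rangle$ into $\langle F(G)\rangle$, it follows that $X\in\langle i^*(G)\rangle$.

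For (2), suppose $\fd(\mathcal{S})<+\infty$, witnessed by some object $H$ with $\fd(\mathcal{S},H)<+\infty$. I would not try to compare $H$ with $G$. Instead, take $G_{\mathcal{S}}\coloneqq H$ and $G_{\mathcal{R}}\coloneqq i^*(G)$. Since $\mathcal{R}=\langle i^*(G)\rangle$, we have $i^*(H)\in\langle G_{\mathcal{R}}\rangle$. Theorem \ref{mainthm-fd-1}(2) then gives $\fd(\mathcal{R},i^*(G))\le \fd(\mathcal{S},H)+w<+\infty$. This part needs no boundedness hypothesis.

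For (1), suppose $\fd(\mathcal{R},G_{\mathcal{R}})<+\infty$ and $\fd(\mathcal{T},G_{\mathcal{T}})<+\infty$ for some objects $G_{\mathcal{R}}$ and $G_{\mathcal{T}}$. Since $\mathcal{S}=\langle G\rangle$, we have $i_*(G_{\mathcal{R}}),j_!(G_{\mathcal{T}})\in\langle G\rangle$ automatically. The vanishing of $\Hom_{\mathcal{S}}(G[n],G)$ for $n\gg 0$ holds by hypothesis. Theorem \ref{mainthm-fd-1}(1) then yields $\fd(\mathcal{S},G)<+\infty$, so $\fd(\mathcal{S})<+\infty$.

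I expect no serious obstacle; the only point needing care is making sure each hypothesis of Theorem \ref{mainthm-fd-1} is matched to the objects that actually witness finiteness. In (1), the witnesses $G_{\mathcal{R}}$ and $G_{\mathcal{T}}$ are arbitrary objects, and this is fine precisely because $\mathcal{S}$ is classically generated by $G$. In (2), the generation facts $\mathcal{R}=\langle i^*(G)\rangle$ and $\mathcal{T}=\langle j^*(G)\rangle$ guarantee $i^*(H)\in\langle G_{\mathcal{R}}\rangle$ for the witness $H$.
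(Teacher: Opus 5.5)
Your proposal is correct and follows the paper's proof, which simply observes that $\mathcal{R}=\langle i^{*}(G)\rangle$ and then applies Theorem \ref{mainthm-fd-1}. Your choices, $G_{\mathcal{S}}=G$ in (1) and $G_{\mathcal{S}}=H$, $G_{\mathcal{R}}=i^{*}(G)$ in (2), are exactly the intended instantiations; the fact $\mathcal{T}=\langle j^{*}(G)\rangle$ that you prove is never used and can be dropped.
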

\begin{proof}
It is clear that $\mathcal{R}=\langle i^{*}(G) \rangle$. Then $(1)$ and $(2)$ follow from Theorem~\ref{mainthm-fd-1}.
\end{proof}

{\bf Proof of Corollary \ref{cor to regular}.}
By taking the opposite category along the recollement of  $\mathcal{S}$ by $\mathcal{R}$ and $\mathcal{T}$, we have a recollement of $\mathcal{S}^{\mathrm{op}}$ by $\mathcal{R}^{\mathrm{op}}$ and $\mathcal{T}^{\mathrm{op}}$:
\[
(*)\quad\quad
\xymatrixcolsep{4pc}\xymatrix{
\mathcal{R}^{\mathrm{op}} \ar[r]|{i_*=i_!} &
\mathcal{S}^{\mathrm{op}} \ar@<-2ex>[l]|{i^!} \ar@<2ex>[l]|{i^*} \ar[r]|{j^!=j^*} &
\mathcal{T}^{\mathrm{op}} \ar@<-2ex>[l]|{j_{*}} \ar@<2ex>[l]|{j_!}.
}
\]
Further, by Lemma~\ref{lem-bounde induced bounde}, both $\mathcal{S}$ and $\mathcal{S}^{\mathrm{op}}$ have bounded $t$-structures.

Recall that $\fd(\mathcal{R}^{\mathrm{op}}) < +\infty$ means that there exists an object $G_{\mathcal{R}}^{\mathrm{op}} \in \mathcal{R}^{\mathrm{op}}$ such that $\fd(\mathcal{R}^{\mathrm{op}}, G_{\mathcal{R}}^{\mathrm{op}}) < +\infty$. Similarly, $\fd(\mathcal{T}^{\mathrm{op}}, G_{\mathcal{T}}^{\mathrm{op}}) < +\infty$ for some $G_{\mathcal{T}}^{\mathrm{op}} \in \mathcal{T}^{\mathrm{op}}$. Set $G_{\mathcal{S}}:= i_{*}(G_{\mathcal{R}}) \oplus j_{*}(G_{\mathcal{T}})$. Since $\mathcal{S}^{\mathrm{op}}$ has a bounded $t$-structure, we have $\Hom_{\mathcal{S}^{\mathrm{op}}}(G_{\mathcal{S}}^{\mathrm{op}}, G_{\mathcal{S}}^{\mathrm{op}}[k]) = 0$ for $k \ll 0$. Now, we apply Theorem~\ref{mainthm-fd-1} to the above two lines (that form a left recollement) of the recollement $(\ast)$ and obtain $\fd(\mathcal{S}^{\mathrm{op}}, G_{\mathcal{S}}^{\mathrm{op}}) < +\infty$. Since $\mathcal{S}$ has a bounded $t$-structure, it follows from \cite[Theorem 1.5(1)]{bcrpz24} that $\mathcal{S}$ is equivalent to $\mathfrak{S}_{G_{\mathcal{S}}}(\mathcal{S})$ as triangulated categories, where $\mathfrak{S}_{G_{\mathcal{S}}}(\mathcal{S})$ denotes the completion of the triangulated category $\mathcal{S}$ (in the sense of Neeman) with respect to the good metric defined by $G_{\mathcal{S}}$. Thus $\mathcal{S}$ is regular. \hfill$\square$

\subsection{Finitistic dimensions in weakly approximable triangulated categories}\label{FDWAPP}

In this subsection, we discuss (big) finitistic dimensions of weakly approximable triangulated categories connected by recollements. In particular, we establish Theorems \ref{thm-fd-small} and \ref{thm-fd-big} and also give a proof of Corollary \ref{mainthm-cx17}.

Let $\mathcal{S}$ be a compactly generated triangulated category with a compact generator $G$, and let $(\mathcal{S}^{\le 0},\mathcal{S}^{\ge 1})\coloneqq(\mathcal{S}_G^{\le 0},\mathcal{S}_G^{\ge 1})$ be the $t$-structure generated by $G$ (see Lemma~\ref{lem-p-w-gen}). Moreover, in Definition \ref{Ffpd}, we have defined the projective dimension $\pd_G(X)$ for each object $X\in\mathcal{S}$, and also the big finitistic dimension ${\rm Fpd}(\mathcal{S}, G)$ and finitistic dimension ${\rm fpd}(\mathcal{S}, G)$ of $\mathcal{S}$ with respect to $G$.
Some basic properties of those homological dimensions are collected as follows.

\begin{remark}\label{some rems}
$(1)$ Note that $\pd_G(X[n])=\pd_G(X)+n$ for each $X\in\mathcal{S}$ and $n\in\mathbb{Z}$. For any triangle $X_1\to X_2\to X_3\to X_1[1]$ in $\mathcal{S}$, we have $\pd_G(X_2)\leq \max\{\pd_G(X_1), \pd_G(X_3)\}$.
Thus the full subcategory of $\mathcal{S}$ consisting of all objects $X$ with
$\pd_G(X)<+\infty$ is a triangulated subcategory of $\mathcal{S}$ closed under direct summands. Moreover, the finiteness of ${\rm Fpd}(\mathcal{S}, -)$ and ${\rm fpd}(\mathcal{S}, -)$ are independent of the choice of different compact generators of $\mathcal{S}$.

$(2)$ Suppose that $\Hom_\mathcal{S}(G, G[i])=0$ for $|i|\gg 0$. By the first paragraph of the proof of \cite[Proposition B.3]{bcrpz24}, we have $\Fpd(\mathcal{S},G)\geq\fpd(\mathcal{S}, G)\ge -o^-(G)$. This means that $\{\Fpd(\mathcal{S}, G),\fpd(\mathcal{S}, G)\}\subseteq\mathbb{Z}\cup\{+\infty\}$.

$(3)$ Let $S$ be a ring. The \emph{big finitistic dimension} of $S$, denoted by ${\rm Findim}(S)$, is defined as
$$
\Fd(S) \coloneqq \sup\{\pd_S(M) \mid M \in S\text{-Mod},\; \pd_S(M)<+\infty\};
$$
the \emph{finitistic dimension} of $S$, denoted by ${\rm findim}(S)$, is defined as
$$
\fd(S) \coloneqq \sup\{\pd_S(M) \mid M \in S\text{-Mod}\cap\mathscr{D}^{c}(S\Modcat)\}.
$$
Then $\Fd(S)={\rm Fpd}(\mathscr{D}(S\Modcat), S)$ and $\fd(S)={\rm fpd}(\mathscr{D}(S\Modcat), S).$

$(4)$ In \cite[Definition~7.1]{bssw25}, the big finitistic dimension for a Noetherian non-positive differential graded algebra $R$ was defined, and is equal to $\operatorname{Fpd}(\mathscr{D}(R),R)$ by \cite[Appendix~B]{bcrpz24}.
\end{remark}

\begin{lem}\label{range}
Let $\mathcal{S}$ be a compactly generated triangulated category with a compact generator $G$.
If $M\in\overline{\langle G\rangle}^{[-n,+\infty)}$ for some $n\in\mathbb{Z}$, then $\pd_{G}(M)\leq o^{+}(G)+n$. In particular, if $o^{+}(G)<+\infty$, then $\pd_G(M)<+\infty$ for any $M\in\mathcal{S}^{\rm {sb}}$.
\end{lem}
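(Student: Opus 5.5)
The plan is a closure argument: fix a test object, show the class of objects whose projective dimension is suitably bounded contains the generating shifts, and check that this class is closed under coproducts, extensions and summands.

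First we record what the defining vanishing condition for $\pd_G$ means. By Definition \ref{Ffpd}, $\pd_G(M)\le m$ holds precisely when $\Hom_{\mathcal{S}}(M,Y[i])=0$ for all $i>m$ and all $Y\in\mathcal{S}^{\ge 0}\cap\mathcal{S}^{\le 0}$. Here $(\mathcal{S}^{\le 0},\mathcal{S}^{\ge 1})=(\mathcal{S}^{\le 0}_G,\mathcal{S}^{\ge 1}_G)$. So we fix an object $Y$ of the heart and set
\[
\mathscr{X}_Y:=\{M\in\mathcal{S}\mid \Hom_{\mathcal{S}}(M,Y[i])=0\ \text{for all } i>o^{+}(G)+n\}.
\]
It suffices to show $\overline{\langle G\rangle}^{[-n,+\infty)}\subseteq\mathscr{X}_Y$ for every such $Y$. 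We may assume $o^{+}(G)<+\infty$, since otherwise the inequality is trivial.

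The class $\mathscr{X}_Y$ is closed under arbitrary coproducts, because $\Hom_{\mathcal{S}}(\coprod M_\lambda,-)=\prod\Hom_{\mathcal{S}}(M_\lambda,-)$. It is closed under extensions by the long exact Hom sequence, and it is clearly closed under direct summands. Hence it contains $\overline{\langle G\rangle}^{[-n,+\infty)}=\text{smd}(\Coprod(G[-n,+\infty)))$ as soon as it contains $G[-k]=G[n-m]$ for every $m=n+k\ge 0$, where $-n\le k$.

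The main step is the generator check, namely showing $\Hom_{\mathcal{S}}(G,Y[j])=0$ for $j>o^{+}(G)$ and every $Y$ in the heart. Given this, $\Hom(G[-k],Y[i])=\Hom(G,Y[i+k])$ vanishes when $i+k>o^{+}(G)$. This is guaranteed whenever $i>o^{+}(G)+n$, because $k\ge -n$, so $G[-k]\in\mathscr{X}_Y$.

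For the generator check we first observe that $\Hom(G,Y[j])=\Hom(G[-j],Y)$. It is enough to show that $G[-j]$ lies in ${}^{\perp}(\mathcal{S}^{\le 0})$ for $j>o^{+}(G)$; here we only use $Y\in\mathcal{S}^{\le 0}$. The category ${}^{\perp}G[-j]$ is closed under coproducts, extensions and positive shifts, the last meaning that if $Z$ is in it then so is $Z[1]$ (this holds since $G$ is compact, though in fact only the Hom-vanishing on generators is needed). We then test against the generators $G[s]$, $s\ge 0$, of $\mathcal{S}^{\le 0}=\Coprod(G(-\infty,0])$ from Lemma \ref{lem-p-w-gen}. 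This gives $\Hom(G[-j],G[s])=\Hom(G,G[s+j])=0$, because $s+j>o^{+}(G)$. The class $\{Z\mid \Hom(G[-j],Z)=0\}$ is closed under extensions, and it is closed under coproducts by compactness of $G$. So it contains $\Coprod(G(-\infty,0])=\mathcal{S}^{\le 0}$, hence it contains $Y$.

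The closing observation handles strongly bounded objects. If $M\in\mathcal{S}^{\rm sb}$, then $M\in\overline{\langle G\rangle}^{[-i,i]}\subseteq\overline{\langle G\rangle}^{[-i,+\infty)}$ for some $i\ge 0$. The first part then gives $\pd_G(M)\le o^{+}(G)+i<+\infty$. The only delicate point is the compactness-based closure under coproducts in the generator check, and it is routine.
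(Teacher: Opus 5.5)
Your proof is correct and is essentially the paper's argument. Both use compactness of $G$ to see that a right orthogonal of shifts of $G$ is closed under coproducts and extensions, so that $o^{+}(G)$ controls maps from shifts of $G$ into all of $\mathcal{S}^{\le 0}_G$. Both then use that a left orthogonal class is closed under coproducts, extensions and summands to pass from the shifts $G[-k]$, $k\ge -n$, to $\overline{\langle G\rangle}^{[-n,+\infty)}$. The paper packages this as $\overline{\langle G\rangle}^{[1,+\infty)}\subseteq {}^{\perp}\mathcal{S}_G^{\le -o^{+}(G)}$ followed by a shift by $[n+1]$, whereas you fix the test object $Y$ and track the indices directly.

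One harmless slip: the class you actually use in the generator check is $G[-j]^{\perp}$, not ${}^{\perp}G[-j]$, and it is not closed under positive shifts in general. You never need that property, since you test against every $G[s]$ with $s\ge 0$.
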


\begin{proof}
If $o^{+}(G)=+\infty$, then Lemma \ref{range} holds trivially. Assume that $o^{+}(G)<+\infty$. By Definition~\ref{index}, $\Hom_{\mathcal{S}}(G,G[i])=0$ for $i>o^{+}(G)$.
Then $G(-\infty,-o^{+}(G)]\subseteq G[1,+\infty)^{\perp}$. Since every object in $G[1,+\infty)$ is compact, the category $G[1,+\infty)^{\perp}$ is closed under coproducts in $\mathcal{S}$. Clearly, $G[1,+\infty)^{\perp}$ is closed under extensions in $\mathcal{S}$. Thus $\mathcal{S}_{G}^{\leq -o^{+}(G)}\subseteq G[1,+\infty)^{\perp}$ by Lemma~\ref{lem-p-w-gen}. This also means that $G[1,+\infty)\subseteq{}^{\perp}\mathcal{S}_{G}^{\leq -o^{+}(G)}$.
Since ${}^{\perp}\mathcal{S}_{G}^{\leq -o^{+}(G)}\subseteq \mathcal{S}$ is closed under coproducts and extensions, we have $\overline{\langle G\rangle}^{[1,+\infty)}\subseteq{}^{\perp}\mathcal{S}_{G}^{\leq -o^{+}(G)}$. It follows that
\[
\overline{\langle G\rangle}^{[-n,+\infty)}=\overline{\langle G\rangle}^{[1,+\infty)}[n+1]\subseteq({}^{\perp}\mathcal{S}_{G}^{\leq -o^{+}(G)})[n+1]={}^{\perp}\mathcal{S}_{G}^{\leq -o^{+}(G)-n-1}.
\] If  $M\in \overline{\langle G\rangle}^{[-n,+\infty)}$, then
$\Hom_\mathcal{S}(M, \mathcal{S}_{G}^{\leq 0}[o^+(G)+n+1])=0$, and thus $\pd_{G}(M)\leq o^+(G)+n$ by Definition \ref{Ffpd}.
\end{proof}

From now on, let $\mathcal{S}$ be a {\bf weakly approximable} triangulated category
with a compact generator $G$ and with the $t$-structure $(\mathcal{S}^{\le 0}, \mathcal{S}^{\ge 1}) $ generated by $G$  (see Lemma~\ref{lem-p-w-gen}).

The following result provides a converse of Lemma \ref{range} in the case of weakly approximable triangulated categories.
\begin{lem}\label{range-1}
Suppose that $G$ is bounded. If $M\in\mathcal{S}^b$ and $\pd_G(M)<+\infty$, then there exists an integer $n$ (depending on $M$) such that  $M\in \overline{\langle G\rangle}^{[-\pd_G(M)-a(G),\, n]}$.
If, in addition, $M\in\mathcal{S}^c$, then $M\in \langle G\rangle ^{[-\pd_G(M)-a(G),\, n]}$.
\end{lem}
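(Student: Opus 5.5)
The plan is to approximate a suitable shift of $M$ by Lemma~\ref{lem-app-induction}, using $\pd_G(M)$ to kill the error term. Set $p\coloneqq\pd_G(M)<+\infty$. Since $M\in\mathcal{S}^b\subseteq\mathcal{S}^-$, there is an integer $n_0$ with $M\in\mathcal{S}^{\le n_0}$. We may enlarge $n_0$ so that $m\coloneqq n_0+p+1>0$. Then $M[n_0]\in\mathcal{S}^{\le 0}$.

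First, apply Lemma~\ref{lem-app-induction} to $F=M[n_0]$ with this $m$. This gives a triangle $E_m\to M[n_0]\to D_m\to E_m[1]$ with $E_m\in\overline{\langle G\rangle}^{[1-m-a(G),\,a(G)]}$ and $D_m\in\mathcal{S}^{\le -m}$. Shifting by $[-n_0]$, and using $\overline{\langle G\rangle}^{[A,B]}[-n_0]=\overline{\langle G\rangle}^{[A+n_0,B+n_0]}$, we get a triangle
$$E_m[-n_0]\to M\xrightarrow{f} D_m[-n_0]\to E_m[1-n_0]$$
with
$$E_m[-n_0]\in\overline{\langle G\rangle}^{[-p-a(G),\,a(G)+n_0]}\quad\text{and}\quad D_m[-n_0]\in\mathcal{S}^{\le -p-1}.$$

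Second, show that $f=0$. By Definition~\ref{Ffpd}, $\Hom_{\mathcal{S}}(M,Y[i])=0$ for all $i>p$ and all $Y\in\mathcal{S}^b\cap\mathcal{S}^{\le 0}$. Hence $\Hom_{\mathcal{S}}(M,\mathcal{S}^b\cap\mathcal{S}^{\le -p-1})=0$, writing $Z\in\mathcal{S}^{\le -p-1}$ as $Y[p+1]$ with $Y\in\mathcal{S}^{\le 0}$. So it only remains to check that $D_m[-n_0]$ is bounded, and I expect this to be the main point. Since $G$ is bounded, Proposition~\ref{lem-sp=big} gives $\mathcal{S}^{\rm sb}\subseteq\mathcal{S}^b$, so $E_m[-n_0]\in\mathcal{S}^b$. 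Because $M\in\mathcal{S}^b$ and $\mathcal{S}^b$ is thick, the triangle yields $D_m[-n_0]\in\mathcal{S}^b$. Alternatively, one can argue directly as in the proof of Proposition~\ref{lem-sp=big}, where $\overline{\langle G\rangle}^{[A,B]}\subseteq\mathcal{S}^{\ge\ast}$ is shown via $\Hom_{\mathcal{S}}(G,G[i])=0$ for $i\ll0$. Hence $f=0$.

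Finally, since $f=0$ the triangle splits, so $M$ is a direct summand of $E_m[-n_0]$. As $\overline{\langle G\rangle}^{[-p-a(G),\,a(G)+n_0]}$ is closed under direct summands by definition, we get $M\in\overline{\langle G\rangle}^{[-\pd_G(M)-a(G),\,n]}$ with $n\coloneqq a(G)+n_0$. If in addition $M\in\mathcal{S}^c$, Lemma~\ref{Closure} gives
$$M\in\mathcal{S}^c\cap\overline{\langle G\rangle}^{[-p-a(G),n]}=\langle G\rangle^{[-p-a(G),n]}.$$
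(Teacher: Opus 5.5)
Your argument is correct and is essentially the paper's proof. Both approximate a shift of $M$ lying in $\mathcal{S}^{\le 0}$ via Lemma~\ref{lem-app-induction}, use $\mathcal{S}^{\rm sb}\subseteq\mathcal{S}^b$ (Proposition~\ref{lem-sp=big}) to see that the error term is bounded so that $\pd_G(M)$ forces the connecting map to vanish, and finish with Lemma~\ref{Closure} in the compact case. The only thing you skip is the degenerate case $\pd_G(M)=-\infty$, where your choice $m=n_0+p+1$ makes no sense; the paper disposes of it by noting that then $\Hom_{\mathcal{S}}(M,M)=0$, so $M=0$ and the statement is trivial.
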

\begin{proof}
Let $M\in\mathcal{S}^b$ and $m\coloneqq\pd_G(M)<+\infty$.
By Definition \ref{Ffpd}(1), $\Hom_\mathcal{S}(M, N[i])=0$ for $i>m$ and $N\in \mathcal{S}^{b}\cap\mathcal{S}^{\le 0}$. Since $M\in\mathcal{S}^b\subseteq\mathcal{S}^{-}$, we have $M[n]\in\mathcal{S}^b\cap\mathcal{S}^{\le 0}$ for some integer $n\geq 0$. If $m=-\infty$, then $M=0$ by the vanishing of $\Hom_\mathcal{S}(M, M[n][-n])$. In this case, Lemma \ref{range-1} holds trivially. Now, we assume $m>-\infty$, that is, $m$ is finite.  Since $\mathcal{S}$ is weakly approximable, it follows from Lemma \ref{lem-app-induction} that there is a triangle in $\mathcal{S}$:
$$E\lra M[n]\lraf{h} D\lra E[1]$$ with $E\in \overline{\langle G\rangle}^{[-m-n-a(G),a(G)]}$ and $D\in \mathcal{S}^{\le -m-n-1}$. Since $G$ is bounded, $E\in\mathcal{S}^{{\rm sb}}\subseteq\mathcal{S}^{b}$ by Proposition~\ref{lem-sp=big}. Combining this with $M\in\mathcal{S}^b$, we have $D[-m-n-1]\in\mathcal{S}^b\cap\mathcal{S}^{\le 0}$. Consequently,
$$\Hom_{\mathcal{S}}(M[n], D)\simeq\Hom_{\mathcal{S}}(M, D[-m-n-1][m+1])=0.$$
Thus $h=0$, and further, $M[n]$ is a direct summand of $E$. This implies
$M\in\overline{\langle G\rangle}^{[-m-a(G),a(G)+n]}.$
If $M\in\mathcal{S}^c$, then it follows from Lemma \ref{Closure} that
$M\in\mathcal{S}^c\cap\overline{\langle G\rangle}^{[-m-a(G),a(G)+n]} = \langle G\rangle^{[-m-a(G),a(G)+n]}.$
\end{proof}

In the following result, $(1)$ gives a new characterization of the category $\mathcal{S}^{{\rm sb}}$ (see Definition \ref{defn-sp}), while $(2)$ establishes the equivalences of the finiteness of two kinds of finitistic dimensions, which generalizes \cite[Proposition B.3(3)]{bcrpz24}.

\begin{prop}\label{Relation}
Suppose that $G\in\mathcal{S}$ is bounded. The following statements are true.

$(1)$ Let $M\in\mathcal{S}^b$. Then $\pd_G(M)<+\infty$ if and only if $M\in \mathcal{S}^{{\rm sb}}$. Moreover, $\pd_G(M)=-\infty$ if and only if $M=0$.

$(2)$ $\fd(\mathcal{S}^c, G)\leq \fpd(\mathcal{S},G)+a(G)$ and
$\fpd(\mathcal{S},G)\leq \fd(\mathcal{S}^c, G)+o^+(G)$. In particular, $\fd(\mathcal{S}^c, G)$ is finite if and only if so is $\fpd(\mathcal{S},G)$.
\end{prop}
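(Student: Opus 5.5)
Part (1) follows directly from the two lemmas just proved. If $M\in\mathcal{S}^{\rm sb}$, then $M\in\overline{\langle G\rangle}^{[-n,n]}$ for some $n$. Boundedness of $G$ together with Lemma \ref{PR}(1) gives $o^+(G)<+\infty$, so Lemma \ref{range} yields $\pd_G(M)<+\infty$. Conversely, if $M\in\mathcal{S}^b$ and $\pd_G(M)<+\infty$, then Lemma \ref{range-1} puts $M$ in some $\overline{\langle G\rangle}^{[\alpha,\beta]}\subseteq\mathcal{S}^{\rm sb}$. For the statement about $-\infty$: if $M\neq 0$ lies in $\mathcal{S}^b$, choose $n$ with $M[n]\in\mathcal{S}^b\cap\mathcal{S}^{\le 0}$. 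Then $\Hom(M,M[n][-n])\neq 0$, so $\pd_G(M)\ge -n>-\infty$. This is exactly the argument already used at the start of the proof of Lemma \ref{range-1}.

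For the first inequality in (2), set $f=\fpd(\mathcal{S},G)$, assumed finite. Take $X\in\mathcal{S}^c$ with $X\in G(-\infty,-1]^{\perp}$. By Lemma \ref{lem-p-w-gen} this means $X\in\mathcal{S}^{\ge 0}$. Since $X$ is compact and $G$ is bounded, $X\in\mathcal{S}^b$. By (1), $\pd_G(X)<+\infty$, hence $\pd_G(X)\le f$. Lemma \ref{range-1}, compact case, then gives $X\in\langle G\rangle^{[-f-a(G),n]}$ for some $n$, after enlarging the interval to use the bound $f$ in place of $\pd_G(X)$. This category is contained in $\langle G\rangle^{[0,+\infty)}[f+a(G)]$, because $\langle G\rangle^{[-k,+\infty)}=\langle G\rangle^{[0,+\infty)}[k]$. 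Therefore $\fd(\mathcal{S}^c,G)\le f+a(G)$.

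For the second inequality, set $r=\fd(\mathcal{S}^c,G)$, assumed finite. Take $X\in\mathcal{S}^c\cap\mathcal{S}^b\cap\mathcal{S}^{\ge 0}$ with $\pd_G(X)<+\infty$. Then $X\in G(-\infty,-1]^\perp$, so $X\in\langle G\rangle^{[0,+\infty)}[r]=\langle G\rangle^{[-r,+\infty)}\subseteq\overline{\langle G\rangle}^{[-r,+\infty)}$. Lemma \ref{range} gives $\pd_G(X)\le o^+(G)+r$, and taking the supremum yields $\fpd(\mathcal{S},G)\le r+o^+(G)$.

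It remains to check that all the quantities involved are finite. We have $a(G)<+\infty$ by Definition \ref{App-index}, and $o^+(G)<+\infty$ by Lemma \ref{PR}(1). Also $\fpd(\mathcal{S},G)>-\infty$ by Remark \ref{some rems}(2), which rules out degenerate $-\infty$ cases. With these facts the "in particular" statement follows. The only delicate point is the identification of $G(-\infty,-1]^{\perp}$ with $\mathcal{S}^{\ge 0}$ and the shift convention in $\langle G\rangle^{[0,+\infty)}[k]$; I expect that bookkeeping to be the main place where care is needed.
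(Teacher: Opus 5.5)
Your proof follows the paper's proof essentially step for step. Part (1) comes from Lemmas \ref{range} and \ref{range-1}, and your treatment of $M=0$ is the argument at the start of the proof of Lemma \ref{range-1}. Part (2) identifies $\mathcal{S}^c\cap G(-\infty,-1]^{\perp}$ with $\mathcal{S}^c\cap\mathcal{S}^{\ge 0}$, then applies the compact case of Lemma \ref{range-1} for the first inequality and Lemma \ref{range} for the second.

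There is one step that fails in an edge case, and the paper's proof has the same defect. Because $\fd(\mathcal{S}^c,G)$ is an infimum over $n\in\mathbb{N}$, the inclusion $\mathcal{S}^c\cap\mathcal{S}^{\ge 0}\subseteq\langle G\rangle^{[0,+\infty)}[f+a(G)]$ gives $\fd(\mathcal{S}^c,G)\le f+a(G)$ only when $f+a(G)\ge 0$. Excluding $f=-\infty$ does not settle this, since Remark \ref{some rems}(2) only gives $f\ge -o^-(G)$. In fact $f+a(G)$ can be negative:
\begin{itemize}
\item Take $A=k[\epsilon]/(\epsilon^2)$ with $\epsilon$ in cohomological degree $-1$ and zero differential. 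Then $\mathscr{D}(A)$ is approximable, $A$ is a bounded compact generator, and $a(A)=0$.
\item For compact $X\neq 0$, the heart is generated by $k$, so $\pd_A(X)=\max\{j\mid \Hom(X,k[j])\neq 0\}$. Moreover $\Hom(X,k[j])$ is dual to $H^{-j}(k\otimesL_A X)$.
\item Suppose $H^{<0}(X)=0$ (i.e.\ $X\in\mathcal{S}^{\ge 0}$), and let $i$ be the lowest degree with $H^{i}(k\otimesL_A X)\neq 0$. Tensoring the triangle $k[1]\to A\to k\to k[2]$ with $X$ forces $H^{i-1}(X)\neq 0$, so $i\ge 1$ and $\pd_A(X)=-i\le -1$.
\item Hence $\fpd(\mathscr{D}(A),A)=-1$, attained by $A[-1]$, whereas $\fd(\mathscr{D}(A)^c,A)\ge 0$ by definition. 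So the first inequality of (2) fails here.
\end{itemize}
What your argument (and the paper's) actually proves is $\fd(\mathcal{S}^c,G)\le\max\{0,\fpd(\mathcal{S},G)+a(G)\}$. This is enough for the ``in particular'' statement, and the second inequality is unaffected.
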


\begin{proof}
$(1)$ follows from Lemmas \ref{range} and \ref{range-1}. This can be also concluded from Proposition \ref{lem-sp=big}.

$(2)$ Since $\mathcal{S}$ is weakly approximable, $o^+(G)<+\infty$ by Lemma \ref{PR}(1).
By Lemma \ref{range}, $\pd_G(M)<+\infty$ for any $M\in\mathcal{S}^c$. As $G$ is bounded, the inclusion $\mathcal{S}^c\subseteq\mathcal{S}^b$ holds. Moreover,
$G(-\infty, -1]^{\perp}\cap\mathcal{S}=\mathcal{S}^{\ge 0}$ by Lemma \ref{lem-p-w-gen}. Consequently, by Definitions \ref{Intro-defn-fd} and \ref{Ffpd}, there are equalities:
$$
\fd(\mathcal{S}^c, G)=\inf\big\{n\in\mathbb{N}\mid\mathcal{S}^c\cap\mathcal{S}^{\ge 0}\subseteq\langle G\rangle^{[-n,+\infty)}\big\}\;\;\mbox{and}\;\;
{\rm fpd}(\mathcal{S}, G)=\sup\{\pd_G(M)\mid M\in \mathcal{S}^c\cap\mathcal{S}^{\ge 0}\}.
$$
Since $\langle G\rangle^{[-n,+\infty)}\subseteq \overline{\langle G\rangle}^{[-n,+\infty)}$, we see from
Lemma \ref{range} that $\pd_{G}(M)\leq n+o^{+}(G)$ for any $M\in \langle G\rangle^{[-n,+\infty)}$.
This implies $\fpd(\mathcal{S},G)\leq \fd(\mathcal{S}^c, G)+o^+(G)$. By Lemma \ref{range-1},
if $M\in \mathcal{S}^c\cap\mathcal{S}^{\ge 0}$ (and automatically $\pd_G(M)<+\infty)$, then $M\in \langle G\rangle ^{[-\pd_G(M)-a(G), +\infty)}$. This implies $\fd(\mathcal{S}^c, G)\leq \fpd(\mathcal{S},G)+a(G)$.
\end{proof}

Motivated by Definition \ref{Intro-defn-fd} and Proposition \ref{Relation}(2), we introduce the following definition which is of independent interest.

\begin{defn}
Let $\mathcal{U}$ be a compactly generated triangulated category with a single compact generator. The \emph{big finitistic dimension} of $\mathcal{U}$ at an object $U$ is defined as
$$\Fd(\mathcal{U}, U)\coloneqq \inf\big\{n\in\mathbb{N}\mid U(-\infty, -1]^{\perp}
\cap\mathcal{U}^{\rm sb}\subseteq\bigcup_{m\in\mathbb{N}}\overline{\langle U\rangle}^{[0,\, m]}[n]\,\big\}.$$
\end{defn}

By Lemmas \ref{range} and \ref{range-1}, we can show the following result that is related to two kinds of big finitistic dimensions.

\begin{prop}
Suppose that $G\in\mathcal{S}$ is bounded. Then $\Fd(\mathcal{S}, G)\leq \Fpd(\mathcal{S},G)+a(G)$ and
$\Fpd(\mathcal{S},G)\leq \Fd(\mathcal{S}, G)+o^+(G)$. In particular, $\Fd(\mathcal{S}, G)$ is finite if and only if so is $\Fpd(\mathcal{S},G)$.
\end{prop}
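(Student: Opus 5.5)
The plan is to mirror the proof of Proposition \ref{Relation}(2), with Lemmas \ref{range} and \ref{range-1} used in their non-compact forms. First I rewrite both dimensions in a comparable shape. Since $G$ is compact, Lemma \ref{lem-p-w-gen} gives $G(-\infty,-1]^{\perp}=\mathcal{S}^{\ge 0}$. Hence
$$\Fd(\mathcal{S},G)=\inf\big\{n\in\mathbb{N}\mid \mathcal{S}^{\ge 0}\cap\mathcal{S}^{\rm sb}\subseteq \textstyle\bigcup_{m}\overline{\langle G\rangle}^{[-n,m-n]}\big\},$$
using $\overline{\langle G\rangle}^{[0,m]}[n]=\overline{\langle G\rangle}^{[-n,m-n]}$. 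On the other side, since $G$ is bounded, Proposition \ref{lem-sp=big} gives $\mathcal{S}^{\rm sb}\subseteq\mathcal{S}^b$. Proposition \ref{Relation}(1) identifies the objects of $\mathcal{S}^b$ with finite $\pd_G$ as exactly those of $\mathcal{S}^{\rm sb}$. Therefore
$$\Fpd(\mathcal{S},G)=\sup\{\pd_G(M)\mid M\in\mathcal{S}^{\rm sb}\cap\mathcal{S}^{\ge 0}\}.$$
So both invariants are computed over the same class $\mathcal{S}^{\rm sb}\cap\mathcal{S}^{\ge 0}$.

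\emph{First inequality.} Assume $\Fpd(\mathcal{S},G)=p<+\infty$; otherwise there is nothing to prove. Take $M\in\mathcal{S}^{\rm sb}\cap\mathcal{S}^{\ge 0}$. Then $M\in\mathcal{S}^b$ with $\pd_G(M)\le p$. Lemma \ref{range-1} yields an integer $n'$ with $M\in\overline{\langle G\rangle}^{[-\pd_G(M)-a(G),\,n']}$, and this is contained in $\overline{\langle G\rangle}^{[-p-a(G),\,n']}$. Enlarging $n'$ if necessary, we get $M\in\overline{\langle G\rangle}^{[0,m]}[p+a(G)]$ for some $m$.

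One bookkeeping point needs care. $\Fd$ is indexed by $\mathbb{N}$, while $p+a(G)$ may be negative, since $\Fpd\ge -o^-(G)$ by Remark \ref{some rems}(2). In that case I would use that $\overline{\langle G\rangle}^{[0,m]}[k]\subseteq\overline{\langle G\rangle}^{[0,m+1]}[k+1]$, so enlarging the shift is harmless. This gives $\Fd(\mathcal{S},G)\le\max\{0,p+a(G)\}$, which equals $p+a(G)$ when the right-hand side is nonnegative.

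\emph{Second inequality.} Assume $\Fd(\mathcal{S},G)=n<+\infty$. Take $M\in\mathcal{S}^{\rm sb}\cap\mathcal{S}^{\ge 0}$. Then $M\in\overline{\langle G\rangle}^{[0,m]}[n]\subseteq\overline{\langle G\rangle}^{[-n,+\infty)}$. Lemma \ref{range} gives $\pd_G(M)\le o^+(G)+n$, and $o^+(G)$ is finite by Lemma \ref{PR}(1). Taking the supremum gives $\Fpd(\mathcal{S},G)\le\Fd(\mathcal{S},G)+o^+(G)$.

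The \enquote{in particular} statement follows at once, because $a(G)$ and $o^+(G)$ are finite.

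\emph{Main obstacle.} I expect the only delicate step to be the identification $\Fpd(\mathcal{S},G)=\sup$ over $\mathcal{S}^{\rm sb}\cap\mathcal{S}^{\ge 0}$. Boundedness of $G$ enters here twice: through Proposition \ref{lem-sp=big}, to get $\mathcal{S}^{\rm sb}\subseteq\mathcal{S}^b$, and through Proposition \ref{Relation}(1), to identify finite projective dimension with strong boundedness. Beyond that, only the sign convention at nonnegative $n$ needs attention.
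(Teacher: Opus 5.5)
Your proposal is correct and follows the paper's route: the paper's only argument is that the result follows from Lemmas \ref{range} and \ref{range-1}, exactly as in the proof of Proposition \ref{Relation}(2). Your bound $\Fd(\mathcal{S},G)\le\max\{0,\Fpd(\mathcal{S},G)+a(G)\}$ is the correct form of the first inequality, since $\Fd$ is $\mathbb{N}$-valued while $\Fpd(\mathcal{S},G)+a(G)$ can be negative. For instance, for $\mathscr{D}(A)$ with $A=k[x]/(x^2)$ and $|x|=-1$, a direct computation I did (not in the paper) gives $\Fpd(\mathscr{D}(A),A)=-1$ and $a(A)=0$, so the literal inequality fails there; the paper's argument passes over this point.
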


In the rest of the section, let $\mathcal{R}$, $\mathcal{S}$, and $\mathcal{T}$ be weakly approximable triangulated categories with compact generators $G_{\mathcal{R}}$, $G_{\mathcal{S}}$, and $G_{\mathcal{T}}$, respectively. Unless otherwise stated, we always equip them with the $t$-structures generated by their  compact generators (see Lemma~\ref{lem-p-w-gen}), that is,
\[
(\mathcal{R}^{\le 0}, \mathcal{R}^{\ge 1}) \coloneqq (\mathcal{R}_{G_{\mathcal{R}}}^{\le 0}, \mathcal{R}_{G_{\mathcal{R}}}^{\ge 1}), \quad
(\mathcal{S}^{\le 0}, \mathcal{S}^{\ge 1}) \coloneqq (\mathcal{S}_{G_{\mathcal{S}}}^{\le 0}, \mathcal{S}_{G_{\mathcal{S}}}^{\ge 1}) \quad\mbox{and}\quad
(\mathcal{T}^{\le 0}, \mathcal{T}^{\ge 1}) \coloneqq (\mathcal{T}_{G_{\mathcal{T}}}^{\le 0}, \mathcal{T}_{G_{\mathcal{T}}}^{\ge 1}).
\]
For simplicity of notation, we write $\pd_\mathcal{S}(X)$ for $\pd_{G_\mathcal{S}}(X)$ for any $X\in\mathcal{S}$.

Now, we consider the following recollement of $\mathcal{S}$ by $\mathcal{R}$ and $\mathcal{T}$:
\begin{align*}
\xymatrixcolsep{4pc}\xymatrix{\mathcal{R} \ar[r]|{i_*=i_!} &\mathcal{S} \ar@<-2ex>[l]|{i^*} \ar@<2ex>[l]|{i^!} \ar[r]|{j^!=j^*}  &\mathcal{T}. \ar@<-2ex>[l]|{j_!} \ar@<2ex>[l]|{j_{*}}
}\qquad (\star)
\end{align*}
By Lemma~\ref{lem-key}(1), $i^*$ and $j_!$ can be restricted to $\mathcal{S}^c\ra \mathcal{R}^c$ and $\mathcal{T}^c\ra \mathcal{S}^c$, respectively. In particular, $i^{*}(G_{\mathcal{S}})\in \mathcal{R}^c$ and $ j_{!}(G_{\mathcal{T}})\in \mathcal{S}^c$. Note that $\mathcal{R}^c=\langle G_{\mathcal{R}}\rangle$  and $\mathcal{S}^c=\langle G_{\mathcal{S}}\rangle$. Thus there are two integers $c\leq d$ such that $$j_!(G_{\mathcal{T}}) \in \langle G_{\mathcal{S}} \rangle^{[c, d]}\subseteq \langle G_{\mathcal{S}} \rangle^{(-\infty, d]}.$$ In later discussions, \textbf{we always fix $\bm{c}$ and $\bm{d}$}. Then $0\leq w_{G_{\mathcal{S}}}(j_{!}(G_{\mathcal{T}}))\leq d-c<+\infty$ (see Definition \ref{index}). Similarly, $w_{G_{\mathcal{R}}}(i^{*}(G_{\mathcal{S}}))$ is a nonnegative integer.

\begin{lem}\label{lem-fd-contain}
Suppose that $j_!$ can be restricted to a functor $\mathcal{T}^b\ra \mathcal{S}^b$.
Then $\mathcal{T}^{\le 0}\cap \mathcal{T}^b \subseteq  j^*(\mathcal{S}^{\le d}\cap \mathcal{S}^b)$.
\end{lem}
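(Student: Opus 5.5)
Given $Z\in\mathcal{T}^{\le 0}\cap\mathcal{T}^b$, the natural candidate preimage is $j_!(Z)$ itself, since $j^*j_!(Z)\simeq Z$ because $j_!$ is fully faithful. By hypothesis, $j_!(Z)\in\mathcal{S}^b$. So it remains to show $j_!(\mathcal{T}^{\le 0})\subseteq\mathcal{S}^{\le d}$.

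The plan for that inclusion is as follows.

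\textbf{Step 1 (generators).} Recall $\mathcal{T}^{\le 0}=\Coprod(G_{\mathcal{T}}(-\infty,0])$ from Lemma \ref{lem-p-w-gen}. Since $j_!$ is a left adjoint, it is a triangle functor preserving coproducts. Hence
\[
\mathscr{X}\coloneqq\{Y\in\mathcal{T}\mid j_!(Y)\in\mathcal{S}^{\le d}\}
\]
is closed under coproducts and extensions, because $\mathcal{S}^{\le d}$ is closed under both, being an aisle. It therefore suffices to check $G_{\mathcal{T}}[k]\in\mathscr{X}$ for all $k\ge 0$.

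\textbf{Step 2 (the generator itself).} By the fixed choice, $j_!(G_{\mathcal{T}})\in\langle G_{\mathcal{S}}\rangle^{[c,d]}$. This category is built from $G_{\mathcal{S}}[-i]$ with $c\le i\le d$ by finite sums, extensions and summands. Each such $G_{\mathcal{S}}[-i]$ lies in $\mathcal{S}^{\le i}\subseteq\mathcal{S}^{\le d}$, since $G_{\mathcal{S}}\in\mathcal{S}^{\le 0}$. Hence $j_!(G_{\mathcal{T}})\in\mathcal{S}^{\le d}$.

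\textbf{Step 3 (shifts).} Since $\mathcal{S}^{\le d}[1]\subseteq\mathcal{S}^{\le d}$, it follows that $j_!(G_{\mathcal{T}}[k])\in\mathcal{S}^{\le d}$ for all $k\ge 0$. Combined with Step 1, this gives $\mathcal{T}^{\le 0}\subseteq\mathscr{X}$.

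\textbf{Conclusion.} Thus $j_!(Z)\in\mathcal{S}^{\le d}\cap\mathcal{S}^b$ and $Z\simeq j^*j_!(Z)$, which proves the inclusion.

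The only point needing care is Step 1: one must confirm that closure under extensions and coproducts, with no summands needed, suffices to generate $\Coprod(G_{\mathcal{T}}(-\infty,0])$. This holds by its definition in Definition \ref{notation}(3). No genuine obstacle is expected; the boundedness hypothesis on $j_!$ is used only to land in $\mathcal{S}^b$.
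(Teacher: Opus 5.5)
Your proof is correct, but it reaches the key inclusion from the opposite side of the adjunction from the paper.

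You prove $j_!(\mathcal{T}^{\le 0})\subseteq \mathcal{S}^{\le d}$ directly. You use the aisle description $\mathcal{T}^{\le 0}=\mathrm{Coprod}(G_{\mathcal{T}}(-\infty,0])$ from Lemma \ref{lem-p-w-gen}, the fact that $j_!$ (a left adjoint) commutes with coproducts and triangles, and the closure of $\mathcal{S}^{\le d}$ under coproducts, extensions and summands.

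The paper instead works with the coaisle. It first shows $j^*(\mathcal{S}^{\ge d})\subseteq \mathcal{T}^{\ge 0}$ using $\Hom_{\mathcal{T}}(G_{\mathcal{T}}[p],j^*(Y))\simeq \Hom_{\mathcal{S}}(j_!(G_{\mathcal{T}})[p],Y)$ together with $\mathcal{T}^{\ge 0}=G_{\mathcal{T}}(-\infty,-1]^{\perp}$. It then truncates $j_!(Z)$ at $d$ and shows by adjunction that the map $j_!(Z)\to (j_!(Z))^{\ge d+1}$ vanishes. Hence $j_!(Z)$ is a direct summand of $(j_!(Z))^{\le d}$.

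Since $\mathcal{S}^{\le d}={}^{\perp}\mathcal{S}^{\ge d+1}$, the two intermediate statements are equivalent by adjunction. Your route is shorter and avoids the truncation-and-splitting step. The paper's route uses only the orthogonal description of the coaisle, and it records the left $t$-exactness of $j^*$ up to shift. That same computation reappears in Claim 1 of the proof of Theorem \ref{thm-fd-big}.

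In both arguments the boundedness hypothesis on $j_!$ serves only to place $j_!(Z)$ in $\mathcal{S}^b$.
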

\begin{proof}
We first show $j^*(\mathcal{S}^{\ge d}) \subseteq \mathcal{T}^{\ge 0}$.

Since $(j_!, j^*)$ is an adjoint pair, $\Hom_{\mathcal{T}}(G_{\mathcal{T}}[p], j^*(Y)) \simeq \Hom_{\mathcal{S}}(j_!(G_{\mathcal{T}})[p], Y)$ for $p \in \mathbb{Z}$ and $Y \in \mathcal{S}$.
Recall that $j_!(G_{\mathcal{T}}) \in \langle G_{\mathcal{S}} \rangle^{[c, d]}$ and $G_{\mathcal{S}}\in \mathcal{S}^{\le 0}$. This forces $j_!(G_{\mathcal{T}})[p] \in \langle G_{\mathcal{S}} \rangle^{[c-p, d-p]}\subseteq \mathcal{S}^{\le d-p}$.
Let $Y \in \mathcal{S}^{\ge d}$ and $p\geq 1$. Since $\Hom_\mathcal{S}( \mathcal{S}^{\le d-p}, \mathcal{S}^{\ge d})=0$ by the $t$-structure $(\mathcal{S}^{\le 0}, \mathcal{S}^{\ge 1})$ on $\mathcal{S}$, we have $\Hom_{\mathcal{S}}(j_!(G_{\mathcal{T}})[p], Y) = 0$. Thus $\Hom_{\mathcal{T}}(G_{\mathcal{T}}[p], j^*(Y)) = 0$. Then the inclusion $j^*(\mathcal{S}^{\ge d}) \subseteq \mathcal{T}^{\ge 0}$ follows from the characterization of $\mathcal{T}^{\ge 0}$ in Lemma \ref{lem-p-w-gen}.

For checking $\mathcal{T}^{\le 0}\cap \mathcal{T}^b \subseteq  j^*(\mathcal{S}^{\le d}\cap \mathcal{S}^b)$, let $Z\in \mathcal{T}^{\le 0}\cap \mathcal{T}^b$. Since $j_!$ restricts to a functor $\mathcal{T}^b\ra \mathcal{S}^b$, we have $j_!(Z)\in \mathcal{S}^b$.
As the $t$-structure $(\mathcal{S}^{\le 0}, \mathcal{S}^{\ge 1})$ on $\mathcal{S}$ can be restricted to a $t$-structure on $\mathcal{S}^b$, there is a canonical triangle in $\mathcal{S}$:
\begin{align*}
(j_!(Z))^{\le d}\lra j_!(Z)\lraf{h} (j_!(Z))^{\ge d+1}\lra (j_!(Z))^{\le d}[1]
\end{align*}
where $(j_!(Z))^{\le d}\in \mathcal{S}^{\le d}\cap \mathcal{S}^b$ and $(j_!(Z))^{\ge d+1}\in \mathcal{S}^{\ge d+1}\cap \mathcal{S}^b$. Since $j^*(\mathcal{S}^{\ge d})\subseteq \mathcal{T}^{\ge 0}$, we have $j^*(\mathcal{S}^{\ge d+1})\subseteq \mathcal{T}^{\ge 1}$. In particular, $j^*((j_!(Z))^{\ge d+1})\in \mathcal{T}^{\ge 1}$.
Since $\Hom_\mathcal{T}( \mathcal{T}^{\le 0}, \mathcal{T}^{\ge 1})=0$ and $Z\in \mathcal{T}^{\le 0}$, it follows that $\Hom_{\mathcal{S}}(j_!(Z), (j_!(Z))^{\ge d+1})
\simeq \Hom_{\mathcal{T}}(Z, j^*((j_!(Z))^{\ge d+1}))=0$. This implies $h=0$, and therefore $j_!(Z)$ is a direct summand of $j_!(Z))^{\le d}$. Clearly, $\mathcal{S}^{\le d}\cap \mathcal{S}^b$ is closed under direct summands in $\mathcal{S}$. Thus $j_!(Z)\in \mathcal{S}^{\le d}\cap \mathcal{S}^b$.
Since $j_!$ is fully faithful, $Z\simeq j^*j_!(Z)$. This forces $Z\in j^*(\mathcal{S}^{\le d}\cap \mathcal{S}^b)$.
\end{proof}

We now give the main result of this subsection.

\begin{thm}\label{thm-fd-small}
{\rm (1)}
Suppose that the compact generator $G_{\mathcal{S}}$ of $\mathcal{S}$ is bounded and $i_*(G_{\mathcal{R}})\in\mathcal{S}^{c}$.
Then:

{\rm (i)}
$\fpd(\mathcal{S},G_{\mathcal{S}})\le \fpd(\mathcal{R},G_{\mathcal{R}})+w_{G_{\mathcal{S}}}(i_{*}(G_{\mathcal{R}}))+a(G_{\mathcal{R}})+\fpd(\mathcal{T},G_{\mathcal{T}})
+w_{G_{\mathcal{S}}}(j_{!}(G_{\mathcal{T}}))+a(G_{\mathcal{T}})+o^{-}(G_{\mathcal{S}})+o^{+}(G_{\mathcal{S}})+1.$

{\rm (ii)} $\fpd(\mathcal{R},G_{\mathcal{R}})\le \fpd(\mathcal{S},G_{\mathcal{S}})+w_{G_{\mathcal{R}}}(i^{*}(G_{\mathcal{S}}))
+a(G_{\mathcal{S}})+o^{+}(G_{\mathcal{R}})$.

{\rm (2)}  Suppose that $j_!(\mathcal{T}^{\ge 0})\subseteq \mathcal{S}^{\ge -e}$ for some integer $e$.
Then $\fpd(\mathcal{T},G_{\mathcal{T}})\le\fpd(\mathcal{S},G_{\mathcal{S}})+d+e$.
\end{thm}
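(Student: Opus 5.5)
Parts (1)(i) and (1)(ii) reduce to Theorem \ref{mainthm-fd-1} via Proposition \ref{Relation}(2); part (2) is a direct argument with Lemma \ref{lem-fd-contain}.

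\emph{Setup for (1).} Since $i_*(G_{\mathcal{R}})\in\mathcal{S}^c$, Lemma \ref{lem-key}(2) shows that the recollement $(\star)$ restricts to a left recollement $\mathcal{R}^c\to\mathcal{S}^c\to\mathcal{T}^c$. By Lemma \ref{lem-vanish}(1), $G_{\mathcal{T}}$ is bounded. Because $i_*(G_{\mathcal{R}})$ is compact, it lies in $\mathcal{S}^{\rm sb}$, so Lemma \ref{lem-vanish}(2) shows $G_{\mathcal{R}}$ is bounded. Moreover $\mathcal{R}^c=\langle G_{\mathcal{R}}\rangle$, $\mathcal{S}^c=\langle G_{\mathcal{S}}\rangle$ and $\mathcal{T}^c=\langle G_{\mathcal{T}}\rangle$. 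Hence $i_*(G_{\mathcal{R}}),\,j_!(G_{\mathcal{T}})\in\langle G_{\mathcal{S}}\rangle$ and $i^*(G_{\mathcal{S}})\in\langle G_{\mathcal{R}}\rangle$. Boundedness of $G_{\mathcal{S}}$ gives $\Hom_{\mathcal{S}}(G_{\mathcal{S}}[n],G_{\mathcal{S}})=0$ for $n\gg0$.

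\emph{Proof of (1)(i).} Theorem \ref{mainthm-fd-1}(1), applied to the compact left recollement, gives
\[
\fd(\mathcal{S}^c,G_{\mathcal{S}})\le \fd(\mathcal{R}^c,G_{\mathcal{R}})+\fd(\mathcal{T}^c,G_{\mathcal{T}})+w_{G_{\mathcal{S}}}(i_*(G_{\mathcal{R}}))+w_{G_{\mathcal{S}}}(j_!(G_{\mathcal{T}}))+o^-(G_{\mathcal{S}})+1 .
\]
Proposition \ref{Relation}(2) then supplies three estimates:
\begin{itemize}
\item $\fpd(\mathcal{S},G_{\mathcal{S}})\le \fd(\mathcal{S}^c,G_{\mathcal{S}})+o^+(G_{\mathcal{S}})$ on the left;
\item $\fd(\mathcal{R}^c,G_{\mathcal{R}})\le \fpd(\mathcal{R},G_{\mathcal{R}})+a(G_{\mathcal{R}})$ on the right;
\item $\fd(\mathcal{T}^c,G_{\mathcal{T}})\le \fpd(\mathcal{T},G_{\mathcal{T}})+a(G_{\mathcal{T}})$ on the right.
\end{itemize}
Substituting these yields exactly the stated inequality. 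The hypothesis that $G_{\mathcal{R}}$ and $G_{\mathcal{T}}$ are bounded is needed for Proposition \ref{Relation} to apply to $\mathcal{R}$ and $\mathcal{T}$; this is why the boundedness step in the setup is necessary. One caveat: $\fd$ is valued in $\mathbb{N}$ while $\fpd$ can be negative. The inequalities in Proposition \ref{Relation} still hold as stated, so no extra care should be needed.

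\emph{Proof of (1)(ii).} Lemma \ref{lem-fullyfaithful}, applied with $\mathbf F=i^*$ and $\mathbf H=i_*$ on the compact parts (as in Theorem \ref{mainthm-fd-1}(2)), gives
\[
\fd(\mathcal{R}^c,G_{\mathcal{R}})\le \fd(\mathcal{S}^c,G_{\mathcal{S}})+w_{G_{\mathcal{R}}}(i^*(G_{\mathcal{S}})).
\]
By Proposition \ref{Relation}(2), $\fpd(\mathcal{R},G_{\mathcal{R}})\le \fd(\mathcal{R}^c,G_{\mathcal{R}})+o^+(G_{\mathcal{R}})$ and $\fd(\mathcal{S}^c,G_{\mathcal{S}})\le\fpd(\mathcal{S},G_{\mathcal{S}})+a(G_{\mathcal{S}})$. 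Combining the three gives (ii).

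\emph{Proof of (2).} This part cannot go through compact objects, since $\fpd$ of $\mathcal{T}$ is not obviously controlled by compact restrictions. I would argue directly. Let $Z\in\mathcal{T}^c\cap\mathcal{T}^b\cap\mathcal{T}^{\ge0}$ with $p:=\pd_{\mathcal{T}}(Z)<+\infty$.
\begin{enumerate}
\item By Lemma \ref{lem-key}(1), $j_!(Z)\in\mathcal{S}^c$.
\item By hypothesis $j_!(Z)\in\mathcal{S}^{\ge -e}$, so $j_!(Z)[-e]\in\mathcal{S}^c\cap\mathcal{S}^{\ge0}$. Here $\mathcal{S}^c\subseteq\mathcal{S}^b$ because $G_{\mathcal{S}}$ is bounded; I implicitly assume this, as in (1).
\item By Lemma \ref{range}, $\pd_{\mathcal{S}}(j_!Z)<\infty$. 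Therefore
\[
\pd_{\mathcal{S}}(j_!Z)-e=\pd_{\mathcal{S}}(j_!(Z)[-e])\le\fpd(\mathcal{S},G_{\mathcal{S}}).
\]
\item It remains to show $p\le \pd_{\mathcal{S}}(j_!Z)+d$. Take $N\in\mathcal{T}^b\cap\mathcal{T}^{\le0}$. By Lemma \ref{lem-fd-contain}, which applies because $j_!$ preserves $\mathcal{T}^b$ (a consequence of $j_!(\mathcal{T}^{\ge0})\subseteq\mathcal{S}^{\ge-e}$ together with Lemma \ref{lem-gluing-$t$-struc}), write $N\simeq j^*(Y)$ with $Y\in\mathcal{S}^{\le d}\cap\mathcal{S}^b$.
\item Adjunction gives $\Hom_{\mathcal{T}}(Z,N[i])\simeq\Hom_{\mathcal{S}}(j_!Z,Y[i])$. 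Since $Y[d]\in\mathcal{S}^b\cap\mathcal{S}^{\le0}$, this vanishes whenever $i-d>\pd_{\mathcal{S}}(j_!Z)$.
\end{enumerate}
Hence $p\le\fpd(\mathcal{S},G_{\mathcal{S}})+e+d$.

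The main obstacle is step 4: checking that $j_!$ restricts to $\mathcal{T}^b\to\mathcal{S}^b$, so that Lemma \ref{lem-fd-contain} is available. Bounded-aboveness follows from Lemma \ref{lem-gluing-$t$-struc}, and bounded-belowness from the hypothesis $j_!(\mathcal{T}^{\ge0})\subseteq\mathcal{S}^{\ge-e}$.
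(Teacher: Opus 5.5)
Your proposal follows the paper's proof step for step. Part (1) is obtained by applying Theorem~\ref{mainthm-fd-1} to the compact left recollement, with $G_{\mathcal{R}},G_{\mathcal{T}}$ bounded by Lemma~\ref{lem-vanish}, and converting with both inequalities of Proposition~\ref{Relation}(2); part (2) uses the same direct argument through Lemma~\ref{lem-fd-contain} and the adjunction $(j_!,j^*)$. The one fix is that (2) does not assume $G_{\mathcal{S}}$ bounded, and you do not need it: $j_!(Z)\in\mathcal{S}^c\cap\mathcal{S}^b$ follows from $j_!(\mathcal{T}^b)\subseteq\mathcal{S}^b$, which you already prove in your step 4 and which is exactly what the paper uses.
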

\begin{proof}
(1) We apply Theorem \ref{mainthm-fd-1} to show (1).

Recall that $j_!$ and $i^*$ can be restricted to  $\mathcal{T}^c\to\mathcal{S}^c$ and $\mathcal{S}^c\to\mathcal{R}^c$, respectively. Since $i_*(G_{\mathcal{R}}) \in\mathcal{S}^{c}$, the functor $i_*$ restricts to $\mathcal{R}^c \to \mathcal{S}^c$. By Lemma~\ref{lem-key}(2), the functor $j^*$ also restricts to $\mathcal{S}^c \to \mathcal{T}^c$. It follows from the recollement $(\star)$ that
there is a left recollement of triangulated categories consisting of compact objects:
\begin{align*}
\xymatrixcolsep{4pc}\xymatrix{\mathcal{R}^c \ar[r]|{i_*} &\mathcal{S}^c \ar@<-2ex>[l]|{i^*}  \ar[r]|{j^*}  &\mathcal{T}^c. \ar@<-2ex>[l]|{j_!}
}
\end{align*}
Note that $i_*(G_{\mathcal{R}}), j_!(G_{\mathcal{T}})\in \mathcal{S}^c=\langle G_{\mathcal{S}} \rangle$. Since $G_{\mathcal{S}}$ is bounded, $\Hom_{\mathcal{S}}(G_{\mathcal{S}}[n],G_{\mathcal{S}})=0$ for $n\gg 0$. Now, we apply Theorem \ref{mainthm-fd-1}(1) to the above left recollement and obtain the following inequality:
$$(\sharp)\quad \fd(\mathcal{S}^c,G_{\mathcal{S}})\le \fd(\mathcal{R}^c,G_{\mathcal{R}})+\fd(\mathcal{T}^c,G_{\mathcal{T}})
+w_{G_{\mathcal{S}}}(i_{*}(G_{\mathcal{R}}))+w_{G_{\mathcal{S}}}(j_{!}(G_{\mathcal{T}}))
+o^{-}(G_{\mathcal{S}})+1,$$

By Lemma \ref{lem-vanish}, $G_{\mathcal{R}}$ and $G_{\mathcal{T}}$ are bounded. So we can apply Proposition \ref{Relation}(2) to the categories $\mathcal{R}$ and $\mathcal{T}$. Our aim is to replace $\fd(\mathcal{S}^c,G_{\mathcal{S}})$, $\fd(\mathcal{R}^c,G_{\mathcal{R}})$ and $\fd(\mathcal{T}^c,G_{\mathcal{T}})$ with $\fpd(\mathcal{S},G_{\mathcal{S}})$, $\fpd(\mathcal{R},G_{\mathcal{R}})$ and $\fpd(\mathcal{T},G_{\mathcal{T}})$, respectively.

Since $\mathcal{R}$, $\mathcal{S}$ and $\mathcal{T}$ are weakly approximable with bounded compact generators, we see from the inequality $(\sharp)$ and Proposition \ref{Relation}(2) that
$
\fpd(\mathcal{S},G_\mathcal{S})\leq \fd(\mathcal{S}^c, G_\mathcal{S})+o^+(G_\mathcal{S})
\leq\fd(\mathcal{R}^c,G_{\mathcal{R}})
+\fd(\mathcal{T}^c,G_{\mathcal{T}})
+w_{G_{\mathcal{S}}}(i_{*}(G_{\mathcal{R}}))+w_{G_{\mathcal{S}}}(j_{!}(G_{\mathcal{T}}))
+o^{-}(G_{\mathcal{S}})+1+o^+(G_\mathcal{S})
\leq \fpd(\mathcal{R},G_\mathcal{R})+a(G_\mathcal{R})+ \fpd(\mathcal{T},G_\mathcal{T})+a(G_\mathcal{T})+w_{G_{\mathcal{S}}}(i_{*}(G_{\mathcal{R}}))
+w_{G_{\mathcal{S}}}(j_{!}(G_{\mathcal{T}}))
+o^{-}(G_{\mathcal{S}})+1+o^+(G_\mathcal{S}).$ This shows (i).

Next, we show (ii).

Indeed, since $i^*(G_{\mathcal{S}})\in \mathcal{R}^c=\langle G_{\mathcal{R}} \rangle$, we can apply Theorem \ref{mainthm-fd-1}(2) to the above left recollement and obtain
$\fd(\mathcal{R}^c,G_{\mathcal{R}})\le \fd(\mathcal{S}^c,G_{\mathcal{S}})+w_{G_{\mathcal{R}}}(i^*(G_{\mathcal{S}})).$
It follows from Proposition \ref{Relation}(2) that
$
\fpd(\mathcal{R},G_\mathcal{R})\leq \fd(\mathcal{R}^c, G_\mathcal{R})+o^+(G_\mathcal{R})\leq \fd(\mathcal{S}^c,G_{\mathcal{S}})+w_{G_{\mathcal{R}}}(i^*(G_{\mathcal{S}}))+o^+(G_\mathcal{R})\leq
\fpd(\mathcal{S},G_\mathcal{S})+a(G_\mathcal{S})+w_{G_{\mathcal{R}}}(i^*(G_{\mathcal{S}}))+o^+(G_\mathcal{R}). $  This shows (ii).

(2) If $\fpd(\mathcal{S},G_{\mathcal{S}})=+\infty$, then (2) holds automatically.
So, we assume $\fpd(\mathcal{S},G_{\mathcal{S}})<+\infty$.

By Lemma \ref{range}, for any weakly approximable triangulated category $\mathcal{U}$ with a compact generator $U$, we see that $\pd_U(X)<+\infty$ for any $X\in \mathcal{U}^c$. This implies
$${\rm fpd}(\mathcal{T}, G_\mathcal{T})=\sup\{\pd_\mathcal{T}(Z)\mid Z\in \mathcal{T}^c \cap \mathcal{T}^b \cap \mathcal{T}^{\ge 0}\}.$$
Let $Z \in \mathcal{T}^c \cap \mathcal{T}^b \cap \mathcal{T}^{\ge 0}=\mathcal{T}^c \cap \mathcal{T}^{\ge 0}$. Since $j_!$ preserves compact objects and $\mathcal{S}$ is weakly approximable, we have $j_!(Z)\in\mathcal{S}^c$ and further $\pd_{\mathcal{S}}(j_!(Z))<+\infty$.

Now, assume that $j_!(\mathcal{T}^{\ge 0})\subseteq \mathcal{S}^{\ge -e}$ for some integer $e$.
Then $j_!(\mathcal{T}^+)\subseteq \mathcal{S}^+$, and $j_!(Z)\in \mathcal{S}^{\ge -e}$ due to $Z\in\mathcal{T}^{\geq 0}$. Note that $j_!(\mathcal{T}^-)\subseteq \mathcal{S}^-$ by Lemma \ref{lem-gluing-$t$-struc}. Thus $j_!(\mathcal{T}^b)=j_!(\mathcal{T}^-\cap\mathcal{T}^+)\subseteq \mathcal{S}^-\cap\mathcal{S}^+=\mathcal{S}^b$. Since $j_!$ preserves compact objects and $Z\in\mathcal{T}^c\cap\mathcal{T}^b$, we have $j_!(Z)\in\mathcal{S}^c\cap\mathcal{S}^b$. By Remark \ref{some rems}(1), it follows that
$\pd_{\mathcal{S}}(j_!(Z))=\pd_{\mathcal{S}}(j_!(Z)[-e])+e\leq\fpd(\mathcal{S},G_{\mathcal{S}})+e$. This means that, for any $Y'\in \mathcal{S}^{\le 0}\cap \mathcal{S}^b$,
\begin{align}\label{33}
\Hom_{\mathcal{S}}(j_!(Z),Y'[p])=0 \;\text{for}\; p>e+\fpd(\mathcal{S},G_{\mathcal{S}}).
\end{align}
Moreover, by Lemma \ref{lem-fd-contain}, $\mathcal{T}^{\le 0}\cap \mathcal{T}^b \subseteq  j^*(\mathcal{S}^{\le d}\cap \mathcal{S}^b)$.
Now, we take an object $Z_1\in \mathcal{T}^{\le 0}\cap \mathcal{T}^b$. Then there exists an object $Y_1\in \mathcal{S}^{\le d}\cap \mathcal{S}^b$ such that $Z_1\simeq j^*(Y_1)$.
Clearly, $Y_1[d]\in \mathcal{S}^{\le 0}\cap \mathcal{S}^b$.
By $(\ref{33})$ and the adjoint pair $(j_!, j^*)$, there is a series of isomorphisms:
$$\Hom_{\mathcal{T}}(Z,Z_1[p])\simeq
\Hom_{\mathcal{T}}(Z,j^*(Y_1)[p])\simeq \Hom_{\mathcal{S}}(j_!(Z),Y_1[p])
\simeq \Hom_{\mathcal{S}}(j_!(Z),Y_1[d][p-d])=0$$
for $p>d+e+\fpd(\mathcal{S},G_{\mathcal{S}})$.
This implies $\pd_{\mathcal{T}}(Z)\le d+e+\fpd(\mathcal{S},G_{\mathcal{S}})$. Thus $\fpd(\mathcal{T},G_{\mathcal{T}})\le d+e+\fpd(\mathcal{S},G_{\mathcal{S}})$.
\end{proof}

Related to big finitistic dimensions of weakly approximable triangulated categories, we have the following theorem in which the condition (1) is  weaker than the one in Theorem \ref{thm-fd-small}(1).

\begin{thm}\label{thm-fd-big}
{\rm (1)} Suppose that the compact generator $G_{\mathcal{S}}$ of $\mathcal{S}$ is bounded and $i_*(G_{\mathcal{R}})\in\mathcal{S}^{{\rm sb}}$. Then

{\rm (i)}
$\Fpd(\mathcal{S},G_{\mathcal{S}})\le \Fpd(\mathcal{R},G_{\mathcal{R}})+\overline{w_{G_{\mathcal{S}}}}(i_{*}(G_{\mathcal{R}}))+a(G_{\mathcal{R}})+\Fpd(\mathcal{T},G_{\mathcal{T}})+w_{G_{\mathcal{S}}}(j_{!}(G_{\mathcal{T}}))+a(G_{\mathcal{T}})+o^{-}(G_{\mathcal{S}})+o^{+}(G_{\mathcal{S}})+1.$

{\rm (ii)} $\Fpd(\mathcal{R},G_{\mathcal{R}})\le \Fpd(\mathcal{S},G_{\mathcal{S}})+w_{G_{\mathcal{R}}}(i^{*}(G_{\mathcal{S}}))+a(G_{\mathcal{S}})+o^{+}(G_{\mathcal{R}})$.

{\rm (2)} Suppose that $j_!(\mathcal{T}^{\ge 0})\subseteq \mathcal{S}^{\ge -e}$ for some integer $e$. Then $\Fpd(\mathcal{T},G_{\mathcal{T}})\le\Fpd(\mathcal{S},G_{\mathcal{S}})+d+e$.
\end{thm}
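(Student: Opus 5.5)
The plan is to mirror the proof of Theorem \ref{thm-fd-small}, but replace the compact objects $\mathcal{S}^c$ by the strongly bounded objects $\mathcal{S}^{\rm sb}$ and the finitistic dimension $\fd(\mathcal{S}^c,-)$ by the big finitistic dimension $\Fd(\mathcal{S},-)$ introduced just before. The first step is to turn the recollement $(\star)$ into a left recollement of strongly bounded parts. Since $i_*(G_{\mathcal{R}})\in\mathcal{S}^{\rm sb}$, Theorem \ref{prop-rec-res1} gives a left recollement $\mathcal{R}^{\rm sb}\to\mathcal{S}^{\rm sb}\to\mathcal{T}^{\rm sb}$. Lemma \ref{lem-vanish} shows that $G_{\mathcal{R}}$ and $G_{\mathcal{T}}$ are bounded. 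Proposition \ref{lem-sp=big} then places all three $\mathrm{sb}$-categories inside the bounded parts. By Proposition \ref{Relation}(1) and Remark \ref{some rems}(1), we get
$$\Fpd(\mathcal{U},G)=\sup\{\pd_G(M)\mid M\in\mathcal{U}^{\rm sb}\cap\mathcal{U}^{\ge0}\}$$
for each of the three categories $\mathcal{U}$.

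Next we compare $\Fpd$ and $\Fd$, using the unnumbered proposition after Proposition \ref{Relation}: $\Fd\le \Fpd+a$ and $\Fpd\le\Fd+o^+$. This reduces (i) to an inequality
$$\Fd(\mathcal{S},G_{\mathcal{S}})\le \Fd(\mathcal{R},G_{\mathcal{R}})+\Fd(\mathcal{T},G_{\mathcal{T}})+\overline{w}(i_*G_{\mathcal{R}})+w(j_!G_{\mathcal{T}})+o^-(G_{\mathcal{S}})+1.$$
We prove it by rerunning the argument of Theorem \ref{mainthm-fd-1}(1) and Lemma \ref{lem-fullyfaithful}, with the following substitutions.
\begin{itemize}
\item Replace $\langle G\rangle^{[0,+\infty)}$ by $\bigcup_m\overline{\langle G\rangle}^{[0,m]}$.
\item Replace $\langle G\rangle^{[a,b]}$ for $i_*(G_{\mathcal{R}})$ by $\overline{\langle G_{\mathcal{S}}\rangle}^{[a,b]}$. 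This is why $\overline{w}$ appears.
\item Keep $j_!(G_{\mathcal{T}})\in\langle G_{\mathcal{S}}\rangle^{[c,d]}$ as before.
\end{itemize}

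Concretely, take $Y\in\mathcal{S}^{\ge0}\cap\mathcal{S}^{\rm sb}$ and the triangle $j_!j^*Y\to Y\to i_*i^*Y$. Here $j^*Y\in\mathcal{T}^{\rm sb}$, and its shift lies in $G_{\mathcal{T}}(-\infty,-1]^\perp$. Now apply the $\Fd$-bound in $\mathcal{T}$ and push forward by the coproduct-preserving functor $j_!$, using the closure argument of Lemma \ref{some lems}(3). Orthogonality then controls $i_*i^*Y$ via $o^-(G_{\mathcal{S}})$, exactly as before. The key fact needed is that the vanishing of $\Hom$ from $G[-\infty..]$ extends from $\langle G\rangle$ to $\overline{\langle G\rangle}$. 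This holds because $G$ is compact, so right orthogonals are closed under coproducts, and by \cite[Lemma 1.8]{n21}.

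For (ii), we use the analogue of Lemma \ref{lem-fullyfaithful} for the adjunction $(i^*,i_*)$ restricted to $\mathrm{sb}$-parts, with $i^*(G_{\mathcal{S}})\in\langle G_{\mathcal{R}}\rangle$ of width $w_{G_{\mathcal{R}}}(i^*G_{\mathcal{S}})$. This gives $\Fd(\mathcal{R})\le\Fd(\mathcal{S})+w$. Converting back through the two comparison inequalities yields the stated bound.

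For (2), the proof of Theorem \ref{thm-fd-small}(2) transfers verbatim. Take $Z\in\mathcal{T}^{\rm sb}\cap\mathcal{T}^{\ge0}$. Then $j_!Z\in\mathcal{S}^{\rm sb}$ by Lemma \ref{some lems}(3), and $j_!Z\in\mathcal{S}^{\ge -e}\cap\mathcal{S}^b$. Hence $\pd(j_!Z)\le\Fpd(\mathcal{S})+e$. Lemma \ref{lem-fd-contain} and adjunction then give $\pd_{\mathcal{T}}(Z)\le \Fpd(\mathcal{S})+d+e$.

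The main obstacle is the replacement of $\langle-\rangle$ by $\overline{\langle-\rangle}$ in the Theorem \ref{mainthm-fd-1} argument. In particular, one must check that $\bigcup_m\overline{\langle G\rangle}^{[0,m]}[n]$ is preserved under pushing forward by $i_*$ and $j_!$ with the correct index shift. One must also check that it is closed under the extension in the canonical triangle. This last point should follow from $\overline{\langle G\rangle}^{[0,m]}*\overline{\langle G\rangle}^{[0,m']}\subseteq\overline{\langle G\rangle}^{[0,\max]}$, with the summand issue handled by the Eilenberg swindle remark in Definition \ref{notation}(3).
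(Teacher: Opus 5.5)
Your argument for part (2) has a genuine gap. You take $Z\in\mathcal{T}^{\rm sb}\cap\mathcal{T}^{\ge 0}$ and say the proof of Theorem \ref{thm-fd-small}(2) transfers verbatim. However, $\Fpd(\mathcal{T},G_{\mathcal{T}})$ is a supremum over all $Z\in\mathcal{T}^{b}\cap\mathcal{T}^{\ge 0}$ with $\pd_{\mathcal{T}}(Z)<+\infty$. Two of your steps need $G_{\mathcal{T}}$ to be bounded: identifying that class with $\mathcal{T}^{\rm sb}\cap\mathcal{T}^{\ge 0}$, and the claim $j_!Z\in\mathcal{S}^b$ (which needs $\mathcal{T}^{\rm sb}\subseteq\mathcal{T}^b$). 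Both rest on Lemma \ref{range-1}, Proposition \ref{Relation}(1) and Proposition \ref{lem-sp=big}. You only obtained boundedness of $G_{\mathcal{T}}$ from the hypotheses of (1) via Lemma \ref{lem-vanish}, and part (2) assumes no boundedness at all.

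The missing step is that $\pd_{\mathcal{S}}(j_!Z)<+\infty$ for every such $Z$. It is indispensable, since $\Fpd(\mathcal{S},G_{\mathcal{S}})$ only controls objects already known to have finite projective dimension. In Theorem \ref{thm-fd-small}(2) it came for free from compactness of $j_!Z$. The paper obtains it from the glued $t$-structure $(\widetilde{\mathcal{S}}^{\le 0},\widetilde{\mathcal{S}}^{\ge 1})$, which lies in the preferred equivalence class. Hence $\mathcal{S}^{\le -m}\subseteq\widetilde{\mathcal{S}}^{\le 0}$ for some $m\ge 0$, so $j^*(\mathcal{S}^b\cap\mathcal{S}^{\le 0})\subseteq\mathcal{T}^b\cap\mathcal{T}^{\le m}$, and adjunction gives $\pd_{\mathcal{S}}(j_!Z)\le\pd_{\mathcal{T}}(Z)+m$. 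After that, your use of $j_!(\mathcal{T}^b)\subseteq\mathcal{S}^b$, Lemma \ref{lem-fd-contain} and adjunction coincides with the paper's.

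In part (1) your route also differs from the paper's, and the difference costs something. The paper never passes through $\Fd$. It bounds $\pd_{\mathcal{T}}(j^*Y)$ and $\pd_{\mathcal{R}}(i^*Y)$ directly by $\Fpd(\mathcal{T},G_{\mathcal{T}})$ and $\Fpd(\mathcal{R},G_{\mathcal{R}})$ plus shifts read off from the $t$-structures. It then converts these to ranges with Lemma \ref{range-1}, pushes forward along $j_!$ and $i_*$, and checks that the range for $j_!j^*(Y)$ is contained in that for $i_*i^*(Y)$ (Claim 3).

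Your $\Fd$-versions of Lemma \ref{lem-fullyfaithful} and Theorem \ref{mainthm-fd-1}(1) are fine. The problem is the conversion $\Fd(\mathcal{U},U)\le\Fpd(\mathcal{U},U)+a(U)$, which can only hold as $\Fd\le\max\{0,\Fpd+a\}$. Here $\Fd$ is an infimum over $\mathbb{N}$, while $\Fpd$ may be negative: Remark \ref{some rems}(2) only gives $\Fpd\ge -o^-(G)$. For example, take the graded algebra $A=k[x]/(x^2)$ with $x$ in degree $-5$ and zero differential. Then $\Fpd(\mathscr{D}(A),A)=-5$, attained by $A[-5]$, while $a(A)=0$.

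So your chain yields (i) only when $\Fpd(\mathcal{R},G_{\mathcal{R}})+a(G_{\mathcal{R}})\ge 0$ and $\Fpd(\mathcal{T},G_{\mathcal{T}})+a(G_{\mathcal{T}})\ge 0$. It yields (ii) only when $\Fpd(\mathcal{S},G_{\mathcal{S}})+a(G_{\mathcal{S}})\ge 0$. The paper's direct argument needs no sign condition. Where it needs a lower bound, it uses $\Fpd(\mathcal{R},G_{\mathcal{R}})\ge -o^-(G_{\mathcal{R}})\ge -(\overline{w_{G_{\mathcal{S}}}}(i_*(G_{\mathcal{R}}))+o^-(G_{\mathcal{S}}))$.
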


\begin{proof}
(1) By Lemma \ref{lem-vanish}, $G_{\mathcal{R}}$ and $G_{\mathcal{T}}$ are bounded. It follows from Proposition \ref{lem-sp=big} that $\mathcal{R}^{{\rm sb}}\subseteq\mathcal{R}^{b}$, $\mathcal{S}^{{\rm sb}}\subseteq\mathcal{S}^{b}$ and $\mathcal{T}^{{\rm sb}}\subseteq\mathcal{T}^{b}$.
Further, since $i_*(G_{\mathcal{R}}) \in\mathcal{S}^{{\rm sb}}$, by Theorem \ref{prop-rec-res1}, the functor $i_*$ restricts to $\mathcal{R}^{{\rm sb}} \to \mathcal{S}^{{\rm sb}}$. So, we can make the following assumptions:
$$i_{*}(G_{\mathcal{R}})=\overline{\langle G_{\mathcal{S}}\rangle}^{[\alpha,\beta]},\; \overline{w_{G_{\mathcal{S}}}}(i_{*}(G_{\mathcal{R}}))=\beta-\alpha; \; \;i^{*}(G_{\mathcal{S}})=\langle G_{\mathcal{R}}\rangle^{[a,b]}, \;w_{G_{\mathcal{R}}}(i^{*}(G_{\mathcal{S}}))=b-a;$$
$$j_{!}(G_{\mathcal{T}})=\langle G_{\mathcal{S}}\rangle^{[c,d]},\; w_{G_{\mathcal{S}}}(j_{!}(G_{\mathcal{T}}))=d-c.$$  Note that $a(G_{\mathcal{R}})$, $a(G_{\mathcal{S}})$ and $a(G_{\mathcal{T}})$ are finite since $\mathcal{R}$, $\mathcal{S}$ and $\mathcal{T}$ are weakly approximable. Moreover, $o^{+}(G_{\mathcal{S}})$ and $o^{+}(G_{\mathcal{R}})$ are finite by Lemma \ref{PR}(1), while $o^{-}(G_{\mathcal{S}})$ is finite by the boundedness of $G_{\mathcal{S}}$.

(i) When either $\Fpd(\mathcal{R},G_{\mathcal{R}})=+\infty$ or $\Fpd(\mathcal{T},G_{\mathcal{T}})=+\infty$, (i) holds trivially. So, we assume
$$\sigma:=\Fpd(\mathcal{R},G_{\mathcal{R}})<+\infty\quad\mbox{and}\quad
\delta:=\Fpd(\mathcal{T},G_{\mathcal{T}})<+\infty.$$
Let $Y\in\mathcal{S}^{b}\cap\mathcal{S}^{\geq 0}$ with $\pd_{\mathcal{S}}(Y)<+\infty$.
Then there is a canonical triangle in $\mathcal{S}$
\begin{align}\label{f-canonical-triangle}
j_!j^*(Y)\lra Y\lra i_*i^*(Y)\lra j_!j^*(Y)[1].
\end{align}
Our strategy for showing (i) is to first determine the range of $j_!j^*(Y)$ and $i_*i^*(Y)$ by $G_\mathcal{S}$, and then to apply Lemma \ref{range} to give a upper bound for $\pd_{\mathcal{S}}(Y)$.
The proof is divided into four steps.

{\bf Claim 1}: $j_!j^*(Y)\in \overline{\langle G_{\mathcal{S}}\rangle}^{[-u,+\infty)}$, where $u\coloneqq \delta+d-c+a(G_{\mathcal{T}})$.

Since $i_*(G_{\mathcal{R}}) \in\mathcal{S}^{{\rm sb}}$, it follows from Theorem \ref{prop-rec-res1} that there is a left recollement:
\begin{align*}(\dag)\quad
\xymatrixcolsep{4pc}\xymatrix{
\mathcal{R}^{{\rm sb}} \ar[r]|{i_*=i_!}
&\mathcal{S}^{{\rm sb}} \ar@<-2ex>[l]|{i^*}   \ar[r]|{j^!=j^*}
&\mathcal{T}^{{\rm sb}}. \ar@<-2ex>[l]|{j_!}
}
\end{align*}
As $Y\in\mathcal{S}^{b}$ and $\pd_{\mathcal{S}}(Y)<+\infty$, we have $Y\in\mathcal{S}^{{\rm sb}}$ by Lemma~\ref{range-1}. Then $j^{*}(Y)\in\mathcal{T}^{{\rm sb}}\subseteq\mathcal{T}^{b}$, and therefore $\pd_{\mathcal{T}}(j^{*}(Y))<+\infty$ by Lemma \ref{range}. Now we show $j^*(Y)\in\mathcal{T}^b\cap\mathcal{T}^{\geq -d}$. Since $j_!(G_{\mathcal{T}})\in \langle G_{\mathcal{S}}\rangle^{[c,d]}$ and $G_{\mathcal{S}}\in\mathcal{S}^{\le 0}$, we have $j_!(G_{\mathcal{T}})[p]\in \mathcal{S}^{\le -1}$ for $p> d$.
It follows from $Y\in \mathcal{S}^{\ge 0}$ that $\Hom_{\mathcal{S}}(j_!(G_{\mathcal{T}})[p],Y)=0$ for $p>d$. Combining this with the adjoint pair $(j_!, j^*)$ leads to \[\Hom_{\mathcal{T}}(G_{\mathcal{T}}[p],j^*(Y))\simeq\Hom_{\mathcal{S}}(j_{!}(G_{\mathcal{T}})[p],Y)=0\text{ for }p>d.\]
This implies $j^*(Y)\in\mathcal{T}^{b}\cap\mathcal{T}^{\ge -d}$. By Remark \ref{some rems}(1), we obtain $\pd_{\mathcal{T}}(j^*(Y))=\pd_{\mathcal{T}}(j^*(Y)[-d])+d\leq\delta+d.$
Further, $j^*(Y)\in\overline{\langle G_{\mathcal{T}}\rangle}^{[-\delta-d-a(G_{\mathcal{T}}),+\infty)}$  by Lemma \ref{range-1}. Let $u_{1}:=\delta+d+a(G_{\mathcal{T}})$ and \[\mathscr{X}:=\{X\in\mathcal{T}\mid j_{!}(X)\in\overline{\langle G_{\mathcal{S}}\rangle}^{[c-u_{1},+\infty)}\}.\]
It follows from $j_{!}(G_{\mathcal{T}})\in\langle G_{\mathcal{S}}\rangle^{[c,d]}$ that $G_{\mathcal{T}}[-u_{1},+\infty)\subseteq\mathscr{X}$. Since $j_{!}$ preserves coproducts, $\mathscr{X}$ is closed under coproducts, direct summands and extensions in $\mathcal{T}$. This implies $\overline{\langle G_{\mathcal{T}}\rangle}^{[-u_{1},+\infty)}\subseteq\mathscr{X}$ and therefore  $j_{!}j^{*}(Y)\in\overline{\langle G_{\mathcal{S}}\rangle}^{[c-u_{1},+\infty)}=\overline{\langle G_{\mathcal{S}}\rangle}^{[-u,+\infty)}$.

{\bf Claim 2:} $i_*i^*(Y)\in \overline{\langle G_{\mathcal{S}}\rangle}^{[-v,+\infty)}$, where $v\coloneqq u+\sigma+\beta-\alpha+a(G_{\mathcal{R}})+o^{-}(G_{\mathcal{S}})+1$

Since $\Hom_{\mathcal{S}}(G_{\mathcal{S}},G_{\mathcal{S}}[i])=0$ for $i<- o^{-}(G_{\mathcal{S}})$, we have $G_{\mathcal{S}}\in \mathcal{S}^{\ge - o^{-}(G_{\mathcal{S}})}$ by Lemma \ref{lem-p-w-gen}. It follows from
$j_!j^*(Y)\subseteq\langle G_{\mathcal{S}}\rangle^{[-u,+\infty)}$ that $j_!j^*(Y)\in \mathcal{S}^{\ge -u-o^{-}(G_{\mathcal{S}}) }$. By $Y \in \mathcal{S}^{\ge 0}$, we see from the triangle~\eqref{f-canonical-triangle} that  $i_* i^*(Y) \in \mathcal{S}^{\ge -u - o^{-}(G_{\mathcal{S}})-1}$.

Let $v_1\coloneqq u+o^{-}(G_{\mathcal{S}})+\beta+1$. Since $i_*(G_{\mathcal{R}})\in \langle G_{\mathcal{S}}\rangle^{[\alpha,\beta]}$ and $i_* i^*(Y) \in \mathcal{S}^{\ge -u - o^{-}(G_{\mathcal{S}}) - 1}$, it follows from the characterization of $\mathcal{S}^{\geq 1}$ in Lemma \ref{lem-p-w-gen} that $\Hom_{\mathcal{S}}(i_*(G_{\mathcal{R}})[p],i_*i^*(Y))=0$ for $p>v_1$. Using the full faithfulness of $i_*$, we obtain $\Hom_{\mathcal{R}}(G_{\mathcal{R}}[p],i^*(Y))=0$ for $p>v_1$. Still by Lemma \ref{lem-p-w-gen}, $i^*(Y) \in \mathcal{R}^{\ge -v_1}$. By the recollement $(\dag)$, $i^*(Y)[-v_1]\in \mathcal{R}^{{\rm sb}}\cap\mathcal{R}^{\ge 0}\subseteq\mathcal{R}^b\cap\mathcal{R}^{\ge 0}.$ Hence $\pd_{\mathcal{R}}(i^*(Y)[-v_{1}])<+\infty$ by Lemma \ref{range}. By Remark~\ref{some rems}(1), we obtain $\pd_{\mathcal{R}}(i^*(Y))=\pd_{\mathcal{R}}(i^*(Y)[-v_{1}])+v_{1}\leq\sigma+v_{1}$. By Lemma \ref{range-1}, $i^*(Y)\in\overline{\langle G_{\mathcal{R}}\rangle}^{[-\sigma-v_1-a(G_{\mathcal{R}}),+\infty)}$. Note that $i_{*}$ preserves coproducts and $i_{*}(G_{\mathcal{R}})=\overline{\langle G_{\mathcal{S}}\rangle}^{[\alpha,\beta]}$. Similarly, we can show  $i_*i^*(Y)\in\overline{\langle G_{\mathcal{S}}\rangle}^{[\alpha-\sigma-v_{1}-a(G_{\mathcal{R}}),+\infty)}=\overline{\langle G_{\mathcal{S}}\rangle}^{[-v,+\infty)}$.

{\bf Claim 3:} $u\le v$.

It suffices to show that $\sigma\ge -(\beta-\alpha+o^{-}(G_{\mathcal{S}}))$. By Remark~\ref{some rems}(2), we only need to show that $\Hom_{\mathcal{R}}(G_{\mathcal{R}}[p],G_{\mathcal{R}})=0$ for $p>\beta-\alpha+o^{-}(G_{\mathcal{S}})$.

Indeed, since $i_*$ is fully faithful, we have $\Hom_{\mathcal{R}}(G_{\mathcal{R}}[p],G_{\mathcal{R}})\simeq
\Hom_{\mathcal{S}}(i_*(G_{\mathcal{R}})[p],i_*(G_{\mathcal{R}})).$
Recall that $i_*(G_{\mathcal{R}})\in \langle G_{\mathcal{S}}\rangle^{[\alpha,\beta]}$ and $\Hom_{\mathcal{S}}(G_{\mathcal{S}}[i],G_{\mathcal{S}})=0$ for $i> o^{-}(G_{\mathcal{S}})$. Consequently,  for any $Y_3,Y_4\in \langle G_{\mathcal{S}}\rangle^{[\alpha,\beta]}$, we have $\Hom_{\mathcal{S}}(Y_3[p],Y_4)=0$ for $p>\beta-\alpha+o^{-}(G_{\mathcal{S}})$.
In particular, taking $Y_3 = Y_4 = i_*(G_{\mathcal{R}})$ yields $\Hom_{\mathcal{S}}(i_*(G_{\mathcal{R}})[p], i_*(G_{\mathcal{R}}))=0$ for $p>\beta-\alpha+o^{-}(G_{\mathcal{S}})$. Thus $\Hom_{\mathcal{R}}(G_{\mathcal{R}}[p], G_{\mathcal{R}})=0$ for $p>\beta-\alpha+o^{-}(G_{\mathcal{S}})$.

{\bf Claim 4:} $\Fpd(\mathcal{S},G_{\mathcal{S}})\le o^{+}(G_{\mathcal{S}})+v$. Thus $(1)$ holds.

In fact, combining the triangle (\ref{f-canonical-triangle}) with the claims $1$-$3$, we obtain $Y\in \langle G_{\mathcal{S}} \rangle^{[-v,+\infty)}$. By Lemma~\ref{range}, $\pd_{\mathcal{S}}(Y)\le o^{+}(G_{\mathcal{S}})+v$. Thus $\Fpd(\mathcal{S},G_{\mathcal{S}})\le o^{+}(G_{\mathcal{S}})+v$.

\smallskip
(ii) If $\Fpd(\mathcal{S},G_{\mathcal{S}})=+\infty$, then (ii) holds automatically.
So, we assume $\Fpd(\mathcal{S},G_{\mathcal{S}})<+\infty$.

Let $X\in\mathcal{R}^b\cap\mathcal{R}^{\ge 0}$ with $\pd_{\mathcal{R}}(X)<+\infty$. Define $w\coloneqq \Fpd(\mathcal{S},G_{\mathcal{S}})+b<+\infty$. In the following, we show  $\pd_{\mathcal{S}}(i_*(X))\leq w$.

By Lemma \ref{range-1}, $X\in\mathcal{R}^{{\rm sb}}$. It follows from the recollement $(\dag)$ that $i_*(X)\in\mathcal{S}^{{\rm sb}}$. Hence $\pd_{\mathcal{S}}(i_*(X))<\infty$ by Lemma \ref{range}. Since $i^*(G_{\mathcal{S}})\in \langle G_{\mathcal{R}}\rangle^{[a,b]}$ and $X\in \mathcal{R}^{\ge 0}$, we have $\Hom_{\mathcal{R}}(i^*(G_{\mathcal{S}})[p],X)=0$ for $p>b$.
Then $$\Hom_{\mathcal{S}}(G_{\mathcal{S}}[p],i_*(X))\simeq \Hom_{\mathcal{R}}(i^*(G_{\mathcal{S}})[p],X)=0 \;\text{for}\; p>b.$$
This implies $i_*(X)\in \mathcal{S}^{{\rm sb}}\cap\mathcal{S}^{\ge -b}\subseteq\mathcal{S}^b\cap\mathcal{S}^{\ge -b}$. By Remark \ref{some rems}(1), we obtain $$\pd_{\mathcal{S}}(i_*(X))=\pd_{\mathcal{S}}(i_*(X)[-b])+b\leq\Fpd(\mathcal{S},G_{\mathcal{S}})+b=w.$$ Further, by Lemma \ref{range-1}, $i_{*}(X)\in\overline{\langle G_{\mathcal{S}}\rangle}^{[-w-a(G_{\mathcal{S}}),+\infty)}$. Note that $i^*$ preserves coproducts and $i^*(G_{\mathcal{S}})=\langle G_{\mathcal{R}}\rangle^{[a,b]}$. This forces $i^*i_*(X)\in\overline{\langle G_{\mathcal{R}}\rangle}^{[a-w-a(G_{\mathcal{S}}),+\infty)}$. Since $i_{*}$ is fully faithful, there is an isomorphism $X\simeq i^*i_*(X)$. Now, by Lemma \ref{range}, $\pd_{\mathcal{R}}(X)\leq -a+w+a(G_{\mathcal{S}})+o^{+}(G_{\mathcal{R}})=\Fpd(\mathcal{S},G_{\mathcal{S}})+b-a+a(G_{\mathcal{S}})+o^{+}(G_{\mathcal{R}}).$
Thus
$\Fpd(\mathcal{R},G_{\mathcal{R}})\le\Fpd(\mathcal{S},G_{\mathcal{S}})+b-a+a(G_{\mathcal{S}})+o^{+}(G_{\mathcal{R}})$.

(2) If $\Fpd(\mathcal{S},G_{\mathcal{S}})=+\infty$, then (2) holds automatically.
So, we assume $\Fpd(\mathcal{S},G_{\mathcal{S}})<+\infty$.

Let $Z \in\mathcal{T}^b \cap \mathcal{T}^{\ge 0}$ with $\pd_{\mathcal{T}}(Z) < +\infty$. We first show $\pd_{\mathcal{S}}(j_!(Z))<+\infty$.

For this aim, we consider the glued $t$-structure $(\widetilde{\mathcal{S}}^{\leq 0},\widetilde{\mathcal{S}}^{\geq 1})$ associated with the $t$-structures $(\mathcal{R}^{\leq 0},\mathcal{R}^{\leq 1})$ and $(\mathcal{T}^{\leq 0},\mathcal{T}^{\geq 1})$ via the recollement $(\star)$, where\[\widetilde{\mathcal{S}}^{\leq0}:=\{X\in\mathcal{S}\,\mid\,i^{*}(X)\in\mathcal{R}^{\leq 0},\,j^{*}(X)\in\mathcal{T}^{\leq 0}\}\quad {\rm by \; Proposition}\; \ref{glued $t$-str}. \]
Since $(\widetilde{\mathcal{S}}^{\leq 0},\widetilde{\mathcal{S}}^{\geq 1})$ is in the preferred equivalence class by Lemma \ref{lem-gluing-$t$-struc}, we have  $\mathcal{S}^{\leq -m}\subseteq\widetilde{\mathcal{S}}^{\leq 0}\subseteq\mathcal{S}^{\leq m}$ for some $m\in\mathbb{N}$. Note that $j^*(\mathcal{S}^b)\subseteq\mathcal{T}^b$ by Lemma \ref{lem-gluing-$t$-struc}, and that $j^*(\mathcal{S}^{\leq 0})[m]=j^*(\mathcal{S}^{\leq -m})\subseteq j^*(\widetilde{\mathcal{S}}^{\leq 0})\subseteq \mathcal{T}^{\leq 0}$. This implies $j^*(\mathcal{S}^b\cap\mathcal{S}^{\leq 0})\subseteq \mathcal{T}^b\cap \mathcal{T}^{\leq m}$. By the adjoint isomorphism \[\Hom_{\mathcal{S}}(j_!(Z),Y')\simeq \Hom_{\mathcal{T}}(Z,j^*(Y'))\text{ for each }Y'\in\mathcal{S}^b\cap\mathcal{S}^{\leq 0},\] we have $\pd_{\mathcal{S}}(j_!(Z))\leq \pd_{\mathcal{T}}(Z)+m< +\infty$.

Assume that $j_!(\mathcal{T}^{\ge 0})\subseteq \mathcal{S}^{\ge -e}$ for some integer $e$.
Then $j_!(\mathcal{T}^+)\subseteq \mathcal{S}^+$, and $j_!(Z)\in \mathcal{S}^{\ge -e}$ by $Z\in\mathcal{T}^{\geq 0}$.  Observe that $j_!(\mathcal{T}^-)\subseteq \mathcal{S}^-$ by Lemma \ref{lem-gluing-$t$-struc}. Thus $j_!(\mathcal{T}^b)=j_!(\mathcal{T}^-\cap\mathcal{T}^+)\subseteq \mathcal{S}^-\cap\mathcal{S}^+=\mathcal{S}^b$. This forces $j_!(Z)\in\mathcal{S}^b$ since $Z\in\mathcal{T}^b$. By Remark \ref{some rems}(1), we have
$\pd_{\mathcal{S}}(j_!(Z))=\pd_{\mathcal{S}}(j_!(Z)[-e])+e\leq\Fpd(\mathcal{S},G_{\mathcal{S}})+e$. This means that, for any $Y'\in \mathcal{S}^{\le 0}\cap \mathcal{S}^b$,
\begin{align}\label{f-iso-ix-y-2}
\Hom_{\mathcal{S}}(j_!(Z),Y'[p])=0 \;\text{for}\; p>e+\Fpd(\mathcal{S},G_{\mathcal{S}}).
\end{align}
Moreover, by Lemma \ref{lem-fd-contain}, $\mathcal{T}^{\le 0}\cap \mathcal{T}^b \subseteq  j^*(\mathcal{S}^{\le d}\cap \mathcal{S}^b)$.
Now, we take an object $Z_1\in \mathcal{T}^{\le 0}\cap \mathcal{T}^b$. Then there exists an object $Y_1\in \mathcal{S}^{\le d}\cap \mathcal{S}^b$ such that $Z_1\simeq j^*(Y_1)$.
Clearly, $Y_1[d]\in \mathcal{S}^{\le 0}\cap \mathcal{S}^b$.
By (\ref{f-iso-ix-y-2}) and the adjoint pair $(j_!, j^*)$, for $p>d+e+\Fpd(\mathcal{S},G_{\mathcal{S}})$, there is a series of isomorphisms:
$$\Hom_{\mathcal{T}}(Z,Z_1[p])\simeq
\Hom_{\mathcal{T}}(Z,j^*(Y_1)[p])\simeq \Hom_{\mathcal{S}}(j_!(Z),Y_1[p])
\simeq \Hom_{\mathcal{S}}(j_!(Z),Y_1[d][p-d])=0.$$
This implies $\pd_{\mathcal{T}}(Z)\le d+e+\Fpd(\mathcal{S},G_{\mathcal{S}})$. Thus $\Fpd(\mathcal{T},G_{\mathcal{T}})\le d+e+\Fpd(\mathcal{S},G_{\mathcal{S}})$.
\end{proof}

As a preparation for showing Corollary \ref{mainthm-cx17}, we recall the following definition.

\begin{defn}\label{cowidth}\cite[Section~3.1]{cx17}
Let $R$ be a ring and $\cpx{Y}\in\D{R\Modcat}$.
Suppose that $\cpx{Y}$ is isomorphic in $\Db{R\Modcat}$ to a bounded complex
$\cpx{J}\in\Cb{\Imodcat{R}}$, where $\Imodcat{R}$ denotes the category of left injective $R$-modules.
The \textit{homological cowidth} of $\cpx{Y}$ is defined as
$$cw(\cpx{Y})\coloneqq \min\left\{
\alpha_{\cpx{I}}-\beta_{\cpx{I}}\,\geq 0\, \bigg |
\begin{array}{ll}
\cpx{I}\simeq \cpx{Y} \;\;\mbox{in}\;\;\Db{R\Modcat}\;\;\mbox{for}\;\;
\cpx{I}\in\Cb{\Imodcat{R}} \\
\;\;\mbox{with}\;\; I^i=0\;\;\mbox{for}\;\; i<\beta_{\cpx{I}}\;\;\mbox{or}\;\; i>\alpha_{\cpx{I}}
\end{array}
\right\}.$$
\end{defn}
\noindent Note that if $\cpx{Y}\in \overline{\langle\Hom_{\mathbb{Z}}(R,\mathbb{Q}/\mathbb{Z})\rangle}^{[u,v]}$, then $cw(\cpx{Y})\le v-u$.

\medskip
{\bf Proof of Corollary \ref{mainthm-cx17}.}
Let $\mathcal{R}\coloneqq \D{R\Modcat}$, $\mathcal{S}\coloneqq \D{S\Modcat}$ and $\mathcal{T}\coloneqq \D{T\Modcat}$.
By Example \ref{EXWAPP}(2), $\mathcal{R}$, $\mathcal{S}$ and $\mathcal{T}$ are approximable triangulated categories. Moreover, their compact generators can be chosen to be ${_R}R$, ${_S}S$ and ${_T}T$ with $$a(R)=o^{-}(R)=o^{+}(R)=a(S)=o^{-}(S)=o^{+}(S)=a(T)=o^{-}(T)=o^{+}(T)=0.$$
By Remark \ref{some rems}(3), $\fd(S)=\fpd(\mathcal{S}, S)$. Similar statements hold also for $R$ and $T$.
Recall that $\mathcal{S}^{c}\simeq\Kb{S\text{-}{\rm proj}}$ and $\mathcal{S}^{{\rm sb}}\simeq\Kb{S\text{-}{\rm Proj}}$. Thus Corollary \ref{mainthm-cx17}(1) follows by combining Remark~\ref{kuandu} with Theorems \ref{thm-fd-small}(1) and \ref{thm-fd-big}(1).

(2) We prove the case of finitistic dimension; the case of big finitistic dimension can be proved  analogously. Clearly, $\mathcal{T}^{b}=\Db{T\Modcat}$ and $\mathcal{S}^{b}=\Db{S\Modcat}$. Suppose that $j_{!}$ restricts to $\mathcal{T}^{b}\ra\mathcal{S}^{b}$. We now show that $j_{!}(\mathcal{T}^{\geq 0})\subseteq\mathcal{S}^{\geq -e}$ for some $e\in\mathbb{Z}$.

We consider the exact functor $(-)^\vee \coloneqq  \Hom_{\mathbb{Z}}(-, \mathbb{Q}/\mathbb{Z}) \colon \mathbb{Z}\Modcat \to \mathbb{Z}\Modcat.$ Since $\mathbb{Q}/\mathbb{Z}$ is an injective cogenerator for $\mathbb{Z}\Modcat$, a $\mathbb{Z}$-module $U$ is zero if and only if so is $U^\vee$. Let $\cpx{I}\coloneqq j^{!}(S^{\vee})$. By \cite[Lemma 3.3(2)]{cx17}, $\cpx{I}$ is isomorphic in $\mathcal{T}$ to a bounded complex of injective $T$-module.
We may assume that $\cpx{I}$ is of the form
$$0\lra I^{f}\lra I^{f+1}\lra \cdots \lra I^{e-1}\lra I^{e}\lra 0$$
where all $I^i$ are injective and $cw(\cpx{I}) = e - f$.
Now, let $\cpx{Z}\in \mathcal{T}^{\ge 0}$. The proof of \cite[Lemma 3.3(2)]{cx17} yields the following isomorphisms:
$$
(\ast)\quad [H^n(j_!(\cpx{Z}))]^{\vee}\simeq \Hom_{\mathcal{S}}(j_!(\cpx{Z})[n],S^{\vee})
\simeq \Hom_{\mathcal{T}}(\cpx{Z}[n],j^{!}(S^{\vee}))
=\Hom_{\mathcal{T}}(\cpx{Z}[n],\cpx{I}).$$
Note that $\Hom_{\mathcal{T}}(\cpx{Z}[n],\cpx{I}) \simeq \Hom_{\K{T\text{-}{\rm Inj}}}(\cpx{Z}[n],\cpx{I})$  by $\cpx{I}\in \Kb{\Imodcat{T}}$.
Since $\cpx{Z} \in \mathcal{T}^{\ge 0}$, we can write $\cpx{Z}$ as
$$0\lra Z^{0}\lra Z^{1}\lra Z^{2}\lra \cdots.$$ Then $\Hom_{\K{T\text{-}{\rm Inj}}}(\cpx{Z}[n],\cpx{I})=0$ for $n<-e$. This implies that
$[H^n(j_!(\cpx{Z}))]^{\vee} \simeq \Hom_{\K{T\text{-}\mathrm{Inj}}}(\cpx{Z}[n], \cpx{I})= 0$ for $n <-e$. Thus
$H^n(j_!(\cpx{Z})) = 0$ for $n < -e$; in other words, $j_!(\cpx{Z}) \in \mathcal{S}^{\ge -e}$.

Thanks to Theorem \ref{thm-fd-small}(2), $\fd(T)\le e+d+\fd(S)$, where $j_!(T)\in \langle S\rangle^{[c,d]}$ for some integers $c$ and $d$. Since $[H^n(j_!(T))]^{\vee}\simeq H^{-n}(\cpx{I})$ by $(\ast)$ and $H^{-n}(\cpx{I})=0$ for $n>-f$,  we have $[H^n(j_!(T))]^{\vee}=0$ for $n>-f$.
Thus $H^n(j_!(T))=0$ for $n>-f$. Note that $j_!(T)$ is isomorphic to a bounded complex of finitely generated projective $S$-modules. So, we can choose $d\leq -f$. It follows that
$$\fd(T)\le e+d+\fd(S)\le e-f+\fd(S) =\fd(S)+cw(\cpx{I})
=\fd(S)+cw(j^!(\Hom_{\mathbb{Z}}(S,\mathbb{Q}/\mathbb{Z}))).$$
This shows the inequality in Corollary \ref{mainthm-cx17}(2). \hfill$\square$

\section{Strongly compactly generated triangulated categories}\label{5}
This section is devoted to giving the reductions of both global dimension and strong compact generation for (weakly) approximable triangulated categories in recollements. Also, a relationship between global dimension and strong compact generation for compactly generated triangulated categories is established.

\subsection{Global dimension of triangulated categories}

In this subsection, we consider the finiteness of global dimension for weakly approximable triangulated categories and give a reduction of the finiteness of global dimension by recollements.

\begin{defn}\label{global dimension}
Let $\mathcal{S}$ be a compactly generated triangulated category with a compact generator $G$ and let $(\mathcal{S}_{G}^{\leq 0},\mathcal{S}_{G}^{\geq 1})$ be the $t$-structure generated by $G$ (see Lemma \ref{lem-p-w-gen}).
The \emph{global dimension} of $\mathcal{S}$ with respect to $G$ is defined as
\[\gpd(\mathcal{S}, G)\coloneqq \sup\{\pd_G(X)\mid X\in \mathcal{S}_{G}^{b}\cap\mathcal{S}_{G}^{\ge 0}\}.\]
\end{defn}
Note that if $\gpd(\mathcal{S}, G) < +\infty$, then $\gpd(\mathcal{S}, G) = \operatorname{Fpd}(\mathcal{S}, G)$. The following result provides several equivalent characterizations of global dimension.

\begin{lem}\label{lem-global}
Let $\mathcal{H}$ be the heart of the $t$-structure $(\mathcal{S}_{G}^{\leq 0},\mathcal{S}_{G}^{\geq 1})$ on $\mathcal{S}$. Then:

$(1)$
$$
\gpd(\mathcal{S},G)= \sup\{\pd_{G}(X) \mid X\in\mathcal{H}\}=
\inf\{d\in\mathbb{Z} \mid \Hom_{\mathcal{S}}(X,Y[i])=0,\;\forall\, X,Y\in\mathcal{H},\,i>d\}$$
$$
=\inf\{d\ge 0 \mid \Hom_{\mathcal{S}}(\mathcal{S}_G^{\geq 0}\cap\mathcal{S}^{b},\mathcal{S}_G^{\leq-(d+1)}\cap\mathcal{S}^{b})=0\}.\quad\quad\quad\quad\quad\quad $$
$(2)$ Suppose that $\mathcal{S}$ is weakly approximable and $G$ is bounded. Then   $\mathcal{S}^b=\mathcal{S}^{{\rm sb}}$ if and only if ${\rm gpd}(\mathcal{S},G)<+\infty$.
\end{lem}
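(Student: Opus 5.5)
For part (1), I will work with $\mathcal{S}^{\leq 0}=\mathcal{S}_G^{\le 0}$ and show that the four quantities coincide. Call them $g_1=\gpd(\mathcal{S},G)$, $g_2=\sup_{X\in\mathcal{H}}\pd_G(X)$, $g_3$ (the Ext-vanishing bound on the heart) and $g_4$ (the Hom-vanishing between $\mathcal{S}^{\ge0}\cap\mathcal{S}^b$ and $\mathcal{S}^{\le -(d+1)}\cap\mathcal{S}^b$).

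\begin{enumerate}
\item Since $\mathcal{H}\subseteq \mathcal{S}^b\cap\mathcal{S}^{\ge 0}$, we get $g_2\le g_1$.
\item By the second description of $\pd_G$ in Definition \ref{Ffpd}(1), $g_2=g_3$ directly.
\item To show $g_1\le g_2$, take $X\in\mathcal{S}^b\cap\mathcal{S}^{\ge0}$. It has finitely many nonzero cohomology objects $H^k(X)[-k]$ with $k\ge 0$, so it is a finite iterated extension of these. By Remark \ref{some rems}(1),
\[
\pd_G(X)\le \max_k\big(\pd_G(H^k)-k\big)\le g_2 .
\]
\item The identification with $g_4$ is the usual truncation argument: $\Hom(X,Y[i])=0$ for $X\in\mathcal{H}$, $Y\in\mathcal{H}$, $i>d$ is equivalent to $\Hom(\mathcal{S}^{\ge0}\cap\mathcal{S}^b,\mathcal{S}^{\le -(d+1)}\cap\mathcal{S}^b)=0$. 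Both sides are closed under the finite extensions given by truncation triangles, since bounded objects have finite cohomological length. The only care needed is the convention $d\ge0$ versus $d\in\mathbb{Z}$: when the heart is nonzero, $\Hom(X,X)\neq0$ forces $d\ge0$, and the degenerate case $\mathcal{S}=0$ is handled separately or ignored.
\end{enumerate}

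For part (2), both directions come from Proposition \ref{Relation}(1), which says that for $M\in\mathcal{S}^b$, $\pd_G(M)<\infty$ if and only if $M\in\mathcal{S}^{\rm sb}$. Since $G$ is bounded, Proposition \ref{lem-sp=big} also gives $\mathcal{S}^{\rm sb}\subseteq\mathcal{S}^b$.

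If $\gpd<\infty$, then every $X\in\mathcal{S}^b$ has a shift in $\mathcal{S}^b\cap\mathcal{S}^{\ge0}$, so it has finite projective dimension by Remark \ref{some rems}(1). Hence $X\in\mathcal{S}^{\rm sb}$, and so $\mathcal{S}^b=\mathcal{S}^{\rm sb}$.

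Conversely, suppose $\mathcal{S}^b=\mathcal{S}^{\rm sb}$. Then every $X\in\mathcal{H}$ has finite projective dimension, but a uniform bound is not yet clear; this is the main obstacle. I plan to use that $\mathcal{H}$ is closed under coproducts (Lemma \ref{lem-p-w-gen}). Suppose there are $X_n\in\mathcal{H}$ with $\pd_G(X_n)\ge n$. Set $X=\bigoplus_n X_n\in\mathcal{H}\subseteq\mathcal{S}^b$, which has finite projective dimension by hypothesis. For each $n$ choose $Y_n$ and $i_n>n$ with $\Hom(X_n,Y_n[i_n])\ne0$. A nonzero map $X_n\to Y_n[i_n]$ extends to a nonzero map $X\to Y_n[i_n]$ via the projection $X\to X_n$. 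Therefore $\pd_G(X)\ge i_n>n$ for all $n$, a contradiction.

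Thus $g_2<\infty$, and by part (1) $\gpd(\mathcal{S},G)<\infty$.
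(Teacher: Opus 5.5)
Your proposal is correct and follows essentially the paper's route. Part (1) uses $\pd_G(X[k])=\pd_G(X)+k$ and the extension inequality of Remark \ref{some rems}(1); your description of $\mathcal{S}^b\cap\mathcal{S}^{\ge 0}$ as finite iterated extensions of the $H^k(X)[-k]$ with $k\ge 0$ is the precise form of the paper's observation, which loosely writes this class as $\bigcup_n\mathcal{H}[-n]$. Part (2) uses Proposition \ref{Relation}(1) together with the coproduct-closed heart and $\bigoplus_n X_n$, exactly as the paper does.

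Two negligible quibbles:
\begin{itemize}
\item $\pd_G(X_n)\ge n$ only gives $i_n\ge n$, not $i_n>n$; the contradiction goes through unchanged.
\item The degenerate case in (1) is really $\mathcal{H}=0$ (equivalently $\mathcal{S}^b=0$), not just $\mathcal{S}=0$. This can happen with $\mathcal{S}\neq 0$ when $G$ is unbounded, and there the $d\ge 0$ convention makes the last equality fail as literally stated. That is a defect of the statement, not of your argument.
\end{itemize}
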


\begin{proof}
$(1)$ Observe that $\mathcal{S}_{G}^{b}\cap\mathcal{S}_{G}^{\ge 0}=\bigcup_{n\in\mathbb{N}}\mathcal{H}[-n]$ and that $\pd_G(X[k])=\pd_G(X)+k$ for any $X\in\mathcal{S}^b$ and $k\in\mathbb{Z}$ by Remark \ref{some rems}(1). Thus $(1)$ holds.

$(2)$  Since $G$ is bounded, the inclusion $\mathcal{S}^{{\rm sb}}\subseteq\mathcal{S}^{b}$ holds by Proposition~\ref{lem-sp=big}.

Suppose $d\coloneqq {\rm gpd}(\mathcal{S},G)<+\infty$. Let $M\in\mathcal{S}^{b}$. Then there exists $m\geq 0$ such that $M[-m]\in\mathcal{S}^{b}\cap\mathcal{S}^{\geq 0}$. Hence $\pd_{G}(M)=\pd_G(M[-m])+m\leq d+m<+\infty$. Since $\mathcal{S}$ is weakly approximable and $G$ is bounded, it follows from Proposition \ref{Relation}(1) that $M\in\mathcal{S}^{{\rm sb}}$. Thus $\mathcal{S}^b=\mathcal{S}^{{\rm sb}}$.

Conversely, suppose $\mathcal{S}^{{\rm sb}}=\mathcal{S}^b$. Still by Proposition \ref{Relation}(1), every object in $\mathcal{S}^b$ has finite projective dimension. Assume $\gpd(\mathcal{S},G)=+\infty$. By $(1)$, for each $i\in\mathbb{N}$, there exists an object $X_{i}\in\mathcal{H}$ such that $\pd_{G}(X_{i})\geq i$. Now, let $X:=\bigoplus_{i\in\mathbb{N}}X_i\in \mathcal{S}$. Since $\mathcal{H}$ is closed under coproducts in $\mathcal{S}$, we have $X\in\mathcal{H}$ and further $\pd_G(X)<\infty$. However, since $X_i$ is a direct summand of $X$, it is clear that $\pd_G(X)\geq \pd_G(X_i)\geq i$. Thus $\pd_{G}(X)=+\infty$, which leads to a contradiction.
\end{proof}

\begin{remark}\label{rem-global}
(1) Let $S$ be a ring. The \emph{global dimension} of $S$, denoted by ${\rm gldim}(S)$, is defined as
${\rm gldim}(S) \coloneqq \sup\{\pd_S(M) \mid M \in S\text{-Mod}\}.$
Then ${\rm gldim}(S)={\rm gpd}(\mathscr{D}(S\Modcat), S)$.

(2) By Lemma \ref{lem-global}(1), our definition ${\rm gpd}(\mathcal{S}, G)$ coincides with the global dimension of the bounded $t$-structure $(\mathcal{S}_G^{\leq 0}\cap\mathcal{S}^b,\mathcal{S}_G^{\geq 1}\cap\mathcal{S}^{b})$ on $\mathcal{S}^b$ defined in \cite[Definition 3.3]{clz23}.
\end{remark}

The main result of this subsection is the following.
\begin{thm}\label{global dim}
Let $\mathcal{R}$, $\mathcal{S}$, and $\mathcal{T}$ be \emph{weakly approximable} triangulated categories with respective bounded compact generators $G_{\mathcal{R}}$, $G_{\mathcal{S}}$, and $G_{\mathcal{T}}$. Suppose that there is a recollement:
\begin{align*}
\xymatrixcolsep{4pc}\xymatrix{\mathcal{R} \ar[r]|{i_*=i_!} &\mathcal{S} \ar@<-2ex>[l]|{i^*} \ar@<2ex>[l]|{i^!} \ar[r]|{j^!=j^*}  &\mathcal{T}. \ar@<-2ex>[l]|{j_!} \ar@<2ex>[l]|{j_{*}}
}
\end{align*}
Then
${\rm gpd}(\mathcal{S},G_{\mathcal{S}})<+\infty$ if and only if ${\rm gpd}(\mathcal{R},G_{\mathcal{R}})<+\infty$ and ${\rm gpd}(\mathcal{T},G_{\mathcal{T}})<+\infty$.
\end{thm}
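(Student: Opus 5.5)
The plan is to reduce everything to the criterion of Lemma \ref{lem-global}(2). Since all three generators are bounded and all three categories are weakly approximable, ${\rm gpd}(\mathcal{U},G_{\mathcal{U}})<+\infty$ is equivalent to $\mathcal{U}^b=\mathcal{U}^{{\rm sb}}$ for $\mathcal{U}\in\{\mathcal{R},\mathcal{S},\mathcal{T}\}$. So I would prove: $\mathcal{S}^b=\mathcal{S}^{{\rm sb}}$ if and only if $\mathcal{R}^b=\mathcal{R}^{{\rm sb}}$ and $\mathcal{T}^b=\mathcal{T}^{{\rm sb}}$. The inclusions $\mathcal{U}^{{\rm sb}}\subseteq\mathcal{U}^b$ are automatic by Proposition \ref{lem-sp=big}, so only the reverse inclusions need work.

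\emph{Necessity.} Assume $\mathcal{S}^b=\mathcal{S}^{{\rm sb}}$.
\begin{enumerate}
\item Since $G_{\mathcal{R}}\in\mathcal{R}^b$ and $i_*$ restricts to $\mathcal{R}^b\to\mathcal{S}^b$ (Lemma \ref{lem-gluing-$t$-struc}), we get $i_*(G_{\mathcal{R}})\in\mathcal{S}^b=\mathcal{S}^{{\rm sb}}$. So condition (1) of Theorem \ref{prop-rec-res1} holds.
\item That theorem then gives the left recollement of the $(-)^{{\rm sb}}$ categories and the right recollement of the $(-)^b$ categories.
\item For $\mathcal{R}$: take $X\in\mathcal{R}^b$. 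Then $i_*X\in\mathcal{S}^b=\mathcal{S}^{{\rm sb}}$. Also $i^*$ maps $\mathcal{S}^{{\rm sb}}$ into $\mathcal{R}^{{\rm sb}}$ by Lemma \ref{some lems}(3), since $i^*(G_{\mathcal{S}})$ is compact. Hence $X\simeq i^*i_*X\in\mathcal{R}^{{\rm sb}}$.
\item For $\mathcal{T}$: take $Z\in\mathcal{T}^b$. By the right recollement, $j_*Z\in\mathcal{S}^b=\mathcal{S}^{{\rm sb}}$. By the left recollement, $j^*$ maps $\mathcal{S}^{{\rm sb}}$ into $\mathcal{T}^{{\rm sb}}$. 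Hence $Z\simeq j^*j_*Z\in\mathcal{T}^{{\rm sb}}$.
\end{enumerate}

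\emph{Sufficiency.} Assume $\mathcal{R}^b=\mathcal{R}^{{\rm sb}}$ and $\mathcal{T}^b=\mathcal{T}^{{\rm sb}}$, and let $Y\in\mathcal{S}^b$. I would apply the characterization in Proposition \ref{lem-sp=big}: it suffices to show that for every $Z\in\mathcal{S}^b$, $\Hom_{\mathcal{S}}(Y,Z[i])=0$ for $i\gg0$. Use the canonical triangle $j_!j^*Y\to Y\to i_*i^*Y\to$ and treat the two outer terms separately.
\begin{enumerate}
\item First, $j^*Y\in\mathcal{T}^b=\mathcal{T}^{{\rm sb}}$ by Lemma \ref{lem-gluing-$t$-struc}. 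Since $j_!(G_{\mathcal{T}})$ is compact, $j_!$ maps $\mathcal{T}^{{\rm sb}}$ into $\mathcal{S}^{{\rm sb}}$ (Lemma \ref{some lems}(3)). So $j_!j^*Y\in\mathcal{S}^{{\rm sb}}$, and this term satisfies the vanishing condition.
\item It follows that $i_*i^*Y\in\mathcal{S}^b$, by the triangle, since $\mathcal{S}^{{\rm sb}}\subseteq\mathcal{S}^b$.
\item Next, $i^*Y\in\mathcal{R}^-$ because $i^*$ preserves $(-)^-$.
\item Also $i^*Y\in\mathcal{R}^+$. Indeed, $\Hom(G_{\mathcal{R}}[p],i^*Y)\simeq\Hom(i_*G_{\mathcal{R}}[p],i_*i^*Y)$, and this vanishes for $p\gg0$ because $i_*G_{\mathcal{R}}\in\mathcal{S}^-$ and $i_*i^*Y\in\mathcal{S}^+$.
\item Hence $i^*Y\in\mathcal{R}^b=\mathcal{R}^{{\rm sb}}$.
\end{enumerate}
It then remains to see that $i_*i^*Y\in\mathcal{S}^{{\rm sb}}$. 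By Lemma \ref{some lems}(3), it is enough to show $i_*(G_{\mathcal{R}})\in\mathcal{S}^{{\rm sb}}$.

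\emph{Main obstacle.} The hard step is proving $i_*(G_{\mathcal{R}})\in\mathcal{S}^{{\rm sb}}$ in the sufficiency direction. By Lemma \ref{lem-r-func1} (with $\mathbf{F}=i_*$, $\mathbf{H}=i^!$), this is equivalent to $i^!$ restricting to $\mathcal{S}^b\to\mathcal{R}^b$. Lemma \ref{lem-gluing-$t$-struc} already gives $i^!(\mathcal{S}^+)\subseteq\mathcal{R}^+$, so what remains is bounded-aboveness of $i^!Z$ for $Z\in\mathcal{S}^b$.

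My first attempt would use the triangle $i_*i^!Z\to Z\to j_*j^*Z\to$ with $j^*Z\in\mathcal{T}^b=\mathcal{T}^{{\rm sb}}$, and try to show $j_*j^*Z\in\mathcal{S}^-$. For this I would test against $G_{\mathcal{S}}$:
\[
\Hom(G_{\mathcal{S}}[p],j_*j^*Z)\simeq\Hom(j^*G_{\mathcal{S}}[p],j^*Z).
\]
Here $j^*G_{\mathcal{S}}\in\mathcal{T}^b=\mathcal{T}^{{\rm sb}}$, because $j^*$ preserves $(-)^b$ and $G_{\mathcal{S}}$ is bounded. The intended conclusion is that this Hom vanishes for $p\ll0$, using Lemma \ref{PR}(2) together with Proposition \ref{lem-sp=big}. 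One point needs checking: Proposition \ref{lem-sp=big} controls $\Hom(X,W[i])$ for $i\gg0$ with $X\in\mathcal{T}^{{\rm sb}}$, and here we need the same range of shifts, since $\Hom(j^*G_{\mathcal{S}}[p],j^*Z)=\Hom(j^*G_{\mathcal{S}},j^*Z[-p])$ with $-p\gg0$. If this works, then $i_*i^!Z\in\mathcal{S}^-$ and so $i^!Z\in\mathcal{R}^-$, which closes the argument.

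If it fails, my fallback is a direct argument with the glued $t$-structure: bound $\Hom$ from $\widetilde{\mathcal{S}}^{\ge0}\cap\mathcal{S}^b$ into $\widetilde{\mathcal{S}}^{\le -n}\cap\mathcal{S}^b$ using the third characterization in Lemma \ref{lem-global}(1). Finiteness of that bound is insensitive to replacing the $t$-structure by an equivalent one from the preferred class.
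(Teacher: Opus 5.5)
Your necessity argument is the same as the paper's. For sufficiency you take a different route.

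The paper notes that $j^*(G_{\mathcal{S}})\in\mathcal{T}^b=\mathcal{T}^{{\rm sb}}$, which is condition $(1')$ of Theorem~\ref{prop-rec-res1}. That theorem then gives both a left recollement of the ${\rm sb}$-categories and a right recollement of the $b$-categories. So $j^*$ induces equivalences $\mathcal{S}^{{\rm sb}}/i_*(\mathcal{R}^b)\simeq\mathcal{T}^b\simeq\mathcal{S}^b/i_*(\mathcal{R}^b)$, and this forces $\mathcal{S}^b=\mathcal{S}^{{\rm sb}}$.

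You argue object by object with $j_!j^*Y\to Y\to i_*i^*Y\to$. This is more explicit, avoids the Verdier-quotient comparison, and your steps there are correct. But your ``main obstacle'' is not really one. You already note $j^*(G_{\mathcal{S}})\in\mathcal{T}^{{\rm sb}}$, and $(1')\Rightarrow(1)$ of Theorem~\ref{prop-rec-res1} then gives $i_*(G_{\mathcal{R}})\in\mathcal{S}^{{\rm sb}}$ at once. You can also get it by hand. Apply your own step 1 to $Y=G_{\mathcal{S}}$: this gives $j_!j^*(G_{\mathcal{S}})\in\mathcal{S}^{{\rm sb}}$, hence $i_*i^*(G_{\mathcal{S}})\in\mathcal{S}^{{\rm sb}}$. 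Since $G_{\mathcal{R}}\in\mathcal{R}^c=\langle i^*(G_{\mathcal{S}})\rangle$, thickness gives $i_*(G_{\mathcal{R}})\in\mathcal{S}^{{\rm sb}}$.

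Your detour through $i^!$ and Lemma~\ref{lem-r-func1} $(4)\Rightarrow(1)$ also works. The shift check you flag passes: Proposition~\ref{lem-sp=big} gives $\Hom_{\mathcal{T}}(j^*(G_{\mathcal{S}}),W[i])=0$ for $i\gg 0$ and all $W\in\mathcal{T}^-$. This is exactly the range $p\ll 0$ in $\Hom_{\mathcal{T}}(j^*(G_{\mathcal{S}})[p],j^*Z)$, so $j_*j^*Z\in\mathcal{S}^-$ and hence $i_*i^!Z\in\mathcal{S}^-$.

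However, your last step, ``so $i^!Z\in\mathcal{R}^-$'', needs a justification, and the obvious one is circular. Testing via $\Hom_{\mathcal{R}}(G_{\mathcal{R}}[p],i^!Z)\simeq\Hom_{\mathcal{S}}(i_*(G_{\mathcal{R}})[p],i_*i^!Z)$ would require $i_*(G_{\mathcal{R}})\in{}^{\perp}\mathcal{S}^{\le m}$ for some $m$. That is precisely $i_*(G_{\mathcal{R}})\in\mathcal{S}^{{\rm sb}}$, the thing you are trying to prove. Use instead $i^!Z\simeq i^*i_*i^!Z$ together with $i^*(\mathcal{S}^-)\subseteq\mathcal{R}^-$, which comes from the restricted left recollement on bounded-above objects. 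The glued $t$-structure fallback is unnecessary.
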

\begin{proof}
 We first show the necessity of Theorem \ref{global dim}.

Suppose ${\rm gpd}(\mathcal{S},G_{\mathcal{S}})<+\infty$. Then $\mathcal{S}^b=\mathcal{S}^{{\rm sb}}$ by Lemma ~\ref{lem-global}(2). Further, by Lemma~\ref{lem-gluing-$t$-struc}, $i_*$ restricts to  $\mathcal{R}^{b} \to \mathcal{S}^{{b}}$. It follows that $i_*$ restricts to $\mathcal{R}^{b} \to \mathcal{S}^{{\rm sb}}$. This implies $i_*(G_\mathcal{R})\in\mathcal{S}^{{\rm sb}}$, due to $G_\mathcal{R}\in\mathcal{R}^b$. Now, by the equivalences of $(1)$ and $(2)$ in Theorem~\ref{prop-rec-res1}, $i^*$ restricts to $\mathcal{S}^{{\rm sb}}\to \mathcal{R}^{{\rm sb}}$. Note that $i_{*}$ is fully faithful. Consequently, $X\simeq i^{*}i_{*}(X)\in\mathcal{R}^{{\rm sb}}$ for any $X\in\mathcal{R}^{b}$.
This shows $\mathcal{R}^{b}\subseteq \mathcal{R}^{{\rm sb}}$. Since $\mathcal{R}$ is weakly approximable with a bounded compact generator, $\mathcal{R}^{{\rm sb}}\subseteq \mathcal{R}^{b}$ by Proposition \ref{lem-sp=big}. Thus $\mathcal{R}^{{\rm sb}}= \mathcal{R}^{b}$, and further, ${\rm gpd}(\mathcal{R},G_{\mathcal{R}})<+\infty$ by Lemma \ref{lem-global}(2).

Since $i_{*}(G_{\mathcal{R}})\in \mathcal{S}^{{\rm sb}}$,  we see from Theorem~\ref{prop-rec-res1} that $j^*$ restricts to $\mathcal{S}^{{\rm sb}}\to \mathcal{T}^{{\rm sb}}$ and $j_{*}$ restricts to $\mathcal{T}^{b}\to\mathcal{S}^{b}$. Now, let $Y\in\mathcal{T}^{b}$.
Since $j_{*}(Y)\in\mathcal{S}^{b}=\mathcal{S}^{{\rm sb}}$ and $j_*$ is fully faithful, we have
$Y\simeq j^{*}j_{*}(Y)\in\mathcal{T}^{{\rm sb}}$.
This shows $\mathcal{T}^{b}\subseteq\mathcal{T}^{{\rm sb}}$. Similarly, we can show ${\rm gpd}(\mathcal{T},G_{\mathcal{T}})<+\infty$ by Lemma \ref{lem-global}(2).

Next, we show the sufficiency of Theorem \ref{global dim}.

Suppose $\mathrm{gpd}(\mathcal{R}, G_{\mathcal{R}}) < +\infty$ and $\mathrm{gpd}(\mathcal{T}, G_{\mathcal{T}}) < +\infty$. By Lemma \ref{lem-global}(2), $\mathcal{R}^b = \mathcal{R}^{\mathrm{sb}}$ and $\mathcal{T}^b = \mathcal{T}^{\mathrm{sb}}$. Since $\mathcal{S}$ is weakly approximable with a bounded compact generator, $\mathcal{S}^{\rm sb}\subseteq \mathcal{S}^b$ by Proposition \ref{lem-sp=big}. Moreover, by Lemma~\ref{lem-gluing-$t$-struc}, $j^{*}$ restricts to $\mathcal{S}^b \to \mathcal{T}^b=\mathcal{T}^{\mathrm{sb}}$. Since $G_\mathcal{S}\in\mathcal{S}^b$, we have $j^*(G_\mathcal{S})\in \mathcal{T}^{\mathrm{sb}}$. It follows from Theorem~\ref{prop-rec-res1} that there exists a left recollement
\begin{align*}
\xymatrixcolsep{4pc}\xymatrix{
\mathcal{R}^b=\mathcal{R}^{\mathrm{sb}} \ar[r]|{i_*=i_!}
&\mathcal{S}^{\mathrm{sb}} \ar@<-2ex>[l]|{i^*}   \ar[r]|{j^!=j^*}
&\mathcal{T}^{\mathrm{sb}}=\mathcal{T}^b. \ar@<-2ex>[l]|{j_!}
}
\end{align*}
and a right recollement
\begin{align*}
\xymatrixcolsep{4pc}\xymatrix{
\mathcal{R}^{\mathrm{sb}} =\mathcal{R}^b \ar[r]|{i_*=i_!}
&\mathcal{S}^b  \ar@<2ex>[l]|{i^!} \ar[r]|{j^!=j^*}
&\mathcal{T}^b=\mathcal{T}^{\mathrm{sb}}.   \ar@<2ex>[l]|{j_{*}}
}
\end{align*}
Consequently, the same functor $j^*$ induces two triangle equivalences $\mathcal{S}^{\rm sb}/i_*(\mathcal{R}^b)\to \mathcal{T}^b$ and $\mathcal{S}^b/i_*(\mathcal{R}^b)\to \mathcal{T}^b$. This forces $\mathcal{S}^b=\mathcal{S}^{{\rm sb}}$.
Thus ${\rm gpd}(\mathcal{S},G_{\mathcal{S}})<+\infty$ by Lemma \ref{lem-global}(2).
\end{proof}

The following result generalizes \cite[Proposition~4]{k91} which is concentrated on unbounded derived categories of rings.
\begin{prop}\label{prop-global dim-rec}
Let $\mathcal{R}$, $\mathcal{S}$, and $\mathcal{T}$ be weakly approximable triangulated categories with respective bounded compact generators $G_{\mathcal{R}}$, $G_{\mathcal{S}}$, and $G_{\mathcal{T}}$. Suppose that there is a recollement:
\begin{align*}
\xymatrixcolsep{4pc}\xymatrix{\mathcal{R} \ar[r]|{i_*=i_!} &\mathcal{S} \ar@<-2ex>[l]|{i^*} \ar@<2ex>[l]|{i^!} \ar[r]|{j^!=j^*}  &\mathcal{T}. \ar@<-2ex>[l]|{j_!} \ar@<2ex>[l]|{j_{*}}
}
\end{align*}
If either
${\rm gpd}(\mathcal{S},G_{\mathcal{S}})<+\infty$ or ${\rm gpd}(\mathcal{T},G_{\mathcal{T}})<+\infty$, then the given recollement restricts to the following recollement:
\begin{align*}
\xymatrixcolsep{4pc}\xymatrix{\mathcal{R}^b \ar[r]|{i_*=i_!} &\mathcal{S}^b \ar@<-2ex>[l]|{i^*} \ar@<2ex>[l]|{i^!} \ar[r]|{j^!=j^*}  &\mathcal{T}^b. \ar@<-2ex>[l]|{j_!} \ar@<2ex>[l]|{j_{*}}
}
\end{align*}
\end{prop}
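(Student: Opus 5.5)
We need to show that all six functors restrict to the bounded parts. By Lemma~\ref{lem-gluing-$t$-struc}, $i_*$ and $j^*$ already restrict to $\mathcal{R}^b\to\mathcal{S}^b$ and $\mathcal{S}^b\to\mathcal{T}^b$. It remains to treat $i^*,i^!,j_!,j_*$. The strategy is to reduce both cases to condition (1) of Theorem~\ref{prop-rec-res1}, namely $i_*(G_{\mathcal{R}})\in\mathcal{S}^{\rm sb}$. Once this holds, item (4) of that theorem gives the right recollement on bounded parts, so $i^!$ and $j_*$ restrict. Item (2) gives the left recollement on strongly bounded parts, so $i^*$ and $j_!$ restrict on $\mathcal{S}^{\rm sb}$ and $\mathcal{T}^{\rm sb}$. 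To pass from this to the bounded parts, we will show in each case that $\mathcal{S}^{\rm sb}=\mathcal{S}^b$ and $\mathcal{T}^{\rm sb}=\mathcal{T}^b$. Since $\mathcal{R}^{\rm sb}\subseteq\mathcal{R}^b$ by Proposition~\ref{lem-sp=big}, the restrictions of $i^*$ and $j_!$ then land in the bounded parts.

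\emph{Case ${\rm gpd}(\mathcal{S},G_{\mathcal{S}})<+\infty$.} By Theorem~\ref{global dim}, ${\rm gpd}(\mathcal{T},G_{\mathcal{T}})<+\infty$ as well. Then Lemma~\ref{lem-global}(2) gives $\mathcal{S}^b=\mathcal{S}^{\rm sb}$ and $\mathcal{T}^b=\mathcal{T}^{\rm sb}$. Since $G_{\mathcal{R}}\in\mathcal{R}^b$, we get $i_*(G_{\mathcal{R}})\in\mathcal{S}^b=\mathcal{S}^{\rm sb}$. The conclusion follows as above.

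\emph{Case ${\rm gpd}(\mathcal{T},G_{\mathcal{T}})<+\infty$.} Lemma~\ref{lem-global}(2) gives $\mathcal{T}^b=\mathcal{T}^{\rm sb}$. Since $G_{\mathcal{S}}\in\mathcal{S}^b$ and $j^*(\mathcal{S}^b)\subseteq\mathcal{T}^b$, we get $j^*(G_{\mathcal{S}})\in\mathcal{T}^{\rm sb}$. This is condition ($1'$) of Theorem~\ref{prop-rec-res1}, hence (1) holds. So $i^!,j_*$ restrict to bounded parts, and $j_!$ restricts to $\mathcal{T}^b=\mathcal{T}^{\rm sb}\to\mathcal{S}^{\rm sb}\subseteq\mathcal{S}^b$.

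The main obstacle is $i^*$ on $\mathcal{S}^b$, because here $\mathcal{S}^b$ need not equal $\mathcal{S}^{\rm sb}$ a priori. For $Y\in\mathcal{S}^b$, take the canonical triangle $j_!j^*(Y)\to Y\to i_*i^*(Y)\to$. Here $j^*(Y)\in\mathcal{T}^b=\mathcal{T}^{\rm sb}$, so $j_!j^*(Y)\in\mathcal{S}^{\rm sb}\subseteq\mathcal{S}^b$. It follows that $i_*i^*(Y)\in\mathcal{S}^b$. To conclude $i^*(Y)\in\mathcal{R}^b$, apply $i^!$: it restricts to bounded parts, and $i^!i_*\simeq\mathrm{id}$ by full faithfulness of $i_*$. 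Hence $i^*(Y)\simeq i^!i_*i^*(Y)\in\mathcal{R}^b$. This argument also covers the first case without using $\mathcal{S}^b=\mathcal{S}^{\rm sb}$.

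Finally, the recollement axioms on the bounded parts are inherited from the ambient recollement: adjunctions, full faithfulness and the vanishing $j^*i_*=0$ restrict to full subcategories, and the canonical triangles consist of objects in the bounded parts once all six functors preserve them.
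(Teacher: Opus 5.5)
Your proof is correct and follows the same route as the paper. In both cases you reduce to condition $(1)$/$(1')$ of Theorem~\ref{prop-rec-res1} via $\mathcal{T}^b=\mathcal{T}^{\rm sb}$, and you handle $i^*$ through the canonical triangle; for ${\rm gpd}(\mathcal{S},G_{\mathcal{S}})<+\infty$ the paper reruns the argument of Theorem~\ref{global dim} where you simply cite it. The one local difference is the last step for $i^*$: the paper obtains $i^*(Y)\in\mathcal{R}^b$ from $i_*i^*(Y)\in\mathcal{S}^b$ via the Hom-vanishing criterion of Lemma~\ref{PR}(2) and $G_{\mathcal{R}}\in\langle i^*(G_{\mathcal{S}})\rangle^{[u,v]}$, while you use $i^*(Y)\simeq i^!i_*i^*(Y)$ and the already established restriction of $i^!$, which is equally valid and a little shorter.
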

\begin{proof}
The case where ${\rm gpd}(\mathcal{S},G_{\mathcal{S}})<+\infty$ follows directly from the argument in the proof of Theorem~\ref{global dim}.

We now assume that ${\rm gpd}(\mathcal{T},G_{\mathcal{T}})<+\infty$. Then $\mathcal{T}^b=\mathcal{T}^{{\rm sb}}$ by Lemma ~\ref{lem-global}(2).
Moreover, Lemma~\ref{lem-gluing-$t$-struc} implies that $j^*$ restricts to  $\mathcal{S}^{b} \to \mathcal{T}^{{b}}$.
It follows that $j^*$ restricts to $\mathcal{S}^{b} \to \mathcal{T}^{{\rm sb}}$. This implies $j^*(G_\mathcal{S})\in\mathcal{T}^{{\rm sb}}$, due to $G_\mathcal{S}\in\mathcal{S}^b$.
By the equivalences of the conditions $(1')$ and $(4)$ in Theorem~\ref{prop-rec-res1}, the given recollement induces a right recollement:
\begin{align*}
\xymatrixcolsep{4pc}\xymatrix{
\mathcal{R}^b \ar[r]|{i_*=i_!}
&\mathcal{S}^b  \ar@<2ex>[l]|{i^!} \ar[r]|{j^!=j^*}
&\mathcal{T}^b=\mathcal{T}^{{\rm sb}},   \ar@<2ex>[l]|{j_{*}}
}
\end{align*}
and $j_!$ restricts to  $\mathcal{T}^{{\rm sb}}\to \mathcal{S}^{{\rm sb}}$. Since $\mathcal{S}$ is weakly approximable with a bounded compact generator, $\mathcal{S}^{{\rm sb}}\subseteq \mathcal{S}^{b}$ by Proposition \ref{lem-sp=big}. Consequently $j_!$ restricts to  $\mathcal{T}^{b}\to \mathcal{S}^{b}$ by Theorem~\ref{prop-rec-res1}.

To complete the proof, it remains to show that $i^*$ restricts to  $\mathcal{S}^{b}\to \mathcal{R}^{b}$. For any $Y\in \mathcal{S}^b$, we prove that $i^*(Y)\in \mathcal{R}^b$.
Consider the canonical triangle in $\mathcal{S}$:
$$j_!j^*(Y)\ra Y\ra i_*i^*(Y)\ra j_!j^*(Y)[1].$$
Since both  $Y$ and $j_!j^*(Y)[1]$ lie in $\mathcal{S}^b$, we obtain $i_*i^*(Y)\in \mathcal{S}^b$. By Lemma \ref{PR}(2), $\Hom_{\mathcal{S}}(G_{\mathcal{S}}[p],i_*i^*(Y))=0$ for $|p|\gg 0$.
Using the adjoint pair $(i^*, i_*)$, we deduce
$$\Hom_{\mathcal{R}}(i^*(G_{\mathcal{S}})[p],i^*(Y))\simeq \Hom_{\mathcal{S}}(G_{\mathcal{S}}[p],i_*i^*(Y))=0 \text{ for }|p|\gg 0.$$
Since $i^*(G_{\mathcal{S}})$ is a compact generator of $\mathcal{R}$, there exist integers $u \leq v$ such that  $G_{\mathcal{R}}\in \langle i^*(G_{\mathcal{S}})\rangle^{[u,v]}$. This implies $\Hom_{\mathcal{R}}(G_{\mathcal{R}}[p],i^*(Y))=0 \text{ for }|p|\gg 0.$
Applying Lemma \ref{PR}(2) again, we conclude $i^*(Y)\in \mathcal{R}^b$.
\end{proof}

\begin{remark}
For a compactly generated triangulated category $\mathcal{K}$, Kostas, Psaroudakis and Vitória recently introduced the notion of far-away orthogonality and defined in \cite[Definition 3.1]{kpv25} the triangulated subcategories $\mathcal{K}^{b}$ and $\mathcal{K}_{p}^{b}$ of $\mathcal{K}$ consisting of all \emph{bounded} and \emph{bounded projective} objects, respectively. When $\mathcal{K}^{b}=\mathcal{K}_{p}^{b}$, $\mathcal{K}$ is said to be of \emph{finite global dimension} (see \cite[Definition 4.2]{kpv25}). Moreover, a reduction of the finiteness of global dimension under (bounded) recollements of compactly generated triangulated categories such that bounded projective objects are bounded was done in \cite[Proposition 6.10]{kpv25}.

By Proposition \ref{lem-sp=big} and \cite[Lemma~3.12 and Remark~3.15]{kpv25}, if $\mathcal{K}$ is weakly approximable with a bounded compact generator $G_{\mathcal{K}}$, then $\mathcal{K}^{b}$ and $\mathcal{K}_{p}^{b}$ coincide with the subcategories $\mathcal{K}^{b}$ and $\mathcal{K}^{{\rm sb}}$ defined in our paper, respectively. So, in this case, the category $\mathcal{K}$ is of finite global dimension if and only if
$\gpd(\mathcal{K}, G_{\mathcal{K}}) < +\infty$. Thus the sufficiency of Theorem \ref{global dim} also follows from Proposition \ref{lem-sp=big} and \cite[Proposition 6.10(2)]{kpv25}. In the case of the existence of a recollement among $\mathcal{R}^{b}$, $\mathcal{S}^{b}$ and $\mathcal{T}^{b}$, Theorem \ref{global dim} is also a consequence of Proposition \ref{lem-sp=big} and \cite[Proposition 6.10(2)]{kpv25}.
\end{remark}

Applying Theorem \ref{global dim} to unbounded derived categories of rings, we obtain the following result.
\begin{cor}{\rm \cite[Theorem 3.17]{cx17}}
Let $R$, $S$ and $T$ be three rings.
Suppose that there is a recollement among the derived categories
$\D{R\Modcat}$, $\D{S\Modcat}$ and $\D{T\Modcat}$$:$
\begin{align}\label{cor-rec-gd}
\xymatrixcolsep{4pc}\xymatrix{
\D{R\Modcat} \ar[r]|{i_*=i_!} &\D{S\Modcat} \ar@<-2ex>[l]|{i^*} \ar@<2ex>[l]|{i^!} \ar[r]|{j^!=j^*}  &\D{T\Modcat}. \ar@<-2ex>[l]|{j_!} \ar@<2ex>[l]|{j_{*}}
}
\end{align}
Then ${\rm gldim}(S)<+\infty$ if and only if ${\rm gldim}(R)<+\infty$ and ${\rm gldim}(T)<+\infty$.
Moreover,

{\rm (1)} ${\rm gldim}(R)\le {\rm gldim}(S)+w(i^*(S))$ and ${\rm gldim}(T)\le {\rm gldim}(S)+cw(j^!(\Hom_{\mathbb{Z}}(S,\mathbb{Q}/\mathbb{Z})))$.

{\rm (2)} ${\rm gldim}(S)\le {\rm gldim}(R)+{\rm gldim}(T)+w(i_*(R))+w(j_!(T))+1$.
\end{cor}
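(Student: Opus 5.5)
The plan is to specialize the earlier results to $\mathcal{R}=\D{R\Modcat}$, $\mathcal{S}=\D{S\Modcat}$, $\mathcal{T}=\D{T\Modcat}$ with generators ${}_RR$, ${}_SS$, ${}_TT$. By Example \ref{EXWAPP}(2) these categories are approximable, the generators are bounded, and $a(-)=o^{\pm}(-)=0$ for all three. By Remark \ref{rem-global}(1) we have ${\rm gldim}=\gpd$. The equivalence ${\rm gldim}(S)<+\infty$ iff ${\rm gldim}(R),{\rm gldim}(T)<+\infty$ is then exactly Theorem \ref{global dim}. For the explicit inequalities I use that $\gpd=\Fpd$ whenever $\gpd$ is finite.

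For (1), assume ${\rm gldim}(S)<+\infty$; otherwise there is nothing to prove. The proof of Theorem \ref{global dim} shows $i_*(R)\in\mathcal{S}^{b}=\mathcal{S}^{\rm sb}$, so the hypotheses of Theorem \ref{thm-fd-big}(1)(ii) hold. It yields
\[
{\rm gldim}(R)=\Fpd(\mathcal{R},R)\le \Fpd(\mathcal{S},S)+w_R(i^*(S)),
\]
where the equality uses finiteness of ${\rm gldim}(R)$, already known from Theorem \ref{global dim}. For the bound on $T$, the argument of Proposition \ref{prop-global dim-rec} shows that $j_!$ restricts to $\Db{T\Modcat}\to\Db{S\Modcat}$. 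This is exactly the hypothesis of Corollary \ref{mainthm-cx17}(2), whose big-finitistic version gives $\Fd(T)\le \Fd(S)+cw(j^!(\Hom_{\mathbb{Z}}(S,\mathbb{Q}/\mathbb{Z})))$. Since both global dimensions are finite, each $\Fd$ equals the corresponding ${\rm gldim}$.

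For (2), assume ${\rm gldim}(R),{\rm gldim}(T)<+\infty$. Then ${\rm gldim}(S)<+\infty$ by Theorem \ref{global dim}, and hence $i_*(R)\in\mathcal{S}^{\rm sb}$, which is $\Kb{\Pmodcat{S}}$. Now Corollary \ref{mainthm-cx17}(1)(ii), big version, gives
\[
\Fd(S)\le\Fd(R)+\Fd(T)+w(i_*(R))+w(j_!(T))+1.
\]
The widths here are measured over projectives, via Remark \ref{kuandu}(2). Replacing each $\Fd$ by ${\rm gldim}$ gives (2).

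The main obstacle is bookkeeping rather than new mathematics. One has to confirm that $\Fpd=\gpd$ precisely when the latter is finite, so the case distinctions must be arranged so that finiteness is established first via Theorem \ref{global dim}. One also has to check that the width $w$ in the statement means $\overline{w}$, taken over arbitrary projectives, whenever $i_*(R)$ is not compact.
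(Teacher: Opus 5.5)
Your proposal is correct and takes the route the paper intends. The equivalence is Theorem \ref{global dim} applied with generators ${}_RR$, ${}_SS$, ${}_TT$, and the inequalities come from the big-finitistic versions of Theorem \ref{thm-fd-big} and Corollary \ref{mainthm-cx17}, using $\gpd=\Fpd$ once finiteness is known. The paper only says ``apply Theorem \ref{global dim}'', so your bookkeeping is exactly the detail it leaves implicit: establishing finiteness first, checking $i_*(R)\in\mathcal{S}^{\rm sb}$ and $j_!(\mathcal{T}^b)\subseteq\mathcal{S}^b$ via Proposition \ref{prop-global dim-rec}, and reading the width as $\overline{w}$ via Remark \ref{kuandu}.
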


Finally, we consider special triangulated categories. Let $R$ be a commutative Noetherian ring.
An $R$-linear weakly approximable triangulated category $\mathcal{S}$ is said to be \emph{locally Hom-finite} if $\Hom_{\mathcal{S}}(X,Y)\in R\modcat$ for any $X$, $Y\in\mathcal{S}^{c}$, where $R\modcat$ denotes the category of \emph{finitely generated} $R$-modules.

\begin{prop}\label{App-case}
Let the following diagram be a recollement of $R$-linear weakly approximable triangulated categories with bounded compact generators $G_{\mathcal{R}}$, $G_{\mathcal{S}}$ and $G_{\mathcal{T}}$, respectively:
\begin{align*}
\xymatrixcolsep{4pc}\xymatrix{\mathcal{R} \ar[r]|{i_*=i_!} &\mathcal{S} \ar@<-2ex>[l]|{i^*} \ar@<2ex>[l]|{i^!} \ar[r]|{j^!=j^*}  &\mathcal{T}. \ar@<-2ex>[l]|{j_!} \ar@<2ex>[l]|{j_{*}}
}
\end{align*}
Assume that $\mathcal{S}$ and $\mathcal{T}$ are locally Hom-finite, and that $\mathcal{T}$ is approximable with ${\rm gpd}(\mathcal{T},G_{\mathcal{T}})<+\infty$. Then $\fpd(\mathcal{S},G_{\mathcal{S}}) < +\infty \;\bigl(\text{resp. }\Fpd(\mathcal{S},G_{\mathcal{S}}) < +\infty\bigr)$ if and only if $\fpd(\mathcal{R},G_{\mathcal{R}}) < +\infty \;\bigl(\text{resp. }\Fpd(\mathcal{R},G_{\mathcal{R}}) < +\infty\bigr)$.
\end{prop}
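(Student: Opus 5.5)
Since ${\rm gpd}(\mathcal{T},G_{\mathcal{T}})<+\infty$, we have $\fpd(\mathcal{T},G_{\mathcal{T}})$ and $\Fpd(\mathcal{T},G_{\mathcal{T}})$ finite (both are bounded by the global dimension). So the equivalence should follow from parts (1)(i) and (1)(ii) of Theorems \ref{thm-fd-small} and \ref{thm-fd-big}. Those parts require $G_{\mathcal{S}}$ bounded, which is given, together with $i_*(G_{\mathcal{R}})\in\mathcal{S}^c$ (for $\fpd$) or $i_*(G_{\mathcal{R}})\in\mathcal{S}^{\rm sb}$ (for $\Fpd$). Since $\mathcal{S}^c\subseteq\mathcal{S}^{\rm sb}$, it is enough to verify the compactness condition $i_*(G_{\mathcal{R}})\in\mathcal{S}^c$. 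By Lemma \ref{lem-key}(2) this is equivalent to $j^*(G_{\mathcal{S}})\in\mathcal{T}^c$. With this in hand, the "if" direction comes from (1)(i), using the finiteness of $\fpd$ or $\Fpd$ for $\mathcal{T}$, and the "only if" direction from (1)(ii).

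The main step is therefore to show that $j^*(G_{\mathcal{S}})$ is compact in $\mathcal{T}$. Because $\mathcal{T}$ has finite global dimension, Lemma \ref{lem-global}(2) gives $\mathcal{T}^b=\mathcal{T}^{\rm sb}$. Also $j^*(G_{\mathcal{S}})\in\mathcal{T}^b$, since $G_{\mathcal{S}}\in\mathcal{S}^b$ and $j^*$ preserves bounded objects by Lemma \ref{lem-gluing-$t$-struc}. Hence $j^*(G_{\mathcal{S}})\in\mathcal{T}^{\rm sb}$, and Remark \ref{lem-app-sb}, which applies because $\mathcal{T}$ is approximable, places it in some $\overline{\langle G_{\mathcal{T}}\rangle}^{[-i,i]}_{i+1}$. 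It remains to pass from this ``big'' finite building to an honest compact object, and this is where local Hom-finiteness enters. I would show that $j^*(G_{\mathcal{S}})$ lies in $\mathcal{T}^-_c$, i.e.\ is pseudo-compact. Here the Hom-finiteness of $\mathcal{S}$ gives that each $\Hom_{\mathcal{T}}(G_{\mathcal{T}}[p],j^*G_{\mathcal{S}})\cong\Hom_{\mathcal{S}}(j_!G_{\mathcal{T}}[p],G_{\mathcal{S}})$ is a finitely generated $R$-module, with $j_!G_{\mathcal{T}}$ compact by Lemma \ref{lem-key}(1). Neeman's description of $\mathcal{T}^-_c$ for locally Hom-finite approximable categories over a Noetherian ring then identifies $j^*(G_{\mathcal{S}})$ as an object of $\mathcal{T}^b_c$.

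Combining the two facts, I would argue that $\mathcal{T}^b_c\cap\mathcal{T}^{\rm sb}\subseteq\mathcal{T}^c$. Take $X=j^*(G_{\mathcal{S}})\in\overline{\langle G_{\mathcal{T}}\rangle}^{[-i,i]}$. Choose a triangle $E\to X\to D$ with $E$ compact and $D\in\mathcal{T}^{\le -m}$ for $m$ large. Since $X$ is strongly bounded, $\Hom(X,\mathcal{T}^{\le -m})=0$ for $m\gg0$ by Lemma \ref{some lems}(2). So the map $X\to D$ vanishes and $X$ is a summand of $E$, hence compact. This step seems to need only that $X$ lies in both $\mathcal{T}^-_c$ and $\mathcal{T}^{\rm sb}$.

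The hardest step is likely establishing $j^*(G_{\mathcal{S}})\in\mathcal{T}^-_c$ rigorously from local Hom-finiteness. I would try first to invoke Neeman's theorem that, for a locally Hom-finite approximable $\mathcal{T}$ over a Noetherian ring, $\mathcal{T}^-_c$ consists of the bounded-above objects $X$ with every $\Hom(C,X)$ finitely generated for compact $C$. The finite generation and the bounded-above condition are both available, the latter from $j^*(\mathcal{S}^-)\subseteq\mathcal{T}^-$. If that characterization is not available in this generality, the fallback is to build the approximating triangles directly from finiteness of $\Hom(G_{\mathcal{T}}[p],X)$ in each degree, using the finite length $i+1$ building supplied by Remark \ref{lem-app-sb}.
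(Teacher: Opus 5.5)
Your proposal matches the paper's proof essentially step for step. It uses the same reduction, via Theorems \ref{thm-fd-small}(1) and \ref{thm-fd-big}(1) and Lemma \ref{lem-key}(2), to showing $j^*(G_{\mathcal{S}})\in\mathcal{T}^c$. It uses Neeman's Brown representability for locally Hom-finite approximable categories in the same way, fed by the finite generation of $\Hom_{\mathcal{S}}(j_!(G_{\mathcal{T}})[p],G_{\mathcal{S}})$. And it uses the same argument that $\mathcal{T}^{\rm sb}\cap\mathcal{T}^b_c\subseteq\mathcal{T}^c$.

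The one point to make precise is that Neeman's theorem \cite[Theorem~1.4(i)]{n18} is about restricted Yoneda functors. It only produces some $Z\in\mathcal{T}^b_c$ with $\Hom_{\mathcal{T}}(-,Z)|_{\mathcal{T}^c}\cong\Hom_{\mathcal{T}}(-,j^*(G_{\mathcal{S}}))|_{\mathcal{T}^c}$. To get your object-level characterization, you must realize this isomorphism by a morphism $f\colon Z\to j^*(G_{\mathcal{S}})$, as the paper does via \cite[Lemmas~9.5(ii) and 7.8]{n18}. Then the cone of $f$ is killed by $\mathcal{T}^c$, hence zero, so no hand-built fallback is needed.
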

\begin{proof}
Since ${\rm gpd}(\mathcal{T},G_{\mathcal{T}})<+\infty$, we have $\fpd(\mathcal{T},G_{\mathcal{T}}) \leq \Fpd(\mathcal{T},G_{\mathcal{T}})={\rm gpd}(\mathcal{T},G_{\mathcal{T}})$. Then Proposition \ref{App-case} follows from Theorems~\ref{thm-fd-small}(1) and \ref{thm-fd-big}(1) by showing $i_{*}(G_{\mathcal{R}}) \in \mathcal{S}^{c}$. By Lemma~\ref{lem-key}(2), it suffices to show $j^{*}(G_{\mathcal{S}}) \in \mathcal{T}^c$.

For checking this, we first apply a Brown representability theorem for approximable triangulated categories (namely, \cite[Theorem~1.4(i)]{n18}) to show $j^{*}(G_{\mathcal{S}})\in\mathcal{T}_{c}^b$.

Let $F:=\Hom_{\mathcal{T}}(-, j^{*}(G_{\mathcal{S}})): (\mathcal{T}^c)\opp\to R\Modcat$. Then $F$ is a cohomological functor. By \cite[Remark 1.2]{n18}, it suffices to show that $F$ is $G_\mathcal{T}$-finite, that is, $F(G_\mathcal{T}[i])\in R\modcat$ for each $i\in\mathbb{Z}$ and $F(G_\mathcal{T}[i])=0$ for $|i|\gg 0$.

In fact, since $j_{!}$ preserves compact objects by Lemma~\ref{lem-key}(1) and $\mathcal{S}$ is locally Hom-finite, it follows from the adjoint pair $(j_!, j^*)$ that $$F(X)=\Hom_{\mathcal{T}}(X,j^{*}(G_{\mathcal{S}}))\simeq\Hom_{\mathcal{S}}(j_{!}(X),G_{\mathcal{S}})\in R\text{-}{\rm mod}$$ for each $X\in\mathcal{T}^{c}=\langle G_\mathcal{T}\rangle$.
In particular, $F(G_\mathcal{T}[i]))\in R\modcat$.
Note that $j^{*}(G_{\mathcal{S}})\in\mathcal{T}^{b}$ by Lemma~\ref{lem-gluing-$t$-struc}. By the characterization of $\mathcal{T}^b$ in Lemma \ref{PR}(2) for the weakly approximable triangulated category $\mathcal{T}$, we have $F(G_{\mathcal{T}}[i])=\Hom_{\mathcal{T}}(G_{\mathcal{T}}[i],j^{*}(G_{\mathcal{S}}))=0$ for $|i|\gg 0$. Thus $F$ is a $G_\mathcal{T}$-finite cohomological functor.

Now, since $\mathcal{T}$ is locally Hom-finite and \emph{approximable} and $F$ is $G_\mathcal{T}$-finite, we see from \cite[Theorem~1.4(i)]{n18} that there exists an object $Z\in\mathcal{T}_{c}^b$ and a natural isomorphism of cohomological functors:
$$
\alpha:\Hom_{\mathcal{T}}(-,Z)|_{\mathcal{T}^{c}}
\simeq\Hom_{\mathcal{T}}(-,j^{*}(G_{\mathcal{S}}))|_{\mathcal{T}^{c}}: (\mathcal{T}^{c})^{{\rm op}}\to R\Modcat.
$$
Moreover, by \cite[Lemma~9.5(ii) and Lemma~7.8]{n18}, there is a morphism $f:Z\to j^{*}(G_{\mathcal{S}})$ in $\mathcal{T}$ such that $\alpha=\Hom_{\mathcal{T}}(-,f)|_{\mathcal{T}^{c}}$. This implies $\Hom_\mathcal{T}(\mathcal{T}^c, \Cone(f))=0$, where $Z\to j^{*}(G_{\mathcal{S}})\to \Cone(f)\to Z[1]$ is a triangle in $\mathcal{T}$. Since $\mathcal{T}^c$ generates $\mathcal{T}$, the morphism $f$ is an isomorphism. Thus $j^{*}(G_{\mathcal{S}})\simeq Z\in\mathcal{T}_{c}^b$.

Since $\mathcal{T}$ is approximable with ${\rm gpd}(\mathcal{T},G_{\mathcal{T}})<+\infty$, we see from Lemma \ref{lem-global}(2) that $\mathcal{T}^{\mathrm{sb}} = \mathcal{T}^{b}$. This gives $j^{*}(G_{\mathcal{S}}) \in \mathcal{T}^{\mathrm{sb}}\cap\mathcal{T}_{c}^b$.
It remains to show $\mathcal{T}^{\mathrm{sb}}\cap\mathcal{T}_{c}^b=\mathcal{T}^{c}$.

Since $\mathcal{T}^{c}=\langle G_{\mathcal{T}}\rangle$ and $G_{\mathcal{T}}$ are bounded, we have $\mathcal{T}^{c}\subseteq\mathcal{T}^{{\rm sb}}\cap\mathcal{T}_{c}^b$. To show the converse, let $X\in\mathcal{T}^{\mathrm{sb}}\cap\mathcal{T}_{c}^b$. Since $\mathcal{T}$ is weakly approximable, it follows from Proposition \ref{lem-sp=big} that there is a positive integer $m$ such that $X\in{}^{\perp}(\mathcal{T}_{G_\mathcal{T}}^{\leq -m})$. Further, by the definition of $\mathcal{T}_{c}^{b}$, there is a triangle $C_{m}\to X\to D_{m}\to C_{m}[1]$ with $C_{m}\in\mathcal{T}^{c}$ and $D_{m}\in\mathcal{T}_{G_\mathcal{T}}^{\leq -m}$. Consequently, $\Hom_\mathcal{T}(X, D_m)=0$, and further  $X$ is a direct summand of $C_m$. This forces $X\in\mathcal{T}^{c}$, and therefore $\mathcal{T}^{\mathrm{sb}}\cap\mathcal{T}_{c}^b=\mathcal{T}^{c}$. Thus $j^{*}(G_{\mathcal{S}}) \in \mathcal{T}^{\mathrm{sb}}\cap\mathcal{T}_{c}^b=\mathcal{T}^c$.
\end{proof}

\subsection{Strong compact generation and finite global dimension}
In this subsection, we discuss connections between strong compact generation and finite global dimension for compactly generated triangulated categories. It turns out that a categorical version of Kelly’s theorem on rings is obtained (see \cite{k65}).

We now fix a compactly generated triangulated category $\mathcal{S}$ with a compact generator
$G\neq 0$. Let $(\mathcal{S}^{\leq 0}, \mathcal{S}^{\geq 1}):= (\mathcal{S}_{G}^{\leq 0}, \mathcal{S}_{G}^{\geq 1})$ be the $t$-structure on $\mathcal{S}$ generated by $G$ (see Lemma~\ref{lem-p-w-gen}) with its heart and the associated cohomological functor given by
$$\mathcal{H}:=\mathcal{S}_{G}^{\leq 0}\cap \mathcal{S}_{G}^{\geq 0}\quad \mbox{and}\quad H: \mathcal{S} \to \mathcal{H}.$$
Then $\mathcal{H}$ is an abelian category. Further, we define $H^i:=H\circ[i]$ for any $i\neq 0$, and $H^0:=H$.

We say that $\mathcal{S}$ is \textit{strongly compactly generated} if there exists an integer $n\geq 0$ such that $\mathcal{S} = \overline{\langle G\rangle}_{n+1}^{(-\infty, +\infty)}$. In this case, $G$ is called a \textit{strong compact generator} for $\mathcal{S}$.

We state two important classes of strongly compactly generated triangulated categories: $(a)$ the derived category of a smooth dg algebra (see \cite[Lemma~3.6(a)]{l10}); $(b)$ the derived category $\mathscr{D}_{\mathrm{qc}}(X)$ for a separated, quasicompact scheme $X$ covered by open affine subschemes $\mathrm{Spec}(R_{i})$ with each $R_{i}$ of finite global dimension (see \cite[Theorem~2.1]{n21})

The main result of this subsection is the following.

\begin{thm}\label{main-thm-scg-gd}
Let $\mathcal{S}$ be a triangulated category with coproducts and $0\neq G \in \mathcal{S}$ a compact generator such that $\Hom(G, G[i]) = 0$ for $|i| \gg 0$. For each $n\in\mathbb{N}$, we consider the following conditions:

{\rm (1)} $\mathcal{S} = \overline{\langle G\rangle}_{n+1}^{(-\infty, +\infty)}$.
In particular, $\mathcal{S}$ is strongly compactly generated.

{\rm (2)} $o^+(G)-o^-(G)\leq {\rm gpd}(\mathcal{S},G)\leq o^+(G)+n(1+o^-(G))<+\infty$.

\noindent Then $(1)$ implies $(2)$. If $\Hom_\mathcal{S}(G,G[i])=0$ for all $i\neq 0$, then $(2)$ implies $(1)$.
\end{thm}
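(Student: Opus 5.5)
We prove (1)$\Rightarrow$(2) by a ghost-lemma style estimate, and the converse by building a finite resolution by coproducts of shifts of $G$. Throughout we use the $t$-structure $(\mathcal{S}^{\le 0},\mathcal{S}^{\ge 1})$ generated by $G$. Note that $G\in\mathcal{S}^{\le 0}\cap\mathcal{S}^{\ge -o^-(G)}$ by Lemma~\ref{lem-p-w-gen}, since $\Hom(G,G[i])=0$ for $i<-o^-(G)$.

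\emph{The lower bound in (2).} It needs only $G\ne 0$. Since $G[-o^-(G)]\in\mathcal{S}^b\cap\mathcal{S}^{\ge 0}$, we have ${\rm gpd}(\mathcal{S},G)\ge \pd_G(G)-o^-(G)$ by Remark~\ref{some rems}(1). We then show $\pd_G(G)\ge o^+(G)$. If $o^+(G)=c>0$, then $\Hom(G,G[c])\neq 0$, and $G\in\mathcal{S}^b\cap\mathcal{S}^{\le 0}$ is an admissible test object, so $\pd_G(G)\ge c$. If $c=0$, the bound follows from $\Hom(G,G)\ne 0$.

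\emph{The upper bound, assuming (1).} Let $X,Y\in\mathcal{H}$; by Lemma~\ref{lem-global}(1) it suffices to show $\Hom(X,Y[i])=0$ for $i>o^+(G)+n(1+o^-(G))$. The key computation concerns $\Hom(\Add(G[k]),Y[i])=\prod\Hom(G[k],Y[i])$. Since $Y\in\mathcal{S}^{\ge 0}$, this vanishes for $k>i$. Since $Y\in\mathcal{S}^{\le 0}$ and $\Hom(G,\mathcal{S}^{\le -o^+(G)-1})=0$ (the argument of Lemma~\ref{range}), it also vanishes when $i-k>o^+(G)$. So only shifts $k\in[i-o^+(G),\,i]$ contribute.

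We then run a ghost-lemma/induction on the number of cones. Write $X\in\overline{\langle G\rangle}_{n+1}$ as an iterated extension of $n+1$ objects of $\Add(G[\mathbb{Z}])$, and truncate. We first reduce to bounded ranges: $X$ is in the heart, so using $G\in\mathcal{S}^{\ge -o^-(G)}$ and $\Hom(G[k],X)=0$ outside a window, we discard summands $G[k]$ with $k$ outside a window of length roughly $1+o^-(G)$ per layer. Each extension step moves the admissible window by at most $1+o^-(G)$, which gives total range $o^+(G)+n(1+o^-(G))$. The cleanest route is probably the following: show by induction on $m$ that any $Z\in\overline{\langle G\rangle}_{m+1}\cap\mathcal{S}^{\ge 0}$ satisfies $\pd_G(Z)\le o^+(G)+m(1+o^-(G))$. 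To do this, use a triangle $A\to Z\to B$ with $A\in\Add(G[\mathbb{Z}])$ and $B\in\overline{\langle G\rangle}_m$, and replace $A$ and $B$ via truncations so that the inductive hypothesis applies with a shift costing $1+o^-(G)$. Making this truncation bookkeeping precise is the main obstacle. I would first try the ghost-map formulation: maps $X\to Y[i]$ that kill all $G[k]$ form an ideal whose $(n+1)$-fold composites vanish, and one factors a map $X\to Y[i]$ through $n+1$ degree-shifted ghosts, each absorbing $1+o^-(G)$ degrees.

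\emph{(2)$\Rightarrow$(1) when $G$ is exceptional.} Here $o^\pm(G)=0$, so $\gpd=:d\le n$. The heart $\mathcal{H}$ is then equivalent to $\Modcat$ over $E=\End(G)$ with projectives $\Add(G)$, and $H^i(X)$ is computed by $\Hom(G[-i],X)$. For $X\in\mathcal{S}$, each $H^i(X)$ has a projective resolution of length $\le d$ in $\Add(G)$, with $\Ext^{>d}_{\mathcal{H}}$ vanishing by Lemma~\ref{lem-global}(1). We then mimic Kelly/Christensen: take an epimorphism $P_0=\bigoplus_i P^i[-i]\to X$ on all cohomology, with $P^i\in\Add(G)$ surjecting onto $H^i(X)$. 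Its cone $X_1$ has cohomology equal to the syzygies shifted by one. Iterating $d$ times yields a ghost sequence of length $d+1$ whose final object has cohomology projective in each degree, so it lies in $\Add(G[\mathbb{Z}])$. The Adams resolution then gives $X\in\overline{\langle G\rangle}_{d+1}\subseteq\overline{\langle G\rangle}_{n+1}$. The unbounded case needs $\Hom(G[k],-)$ to detect objects, which holds because $G$ is a compact generator.
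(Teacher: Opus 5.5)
\textbf{The gap is the upper bound in $(1)\Rightarrow(2)$.} This is the heart of that implication, and you leave it as ``the main obstacle.''

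The other two parts are fine. Your lower bound works and is slightly cleaner than the paper's, since it does not assume ${\rm gpd}(\mathcal{S},G)<+\infty$. Your sketch of $(2)\Rightarrow(1)$ is essentially the paper's lemma on tilting objects. One small point there: Lemma~\ref{lem-global}(1) gives vanishing of $\Hom_{\mathcal S}(X,Y[i])$, not of $\Ext^i_{\mathcal H}$. Dimension-shift $\pd_G$ directly through the triangles, as the paper does, rather than identify the two. The equivalence of $\mathcal H$ with a module category is also not needed.

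The induction you propose for the upper bound cannot work as stated. A given presentation $A\to Z\to B$ of $Z\in\overline{\langle G\rangle}_{m+1}$ says nothing about which shifts occur. For example, $G[k]\to 0\to G[k+1]$ presents $0$ for every $k$. So neither $A$ nor $B$ need lie anywhere near $\mathcal S^{\ge 0}$. Truncating them does not preserve the number of cones, and ``discarding summands outside a window'' is not an operation available on an iterated extension. Your last sentence about ghosts names the right mechanism. However, you cannot factor an arbitrary $f\colon X\to Y[i]$ through ghosts without a specific tower with controlled shifts, and building that tower is exactly what is missing.

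\textbf{The fix is to build the tower from $X$ itself, not from a presentation.}
\begin{itemize}
\item For $X_0\in\mathcal S^{\ge 0}$ we have $\Hom(G[k],X_0)=0$ for $k\ge 1$. So choose $X_0'=\bigoplus_{k\le 0}G[k]^{(I_k)}\to X_0$ surjective on $\Hom(G[k],-)$ for every $k$. The cone map $g_0\colon X_0\to X_1$ is then a ghost.
\item Since $G\in\mathcal S^{\ge -o^-(G)}$, and this subcategory is closed under coproducts and negative shifts, $X_0'\in\mathcal S^{\ge -o^-(G)}$. Hence $X_1\in\mathcal S^{\ge -(1+o^-(G))}$.
\item Iterating gives ghosts $g_j\colon X_j\to X_{j+1}$ with $X_j\in\mathcal S^{\ge -j(1+o^-(G))}$. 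So $X_j'$ involves only $G[k]$ with $k\le j(1+o^-(G))$.
\item Take $f\colon X_0\to Y[i]$ with $Y\in\mathcal S^{\le 0}$. Your own window computation shows that the induced map $X_j\to Y[i]$ vanishes on $X_j'$ whenever $i>o^+(G)+j(1+o^-(G))$, so it factors through $g_j$.
\item Doing this for $j=0,\dots,n$ writes $f=f_{n+1}g_n\cdots g_0$. By the ghost lemma, $g_n\cdots g_0=0$ because $X_0\in\overline{\langle G\rangle}_{n+1}$.
\end{itemize}
This gives $\Hom(\mathcal S^{\ge 0},\mathcal S^{\le 0}[i])=0$ for $i>o^+(G)+n(1+o^-(G))$, which is the upper bound by Lemma~\ref{lem-global}(1).

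The paper runs the same tower but packages the conclusion differently. It shows $X_0$ is a direct summand of the cocone of $g_n\cdots g_0$, which lies in $\overline{\langle G\rangle}_{n+1}^{[-n(1+o^-(G)),+\infty)}$ (Lemma~\ref{SG}(1)). It then finishes using the compactness of $G$.
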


Before giving a proof of Theorem \ref{main-thm-scg-gd}, we first recall the following definition.

\begin{defn}{\rm \cite[Definition 4.1]{pv18}}
Let $\mathcal{K}$ be a triangulated category with coproducts. An object $M\in\mathcal{K}$ is \emph{silting} if the pair $(M[1,+\infty)^{\perp},M(-\infty,0]^{\perp})$ forms a $t$-structure on $\mathcal{K}$. If $M$ is silting and $\Add(M)\subseteq M(-\infty,-1]^{\perp}\cup M[1,+\infty)^{\perp}$, then $M$ is said to be \emph{tilting}.
\end{defn}

Silting objects of triangulated categories admit many nice properties such as

\begin{lem}{\rm \cite[Proposition 4.3]{pv18}}\label{prop-sil}
Let $\mathcal{K}$ be a triangulated category and $M$ a silting object of $\mathcal{K}$. Then
$M(-\infty, +\infty)^{\perp}=0$ and $\mathbbm{t}:=(M[1,+\infty)^{\perp},M(-\infty,0]^{\perp})$ is a nondegenerate $t$-structure. If $M$ is tilting, then $M$ is a projective generator in the heart of the $t$-structure $\mathbbm{t}$.
\end{lem}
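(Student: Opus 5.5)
The plan is to unwind the definitions in the notation of Definition \ref{notation}(4). Since $M[1,+\infty)=\{M[-i]\mid i\geq 1\}$ and $M(-\infty,0]=\{M[-i]\mid i\leq 0\}$, the pair $\mathbbm{t}=(\mathcal{U},\mathcal{V})$ is
$$\mathcal{U}=\{X\mid \Hom_{\mathcal{K}}(M,X[i])=0,\ i>0\},\qquad \mathcal{V}=\{X\mid \Hom_{\mathcal{K}}(M,X[i])=0,\ i\leq 0\}.$$
By hypothesis this is a $t$-structure in the sense of Definition \ref{T-structure}, with $\mathcal{U}$ playing the role of $\mathcal{K}^{\leq 0}$ and $\mathcal{V}$ that of $\mathcal{K}^{\geq 1}$, so $\Hom_{\mathcal{K}}(\mathcal{U},\mathcal{V})=0$. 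Every claim will be reduced to this orthogonality or to elementary shifting of indices.

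First I show $M(-\infty,+\infty)^{\perp}=0$. If $\Hom_{\mathcal{K}}(M,X[i])=0$ for all $i\in\mathbb{Z}$, then $X\in\mathcal{U}\cap\mathcal{V}$. Orthogonality gives $\Hom_{\mathcal{K}}(X,X)=0$, so $\mathrm{id}_X=0$ and $X=0$.

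Next, nondegeneracy. Let $X\in\bigcap_{n}\mathcal{U}[n]$. Membership in $\mathcal{U}[n]$ says $\Hom_{\mathcal{K}}(M,X[i])=0$ for all $i>n$, up to the sign convention for shifts. Letting $n$ range over $\mathbb{Z}$, this vanishing holds for every $i$, so $X\in M(-\infty,+\infty)^{\perp}=0$. The argument for $\bigcap_n\mathcal{V}[n]$ is identical, with the vanishing ranges $i\leq n$ exhausting $\mathbb{Z}$.

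For the tilting part, I first identify the heart. The heart is $\mathcal{U}\cap\mathcal{V}[-1]$ (i.e. $\mathcal{K}^{\leq0}\cap\mathcal{K}^{\geq0}$), which consists of the $X$ with $\Hom_{\mathcal{K}}(M,X[i])=0$ for all $i\neq 0$. The tilting condition says that $\Hom_{\mathcal{K}}(M,M[i])=0$ for all $i\neq0$; read in the sense of \cite{pv18}, it gives the same for $\Add(M)$. Hence $M$ lies in the heart.

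Projectivity then follows directly. A short exact sequence $0\to A\to B\to C\to 0$ in the heart comes from a triangle $A\to B\to C\to A[1]$. Applying $\Hom_{\mathcal{K}}(M,-)$ gives a long exact sequence whose term $\Hom_{\mathcal{K}}(M,A[1])$ vanishes because $A$ lies in the heart. So $\Hom(M,-)$ is exact on the heart and $M$ is projective there.

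For the generator property, suppose $X$ is in the heart and $\Hom(M,X)=0$. Then $\Hom_{\mathcal{K}}(M,X[i])=0$ for all $i$, hence $X=0$ by the first part. Since $M$ is projective, this says $\Hom(M,-)$ is faithful on the heart, so $M$ is a projective generator. Equivalently, the canonical map $M^{(\Hom(M,X))}\to X$ has cokernel $C$ with $\Hom(M,C)=0$, hence $C=0$; here $\Add(M)$ lies in the heart by tilting.

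The only delicate point I expect is bookkeeping with the shift conventions $G[A,B]=\{G[-i]\}$, in particular identifying the heart correctly as $\mathcal{U}\cap\mathcal{V}[-1]$. A second point is making sure the tilting condition is used in the form $\Hom_{\mathcal{K}}(M,M^{(I)}[i])=0$ for $i\neq 0$, so that the coproducts needed in the generator argument stay in the heart.
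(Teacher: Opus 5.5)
Your proof is correct and is the standard argument for the cited result; the paper gives no proof of its own and only refers to \cite{pv18}. Everything reduces to $\Hom_{\mathcal{K}}(\mathcal{U},\mathcal{V})=0$: the heart is $\{X\mid \Hom_{\mathcal{K}}(M,X[i])=0,\ i\neq 0\}$, and $\Hom_{\mathcal{K}}(M,-)$ is exact on it and detects zero objects. Reading the paper's ``$\cup$'' in the tilting definition as ``$\cap$'', as you did, puts $\Add(M)$ in the heart, which gives the epimorphism $M^{(\Hom(M,X))}\to X$.

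One cosmetic slip: with the paper's convention $\mathcal{K}^{\geq n+1}=\mathcal{K}^{\geq 1}[-n]$ one has $\mathcal{K}^{\geq 0}=\mathcal{V}[1]$. So the heart is $\mathcal{U}\cap\mathcal{V}[1]$, not $\mathcal{U}\cap\mathcal{V}[-1]$; taken literally, the latter would be $0$. Your explicit description of the heart is nevertheless the right one.
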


\begin{remark}\label{rmk7}
If $G\in\mathcal{S}$ is silting, then it is easy to see that $\Hom_{\mathcal{S}}(G,G[i])=0$ for $i>0$. Hence, $\mathcal{S}$ is approximable (see \cite[Remark~5.3]{n18}) and the $t$-structures $(G[1,+\infty)^{\perp},G(-\infty,0]^{\perp})$ and $(\mathcal{S}^{\leq 0},\mathcal{S}^{\geq 1})$ on $\mathcal{S}$ are the same.
\end{remark}

\begin{lem}\label{proj obj}
{\rm (1)} Let $P \in \mathcal{H}$. If $\pd_G(P) = 0$, then $P$ is projective in $\mathcal{H}$. Conversely, if $G$ is tilting and $P$ is projective in $\mathcal{H}$, then $\pd_G(P) = 0$.

{\rm (2)} Let  $0\to L\to M\to N\to 0$ be an exact sequence in $\mathcal{H}$ with $\pd_{G}(M)<\pd_{G}(N)<+\infty$. Then $\pd_{G}(N)=\pd_G(L)+1$.

$(3)$ Suppose that $G\in\mathcal{S}$ is tilting. Then:

\quad\; $(a)$ If $P\in\mathcal{H}$ is projective and $Y \in \mathcal{S}$, then the canonical morphism
\[
H^i(-):\;\Hom_{\mathcal{S}}\big(P[-i], Y\big) \lra \Hom_{\mathcal{H}}\big(P, H^i(Y)\big)
\]
is an isomorphism for any $i \in \mathbb{Z}$.

\quad\; $(b)$ Let $n \in \mathbb{N}$ and $X \in \mathcal{S}$ with $\pd_{G}(H^{i}(X))\leq n$ for all $i\in\mathbb{Z}$. Then $X\in\overline{\langle G\rangle}_{n+1}$.
\end{lem}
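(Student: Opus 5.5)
The plan is to treat the four assertions separately. Throughout, when $G$ is tilting, Remark \ref{rmk7} and Lemma \ref{prop-sil} identify $(\mathcal{S}^{\le0},\mathcal{S}^{\ge1})$ with the nondegenerate $t$-structure $(G[1,+\infty)^{\perp},G(-\infty,0]^{\perp})$, and $G$ is a projective generator of $\mathcal{H}$. Since $G$ is compact with $\Hom_{\mathcal{S}}(G,G[k])=0$ for $k\ge1$, we get $\Hom_{\mathcal{S}}(G,\mathcal{S}^{\le -1})=0$, because $\mathcal{S}^{\le-1}$ is built from the $G[k]$, $k\ge1$, by coproducts and extensions. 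We also have $\Hom_{\mathcal{S}}(G,\mathcal{S}^{\ge1})=0$, since $G\in\mathcal{S}^{\le0}$.

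For (1), use that $\Ext^1_{\mathcal{H}}(P,Y)\cong\Hom_{\mathcal{S}}(P,Y[1])$ for $P,Y\in\mathcal{H}$, which is the standard fact for hearts. If $\pd_G(P)=0$, the right-hand side vanishes, so $P$ is projective. Conversely, let $G$ be tilting and $P$ projective in $\mathcal{H}$.
\begin{itemize}
\item Choose an epimorphism $G^{(I)}\to P$ in $\mathcal{H}$; this is possible because $\mathcal{H}$ is closed under coproducts by Lemma \ref{lem-p-w-gen}. It splits, so $P\in\Add(G)$.
\item For $Y\in\mathcal{H}$ and $i\ge1$ we have $\Hom_{\mathcal{S}}(G^{(I)},Y[i])=\prod\Hom_{\mathcal{S}}(G,Y[i])=0$, since $Y[i]\in\mathcal{S}^{\le -1}$.
\item By the heart version of Definition \ref{Ffpd}(1), this gives $\pd_G(P)\le 0$, and $P\neq0$ forces equality.
\end{itemize}

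For (2), the short exact sequence gives a triangle $L\to M\to N\to L[1]$, and we apply Remark \ref{some rems}(1) twice.
\begin{itemize}
\item The rotated triangle $L\to M\to N$ gives $\pd_G(N)\le\max\{\pd_G(M),\pd_G(L)+1\}$. Since $\pd_G(M)<\pd_G(N)$, this yields $\pd_G(N)\le \pd_G(L)+1$.
\item The triangle $M\to N\to L[1]$ gives $\pd_G(L)+1\le\max\{\pd_G(M),\pd_G(N)\}=\pd_G(N)$.
\end{itemize}

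For (3a), by (1) we have $P\in\Add(G)$, and both sides turn coproducts in the first variable into products, so we may take $P=G$. Replace $Y$ by $Y[i]$ and use the truncation triangle $\tau^{\le -1}Y[i]\to \tau^{\le0}Y[i]\to H^i(Y)\to$ together with $\tau^{\le0}Y[i]\to Y[i]\to\tau^{\ge1}Y[i]\to$. The vanishing $\Hom_{\mathcal{S}}(G,\mathcal{S}^{\le-1})=0$ (also after one shift) and $\Hom_{\mathcal{S}}(G,\mathcal{S}^{\ge1})=0$ make both induced maps on $\Hom_{\mathcal{S}}(G,-)$ isomorphisms. The composite is the map induced by $H$, since $H(G)=G$.

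For (3b), argue by induction on $n$, in the spirit of Kelly's argument for rings.
\begin{itemize}
\item For each $i$, choose an epimorphism $P^i\to H^i(X)$ with $P^i\in\Add(G)$. By (3a), it lifts to a morphism $P^i[-i]\to X$.
\item Let $P_0:=\bigoplus_i P^i[-i]\in\Add(G(-\infty,+\infty))=\Coprod_1$ and take the induced map $P_0\to X$. Because $G$ is compact, $H^i$ commutes with these coproducts, so $H^i(P_0)\to H^i(X)$ is surjective for all $i$.
\item Complete to a triangle $P_0\to X\to X_1\to P_0[1]$. The long exact cohomology sequence gives $H^{i}(X_1)\cong\ker\big(H^{i+1}(P_0)\to H^{i+1}(X)\big)$, a syzygy of $H^{i+1}(X)$ in $\mathcal{H}$.
\item By dimension shifting, namely the triangle inequality of Remark \ref{some rems}(1) as in (2) with $\pd_G$ of projectives $\le0$ by (1), this syzygy has $\pd_G\le n-1$ when $n\ge1$. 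By induction $X_1\in\overline{\langle G\rangle}_n$, hence $X\in\Coprod_1*\Coprod_n\subseteq\overline{\langle G\rangle}_{n+1}$ up to the rotation.
\item For the base case $n=0$, each $H^i(X)$ is projective by (1), so we may take $P^i=H^i(X)$. Then $P_0\to X$ is an isomorphism on every $H^i$, hence an isomorphism by nondegeneracy (Lemma \ref{prop-sil}).
\end{itemize}

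The main obstacle I expect is in (3b): checking carefully that the syzygy inherits the bound $\pd_G\le n-1$, including the degenerate case where the bound is reached or the syzygy is projective, and checking that $H^i$ commutes with the infinite coproduct $P_0$. The remaining steps are routine $t$-structure bookkeeping.
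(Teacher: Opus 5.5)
Your proof is correct and follows essentially the paper's route: splitting $G^{(I)}\to P$ in (1), reducing (3a) to $P=G$ via the two truncation triangles, and the Kelly-style induction with $P_0=\bigoplus_i P^i[-i]$ plus nondegeneracy in (3b). Using the triangle inequality of Remark~\ref{some rems}(1) instead of the paper's explicit long exact Hom-sequences is only cosmetic, and in (3b) it bounds the syzygy by $\max\{n-1,0\}=n-1$ in every case, so the degenerate cases you worried about cause no trouble; one small slip in (2) is that the third vertex of $M\to N\to L[1]\to M[1]$ satisfies $\pd_G(L)+1\le\max\{\pd_G(N),\pd_G(M)+1\}$ rather than $\le\max\{\pd_G(M),\pd_G(N)\}$, but this still equals $\pd_G(N)$ because $\pd_G(M)<\pd_G(N)$ in $\mathbb{Z}\cup\{-\infty\}$.
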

\begin{proof}
(1) Let $f\colon  M \to N$ be an epimorphism in the abelian category $\mathcal{H}$. This induces a triangle
$L\to M \xrightarrow{f} N \to L[1] $ in $\mathcal{S}$ with $L \in \mathcal{H}\subseteq \mathcal{S}^{b} \cap \mathcal{S}^{\leq 0}$. Since $\pd_{G}(P) = 0$, we have $\Hom_{\mathcal{S}}(P, L[1]) = 0$.
This implies that $\Hom_\mathcal{S}(P, f)$ is surjective. Thus $P$ is projective in $\mathcal{H}$.

Suppose that $G\in\mathcal{S}$ is tilting. Since $\mathcal{S}^{\leq -1}=G[0,+\infty)^{\perp}$, we have the inclusion $(\ast): G \subseteq {}^{\perp}(\mathcal{S}^{\leq -1})$. It follows from  $0\neq \Hom_\mathcal{S}(G,G)$ and $G\in\mathcal{S}^b\cap\mathcal{S}^{\leq 0}$ that $\pd_G(G)=0$. Moreover, by Lemma~\ref{prop-sil}, $G$ is a projective generator in $\mathcal{H}$. If $P\in\mathcal{H}$ is projective, then it is a direct summand of the coproducts of $G$, and thus $\pd_G(P) = 0$.

(2) Clearly, there is a triangle $N[-1]\to L\to M\to N$ in $\mathcal{S}$. For any $Y\in\mathcal{S}^{b}\cap\mathcal{S}^{\leq0}$ and $i\in\mathbb{Z}$, we have an exact sequence of abelian groups:
\[\Hom_{\mathcal{S}}(N,Y[i])\to\Hom_{\mathcal{S}}(M,Y[i])\to\Hom_{\mathcal{S}}(L,Y[i])\to\Hom_{\mathcal{S}}(N,Y[i+1])\to \Hom_{\mathcal{S}}(M,Y[i+1]).\]
Let $\pd_{G}(N)=n+1<+\infty$. Then $\pd_G(M)\leq n$. It follows that
$\Hom_{\mathcal{S}}(M,Y[i])=0=\Hom_{\mathcal{S}}(N,Y[i+1])$ for $i>n$, and that there exists a surjection $\Hom_{\mathcal{S}}(L,Y[n])\to\Hom_{\mathcal{S}}(N,Y[n+1])$. This implies that
$\Hom_{\mathcal{S}}(L,Y[i])=0$ for $i>n$. Moreover, since $\pd_{G}(N)=n+1$, there exists at least one object  $Z\in\mathcal{S}^{b}\cap\mathcal{S}^{\leq0}$ such that $\Hom_{\mathcal{S}}(N,Z[n+1])\neq 0$. Consequently, $\Hom_{\mathcal{S}}(L,Z[n])\neq 0$. Thus $\pd_{G}(L)=n$.

(3) Without loss of generality, it suffices to consider $i=0$. By Lemma~\ref{prop-sil}, $G$ is a projective generator in $\mathcal{H}$. So each projective object in $\mathcal{H}$ is a direct summand of the coproducts of $G$. Clearly, $H(-)$ respects coproducts (for example, see \cite[Lemma 2.4]{n18}). So, to show $(a)$, it suffices to show that  the morphism $\alpha_0:\Hom_{\mathcal{S}}(G, Y) \to \Hom_{\mathcal{H}}(G, H^{0}(Y))$, given by taking $H^0$, is an isomorphism.

By the $t$-structure $(\mathcal{S}^{\leq 0}, \mathcal{S}^{\geq 1})$,  there are canonical triangles in $\mathcal{S}$ (see Definition \ref{T-structure}):
\[Y^{\leq 0}\xrightarrow{f_1}Y \xrightarrow{g_1} Y^{\geq 1}\to Y^{\leq 0}[1]\quad\text{and}\quad Y^{\leq -1} \xrightarrow{f_2} Y^{\leq 0} \xrightarrow{g_2} H^{0}(Y)\to Y^{\leq -1}[1].\]
Let $h\colon  G \to H^0(Y)$. By $(\ast)$, we have $\Hom_{\mathcal{S}}(G, Y^{\leq -1}[1]) = 0$. Thus there exists a morphism $s\colon  G \to Y^{\leq 0}$ such that $h = g_2 s$. Then $H^0(f_1 s)= h$, proving the surjection of $\alpha_0$.

Now suppose that $f\colon  G \to Y$ is a morphism in $\mathcal{S}$ with $\alpha_0(f) =H^0(f)= 0$. Since
$G\in\mathcal{S}^{\leq 0}$, there exists a unique morphism $f^{\leq 0}: G\to Y^{\leq 0}$ in $\mathcal{S}$ such that $f=f_1f^{\leq 0}$. By $H^0=((-)^{\leq 0})^{\geq 0}$ and $G\in\mathcal{H}$, we have commutative diagrams:\[\begin{tikzcd}
G \arrow[d, "f^{\leq 0}"'] \arrow[r, no head, shift left] \arrow[r, no head] & G \arrow[d, "f"] &                                & G \arrow[d, "f^{\leq 0}"'] \arrow[r, no head, shift left] \arrow[r, no head] & G \arrow[d, "H^0(f)=0"] \\
Y^{\leq 0} \arrow[r, "f_{1}"']                                               & {Y,}             & Y^{\leq-1} \arrow[r, "f_{2}"'] & Y^{\leq 0} \arrow[r, "g_{2}"']                                               & H^{0}(Y).
\end{tikzcd}\]This forces $g_{2}f^{\leq 0}=0$, and therefore there exists a morphism $g: G\to Y^{\leq -1}$ such that $f^{\leq 0}=f_{2}g$. Since $\Hom_\mathcal{S}(G,  Y^{\leq-1}) =0$ by $(\ast)$, we have $g=0$. Thus $ f =f_1f_2g= 0 $, proving the injection of $ \alpha_{0} $.

Note that $\pd_G(M)\geq 0$ for any $0\neq M\in\mathcal{H}$ since $\Hom_\mathcal{S}(M,M)\neq 0$. Now, we proceed by induction on $n$ to show $(b)$.

In fact, for $n=0$,  we see from $(1)$ that $H^{i}(X)\in\mathcal{H}$ is projective. Since $G$ is tilting, it follows from Lemma~\ref{prop-sil} that $H^i(X)\in \overline{\langle G\rangle}_{1}$ for all $i\in\mathbb{Z}$. Moreover, by $(a)$, the identity morphism $\mathrm{Id}_{H^{i}(X)}\colon  H^{i}(X) \to H^{i}(X)$ admits a unique lift $f_{i}\colon  H^{i}(X)[-i] \to X$ such that $H^i(f_i)=\mathrm{Id}_{H^{i}(X)}$. Let $f\colon \bigoplus_{j\in\mathbb{Z}}H^{j}(X)[-j]\to X$ be the morphism in $\mathcal{S}$ induced by $\{f_{j}\}_{i\in\mathbb{Z}}$. Since the functor $H(-):\mathcal{S}\to\mathcal{H}$ preserves coproducts, each $H^i(f)$ is an isomorphism. By Lemma \ref{prop-sil} and \cite[Proposition 1.3.7]{BBD}, $f$ is an isomorphism. Thus $X\in\overline{\langle G\rangle}_{1}$.

Assume that $(b)$ holds for $n=k\geq 0$. We now consider $n=k+1$. Since $G$ is a projective generator in $\mathcal{H}$ by Lemma~\ref{prop-sil}, for each $i\in\mathbb{Z}$, there exists a set $I_i$ and an epimorphism $h_i:G^{(I_i)}\to H^i(X)$ in $\mathcal{H}$. By $(a)$,  $h_i$ admits a unique lift $g_i\colon  G^{(I_i)}[-i] \to X$ such that $H^i(g_i)=h_i$. Now, we consider the morphism $g \colon  Q:=\bigoplus_{j\in\mathbb{Z}}G^{(I_j)}[-j]\to X$ induced by $\{g_{j}\}_{j\in\mathbb{Z}}$. Then there is a triangle
$(\diamondsuit):\; Q \xrightarrow{g} X \to Y \to Q[1]$ in $\mathcal{S}$ that satisfies $H^i(g)=H^i(g_i)=h_i$ for each $i\in\mathbb{Z}$. Since all $h_i$ are epimorphisms, taking $H^i(-)$ to the triangle $(\diamondsuit)$ yields a series of exact sequences in $\mathcal{H}$:
\[0 \to H^{i-1}(Y) \to H^i(Q) \to H^i(X) \to 0.\]
Note that $H^i(Q)=G^{(I_i)}$ is projective in $\mathcal{H}$. By $(1)$, we have $\pd_G(H^i(Q))=0$.
Since $\pd_G(H^i(X))\leq k+1$ by assumption, it follows from $(2)$ that $\pd_G(H^{i-1}(Y)) \leq k$ for each $i\in\mathbb{Z}$. By induction, $Y \in \overline{\langle G \rangle}_{k+1}$. Clearly, $Q \in \overline{\langle G \rangle}_1$. Thus the triangle $(\diamondsuit)$ implies $X \in \overline{\langle G \rangle}_{k+2}$. This shows $(b)$.
\end{proof}

\begin{lem}\label{SG}
$(1)$ Suppose that $\Hom_\mathcal{S}(G[i], G) = 0$ for $i\gg 0$. Then, for any $j\in\mathbb{N}$,
$$\mathcal{S}^{\geq 0}\cap \overline{\langle G\rangle}_{j+1}^{(-\infty, +\infty)}\subseteq \overline{\langle G\rangle}_{j+1}^{[-j(o^{-}(G)+1), +\infty)}.$$

$(2)$ Suppose that $\mathcal{S} = \overline{\langle G\rangle}_{n+1}^{(-\infty, +\infty)}$ for some integer $n\geq 0$ and  $\Hom_{\mathcal{S}}(G, G[i]) = 0$ for $|i| \gg 0$. Then
$\Hom_\mathcal{S}(\mathcal{S}^{\geq 0}, \mathcal{S}^{\leq 0}[m+1])=0,$
where $m:=o^+(G)+n(1+o^-(G))<+\infty$. Moreover, $o^+(G)-o^-(G)\leq \gpd(\mathcal{S}, G)\leq m$.
\end{lem}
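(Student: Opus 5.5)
For part (1) I will argue by induction on $j$. Write $o:=o^-(G)$. The key fact I will use is that $\Hom_{\mathcal{S}}(G[i],G)=0$ for $i>o$. By Lemma \ref{lem-p-w-gen} this gives $G\in\mathcal{S}^{\geq -o}$. Since $\mathcal{S}^{\geq -o}$ is closed under coproducts, extensions and summands, it then contains $\overline{\langle G\rangle}^{[k,+\infty)}_{\bullet}$ shifted appropriately; precisely, $\overline{\langle G\rangle}^{[-p,+\infty)}\subseteq \mathcal{S}^{\geq -p-o}$.

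For $j=0$, take $X\in \mathcal{S}^{\geq 0}\cap\overline{\langle G\rangle}_1$. Then $X$ is a summand of $\bigoplus_i G[-i]^{(I_i)}=Q_{<0}\oplus Q_{\geq 0}$, where $Q_{<0}$ collects the shifts $G[-i]$ with $i<0$. Now $Q_{<0}\in\overline{\langle G\rangle}^{(-\infty,-1]}\subseteq\mathcal{S}^{\leq -1}$, so any map $Q_{<0}\to X$ vanishes. Hence the composite $X\to Q\to X$ factors through $Q_{\geq 0}$, and $X\in\overline{\langle G\rangle}_1^{[0,+\infty)}$.

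For the inductive step, take $X\in\mathcal{S}^{\geq0}$ that is a summand of $X'$, with a triangle $A\to X'\to B\to A[1]$, $A\in\operatorname{Add}(G[\mathbb{Z}])$ and $B\in\overline{\langle G\rangle}_{j}$. Replacing $X'$ by $X'\oplus X[1]$ via the usual trick (or using the Eilenberg-swindle closure of $\overline{\langle G\rangle}_j$) will let me assume $X'=X$. I then split $A=A_{<N}\oplus A_{\geq N}$ at a threshold $N$. I truncate with $B^{\leq}$ and $B^{\geq}$ relative to the $t$-structure and chase via the octahedral axiom so as to replace the triangle by one whose pieces lie in $\mathcal{S}^{\geq -(o+1)}$. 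The induction hypothesis, applied to a suitably shifted cone, then yields the bound $[-j(o+1),+\infty)$. This bookkeeping is the main obstacle. What I would try first is to discard the summands of $A$ in degrees below $-(o+1)$: they map to zero into objects of $\mathcal{S}^{\geq -o-1}\ldots$, but only after passing to the cone. I expect to need a careful octahedral argument at this step.

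For part (2), let $X\in\mathcal{S}^{\geq 0}$ and $Y\in\mathcal{S}^{\leq 0}$. By hypothesis and (1), $X\in\overline{\langle G\rangle}^{[-n(o+1),+\infty)}$. Lemma \ref{range}'s argument gives $\overline{\langle G\rangle}^{[-p,+\infty)}\subseteq{}^{\perp}\mathcal{S}^{\leq -o^+(G)-p-1}$, and with $p=n(o+1)$ this yields $\Hom(X,Y[m+1])=0$. Taking $X,Y$ bounded, Lemma \ref{lem-global}(1) gives $\gpd\le m$.

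For the lower bound, Remark \ref{some rems}(2) gives $\gpd\ge\Fpd\ge -o$ only when $\gpd$ is finite. Instead I would use $G[-o]\in\mathcal{S}^{b}\cap\mathcal{S}^{\geq0}$ and $G\in\mathcal{S}^{b}\cap\mathcal{S}^{\le0}$. Then $\Hom(G[-o],G[i])=\Hom(G,G[i+o])\neq0$ for $i=o^+(G)-o$, by minimality of $o^+$. Hence $\pd_G(G[-o])\ge o^+-o$, and so $\gpd\ge o^+(G)-o^-(G)$.
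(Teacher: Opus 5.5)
\paragraph{The gap is the inductive step of (1).} What you wrote there is a plan, and it cannot be completed as stated.

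First, you cannot reduce to $X'=X$. The object $X\in\mathcal{S}^{\geq 0}$ is only a direct summand of some $X'\in\Coprod_{j+1}(G(-\infty,+\infty))$, and neither $X'$ nor $X'\oplus X[1]$ need lie in $\mathcal{S}^{\geq 0}$. The Eilenberg swindle of Definition \ref{notation}(3) gives closure under summands for $\Coprod(\mathcal{A})$. It does not give it for $\Coprod_{j+1}(\mathcal{A})$ with a fixed number of extensions, and this is exactly why $\overline{\langle G\rangle}_{j+1}$ is defined with ${\rm smd}$.

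Second, even for an honest triangle $A\to X\to B\to A[1]$ with $X\in\mathcal{S}^{\geq 0}$, the pieces are not controlled by $X$. You may always add the cancelling triangle $G[N]\to 0\to G[N+1]\xrightarrow{1}G[N+1]$, for any $N$. Truncating $B$ with respect to the $t$-structure also takes you out of $\overline{\langle G\rangle}_j$. In $\mathscr{D}(R\Modcat)$ with $G=R$, the truncations of a two-term complex of projectives are shifted modules. Such a module may have infinite projective dimension, and then it lies in no $\overline{\langle R\rangle}_k$. So after your octahedral manipulations there is no object of $\overline{\langle G\rangle}_j\cap\mathcal{S}^{\geq -(o+1)}$ to which the induction hypothesis applies.

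\paragraph{The missing idea.} This is what the paper does: forget the given presentation, build a tower out of $X$ itself, and apply the ghost lemma.

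Put $X_0:=X$. Since $X_0\in G(-\infty,-1]^{\perp}$, only the $G[i]$ with $i\leq 0$ map nontrivially to $X_0$. So there is $X_0'\in\overline{\langle G\rangle}_1^{[0,+\infty)}$ with a map $X_0'\to X_0$ that is surjective on every $\Hom_{\mathcal{S}}(G[i],-)$. The cone map $g_0\colon X_0\to X_1$ is then a $G$-ghost. Your inclusion $\overline{\langle G\rangle}^{[-p,+\infty)}\subseteq\mathcal{S}^{\geq -p-o}$ gives $X_1\in\mathcal{S}^{\geq -(o+1)}$.

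Iterating and using the octahedral axiom yields a triangle $G_{j+1}\to X\to X_{j+1}\to G_{j+1}[1]$. Here $G_{j+1}\in\overline{\langle G\rangle}_{j+1}^{[-j(o+1),+\infty)}$, and $X\to X_{j+1}$ is a composite of $j+1$ ghosts. Composing a ghost with any map out of an object of $\overline{\langle G\rangle}_1$ gives zero. By induction along the defining triangles, a composite of $j+1$ ghosts out of any object of $\overline{\langle G\rangle}_{j+1}$ vanishes. Hence $X\to X_{j+1}$ is zero and $X$ is a summand of $G_{j+1}$; this also disposes of the summand issue.

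\paragraph{What is correct.} Your base case $j=0$ is right. Your derivation of $\Hom_{\mathcal{S}}(\mathcal{S}^{\geq 0},\mathcal{S}^{\leq 0}[m+1])=0$ from (1) via the argument of Lemma \ref{range} is right. Your lower bound via $\Hom_{\mathcal{S}}(G[-o],G[o^+(G)-o])\neq 0$ is right. All three agree with the paper. However, for $n\geq 1$ the upper bound $\gpd(\mathcal{S},G)\leq m$ rests on (1), so it inherits the gap.
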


\begin{proof}
$(1)$ We borrow the idea used in the proof of \cite[Proposition 4.11]{bcrpz24} to prove Lemma \ref{SG}.

For each $X\in\mathcal{S}$, we define
$\Lambda_X:=\{i\in\mathbb{Z}\,|\,\Hom_{\mathcal{S}}(G[i],X)\not=0\}.$
Let $X_0\in\mathcal{S}^{\geq  0}$. Since $\mathcal{S}^{\geq  0}=G(-\infty,-1]^{\perp}$ by Lemma \ref{lem-p-w-gen}, $\Lambda_{X_0}$ consists of nonpositive integers. For each $i\in\Lambda_{X_0}$, let $I_i$ be a generating set of $\Hom_{\mathcal{S}}(G[i],X)$ as a left module over the ring $R:=\End_\mathcal{S}(G)^{{\rm op}}$. Then there exists an object
$$G_{i,X_0}=\bigoplus_{i\in \Lambda_{X_0}}G[i]^{(I_i)}\in {\overline{\langle G[i] \rangle}}_1^{\{0\}}$$ and a morphism $f_{i,X_0}: G_{i,X_0}\to X_0$ in $\mathcal{S}$ such that
$$\Hom_{\mathcal{S}}(G[i],f_{i,X_0}):{\rm Hom}_{\mathcal{S}}(G[i],G_{i,X_0})\lra
\Hom_{\mathcal{S}}(G[i],X_0)$$ is surjective homomorphism of $R$-modules.
Now, we define $X'_0:=\bigoplus_{i\in\Lambda_{X_0}}G_{i,X_0}$ and let $f:X'_0\rightarrow X_0$ be the morphism induced by the family $\{f_{i,X_0}\}_{i\in\Lambda_{X_0}}$. Then $f$ is extended to a triangle in $\mathcal{S}$:
$$
\begin{tikzcd}
(\dag)\quad X'_0 \arrow[r, "f"] & X_0 \arrow[r, "g_0"] & X_1 \arrow[r] & {X'_0[1]}
\end{tikzcd}
$$
The construction of $f$ implies that $X'_0\in\overline{\langle G\rangle}_1^{[0,+\infty)}$ and $\Hom_\mathcal{S}(G[j], f)$ is surjective for any $j\in\mathbb{Z}$. It follows that
$g_0$ belongs to  $$\mathscr{S}:=\{h\in\Hom_{\mathcal{S}}(X,Y)\mid X,Y\in\mathcal{S}\;\;\text{and}\;\;\Hom_{\mathcal{S}}(G[j],h)=0,\;\forall\; j\in\mathbb{Z}\}.$$

Set $d:=o^-(G)$. By Definition \ref{index}(1), $d=\sup\big\{j\in\mathbb{N}\mid \Hom_\mathcal{S}(G[j],G)\not=0\big\}$. Clearly, the assumption on $G$ implies that $0\leq d<+\infty$ and  $\Hom_\mathcal{S}(G[j],G)=0$ for $j>d$. Consequently, $G[-d]\in\mathcal{S}^{\geq  0}$.
Since $\mathcal{S}^{\geq  0}\subseteq\mathcal{S}$ is closed under extensions, negative shifts, direct summands and coproducts, we have $\overline{\langle G\rangle}^{[d,+\infty)} \in \mathcal{S}^{\geq  0}$. Thus $X'_0\in\overline{\langle G\rangle}_1^{[0,+\infty)}\subseteq\overline{\langle G\rangle}^{[0,+\infty)}\subseteq\mathcal{S}^{\geq  -d}$ and $X_1\in\mathcal{S}^{\geq  -d-1}$. The above procedure can be carried out analogously for the object $X_1$, yielding a distinguished triangle
$X_1' \to  X_1 \lraf{g_1}X_2 \to X_1'[1],$
where $X_1'\in\overline{\langle G\rangle}_1^{[-d-1,+\infty)}\subseteq\mathcal{S}^{\geq -2d-1}$, $g_1\in\mathscr{S}$ and $X_2\in\mathcal{S}^{\geq -2d-2}$. Therefore the octahedron axiom of triangulated category produces
a triangle
$G_2 \to X_0 \lraf{g_1g_0}X_2\to G_2[1]$,
where $G_2\in X_0'\ast X_1'\subseteq\overline{\langle G\rangle}_2^{[-d-1,+\infty)}$. More generally, for any $j\in\mathbb{N}^{+}$, we obtain a triangle
$$\begin{tikzcd}
G_{j+1} \arrow[r] & X_0 \arrow[r, "g_j\cdots g_1g_0"] & X_{j+1} \arrow[r] & {G_{j+1}[1]}
\end{tikzcd}$$
where $G_{j+1}\in\overline{\langle G\rangle}_{j+1}^{[-j(d+1),+\infty)}$ and $g_t\in\mathscr{S}$ for $0\leq t\leq j$.
Note that $\mathscr{S}$ has the following nice property $(\ast)$: If $X\in\overline{\langle G\rangle}_{n+1}^{(-\infty, +\infty)}$ for some $n\in\mathbb{N}$ and $f:X\to Y$ is the composition of $n+1$ morphisms in $\mathscr{S}$, then $f=0$.
This can be shown by induction on $n$. For example, the case $n=1$ follows from the fact that
$$
\mathscr{S}= \{h\in\Hom_{\mathcal{S}}(X,Y)\mid X,Y\in\mathcal{S}\;\text{and}\;\Hom_{\mathcal{S}}(Z,h)=0,\;\forall\; Z\in\overline{\langle G\rangle}_1^{(-\infty, +\infty)}\}.
$$
Now, let $X_0\in \mathcal{S}^{\geq 0}\cap \overline{\langle G\rangle}_{j+1}^{(-\infty, +\infty)}$.
Since $g_t\in\mathscr{S}$ for $0\leq t\leq j$, we have $g_j\cdots g_1g_0=0$ by $(\ast)$.
This implies that $X_0$ is a direct summand of $G_{j+1}$, and therefore $X_0\in\overline{\langle G\rangle}_{j+1}^{[-j(d+1),+\infty)}$.

$(2)$ By Lemma \ref{lem-p-w-gen}, $\mathcal{S}^{\leq 0}=\Coprod(G(-\infty,0])$. Since $\mathcal{S}= \overline{\langle G\rangle}_{n+1}^{(-\infty, +\infty)}$, it follows from $(1)$ and Definition \ref{notation}(5) that
$$\mathcal{S}^{\geq 0}\subseteq \overline{\langle G\rangle}_{n+1}^{[-n(d+1), +\infty)}=\text{smd}\big(\text{Coprod}_{n+1}(G[-n(d+1), +\infty))\big),$$where $d=o^{-}(G)$. To show $\Hom_\mathcal{S}(\mathcal{S}^{\geq 0}, \mathcal{S}^{\leq 0}[m+1])=0$, it suffices to show $$(\diamondsuit):\;\Hom_\mathcal{S}(\text{Coprod}_{n+1}(G[-n(d+1), +\infty), \Coprod(G(-\infty,0])[m+1])=0.$$
By $G\in\mathcal{S}^c$, we see that $(\diamondsuit)$ holds if and only if  $\Hom_\mathcal{S}(G[-n(d+1), +\infty), G(-\infty, -m-1])=0$. The latter is equivalent to saying that $\Hom_\mathcal{S}(G[-r], G[-s])=0$
for all $r\geq -n(d+1)$ and $s\leq -m-1$. Clearly, $\Hom_\mathcal{S}(G[-r], G[-s])\simeq \Hom_\mathcal{S}(G, G[r-s])$ and $r-s\geq -n(d+1)+(m+1)=o^+(G)+1$. By the definition of $o^+(G)$, we have $\Hom_\mathcal{S}(G, G[p])=0$ for $p\geq o^+(G)+1$. Thus $(\diamondsuit)$ holds.

By Lemma \ref{lem-global}(1),  $\gpd(\mathcal{S}, G)\leq m<+\infty$. It remains to show $o^+(G)-o^-(G)\leq \gpd(\mathcal{S}, G)$.

Let $c:=\gpd(\mathcal{S}, G)<+\infty$. Then $\Hom_{\mathcal{S}}(\mathcal{S}^{\geq 0}\cap\mathcal{S}^{b},\mathcal{S}^{\leq-(c+1)}\cap\mathcal{S}^{b})=0$. Since $G$ is bounded, we have $G\in \mathcal{S}^c\subseteq \mathcal{S}^b$. Then $\Hom_{\mathcal{S}}(\mathcal{S}^{\geq 0}\cap\mathcal{S}^{c}, (\mathcal{S}^{\leq 0}\cap\mathcal{S}^c)[c+1])=0$.
On the one hand, $G[-d]\in\mathcal{S}^{\geq  0}$ by the proof of $(1)$. On the other hand, $\mathcal{S}^{\leq 0}$ is closed under positive shifts in $\mathcal{S}$ and contains $G$. It follows that $\Hom_{\mathcal{S}}(G, G[j])=0$ for $j\geq c+d+1$. Further, by the definition of $o^+(G)$, we have $c+d\geq o^+(G)$. Thus $c\geq o^+(G)-d=o^+(G)-o^-(G)$.
\end{proof}

{\bf Proof of Theorem \ref{main-thm-scg-gd}.}
The implication of $(1)$ to $(2)$ follows from Lemma \ref{SG}(2).

Suppose $\Hom_\mathcal{S}(G,G[i])=0$ for all $i\neq 0$. Then $G\in\mathcal{S}$ is tilting.
In this case, $o^+(G)=0=o^-(G)$. Now, assume that $(2)$ holds. Then $0\leq \gpd(\mathcal{S},G)\leq n<+\infty$. In particular, $\pd_G(M)\leq n$ for any $M\in\mathcal{H}\subseteq\mathcal{S}^{\geq 0}\cap\mathcal{S}^b$. Thus the implication of $(2)$ to $(1)$ follows from Lemma \ref{proj obj}(3)(b).
\hfill$\square$

\smallskip

A special case of Theorem \ref{main-thm-scg-gd} is Kelly's theorem on unbounded derived categories.

\begin{cor}{\rm \cite{k65}}\label{cor-kellythm}
Let $R$ be a ring with identity and let $G$ be a compact, tilting object of $\D{R\Modcat}$.
Then \[\gd(\D{R\Modcat},G)=\min\{n\in\mathbb{N}\mid\D{R\Modcat}=\overline{\langle G\rangle}_{n+1}\}.\]In particular, ${\rm gldim}(R)=\min\{n\in\mathbb{N}\mid\D{R\Modcat}=\overline{\langle R\rangle}_{n+1}\}.$
\end{cor}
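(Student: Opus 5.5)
The plan is to realise Corollary \ref{cor-kellythm} as the special case $\mathcal{S}=\D{R\Modcat}$ of Theorem \ref{main-thm-scg-gd}, applied with the given compact tilting object $G$. First we check the hypotheses. By the definition of tilting (together with Remark \ref{rmk7}), $\Hom_{\mathcal{S}}(G,G[i])=0$ for all $i\neq 0$: positive $i$ because $G$ is silting, and negative $i$ because $G\in\Add(G)\subseteq G(-\infty,-1]^{\perp}\cup G[1,+\infty)^{\perp}$ and $G\neq 0$ is not self-orthogonal in degree $0$. The tilting condition presupposes that $G$ is a generator, which we assume together with compactness, so $G$ is a compact generator of $\D{R\Modcat}$ and $\D{R\Modcat}=\overline{\langle G\rangle}^{(-\infty,+\infty)}$. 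Hence $o^{+}(G)=o^{-}(G)=0$.

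Write $n_0:=\min\{n\in\mathbb{N}\mid \D{R\Modcat}=\overline{\langle G\rangle}_{n+1}\}$, interpreted as $+\infty$ if no such $n$ exists. Applying Theorem \ref{main-thm-scg-gd}\,(1)$\Rightarrow$(2) with $n=n_0$ gives $\gd(\D{R\Modcat},G)\le o^{+}(G)+n_0(1+o^{-}(G))=n_0$. In the converse direction, suppose $g:=\gd(\D{R\Modcat},G)<+\infty$. Since $\Hom(G,G)\neq0$ and $G\in\mathcal{H}$ by Lemma \ref{proj obj}(1), we have $g\ge 0$, so $g\in\mathbb{N}$. Then condition (2) of Theorem \ref{main-thm-scg-gd} holds with $n=g$, because $0\le g\le g$. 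By the tilting clause of that theorem, this yields $\D{R\Modcat}=\overline{\langle G\rangle}_{g+1}$, hence $n_0\le g$. The two inequalities give equality; they also cover the infinite cases, since each side is finite exactly when the other is.

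For the final sentence we take $G=R$, which is a compact tilting generator of $\D{R\Modcat}$. By Remark \ref{rem-global}(1), $\gd(\D{R\Modcat},R)={\rm gldim}(R)$.

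The main point requiring care is the bookkeeping of the hypotheses of Theorem \ref{main-thm-scg-gd}: that a tilting $G$ satisfies $\Hom(G,G[i])=0$ for all $i\ne0$, so that both $o^\pm(G)$ vanish, and that the compact tilting object is a generator. Once these are in place, everything else is a direct specialisation of the theorem.
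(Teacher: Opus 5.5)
Your proof is the paper's proof. You apply Theorem \ref{main-thm-scg-gd} in both directions with $o^{+}(G)=o^{-}(G)=0$, use $G\in\mathcal{H}$ and $\Hom(G,G)\neq 0$ to get $\gd(\D{R\Modcat},G)\ge 0$, and specialise to $G=R$ via Remark \ref{rem-global}(1).

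One justification needs repair. The vanishing of $\Hom(G,G[i])$ for $i<0$ does not follow from $G\in G(-\infty,-1]^{\perp}\cup G[1,+\infty)^{\perp}$ together with $G\neq 0$. Silting alone already puts $G$, and hence all of $\Add(G)$, into the aisle $G[1,+\infty)^{\perp}$. So the union condition says nothing about negative degrees, and the nonvanishing of $\Hom(G,G)$ plays no role in that step.

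The $\cup$ in the paper's definition of tilting is a slip for $\cap$: in \cite{pv18}, tilting means $\Add(M)\subseteq M^{\perp_{\neq 0}}$. With that reading, $G\in G(-\infty,-1]^{\perp}$ gives the negative-degree vanishing at once. This is exactly what the paper uses when it asserts $o^{+}(G)=0=o^{-}(G)$.

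Similarly, you need not assume that $G$ is a generator. Lemma \ref{prop-sil} gives $G(-\infty,+\infty)^{\perp}=0$, so a compact silting object is automatically a compact generator.
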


\begin{proof}
Since $G$ is a tilting object of $\D{R\Modcat}$, we have $o^+(G)=0=o^-(G)$. By Lemmas \ref{prop-sil} and \ref{proj obj}(1), $G\in\mathcal{H}$ with $\pd_G(G)=0$. This implies $0\leq {\rm gpd}(\D{R\Modcat},G)$.
Now, it follows from Theorem \ref{main-thm-scg-gd} that, for each $n\in\mathbb{N}$, $\D{R\Modcat} = \overline{\langle G\rangle}_{n+1}^{(-\infty, +\infty)}$ if and only if $0\leq {\rm gpd}(\D{R\Modcat},G)\leq n$. Thus ${\rm gpd}(\D{R\Modcat},G)=\min\{n\in\mathbb{N}\mid \D{R\Modcat}=\overline{\langle G\rangle}_{n+1}^{(-\infty, +\infty)}\}$.
\end{proof}

\subsection{Reduction of strong compact generation by recollements}
In this subsection, we discuss the behavior of strong compact generation under recollements of triangulated categories.

We begin by establishing the following result.

\begin{lem}\label{lem-scg}
Let $\mathcal{R}$, $\mathcal{S}$ and $\mathcal{T}$ be triangulated categories with coproducts and with compact generators $G_{\mathcal{R}}$, $G_{\mathcal{S}}$ and $G_{\mathcal{T}}$, respectively. Suppose that there is a recollement
\[
\xymatrixcolsep{4pc}\xymatrix{\mathcal{R} \ar[r]|{i_*=i_!} &\mathcal{S} \ar@<-2ex>[l]|{i^*} \ar@<2ex>[l]|{i^!} \ar[r]|{j^!=j^*}  &\mathcal{T}. \ar@<-2ex>[l]|{j_!} \ar@<2ex>[l]|{j_{*}}
}
\]

{\rm (1)} If $\mathcal{S}$ is strongly compactly generated, then so is $\mathcal{R}$. If, in addition, $j^*(G_{\mathcal{S}})\in \overline{\langle G_{\mathcal{T}}\rangle}^{[-u,u]}_u$ for some $u>0$, then $\mathcal{T}$ is strongly compactly generated.

{\rm (2)} Suppose that $i_*(G_{\mathcal{R}})\in \overline{\langle G_{\mathcal{S}}\rangle}^{[-v,v]}_v$ for some $v>0$. If $\mathcal{R}$ and $\mathcal{T}$ are strongly compactly generated, then so is $\mathcal{S}$.
\end{lem}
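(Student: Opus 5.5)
The plan is to push the global bound $\mathcal{S}=\overline{\langle G_{\mathcal{S}}\rangle}_{n+1}$ through the recollement functors. Everything rests on one transport principle, proved exactly like Lemma \ref{some lems}(3). Let $\mathbf{F}$ be a coproduct-preserving triangle functor with $\mathbf{F}(X)\in\overline{\langle Y\rangle}^{[-w,w]}_w$. Then
$$\mathbf{F}\big(\overline{\langle X\rangle}_{k}\big)\subseteq\overline{\langle Y\rangle}_{kw}.$$
To see this, note that $\mathbf{F}(\mathrm{Add}(X[i]))$ lies in $\mathrm{smd}(\mathrm{Coprod}_w(\ldots))$, because coproducts of $\ast$-extensions are again $\ast$-extensions of coproducts. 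Then induct on $k$, using $\mathrm{smd}(\mathcal{A})\ast\mathrm{smd}(\mathcal{B})\subseteq\mathrm{smd}(\mathcal{A}\ast\mathcal{B})$.

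Next, I record which functors preserve coproducts: $i^*$, $i_*$, $j_!$ and $j^*$ all do, being left adjoints or having left adjoints. I also need that generators move well. Since $i^*(G_{\mathcal{S}})$ is a compact generator of $\mathcal{R}$ and $G_{\mathcal{R}}\in\mathcal{R}^c$, there is $u_1$ with $G_{\mathcal{R}}\in\langle i^*G_{\mathcal{S}}\rangle^{[-u_1,u_1]}_{u_1}$. In the other direction, $i^*(G_{\mathcal{S}})\in\mathcal{R}^c=\langle G_{\mathcal{R}}\rangle$, so $i^*(G_{\mathcal{S}})$ lies in some $\langle G_{\mathcal{R}}\rangle^{[-w,w]}_w$. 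The same holds for $j_!(G_{\mathcal{T}})\in\mathcal{S}^c$.

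For (1), apply $i^*$ to $\mathcal{S}=\overline{\langle G_{\mathcal{S}}\rangle}_{n+1}$. Since $i_*$ is fully faithful, every $X\in\mathcal{R}$ satisfies $X\simeq i^*i_*X$, so
$$\mathcal{R}=i^*(\mathcal{S})\subseteq\overline{\langle i^*G_{\mathcal{S}}\rangle}_{n+1}\subseteq\overline{\langle G_{\mathcal{R}}\rangle}_{(n+1)w}.$$
Thus $\mathcal{R}$ is strongly compactly generated. For $\mathcal{T}$, I use $j^*$, which is essentially surjective because $j^*j_!\simeq\mathrm{id}$. The hypothesis $j^*(G_{\mathcal{S}})\in\overline{\langle G_{\mathcal{T}}\rangle}^{[-u,u]}_u$ is exactly what the transport principle needs, and it gives $\mathcal{T}\subseteq\overline{\langle G_{\mathcal{T}}\rangle}_{(n+1)u}$.

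For (2), suppose $\mathcal{R}=\overline{\langle G_{\mathcal{R}}\rangle}_{r}$ and $\mathcal{T}=\overline{\langle G_{\mathcal{T}}\rangle}_{t}$. Take $Y\in\mathcal{S}$ and use the triangle $j_!j^*Y\to Y\to i_*i^*Y\to$. The hypothesis on $i_*(G_{\mathcal{R}})$ gives $i_*i^*Y\in\overline{\langle G_{\mathcal{S}}\rangle}_{rv}$. For $j_!(G_{\mathcal{T}})\in\langle G_{\mathcal{S}}\rangle^{[c,d]}_{k}$, choose some $w'$ bounding both $k$ and the shift range; then $j_!j^*Y\in\overline{\langle G_{\mathcal{S}}\rangle}_{tw'}$. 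Hence $Y\in\overline{\langle G_{\mathcal{S}}\rangle}_{rv+tw'}$, using $\overline{\langle G\rangle}_a\ast\overline{\langle G\rangle}_b\subseteq\overline{\langle G\rangle}_{a+b}$.

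The main technical point, though routine, is the transport principle, in particular controlling direct summands inside $\overline{\langle -\rangle}_k$. The other thing to check is that the generators really do lie in finite-level, finite-range pieces. For compact objects in $\langle G\rangle$ this is immediate, since $\langle G\rangle=\bigcup_n\langle G\rangle_n$ and each object has finite shift range.
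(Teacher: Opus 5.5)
Your proposal is correct and follows essentially the same route as the paper. For (1) you push $\mathcal{S}=\overline{\langle G_{\mathcal{S}}\rangle}_{n+1}$ through the coproduct-preserving functors $i^*$ and $j^*$, using $X\simeq i^*i_*X$ and $Z\simeq j^*j_!Z$; for (2) you use the canonical triangle $j_!j^*Y\to Y\to i_*i^*Y\to j_!j^*Y[1]$. The differences are only bookkeeping: you carry the bound back to $G_{\mathcal{R}}$ via $i^*(G_{\mathcal{S}})\in\langle G_{\mathcal{R}}\rangle$, where the paper stops at the compact generator $i^*(G_{\mathcal{S}})$, and in (2) you bound the two terms separately rather than through the single object $j_!(G_{\mathcal{T}})\oplus i_*(G_{\mathcal{R}})$.
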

\begin{proof}
(1) Let $\mathcal{S}=\overline{\langle G_{\mathcal{S}}\rangle}_{s+1}^{(-\infty,+\infty)}$ for some $s\geq 0$. Since $i_*$ is fully faithful, $X\simeq i^*i_*(X)$ for any $X\in \mathcal{R}$.
As $i_*(X)\in\mathcal{S}$ and $i^*$ preserves coproducts, it follows that
$$
X\simeq i^*(i_*(X))\in i^*(\overline{\langle G_{\mathcal{S}}\rangle}_{s+1}^{(-\infty,+\infty)})
\subseteq \overline{\langle i^*(G_{\mathcal{S}})\rangle}_{s+1}^{(-\infty,+\infty)}.
$$ This forces $\mathcal{R}=\overline{\langle i^*(G_{\mathcal{S}})\rangle}_{s+1}^{(-\infty,+\infty)}$. By Lemma~\ref{lem-key}(1), $i^*(G_{\mathcal{S}})\in\mathcal{R}^c$. Thus $\mathcal{R}$ is strongly compactly generated.

Let $Z\in \mathcal{T}$. Since $j_!$ is fully faithful, $Z\simeq j^*j_!(Z)$.
As $j_!(Z)\in\mathcal{S}$ and $j^*$ preserves coproducts, we obtain $$
Z\simeq j^*j_!(Z)\in j^*(\overline{\langle G_{\mathcal{S}}\rangle}_{s+1}^{(-\infty,+\infty)})
\subseteq \overline{\langle j^*(G_{\mathcal{S}})\rangle}_{s+1}^{(-\infty,+\infty)}.
$$Hence $\mathcal{T}=\overline{\langle j^*(G_{\mathcal{S}})\rangle}_{s+1}^{(-\infty,+\infty)}$. Now, we assume $j^*(G_{\mathcal{S}})\in \overline{\langle G_{\mathcal{T}}\rangle}^{[-u,u]}_u$. Then $\overline{\langle j^*(G_{\mathcal{S}})\rangle}_{s+1}^{(-\infty,+\infty)}\subseteq\overline{\langle G_{\mathcal{T}}\rangle}_{u(s+1)}^{(-\infty,+\infty)}$ and thus $\mathcal{T}$ is strongly compactly generated.

(2) Let $\mathcal{R}=\overline{\langle G_{\mathcal{R}}\rangle}_{r+1}^{(-\infty,+\infty)}$ and $\mathcal{T}=\overline{\langle G_{\mathcal{T}}\rangle}_{t+1}^{(-\infty,+\infty)}$ for some $r, t\geq 0$.
For each $Y\in \mathcal{S}$, we have a canonical triangle $j_!j^*(Y)\ra Y \ra i_*i^*(Y)\ra j_!j^*(Y)[1]$ in $\mathcal{S}$. Since $j^*(Y)\in \mathcal{T}$ and $j_!$ preserves coproducts,
$$j_!j^*(Y)\in j_!(\overline{\langle G_{\mathcal{T}}\rangle}_{t+1}^{(-\infty,+\infty)})
\subseteq \overline{\langle j_!(G_{\mathcal{T}})\rangle_{t+1}}^{(-\infty,+\infty)}.$$
Similarly, $i_*i^*(Y)\in \overline{\langle i_{*}(G_{\mathcal{R}})\rangle}_{r+1}^{(-\infty,+\infty)}$.
It follows from the triangle that $Y\in \overline{\langle j_{!}(G_{\mathcal{T}})\oplus i_{*}(G_{\mathcal{R}})\rangle}_{r+t+2}^{(-\infty,+\infty)}$. This forces $\mathcal{S}=\overline{\langle j_{!}(G_{\mathcal{T}})\oplus i_{*}(G_{\mathcal{R}})\rangle}_{r+t+2}^{(-\infty,+\infty)}$. By Lemma~\ref{lem-key}, $j_!(G_{\mathcal{T}})\in\mathcal{S}^{c}=\langle G_{\mathcal{S}}\rangle$. Since $i_*(G_{\mathcal{R}})\in \overline{\langle G_{\mathcal{S}}\rangle}^{[-v,v]}_v$, we have $j_!(G_{\mathcal{T}})\oplus i_*(G_{\mathcal{R}})\in\overline{\langle G_{\mathcal{S}}\rangle}_{l}^{[-l,l]}$ for some $l> 0$. Then $\overline{\langle j_{!}(G_{\mathcal{T}})\oplus i_{*}(G_{\mathcal{R}})\rangle}_{r+t+2}^{(-\infty,+\infty)}\subseteq\overline{\langle G_{\mathcal{S}}\rangle}_{l(r+t+2)}$. Thus $\mathcal{S}$ is strongly compactly generated.
\end{proof}

Now, we state the main result of this subsection.

\begin{thm}\label{thm-rec-scg}
Let $\mathcal{R}$, $\mathcal{S}$ and $\mathcal{T}$ be approximable triangulated categories with bounded compact generators.
Suppose that there is a recollement
\begin{align*}%\label{thm-rec-st-thm}
\xymatrixcolsep{4pc}\xymatrix{\mathcal{R} \ar[r]|{i_*=i_!} &\mathcal{S} \ar@<-2ex>[l]|{i^*} \ar@<2ex>[l]|{i^!} \ar[r]|{j^!=j^*}  &\mathcal{T}. \ar@<-2ex>[l]|{j_!} \ar@<2ex>[l]|{j_{*}}
}
\end{align*}
Then $\mathcal{S}$ is strongly compactly generated if and only if so are $\mathcal{R}$ and $\mathcal{T}$.
\end{thm}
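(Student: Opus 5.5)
The plan is to reduce everything to Lemma \ref{lem-scg}. What remains is to verify its two technical hypotheses in the approximable setting. The first is $j^*(G_{\mathcal{S}})\in \overline{\langle G_{\mathcal{T}}\rangle}^{[-u,u]}_u$ for some $u>0$, needed for the ``only if'' direction for $\mathcal{T}$. The second is $i_*(G_{\mathcal{R}})\in \overline{\langle G_{\mathcal{S}}\rangle}^{[-v,v]}_v$ for some $v>0$, needed for the ``if'' direction. The first half of Lemma \ref{lem-scg}(1), namely that $\mathcal{R}$ is strongly compactly generated whenever $\mathcal{S}$ is, needs no hypothesis at all.

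The key translation is Remark \ref{lem-app-sb}. In an approximable category, $\mathcal{S}^{{\rm sb}}=\bigcup_{i\ge 0}\overline{\langle G\rangle}_{i+1}^{[-i,i]}$. Hence an object is strongly bounded if and only if it lies in some $\overline{\langle G\rangle}^{[-v,v]}_v$ (take $v=i+1$). So both hypotheses of Lemma \ref{lem-scg} reduce to strong boundedness: $j^*(G_{\mathcal{S}})\in\mathcal{T}^{{\rm sb}}$ and $i_*(G_{\mathcal{R}})\in\mathcal{S}^{{\rm sb}}$. By Theorem \ref{prop-rec-res1}, the conditions (1) and $(1')$ are equivalent. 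It therefore suffices, in each direction, to show that one of them holds.

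Next I would connect strong compact generation to strong boundedness. If a category $\mathcal{U}$ (approximable with bounded compact generator $G$) is strongly compactly generated, then Theorem \ref{main-thm-scg-gd}, direction $(1)\Rightarrow(2)$, gives ${\rm gpd}(\mathcal{U},G)<+\infty$. Lemma \ref{lem-global}(2) then gives $\mathcal{U}^b=\mathcal{U}^{{\rm sb}}$. The bounded-generator hypothesis ensures $\Hom(G,G[i])=0$ for $|i|\gg0$, using Lemma \ref{PR}(1) for the positive side.

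For the ``only if'' direction, suppose $\mathcal{S}$ is strongly compactly generated. Then $\mathcal{S}^b=\mathcal{S}^{{\rm sb}}$. By Lemma \ref{lem-gluing-$t$-struc}, $i_*(G_{\mathcal{R}})\in\mathcal{S}^b=\mathcal{S}^{{\rm sb}}$, since $G_{\mathcal{R}}$ is bounded. Theorem \ref{prop-rec-res1} then gives $j^*(G_{\mathcal{S}})\in\mathcal{T}^{{\rm sb}}$, and Lemma \ref{lem-scg}(1) yields the claim for both $\mathcal{R}$ and $\mathcal{T}$.

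For the ``if'' direction, suppose $\mathcal{R}$ and $\mathcal{T}$ are strongly compactly generated. Then $\mathcal{T}^b=\mathcal{T}^{{\rm sb}}$. Since $j^*(G_{\mathcal{S}})\in\mathcal{T}^b$ by Lemma \ref{lem-gluing-$t$-struc}, we get $j^*(G_{\mathcal{S}})\in\mathcal{T}^{{\rm sb}}$. Theorem \ref{prop-rec-res1} then gives $i_*(G_{\mathcal{R}})\in\mathcal{S}^{{\rm sb}}$, hence $i_*(G_{\mathcal{R}})\in\overline{\langle G_{\mathcal{S}}\rangle}^{[-v,v]}_v$, and Lemma \ref{lem-scg}(2) applies.

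The main obstacle is the identification of $\mathcal{S}^{{\rm sb}}$ with the objects having a bounded number of cones, i.e.\ the equality $\mathcal{S}^{{\rm sb}}=\bigcup_i\overline{\langle G\rangle}_{i+1}^{[-i,i]}$. This is exactly where approximability, and not mere weak approximability, is used. I would rely on Remark \ref{lem-app-sb}, checking that its argument goes through by rerunning the proof of Proposition \ref{lem-sp=big} with the approximable refinement of Lemma \ref{lem-app-induction}: there $E_m\in\overline{\langle G\rangle}_{mA}^{[1-m-a(G),a(G)]}$, so the splitting index $k$ gives a bounded cone count.
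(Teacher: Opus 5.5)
Your proposal is correct and follows the paper's proof essentially step for step. The chain is the same: strong compact generation gives finite ${\rm gpd}$ (Theorem \ref{main-thm-scg-gd}), hence $\mathcal{U}^b=\mathcal{U}^{\rm sb}$ (Lemma \ref{lem-global}(2)). Theorem \ref{prop-rec-res1} then transfers strong boundedness between $i_*(G_{\mathcal{R}})$ and $j^*(G_{\mathcal{S}})$, Remark \ref{lem-app-sb} turns it into a cone bound, and Lemma \ref{lem-scg} finishes. The only cosmetic difference is in the ``only if'' direction: you use $i_*(G_{\mathcal{R}})\in\mathcal{S}^b=\mathcal{S}^{\rm sb}$ and $(1)\Rightarrow(1')$ directly, whereas the paper cites Theorem \ref{global dim} to get $\mathcal{T}^b=\mathcal{T}^{\rm sb}$, and that theorem's proof contains exactly your step.
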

\begin{proof}
Suppose that $\mathcal{S}$ is strongly compactly generated by a bounded compact generator $G_\mathcal{S}$. Then $\mathcal{S}=\overline{\langle G_{\mathcal{S}}\rangle}_{s+1}^{(-\infty,+\infty)}$ for some $s\ge 0$. By Lemma \ref{lem-scg}(1), $\mathcal{R}$ is strongly compactly generated.
As $\mathcal{S}$ is weakly approximable and $G_{\mathcal{S}}$ is bounded, we see from Lemma \ref{PR}(1) that $\Hom_{\mathcal{S}}(G_\mathcal{S},G_\mathcal{S}[i])=0$ for $|i|\gg 0$. Since $\mathcal{S}$ is strongly compactly generated, ${\rm gpd}(\mathcal{S},G_\mathcal{S})<+\infty$ by Theorem \ref{main-thm-scg-gd}. It follows from Theorem \ref{global dim} that ${\rm gpd}(\mathcal{T},G_\mathcal{T})<+\infty$.
By Lemma \ref{lem-global}(2),  $\mathcal{T}^b=\mathcal{T}^{{\rm sb}}$. Further, by Lemma \ref{lem-gluing-$t$-struc}, $j^*(\mathcal{S}^b)\subseteq\mathcal{T}^b$. Since $G_{\mathcal{S}}\in\mathcal{S}^c\subseteq\mathcal{S}^b$, we obtain $j^{*}(G_{\mathcal{S}})\in \mathcal{T}^{{\rm sb}}$. Now, let $G_{\mathcal{T}}$ be a bounded compact generator of $\mathcal{T}$. Since $\mathcal{T}$ is approximable, we have $\mathcal{T}^{{\rm sb}}=\bigcup_{i\geq 0}\overline{\langle G_{\mathcal{T}}\rangle}_{i+1}^{[-i,i]}$ by Remark \ref{lem-app-sb}. Thus there exists an integer $u>0$ such that $j^{*}(G_{\mathcal{S}})\in \overline{\langle G_{\mathcal{T}}\rangle}_u^{[-u,u]}$. By Lemma \ref{lem-scg}(1), $\mathcal{T}$ is strongly compactly generated.

Conversely, suppose that $\mathcal{R}$ and $\mathcal{T}$ are strongly compactly generated by bounded compact generators $G_\mathcal{R}$ and $G_\mathcal{T}$, respectively. Then
${\rm gpd}(\mathcal{T},G_\mathcal{T})<+\infty$ by Theorem \ref{main-thm-scg-gd}. Clearly, by Lemma \ref{lem-global}(2), $\mathcal{T}^b=\mathcal{T}^{{\rm sb}}$.
Note that $j^*$ can be restricted to $\mathcal{S}^b\to \mathcal{T}^b$ by Lemma \ref{lem-gluing-$t$-struc}.  Since $\mathcal{S}$ has a bounded compact generator $G_{\mathcal{S}}$, we have $\mathcal{S}^{{\rm sb}}\subseteq \mathcal{S}^b$. Thus $j^*$ can be restricted to $\mathcal{S}^{{\rm sb}}\to \mathcal{T}^{{\rm sb}}$. In particular, $j^*(G_{\mathcal{S}})\in\mathcal{T}^{{\rm sb}}$. This implies $i_{*}(G_{\mathcal{R}})\in \mathcal{S}^{{\rm sb}}$, due to the equivalences of $(1)$ and $(1')$ in Theorem \ref{prop-rec-res1}. Similarly, by Remark \ref{lem-app-sb}, since $\mathcal{S}$ is approximable, there exists an integer $v>0$ such that $i_{*}(G_{\mathcal{R}})\in\overline{\langle G_{\mathcal{S}}\rangle}_v^{[-v,v]}$. Thus $\mathcal{S}$ is strongly compactly generated by Lemma \ref{lem-scg}(2).
\end{proof}

\section{Application to recollements induced by idempotents of rings}\label{6}
In this section, we apply our main results to recollements of  derived categories induced from idempotents of rings.

Let $k$ be a commutative ring, $A$ a flat $k$-algebra and $e^2=e \in A$ an idempotent.

The following result supplies a class of recollements of approximable triangulated categories.

\begin{prop}\label{construction}{\rm \cite[Proposition~2.10]{ky16}} There exists a differential graded (DG) $k$-algebra $\tilde{A}$, a homomorphism of dg $k$-algebras $f: A\to\tilde{A}$ and a recollement of derived categories:
\begin{align*}
\xymatrixcolsep{4pc}\xymatrix{\mathscr{D}(\tilde{A}) \ar[r]|{i_*=i_!} &\D{A\Modcat} \ar@<-2ex>[l]|{i^*} \ar@<2ex>[l]|{i^!} \ar[r]|{j^!=j^*}  &\D{eAe\Modcat} \ar@<-2ex>[l]|{j_!} \ar@<2ex>[l]|{j_{*}}
}
\end{align*} satisfying the following conditions:

{\rm (1)} $j_{!}=Ae\otimesL_{eAe}-$, $j^{*}=eA\otimesL_{A}-$, $i^{*}={_{\tilde{A}}}\tilde{A}\otimesL_{A}-$ and $i_{*}$ is the (derived) restriction functor induced from $f$;

{\rm (2)} $\tilde{A}^n=0$ for $n>0$, $H^{0}(\tilde{A})\simeq A/AeA$ and $H^{-n}(\tilde{A})\simeq\Tor^{eAe}_{n-1}(Ae,eA)$ for all $n\geq 2$.
\end{prop}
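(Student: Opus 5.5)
This statement is cited from \cite[Proposition~2.10]{ky16}, and I would reconstruct it through the standard construction of the ``derived quotient'' of $A$ by the idempotent ideal. Set $P:=Ae$, a finitely generated projective (hence compact) left $A$-module with $\End_A(Ae)\simeq eAe$. Since $\D{A\Modcat}$ is compactly generated and $P$ is compact, the localizing subcategory $\mathrm{Loc}(P)$ is compactly generated by $P$. The standard dg-Morita argument identifies $\mathrm{Loc}(P)$ with $\D{eAe\Modcat}$ via $Ae\otimesL_{eAe}-$, whose right adjoint $\mathbb{R}\Hom_A(Ae,-)\simeq eA\otimesL_A-$ is restriction to $eAe$. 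This gives $j_!$, $j^*$ and $j_*=\mathbb{R}\Hom_{eAe}(eA,-)$ directly. Here flatness of $A$ over $k$ is used so that these tensor products may be computed $k$-linearly, with no cofibrancy issues.

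Next I would define $\tilde A$. Take the counit triangle $Ae\otimesL_{eAe}eA\xrightarrow{\mu}A\to \mathrm{Cone}(\mu)\to$ in the derived category of $A$-bimodules. The key step is to realize $\mathrm{Cone}(\mu)$ as a dg algebra $\tilde A$ under $A$. I would do this as a homological epimorphism: the Verdier quotient $\D{A\Modcat}/\mathrm{Loc}(Ae)\simeq \mathrm{Loc}(Ae)^{\perp}$ is compactly generated by the image of $A$. Its derived endomorphism dg algebra $\tilde A:=\mathbb{R}\End(i^*A)$ is connected to $A$ by a dg morphism $f\colon A\to\tilde A$, obtained by choosing a cofibrant or h-injective model so that the unit map is strict. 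One then checks that $\tilde A\otimesL_A\tilde A\simeq\tilde A$, which makes restriction along $f$ fully faithful, that $i^*=\tilde A\otimesL_A-$, and that $i^!=\mathbb{R}\Hom_A(\tilde A,-)$. The recollement axioms (R1)--(R4) then follow from the general recollement attached to the localizing subcategory $\mathrm{Loc}(Ae)$, whose inclusion has both adjoints. I expect this rigidification, replacing $\mathrm{Cone}(\mu)$ by an honest dg algebra with a strict dg map from $A$, to be the main obstacle. My first attempt would take $\tilde A$ to be the dg endomorphism algebra of a fibrant replacement of $\mathrm{Cone}(\mu)$ in $\mathrm{Loc}(Ae)^{\perp}$, and then truncate.

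For (2), $\tilde A\simeq\mathrm{Cone}(\mu)$ as a left $A$-module. Compute cohomology from the long exact sequence, using $H^{-n}(Ae\otimesL_{eAe}eA)=\Tor^{eAe}_n(Ae,eA)$. The map $H^0(\mu)\colon Ae\otimes_{eAe}eA\to A$ has image $AeA$, which gives $H^0(\tilde A)\simeq A/AeA$ and $H^{-n}(\tilde A)\simeq\Tor_{n-1}^{eAe}(Ae,eA)$ for $n\ge2$. The cohomology vanishes in positive degrees, so after applying the smart truncation $\tau^{\le0}$, which remains a dg algebra quasi-isomorphic to $\tilde A$, we may assume $\tilde A^n=0$ for $n>0$. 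Finally, $f$ composed with the truncation is still a dg map, because $A$ is concentrated in degree $0$.
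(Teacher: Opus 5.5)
Your treatment of the routine parts is sound:
\begin{itemize}
\item the $j$-side recollement built from the compact projective $Ae$;
\item the homological-epimorphism criterion for full faithfulness of $i_*$;
\item the long exact sequence giving $H^0(\tilde{A})\cong A/AeA$ and $H^{-n}(\tilde{A})\cong\Tor^{eAe}_{n-1}(Ae,eA)$ for $n\ge 2$;
\item the passage to $\tau^{\le 0}\tilde{A}$, which is a dg subalgebra through which $f$ factors because $f(A)\subseteq Z^0(\tilde{A})$.
\end{itemize}
The gap is the step you flag yourself, and it is the only non-formal content of the statement. The paper gives no argument and simply quotes \cite{ky16}. You never produce a strict dg algebra homomorphism $f\colon A\to\tilde{A}$ with $\tilde{A}\simeq\mathrm{Cone}(\mu)$. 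Your proposed construction, the dg endomorphism algebra of a fibrant replacement of $\mathrm{Cone}(\mu)$ inside $\mathrm{Loc}(Ae)^{\perp}\subseteq\D{A\Modcat}$, does not supply one. For an h-injective complex $I$ of left $A$-modules, left multiplication by $a\in A$ is not $A$-linear. Right multiplication is available only if $I$ still carries a compatible right $A$-action, and a replacement taken in left modules destroys that action. Lifting the unit $A\to I$ up to homotopy gives maps that are multiplicative only up to homotopy, not a dg algebra map. Note also that $\mathcal{E}nd_A$ of a left module receives $A^{\rm op}$, so $\tilde{A}$ must be an opposite endomorphism algebra.

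The missing idea is to keep the bimodule structure throughout. This is where flatness of $A$ over $k$ is really needed; it plays no role on the $j$-side, contrary to what you wrote. You have two options.
\begin{itemize}
\item Bimodule resolution. Take an h-injective resolution $I$ of $Y=\mathrm{Cone}(\mu)$ as a complex of $A\otimes_k A^{\rm op}$-modules. Since $A$ is $k$-flat, the left adjoint $-\otimes_k A$ of restriction to left $A$-modules preserves acyclicity, so $I$ stays h-injective over $A$. Right multiplication then gives a strict dg map $f\colon A\to\tilde{A}:=\mathcal{E}nd_A(I)^{\rm op}$. Evaluation at a bimodule representative of the unit $A\to I$ is a bimodule quasi-isomorphism $\tilde{A}\to I\simeq Y$.
\item Drinfeld's dg quotient. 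Adjoin to $A$ a generator $\varepsilon=e\varepsilon e$ of degree $-1$ with $d\varepsilon=e$. Then $\tilde{A}^0=A$ and $\tilde{A}^{-n}=Ae\otimes_k(eAe)^{\otimes_k(n-1)}\otimes_k eA$ for $n\ge 1$. The differential is the bar complex, augmented by multiplication into $A$. By $k$-flatness, $\tilde{A}\simeq\mathrm{Cone}(\mu)$ as bimodules, $f$ is the inclusion of $A$, and $\tilde{A}$ vanishes in positive degrees without any truncation.
\end{itemize}
In either model $\tilde{A}e\simeq 0\simeq e\tilde{A}$. This shows that $\tilde{A}\otimesL_A\tilde{A}\to\tilde{A}$ is a quasi-isomorphism: its composite with $\tilde{A}\otimesL_A(A\to\tilde{A})$ is the identity, and that map has cone $\tilde{A}e\otimesL_{eAe}eA[1]\simeq 0$. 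It also shows that the essential image of $f_*$ is $\mathrm{Loc}(Ae)^{\perp}$, since the image lies there and contains the generator $f_*\tilde{A}\simeq Y$. From that point on your argument goes through.
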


Applying our results to the recollement in Proposition \ref{construction},  we immediately obtain the following result. This provides a method for constructing non-positive dg algebras with finite (big) finitistic dimension.

\begin{cor}\label{Idempotent}
Suppose that the $eAe$-module $eA$ has a finite projective resolution by finitely generated (respectively,  arbitrary) projective $eAe$-modules.
Then:

{\rm (1)}
$\fd(A)\le \fpd(\mathscr{D}(\tilde{A}),\tilde{A})+w_{A}(i_{*}(\tilde{A}))+\fd(eAe)+1$
(respectively,  $\Fd(A)\le \Fpd(\mathscr{D}(\tilde{A}),\tilde{A})+w_{A}(i_{*}(\tilde{A}))+\Fd(eAe)+1$).

{\rm (2)} $\fpd(\mathscr{D}(\tilde{A}),\tilde{A})\le \fd(A)$
(respectively,  $\Fpd(\mathscr{D}(\tilde{A}),\tilde{A})\le \Fd(A)$).
\end{cor}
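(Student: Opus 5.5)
The plan is to apply Theorem \ref{thm-fd-small}(1) and Theorem \ref{thm-fd-big}(1) to the recollement of Proposition \ref{construction}, with $\mathcal{R}=\mathscr{D}(\tilde{A})$, $\mathcal{S}=\D{A\Modcat}$, $\mathcal{T}=\D{eAe\Modcat}$ and generators $G_{\mathcal{R}}=\tilde{A}$, $G_{\mathcal{S}}=A$, $G_{\mathcal{T}}=eAe$. Since $\tilde{A}^n=0$ for $n>0$, $\tilde{A}$ is a non-positive DG algebra. By Example \ref{EXWAPP}(2), all three categories are approximable, and $\Hom(G,G[i])=0$ for $i>0$ gives $a(\tilde{A})=a(A)=a(eAe)=0$ (Remark 5.3 of \cite{n18}) and $o^+=0$ for all three generators. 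For the ordinary rings $A$ and $eAe$ we also have $o^-=0$. By Remark \ref{some rems}(3), $\fpd$ and $\Fpd$ of $\D{A\Modcat}$ and $\D{eAe\Modcat}$ are $\fd$ and $\Fd$ of $A$ and $eAe$.

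The key step is checking the hypothesis on $i_*(\tilde{A})$. Using the canonical triangle $j_!j^*(A)\to A\to i_*i^*(A)\to$ with $i^*(A)=\tilde{A}$, we get $i_*(\tilde{A})\simeq i_*i^*(A)$, which is the cone of $Ae\otimesL_{eAe}eA\to A$. So it suffices to show that $j^*(A)=eA$ lies in $\mathcal{T}^c$ (respectively $\mathcal{T}^{\rm sb}$).
\begin{itemize}
\item If $eA$ has a finite resolution by finitely generated projective $eAe$-modules, then $eA\in\Kb{\pmodcat{eAe}}=\mathcal{T}^c$. Lemma \ref{lem-key}(2) then gives that $i_*$ preserves compacts, so $i_*(\tilde{A})\in\mathcal{S}^c$.
\item If $eA$ has a finite resolution by arbitrary projective $eAe$-modules, then $eA\in\Kb{\Pmodcat{eAe}}=\mathcal{T}^{\rm sb}$. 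Theorem \ref{prop-rec-res1}, via $(1')\Rightarrow(1)$, then gives $i_*(\tilde{A})\in\mathcal{S}^{\rm sb}$.
\end{itemize}
In both cases $G_{\mathcal{S}}=A$ is bounded.

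For part (1), Theorem \ref{thm-fd-small}(1)(i) (respectively Theorem \ref{thm-fd-big}(1)(i)) then yields
$$\fd(A)\le \fpd(\mathscr{D}(\tilde A),\tilde A)+w_A(i_*(\tilde A))+\fd(eAe)+w_A(j_!(eAe))+1.$$
Here $j_!(eAe)=Ae$ is a projective $A$-module, so $w_A(j_!(eAe))=0$ by Remark \ref{kuandu}(2). In the big case the width is $\overline{w_A}$, which agrees with the homological width on $\Kb{\Pmodcat{A}}$ by the same remark; I will write it as $w_A$ for uniformity, as in the statement.

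For part (2), Theorem \ref{thm-fd-small}(1)(ii) (respectively Theorem \ref{thm-fd-big}(1)(ii)) gives
$$\fpd(\mathscr{D}(\tilde A),\tilde A)\le \fd(A)+w_{\tilde A}(i^*(A))+a(A)+o^+(\tilde A).$$
Since $i^*(A)=\tilde A$, its width is $0$, and the remaining terms vanish as noted above.

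The main obstacle is identifying $\mathcal{T}^{\rm sb}$ with $\Kb{\Pmodcat{eAe}}$ and correctly translating the resolution hypothesis into this membership. This identification is stated after Definition \ref{defn-sp}, so I will rely on it directly.
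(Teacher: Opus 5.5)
Your proposal is correct and follows the paper's proof: you apply Theorem \ref{thm-fd-small}(1) to the recollement of Proposition \ref{construction}, using $a(\cdot)=o^{\pm}(\cdot)=0$ and $w_A(Ae)=0=w_{\tilde A}(\tilde A)$, and you get $i_*(\tilde A)\in\mathcal{S}^c$ from $j^*(A)=eA\in\mathcal{T}^c$ via Lemma \ref{lem-key}(2). You also work out the big case explicitly, which the paper only calls ``analogous'': $eA\in\Kb{\Pmodcat{eAe}}=\mathcal{T}^{\rm sb}$, then Theorem \ref{prop-rec-res1} $(1')\Rightarrow(1)$, then Theorem \ref{thm-fd-big}(1), reading $w_A$ as $\overline{w_A}$.
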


\begin{proof}
We prove the case of finitistic dimension; the case of big finitistic dimension can be proved  analogously. Observe that $o^{+}(\tilde{A})=o^{+}(A)=o^{-}(A)=0$. By \cite[Remark 5.3]{n18}, $a(\tilde{A})=a(A)=a(eAe)=0$. Moreover, by Proposition~\ref{construction}, $j_{!}(eAe) \simeq Ae$ and $i^{*}(A) \simeq \tilde{A}$. Thus $w_{A}(j_{!}(eAe))=0=w_{\tilde{A}}(i^{*}(A))$.

By Proposition \ref{construction}, $j^{*}(A)\simeq eA$. The assumption on the $eAe$-module $eA$ implies that $j^{*}(A)\in\Dc{eAe\Modcat}$. By Lemma \ref{lem-key}(1), $i_{*}(\tilde{A})\in\Dc{A\Modcat}$. Now, $(1)$ and $(2)$ follow from Theorems~\ref{thm-fd-small}(1).
\end{proof}

\begin{cor}\label{Idempotent-app}
Let $A$ be a finite-dimensional $k$-algebra over a field $k$.

$(1)$ Suppose that $\Tor_n^{eAe}(Ae,eA)=0$ for $n\gg 0$. If ${\rm gldim}(A)<+\infty$, then $\gpd(\mathscr{D}(\tilde{A}),\tilde{A})<+\infty$ and ${\rm gldim}(eAe)<+\infty$.

$(2)$ Suppose that ${\rm gldim}(eAe)<+\infty$. Then $\fd(A) < +\infty$ if and only if $\fpd(\mathscr{D}(\tilde{A}),\tilde{A})< +\infty$.

$(3)$ Suppose that $\Tor_n^{eAe}(Ae,eA)=0$ for $n\gg 0$ and that every simple left $(A/AeA)$-module has finite projective dimension as an $A$-module. Then $\gpd(\mathscr{D}(\tilde{A}),\tilde{A})<+\infty$. If, in addition, $\fd(eAe)<+\infty$, then $\fd(A)<+\infty$.

%\noindent 
Analogous conclusions of $(2)$ and $(3)$ hold true when finitistic dimension is replaced by big finitistic dimension or global dimension.
\end{cor}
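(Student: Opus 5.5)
We argue throughout with the recollement of Proposition \ref{construction}. All three categories $\mathscr{D}(\tilde{A})$, $\D{A\Modcat}$ and $\D{eAe\Modcat}$ are approximable by Example \ref{EXWAPP}(2), since $\tilde{A}$ is non-positive. The generators $A$ and $eAe$ are clearly bounded. For $\tilde{A}$, Proposition \ref{construction}(2) gives $H^{-n}(\tilde{A})\simeq \Tor^{eAe}_{n-1}(Ae,eA)$ for $n\ge 2$. So whenever $\Tor^{eAe}_n(Ae,eA)=0$ for $n\gg0$, the dg algebra $\tilde{A}$ has bounded cohomology, hence $\Hom(\tilde{A},\tilde{A}[i])=0$ for $|i|\gg 0$, and $\tilde{A}$ is a bounded compact generator. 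We also use Remark \ref{rem-global}(1): $\gpd(\D{A\Modcat},A)={\rm gldim}(A)$, and similarly for $eAe$.

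\emph{Part (1).} Under the Tor-vanishing hypothesis all three generators are bounded, so Theorem \ref{global dim} applies directly. Finiteness of ${\rm gldim}(A)=\gpd(\D{A\Modcat},A)$ then forces $\gpd(\mathscr{D}(\tilde{A}),\tilde{A})<+\infty$ and ${\rm gldim}(eAe)<+\infty$.

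\emph{Part (2).} Since $A$ is finite-dimensional, $eA$ is a finitely generated $eAe$-module. As ${\rm gldim}(eAe)<+\infty$, $eA$ has a finite projective resolution by finitely generated projective $eAe$-modules. Corollary \ref{Idempotent} therefore applies and gives two inequalities. First, $\fd(A)\le \fpd(\mathscr{D}(\tilde{A}),\tilde{A})+w_A(i_*(\tilde{A}))+\fd(eAe)+1$, where $\fd(eAe)\le{\rm gldim}(eAe)<+\infty$. Second, $\fpd(\mathscr{D}(\tilde{A}),\tilde{A})\le \fd(A)$. Together these give the equivalence. The big-finitistic version follows in the same way from the "respectively" part of Corollary \ref{Idempotent}.

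\emph{Part (3).} We proceed in four steps.
\begin{enumerate}
\item Let $S$ be a simple $A/AeA$-module, regarded as an object of the heart $H^0(\tilde{A})\Modcat=(A/AeA)\Modcat$ of $\mathscr{D}(\tilde{A})$. Then $i_*(S)$ is the simple $A$-module $S$. By hypothesis it has finite projective dimension, so $i_*(S)\in\Kb{\pmodcat{A}}=\D{A\Modcat}^c$. Since $i^*$ preserves compacts (Lemma \ref{lem-key}(1)), $S\simeq i^*i_*(S)$ is compact in $\mathscr{D}(\tilde{A})$.
\item There are only finitely many such $S$. By Lemma \ref{range}, $\pd_{\tilde{A}}(S)\le N$ for a uniform $N$, and hence every finite-dimensional object of the heart has projective dimension at most $N$.
\item Next we show that $\gpd(\mathscr{D}(\tilde{A}),\tilde{A})<+\infty$. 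By Lemma \ref{lem-global}(1) it suffices to bound $\pd_{\tilde{A}}(X)$ for arbitrary $X$ in the heart. We first show that $\tilde{A}$ itself lies in $\mathrm{thick}(\bigoplus S)$. This holds because $\tilde{A}$ has bounded cohomology and each $H^{-n}(\tilde{A})$ is a finite-dimensional $A/AeA$-module. It follows that the compacts of $\mathscr{D}(\tilde{A})$ are exactly $\mathrm{thick}(\bigoplus S)$. We then try either of two routes:
\begin{itemize}
\item Show that $\mathscr{D}(\tilde{A})$ is strongly compactly generated and apply Theorem \ref{main-thm-scg-gd}, using that $\tilde{A}$ lies in a finite stage of the generation from the simples, which have projective dimension at most $N$.
\item Write an arbitrary heart object as a filtered union of finite-dimensional submodules, and use compactness of $\tilde{A}$ and of the simples to pass the uniform vanishing $\Hom(-,Y[i])=0$ for $i>N+c$ to the colimit.
\end{itemize}
\item For the final claim, the Tor-vanishing hypothesis gives that $i_*(\tilde{A})$ has bounded cohomology, with finite-dimensional $A/AeA$-module factors. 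Each composition factor of these is a simple $A/AeA$-module, which has finite projective dimension over $A$. Hence $i_*(\tilde{A})\in\D{A\Modcat}^c$. Theorem \ref{thm-fd-small}(1)(i) then bounds $\fd(A)=\fpd(\D{A\Modcat},A)$ in terms of $\fpd(\mathscr{D}(\tilde{A}),\tilde{A})\le\gpd(\mathscr{D}(\tilde{A}),\tilde{A})<+\infty$ and $\fd(eAe)<+\infty$.
\end{enumerate}

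The analogous big-finitistic and global statements follow the same way, using Theorems \ref{thm-fd-big} and \ref{global dim}.

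The main obstacle is step 3 of part (3): passing from the uniform bound on finite-length heart objects to arbitrary, infinite-dimensional ones. I would try the strong-generation route via Theorem \ref{main-thm-scg-gd} first.
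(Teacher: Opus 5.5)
Parts (1) and (2), and steps 1, 2 and 4 of part (3), are sound and match the paper. The gap is step 3 of part (3). You leave it open, and neither of your routes works as sketched.

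\emph{First route.} Knowing $\tilde{A}\in\langle S\rangle_m$ only gives $\overline{\langle \tilde{A}\rangle}_n\subseteq\overline{\langle S\rangle}_{mn}$, which is the wrong direction. Your sketch also gives no way to turn the bound $\pd_{\tilde A}(S)\le N$ into strong generation. The paper's only tool in that direction is Lemma \ref{proj obj}(3)(b), i.e. the converse in Theorem \ref{main-thm-scg-gd}. It needs a tilting generator, and it takes a bound on $\gpd$ as input, which is exactly what you are trying to prove.

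The missing input is that \emph{every} object of $\mathscr{D}(\tilde{A})$, not only the compact ones, lies in $\overline{\langle S\rangle}_{s}$ for some $s$. The paper takes this from \cite[Lemma 5.2]{g26}. It then uses compactness of $S$ (so $S\in\langle\tilde A\rangle_{m'}$) to get $\mathscr{D}(\tilde A)=\overline{\langle\tilde A\rangle}_{m's}$. It concludes with Theorem \ref{main-thm-scg-gd}, which applies because $\tilde{A}$ is bounded.

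\emph{Second route.} It fails as stated because a directed union of modules is not a colimit in the triangulated category. Hom out of a homotopy colimit involves $\lim^1$, and higher derived limits for uncountable systems, so uniform vanishing does not simply pass to the union.

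You can close step 3 without any strong-generation input.
\begin{itemize}
\item The heart is $H^0(\tilde{A})\Modcat\simeq (A/AeA)\Modcat$. Since $A/AeA$ is finite-dimensional, its Jacobson radical $J$ satisfies $J^\ell=0$ for some $\ell$.
\item Every heart object $X$ therefore has the finite filtration $X\supseteq JX\supseteq\cdots\supseteq J^{\ell}X=0$. Its factors are semisimple, i.e. coproducts of copies of the finitely many simples.
\item The heart is closed under coproducts in $\mathscr{D}(\tilde A)$ (Lemma \ref{lem-p-w-gen}), and $\Hom(\bigoplus_\lambda X_\lambda, Y[i])\simeq\prod_\lambda\Hom(X_\lambda,Y[i])$. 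So a coproduct of objects of projective dimension at most $N$ again has projective dimension at most $N$.
\item Applying Remark \ref{some rems}(1) to the short exact sequences of the filtration gives $\pd_{\tilde A}(X)\le N$ for every $X$ in the heart.
\item Hence $\gpd(\mathscr{D}(\tilde{A}),\tilde{A})\le N$ by Lemma \ref{lem-global}(1).
\end{itemize}
With this, your argument for (3) is complete. It avoids both the external lemma and Theorem \ref{main-thm-scg-gd}.
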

\begin{proof}
(1) Since $\Tor_n^{eAe}(Ae,eA)=0$ for $n\gg 0$, the object $\tilde{A}\in\mathscr{D}(\tilde{A})$ is bounded by Proposition \ref{construction}. Suppose ${\rm gldim}(A)<+\infty$. By Theorem \ref{global dim}, $\gpd(\mathscr{D}(\tilde{A}),\tilde{A})<+\infty$ and ${\rm gldim}(eAe)<+\infty$.

(2) Since ${\rm gldim}(eAe)<+\infty$ and $A$ is finite-dimensional, the $eAe$-module $eA$ has a finite projective resolution by finitely generated projective $eAe$-modules. Note that $w_{\tilde{A}}(i^{*}(A))$ and $w_{A}(i_{*}(\tilde{A}))$ are finite (see the proof of Corollary \ref{Idempotent}). Moreover, $\fd(eAe)={\rm gldim}(eAe)$. Thus (2) follows from Corollary \ref{Idempotent}.

(3) Let $\{S_1,\dots,S_m\}$ be a complete set of pairwise non-isomorphic simple modules over $A/AeA$, and set $S:=\bigoplus_{i=1}^{m} S_{i}$. By \cite[Lemma 5.2]{g26}, $\mathscr{D}(\tilde{A})=\langle\overline{S}\rangle_{N}$ for some $N>0$. By assumption, each simple left $(A/AeA)$-module has finite projective dimension as an $A$-module, which implies $i_*(S)\in\Dc{A\Modcat}$. Since $i_*$ is fully-faithful and $i^*$ preserves compact objects, we obtain $S\simeq i^*i_*(S)\in\mathscr{D}(\tilde{A})^{c}$. Hence $\mathscr{D}(\tilde{A})$ is strongly compactly generated. Clearly, $\tilde{A}\simeq i^{*}(A)$ by Proposition \ref{construction}. Recall from the proof of $(1)$ that $\tilde{A}$ is a bounded compact generator of $\mathscr{D}(\tilde{A})$. Therefore, Theorem~\ref{main-thm-scg-gd} yields $\gpd(\mathscr{D}(\tilde{A}),\tilde{A})<+\infty$.

We now claim $i_*i^*(A)\in\Dc{A\Modcat}$. By assumption, every finitely generated $(A/AeA)$-module has finite projective dimension as an $A$-module. By Proposition \ref{construction}, we obtain $H^n(i_{*}i^{*}(A))=H^n(i^{*}(A))\simeq H^n(\tilde{A})$ for $n\in\mathbb{Z}$. It follows from $\Tor_n^{eAe}(Ae,eA)=0$ for $n\gg 0$ that $\bigoplus_{n\in\mathbb{Z}}H^{n}(i_{*}i^{*}(A))\in A\text{-}{\rm mod}$. In particular, $i_{*}i^{*}(A)$ is a bounded complex. Since
\[i_{*}i^{*}(A)\in\ker(j^{*})=\{X\in\D{A\Modcat}\mid H^n(X)\in (A/AeA)\text{-}{\rm Mod}\text{ for all }n\in\mathbb{Z}\},\]
all cohomologies $H^n(i_{*}i^{*}(A))$ of $i_{*}i^{*}(A)$ have finite projective resolutions by finitely generated projective $A$-modules. This implies $H^n(i_{*}i^{*}(A))\in\Dc{A\Modcat}$. Thus $i_*(\tilde{A})\simeq i_{*}i^{*}(A)\in\Dc{A\Modcat}$. Clearly, $w_{A}(j_!(eAe))=w_{A}(Ae)=0$. Thus, the desire result follows from Theorem \ref{thm-fd-small}(1).
\end{proof}

\begin{cor}\label{Extension}
Let $A$ be a finite-dimensional algebra over a field $k$ with $D:=\Hom_k(-,k)$ and $M\in A\modcat$. If $\Ext_{A}^{i}\big(M, D\Hom_A(M,A)\big)=0$ for $i\gg0$ and ${\rm gldim}\big(\End_{A}(A\oplus M)\big)<+\infty$, then ${\rm gldim}(A)<+\infty$.
\end{cor}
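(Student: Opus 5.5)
The plan is to realize $\End_A(A\oplus M)$ as a corner algebra and apply Corollary~\ref{Idempotent-app}(1). Put $B:=\End_A(A\oplus M)$ (acting so that left $B$-modules behave correctly) and let $e\in B$ be the idempotent projecting onto the summand $A$. Then $eBe\simeq \End_A(A)\simeq A$ up to taking opposites; if the conventions give $A\opp$, we use ${\rm gldim}(A)={\rm gldim}(A\opp)$ for finite-dimensional algebras. Since $B$ is finite-dimensional over the field $k$ (hence flat) and ${\rm gldim}(B)<+\infty$, Corollary~\ref{Idempotent-app}(1) gives ${\rm gldim}(eBe)<+\infty$, provided we verify its hypothesis $\Tor_n^{eBe}(Be,eB)=0$ for $n\gg0$. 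The conclusion ${\rm gldim}(A)<+\infty$ then follows.

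Next we identify the bimodules. We have $eB=\Hom_A(A\oplus M,A)\simeq A\oplus \Hom_A(M,A)$ and $Be=\Hom_A(A,A\oplus M)\simeq A\oplus M$, viewed as modules over $eBe\simeq A$ on the appropriate sides. The summands $A$ are projective, so they do not contribute to higher Tor. Hence, for $n\ge1$, $\Tor_n^{eBe}(Be,eB)$ reduces to $\Tor_n^A(\Hom_A(M,A),M)$, with $\Hom_A(M,A)$ a right $A$-module and $M$ a left $A$-module (or the mirror version, depending on sides).

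Finally, we convert Tor into the given Ext-vanishing via $k$-duality. For finite-dimensional modules we have the standard isomorphism $D\Tor_n^A(X,Y)\simeq \Ext_A^n(Y,DX)$, where $X$ is a right module and $Y$ a left module. Taking $X=\Hom_A(M,A)$ and $Y=M$ gives $D\Tor_n^A(\Hom_A(M,A),M)\simeq \Ext_A^n\bigl(M,D\Hom_A(M,A)\bigr)$, and the right-hand side vanishes for $n\gg0$ by hypothesis. Since $D$ is faithful on finite-dimensional spaces, the Tor groups vanish for $n\gg0$, and Corollary~\ref{Idempotent-app}(1) applies.

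The main obstacle is the bookkeeping of left/right conventions. We need to make sure that $eBe$ is $A$ (or $A\opp$) and that $Be$ and $eB$ correspond to $M$ and $\Hom_A(M,A)$ on the correct sides. Only then does the duality produce exactly $\Ext_A^i(M,D\Hom_A(M,A))$ rather than a transposed variant. If the sides come out mirrored, we will choose the opposite endomorphism-ring convention so that the statement matches.
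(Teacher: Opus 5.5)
Your proposal is correct and follows the paper's proof. The paper also realizes $A$ as the corner $eBe$, identifies $Be$ and $eB$ with $A\oplus\Hom_A(M,A)$ and $A\oplus M$, converts the higher $\Tor$ groups into $D\Ext_A^i(M,D\Hom_A(M,A))$ by $k$-duality, and applies Corollary~\ref{Idempotent-app}(1). The only difference is convention: the paper takes $B=\End_A(A\oplus M)\opp$ so that $eBe\simeq A$ directly, whereas your choice $B=\End_A(A\oplus M)$ gives $eBe\simeq A\opp$, which your appeal to ${\rm gldim}(A)={\rm gldim}(A\opp)$ for finite-dimensional algebras handles correctly.
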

\begin{proof}
Let $X:=A\oplus M$,  $B:=\End_{A}(X)^{{\rm op}}$ and $e^2=e\in B$ the idempotent element corresponding to the direct summand $A$ of the $A$-module $X$. Then $eBe\simeq A$ as algebras. Note that $\Tor_i^{A}(Be,eB)\simeq D\Ext_A^i(eB, D(Be))$ for any $i\in\mathbb{N}$, $Be{_A}\simeq A\oplus\Hom_A(M,A)$ and ${_A}eB\simeq A\oplus M$. It follows that
$$\Tor_i^{A}(Be,eB)\simeq D\Ext_A^i\big(A\oplus M, DA\oplus D\Hom_A(M,A)\big)\simeq D\Ext_A^i\big(M, D\Hom_A(M,A)\big)$$ for $i>0$. If $\Ext_{A}^{i}\big(M, D\Hom_A(M,A)\big)=0$ for $i\gg 0$, then $\Tor_{i}^{A}(Be,eB)=0$ for $i\gg0$. Now, Corollary \ref{Extension} follows from Corollary \ref{Idempotent-app}(1).
\end{proof}

\medskip
{\bf Acknowledgements.} The research work was partially supported by the National Natural Science Foundation of China (Grant 12401038).

\end{document}